\documentclass[12pt]{amsart}
\usepackage{amsmath,latexsym,amsfonts,amssymb,amsthm}

\usepackage{geometry}
\geometry{a4paper,top=3.5cm,bottom=3.8cm,left=2.5cm,right=2.5cm}

\usepackage{hyperref}
\usepackage{mathrsfs}
\usepackage{tikz-cd}
\usepackage[alphabetic]{amsrefs}
\usepackage[all,cmtip]{xy}
\usepackage{bbm}

\setcounter{tocdepth}{1}
\usepackage{bm}

\newtheorem{prop}{Proposition}[section]
\newtheorem{thm}[prop]{Theorem}
\newtheorem{cor}[prop]{Corollary}

\newtheorem{lem}[prop]{Lemma}

\theoremstyle{definition}

\newtheorem{defn}[prop]{Definition}

\newtheorem{expl}[prop]{Example}
\newtheorem{rem}[prop]{\it Remark}

\newtheorem*{claim*}{Claim}

\newcommand{\bP}{\mathbb{P}}
\newcommand{\bC}{\mathbb{C}}
\newcommand{\bR}{\mathbb{R}}
\newcommand{\bA}{\mathbb{A}}
\newcommand{\bQ}{\mathbb{Q}}
\newcommand{\bZ}{\mathbb{Z}}

\newcommand{\bN}{\mathbb{N}}

\newcommand{\bG}{\mathbb{G}}
\newcommand{\bT}{\mathbb{T}}

\newcommand{\Rr}{\mathbb{R}}
\newcommand{\Qq}{\mathbb{Q}}

\newcommand{\oX}{\overline{X}}

\newcommand{\oD}{\overline{D}}

\newcommand{\tX}{\widetilde{X}}
\newcommand{\tY}{\widetilde{Y}}

\newcommand{\tD}{\widetilde{D}}
\newcommand{\tDelta}{\widetilde{\Delta}}

\newcommand{\cX}{\mathcal{X}}
\newcommand{\cY}{\mathcal{Y}}
\newcommand{\cZ}{\mathcal{Z}}
\newcommand{\cW}{\mathcal{W}}

\newcommand{\cO}{\mathcal{O}}
\newcommand{\cL}{\mathcal{L}}
\newcommand{\cI}{\mathcal{I}}
\newcommand{\cM}{\mathcal{M}}
\newcommand{\cF}{\mathcal{F}}
\newcommand{\cG}{\mathcal{G}}

\newcommand{\cE}{\mathcal{E}}
\newcommand{\cN}{\mathcal{N}}
\newcommand{\cS}{\mathcal{S}}
\newcommand{\cT}{\mathcal{T}}

\newcommand{\cR}{\mathcal{R}}
\newcommand{\cD}{\mathcal{D}}

\newcommand{\cB}{\mathcal{B}}

\newcommand{\tcD}{\widetilde{\mathcal{D}}}

\newcommand{\ocX}{\overline{\mathcal{X}}}
\newcommand{\ocZ}{\overline{\mathcal{Z}}}
\newcommand{\ocD}{\overline{\mathcal{D}}}
\newcommand{\ocL}{\overline{\mathcal{L}}}

\newcommand{\fa}{\mathfrak{a}}
\newcommand{\fb}{\mathfrak{b}}

\newcommand{\Center}{\mathrm{Center}}
\newcommand{\Spec}{\mathrm{Spec}}
\newcommand{\Proj}{\mathrm{Proj}}
\newcommand{\Supp}{\mathrm{Supp}}
\newcommand{\Hom}{\mathrm{Hom}}
\newcommand{\mult}{\mathrm{mult}}
\newcommand{\lct}{\mathrm{lct}}
\newcommand{\Ex}{\mathrm{Ex}}
\newcommand{\Pic}{\mathrm{Pic}}
\newcommand{\Aut}{\mathrm{Aut}}
\newcommand{\vol}{\mathrm{vol}}
\newcommand{\ord}{\mathrm{ord}}
\newcommand{\Val}{\mathrm{Val}}
\newcommand{\Nklt}{\mathrm{Nklt}}

\newcommand{\hvol}{\widehat{\rm vol}}
\newcommand{\hVol}{\widehat{\rm Vol}}

\newcommand{\wt}{\mathrm{wt}}
\newcommand{\Coef}{\mathrm{Coeff}}
\newcommand{\Diff}{\mathrm{Diff}}
\newcommand{\mld}{\mathrm{mld}}

\newcommand{\Cl}{\mathrm{Cl}}
\newcommand{\gr}{\mathrm{gr}}

\newcommand{\Sym}{\mathrm{Sym}}

\numberwithin{equation}{section}

\title{Boundedness in general type MMP}

\author{Jingjun Han}
\address{Shanghai Center for Mathematical Sciences, Fudan University, Jiangwan Campus, Shanghai, 200438, China}
\email{hanjingjun@fudan.edu.cn}

\author{Lu Qi}
\address{School of Mathematical Sciences, East China Normal University, Shanghai 200241, China}
\email{lqi@math.ecnu.edu.cn}

\author{Ziquan Zhuang}
\address{Department of Mathematics, Johns Hopkins University, Baltimore, MD 21218, USA}
\email{zzhuang@jhu.edu}

\date{}

\begin{document}

\begin{abstract}
We show that in any sequence of a general type MMP, the minimal log discrepancy of singularities takes at most finitely many values, and the fibers of all the extremal contractions and flips belong to a bounded family. A key ingredient in the proof is an analysis of the behavior of local volumes in the MMP. 
\end{abstract}

\maketitle

\tableofcontents

\emph{Throughout this paper, we work over $\mathbb{C}$, the field of complex numbers.} 

\section{Introduction}

The Minimal Model Program (MMP) aims to construct, for any projective variety, a birational model that is as simple as possible. One of the main open problems in the MMP is the termination of the program. Conjecturally, every sequence of MMP terminates after finitely many steps; in particular, most natural invariants of the variety are expected to be bounded in any given MMP sequence. The purpose of this article is to prove some boundedness statements of this kind, without assuming termination, for varieties of general type, or more generally, for any MMP that starts with a Kawamata log terminal (klt) pair $(X,\Delta)$ such that either $K_X+\Delta$ or $\Delta$ is big. 

\subsection{Boundedness}

Our first result concerns the boundedness of the index and minimal log discrepancy. Recall that the minimal log discrepancy (mld), denoted $\mld(x,X,\Delta)$, of a klt pair $(X,\Delta)$ at a point $x\in X$ is the smallest log discrepancy of prime divisors over $x\in X$. It is one of the most important invariants in the study of the MMP, partly due to its monotonicity under the MMP.

\begin{thm}[see Theorem \ref{thm:index+mld in MMP}] \label{main:index+mld}
Let $(X,\Delta)$ be a projective $\bQ$-factorial klt pair such that either $K_X+\Delta$ or $\Delta$ is big. Then there exist a positive integer $r$ and a finite set $S\subseteq \bR_+$, depending only on the pair $(X,\Delta)$, such that for any sequence of steps of a $(K_X+\Delta)$-MMP $(X,\Delta)\dashrightarrow (Y,\Delta_Y)$,
\begin{enumerate}
    \item the Cartier index of any Weil divisor on $Y$ is at most $r$, and
    \item for any (not necessarily closed) point $y\in Y$, we have $\mld(y,Y,\Delta_Y)\in S$.
\end{enumerate}
\end{thm}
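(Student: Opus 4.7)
The plan is to use the normalized volume $\hvol$ of klt singularities as the central invariant, and to reduce both claims to a uniform lower bound on local volumes of the singularities appearing along any $(K_X+\Delta)$-MMP starting from $(X,\Delta)$.

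First, I would observe that the coefficients of $\Delta_Y$ always lie in the finite set $I=\Coef(\Delta)$ on every intermediate model $Y$, since under an MMP step the pushforward of the boundary can only drop or preserve coefficients. Thus all pairs $(Y,\Delta_Y)$ under consideration are klt with boundary coefficients in a fixed finite set depending only on $(X,\Delta)$. By the recent boundedness theorems for klt singularities with coefficients in a fixed finite (or more generally DCC) set and normalized volume bounded uniformly away from zero (of Xu--Zhuang and related work), such singularities form a bounded family; in particular, the local Cartier index of every Weil divisor is uniformly bounded and the set of $\mld$ values that appears is finite. Both parts (1) and (2) of the theorem therefore reduce to producing a constant $c=c(X,\Delta)>0$ such that
\[
\hvol(y,Y,\Delta_Y)\;\ge\;c
\]
for every (not necessarily closed) point $y\in Y$ on every MMP output $Y$.

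To prove this lower bound I would start from the projectivity of $X$ together with the lower semicontinuity and positivity of $y\mapsto\hvol(y,X,\Delta)$, which yield an initial positive lower bound $c_0>0$ on $X$ itself. The core of the argument is then to propagate this lower bound through a single MMP step $\phi\colon(X,\Delta)\dashrightarrow(Y,\Delta_Y)$, separately for divisorial contractions and flips. Given a point $y\in Y$, I would pick a valuation $v$ (nearly) computing $\hvol(y,Y,\Delta_Y)$, regard it as a valuation on the common function field with some center $x\in X$, and exploit the monotonicity $A_{Y,\Delta_Y}(v)\ge A_{X,\Delta}(v)$ of log discrepancies together with a comparison of the valuation volumes $\vol_Y(v)$ and $\vol_X(v)$. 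The bigness hypothesis on $\Delta$ or $K_X+\Delta$ enters here to produce, via a global positivity estimate on a common log resolution, a uniform volume comparison independent of the step, which when combined with the log discrepancy inequality yields a lower bound on $\hvol(y,Y,\Delta_Y)$ in terms of local volumes on $X$.

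The main obstacle is controlling this propagation across a flip, where the local ring at a point of $Y$ is not birationally covered by an \'etale neighborhood of any single point on $X$ and new singularities can genuinely appear in the flipped locus. Divisorial contractions are more benign because $Y$ is dominated by $X$ and valuations transport in the natural direction. For flips the argument will require a delicate local-to-global volume estimate on a common resolution; it is here that the bigness of $\Delta$ or $K_X+\Delta$ is used in an essential way, since without such global positivity the local volumes could in principle degenerate to zero along an infinite MMP sequence, and it is precisely this possibility that the bigness hypothesis is designed to rule out.
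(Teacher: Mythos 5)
Your high-level reduction is the same as the paper's: both claims are deduced from a uniform positive lower bound on $\hvol(y,Y,\Delta_Y)$ along the MMP, with the index bound coming from \cite{XZ-minimizer-unique} and the mld statement from combining an upper bound on mld (Theorem \ref{thm:mld bound via volume}) with the discreteness of log discrepancies forced by the index bound (Lemma \ref{lem:index bdd imply mld discrete}; note that ``bounded family of singularities'' alone does not immediately give finiteness of the mld set at non-closed points --- one needs both the discreteness and the upper bound). The problem is that the core of your argument --- actually producing the volume lower bound --- is exactly the part you leave open, and the mechanism you sketch for it would not work. Transporting a (nearly) minimizing valuation $v$ over $y\in Y$ back to $X$ and comparing $\vol_Y(v)$ with $\vol_X(v)$ runs into two obstructions: across a flip the center of $v$ on $X$ may be a non-closed point (e.g.\ the generic point of the flipping locus), so $\vol_{X,x}(v)$ is not the quantity controlled by your initial bound $c_0$; and, more fundamentally, the normalized volume is genuinely non-monotone under flips --- the paper's Remark \ref{rem:local vol not monotone} exhibits flips where $\hvol$ of a fixed valuation increases and others where it decreases, and flips where $\inf_x\hvol(x,X)$ moves in either direction. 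So no step-by-step, valuation-by-valuation propagation can close the argument.

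The paper's actual mechanism bypasses propagation entirely. Using bigness of $\Delta$ (after Lemma \ref{lem: gt klt sim to big boundary}), one builds a single eventually birational graded linear series $V_\bullet$ of $\Delta$ on $X$ that is \emph{admissible}, i.e.\ $(X,\Gamma)$ is log canonical for every $\Gamma\in\frac1m|V_m|$ (Lemma \ref{lem:perturb div into lin sys}). Admissibility is preserved under pushforward by any MMP type contraction because log canonicity is preserved by $(K+\Delta)$-non-positive maps (Lemma \ref{lem:MMP type change boundary}), so $\varphi_*V_\bullet$ is admissible on every model $Y$ with the same volume; the inequality $\hvol(y,Y,\Delta_Y)\ge\alpha(Y,\Delta_Y;\varphi_*V_\bullet)^n\cdot\vol(\varphi_*V_\bullet)\ge\vol(V_\bullet)$ (Proposition \ref{prop:alpha-vol inequality linear series}) then gives the uniform constant $c=\vol(V_\bullet)>0$ at every closed point of every model simultaneously. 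This ``global-to-local'' comparison --- a monotone global invariant (the log canonical volume) dominating every local volume --- is the missing idea in your proposal; without it, or some substitute for it, your proof does not go through.
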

Shokurov \cite{Sho-mld-conj} has shown that if the minimal log discrepancy of singularities that appear in the MMP satisfies both lower semi-continuity (LSC) and ascending chain condition (ACC), then the MMP must terminate after finitely many steps. Theorem \ref{main:index+mld} confirms the ACC conjecture for minimal log discrepancies in general type MMPs, thus verifies this part of Shokurov's approach. For some past works on the ACC conjecture in special cases, see for example \cites{Ale93,Amb06,Kaw14,Jia21,HLL22,NS22,HLS24,Kaw24,HL25}. 

\medskip

Our second result shows that when running the MMP on varieties of general type (or when the boundary is big), the fibers of all possible extremal contractions and flips belong to a bounded family. This is certainly expected if the MMP terminates. Indeed, one can also show that a log canonical MMP terminates if and only if the pairs that appear in the MMP belong to a bounded family, see Theorem \ref{thm:bdd imply termination} in the appendix.

\begin{thm}[see Theorem \ref{thm:fiber bdd}]\label{main:fiber}
Let $(X,\Delta)$ be a projective $\bQ$-factorial klt pair such that either $K_X+\Delta$ or $\Delta$ is big. Then there exists a projective family $\cW\to \cB$ over a finite type base $\cB$, such that in any sequence of $(K_X+\Delta)$-MMP, every fiber of the extremal contractions or the flips is isomorphic to $\cW_b$ for some $b\in \cB$.
\end{thm}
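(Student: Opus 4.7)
The plan is to combine the uniform control of singularities provided by Theorem~\ref{main:index+mld} with Birkar's solution of the BAB conjecture on boundedness of singular Fano varieties. By Theorem~\ref{main:index+mld}, there exist a positive integer $r$ and a real number $\epsilon>0$, depending only on $(X,\Delta)$, such that every intermediate pair $(Y,\Delta_Y)$ in any sequence of a $(K_X+\Delta)$-MMP is $\epsilon$-klt, and every Weil divisor on $Y$ has Cartier index dividing $r$. In particular, $L_Y := -r(K_Y+\Delta_Y)$ is Cartier, and its restriction to any positive-dimensional fiber of an extremal contraction or flipping contraction from $Y$ is an ample Cartier divisor of bounded index.

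Next, for each extremal contraction or flipping contraction $f\colon Y\to Z$ and each positive-dimensional reduced fiber $F$, the plan is to equip $F$ with an effective boundary $\Delta_F$ such that $(F,\Delta_F)$ is an $\epsilon'$-klt log Fano pair, where $\epsilon'>0$ depends only on $\epsilon$, $r$, and $\dim X$. For the general fiber of a Mori fiber contraction this is standard: one takes $\Delta_F := \Delta_Y|_F$, and the klt hypothesis together with a Bertini-type argument yields the $\epsilon'$-klt condition. For special fibers, and for positive-dimensional fibers of divisorial or flipping contractions, one instead produces a local relative $\bQ$-complement: perturbing $\Delta_Y$ by the pullback of a sufficiently general effective $\bQ$-divisor from $Z$ exhibits $F$ as a log canonical center of an $\epsilon$-lc pair, after which adjunction together with the bounded Cartier index gives a boundary on $F$ with controlled coefficients. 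Having produced $(F,\Delta_F)$, Birkar's solution of the BAB conjecture implies that the collection of all such $\epsilon'$-klt log Fano pairs of dimension at most $\dim X$ is bounded, so they fit into a single projective family $\cW\to\cB$ over a finite type base $\cB$, from which the theorem follows.

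The main obstacle is producing the boundary $\Delta_F$ and controlling the singularities of $(F,\Delta_F)$ uniformly across all choices of MMP sequence, extremal or flipping contraction, and fiber therein. For special fibers of divisorial and flipping contractions in particular, the fiber need not be normal or even klt in its own right, so extracting a klt log Fano structure requires a careful combination of adjunction, relative $\bQ$-complements, and possibly a local MMP on a suitable higher birational model. What makes the argument possible is that the integer $r$ and the real number $\epsilon$ coming from Theorem~\ref{main:index+mld} are genuinely uniform, not depending on the intermediate model $Y$, so every construction above can be carried out within a single class of $\epsilon$-klt pairs with Cartier index dividing $r$, which is the class to which Birkar's boundedness results apply.
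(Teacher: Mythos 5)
There is a genuine gap, and it sits exactly where you flag ``the main obstacle'': producing a uniformly $\epsilon'$-klt log Fano structure on the fibers. For a flipping contraction $f\colon Y\to Z$ (and for the flip $f^+\colon Y^+\to Z$), the positive-dimensional fibers have codimension at least $2$ in $Y$, so there is no adjunction of $K_Y+\Delta_Y$ to the fiber $F$; the same is true for the special fibers of divisorial contractions. Your proposed substitute --- adding the pullback of a general divisor from $Z$ so that $F$ becomes a log canonical center --- is self-defeating: once $F$ is an lc center the new pair has log discrepancy $0$ along some divisor over $F$, so it is no longer $\epsilon$-klt, and what Kawamata-type subadjunction then produces is a boundary on the \emph{normalization} of (a minimal lc center inside) $F$ whose moduli part is only pseudo-effective and whose coefficients are not controlled by $r$ and $\epsilon$ alone. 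Even granting all of that, BAB would at best bound a normalization of $F$ together with some boundary, whereas Theorem \ref{main:fiber} asserts boundedness of the scheme-theoretic fiber $f^{-1}(z)$, which for flips and special fibers is typically non-normal and can be non-reduced; boundedness of a normal crepant model does not bound the scheme.

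This is precisely why the paper does not argue fiber-by-fiber. Instead it bounds the whole fibration germ $Y\to Z\ni z$ up to Fano type degenerations (Theorem \ref{thm:fibration special bdd, no boundary}), by passing to the relative cone over the germ, proving the local volume of the cone vertex is bounded below by the log canonical volume of the big boundary (Theorem \ref{thm:cone volume}), and invoking the special boundedness of klt singularities with local volume bounded away from zero; a further finiteness-of-models argument upgrades special boundedness of the ample model to log special boundedness of the original germ. The scheme-theoretic fiber is then bounded by a separate deformation argument (Lemma \ref{lem:fiber of special bdd set}) that uses Kawamata--Viehweg vanishing and Nakayama's lemma on the test configuration to cut out every fiber by equations of bounded degree in a fixed projective space. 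Your first paragraph (uniform $r$ and $\epsilon$ from Theorem \ref{main:index+mld}) is correct but is not by itself enough to reach a class of objects to which BAB applies.
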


Recall that a flip $X\dashrightarrow X^+$ in the MMP is a diagram $X\rightarrow Z \leftarrow X^+$ where $X\to Z$ is the flipping contraction and $X^+\to Z$ is (also called) the flip\footnote{In some literature it is called the flipped contraction.}. We refer to the fiber of $X^+\to Z$ as the fiber of the flip.

More generally, our method can be used to show that in the setting of Theorem \ref{main:fiber}, any fixed order infinitesimal neighborhood of the fibers belongs to a bounded family. This suggests that there are only finitely many ``analytic types'' of flips in the MMP, and most invariants of the pair can only change in finitely many different ways in the MMP.

\subsection{Local volume}
A key player in our proof is the local volume of klt singularities, an invariant introduced by Chi Li \cite{Li-normalized-volume} with motivation from questions in K-stability. A natural question is how the local volume behaves in the MMP. In general, this invariant does not seem to satisfy any monotonicity in the MMP, see Remark \ref{rem:local vol not monotone}. Nonetheless, one of our observations is that the local volumes are bounded away from zero in the MMP, at least when the boundary is big. By standard argument (as in \cite{BCHM}), any general type MMP can be turned into an MMP with a big boundary.

\begin{thm}[see Theorem \ref{thm:index+mld+vol bound}] \label{main:vol bdd}
Let $(X,\Delta)$ be a projective $\bQ$-factorial klt pair such that $\Delta$ is big. Then there exists some $\varepsilon>0$ such that for any sequence of steps of a $(K_X+\Delta)$-MMP $(X,\Delta)\dashrightarrow (X',\Delta')$ and any closed point $x'\in X'$, we have $\hvol(x',X')\ge \varepsilon$.
\end{thm}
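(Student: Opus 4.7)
The plan is to lower bound $\hvol(x',X')$ by realizing the klt germs $(X',x',\Delta')$ arising at closed points of an MMP output as members of a bounded family of klt singularities, and then invoking the constructibility of the normalized volume in such families. As a preliminary reduction, since $\Delta'\ge 0$ we have $A_{X'}(v)=A_{(X',\Delta')}(v)+v(\Delta')\ge A_{(X',\Delta')}(v)$ for any valuation $v$ centered at $x'$, so $\hvol(x',X')\ge \hvol(x',X',\Delta')$, and it suffices to bound the latter uniformly below.

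For each closed point $x'\in X'$, I would trace it backward through the isomorphism loci of successive MMP steps, obtaining the following dichotomy. Either $x'$ lies in the isomorphism locus of every step, in which case $(X',x',\Delta')\cong (X,x,\Delta)$ as pointed klt germs for some $x\in X$; or there is a last step at which this fails, and then $x'$ lies on (the image of) a fiber of that step. In the first case, the function $x\mapsto \hvol(x,X,\Delta)$ is lower semicontinuous on the projective variety $X$, hence bounded below by some $\varepsilon_1>0$. In the second case, Theorem \ref{main:fiber} places the corresponding fiber in a fixed bounded family $\cW\to\cB$, and the strengthening indicated in the introduction (that any fixed-order infinitesimal neighborhood of the fibers is also bounded) implies that the germs $(X',x',\Delta')$ at points lying on such fibers form a bounded family of pointed klt pairs. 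The constructibility of the normalized volume in $\bQ$-Gorenstein families then produces a uniform lower bound $\varepsilon_2>0$, and one concludes by taking $\varepsilon=\min(\varepsilon_1,\varepsilon_2)$. The boundedness of Cartier index and the finiteness of minimal log discrepancies from Theorem \ref{main:index+mld} are useful auxiliary inputs for verifying the required $\bQ$-Gorenstein boundedness.

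The main obstacle will be the technical upgrade of Theorem \ref{main:fiber} to bound formal or analytic neighborhoods of the fibers, rather than merely the fibers themselves---only such an upgrade produces a bounded family of pointed germs, to which constructibility of the local volume applies. Making the bounded family genuinely $\bQ$-Gorenstein (so that existing constructibility results for $\hvol$ apply directly) is a second delicate point, requiring one to propagate the index bound of Theorem \ref{main:index+mld} to the level of the parametrizing family. Given the hint in the introduction that this infinitesimal strengthening is available, the rest of the plan proceeds essentially formally.
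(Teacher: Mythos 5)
Your reduction $\hvol(x',X')\ge \hvol(x',X',\Delta')$ and the first branch of your dichotomy (points tracing back isomorphically to $X$, handled by Zariski lower semicontinuity of the local volume on the fixed pair $(X,\Delta)$) are fine. The second branch, however, contains the real content of the theorem and your treatment of it has two serious problems. First, it is circular relative to the logical structure of the paper: Theorem \ref{main:fiber} is proved in Section \ref{sec:fiber bdd} \emph{using} the volume lower bound you are trying to establish --- its proof invokes Lemma \ref{lem:Vol MMP} and Theorem \ref{thm:fibration special bdd, no boundary}, and the latter rests on Theorem \ref{thm:cone volume} together with the special boundedness of klt singularities of local volume bounded below (Theorem \ref{thm:special bdd klt singularity}). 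So the fiber boundedness is downstream of, not an input to, Theorem \ref{main:vol bdd}. Second, even taken as a standalone argument, knowing that the fibers (or, for each fixed $k$, their $k$-th order infinitesimal neighborhoods) lie in a bounded family does not produce an $\bR$-Gorenstein bounded family of pointed klt \emph{germs} $x'\in (X',\Delta')$; the constructibility of $\hvol$ applies to an actual flat family of singularities, and no such family is constructed here. The remark in the introduction about infinitesimal neighborhoods is an unproved aside (and is itself obtained "by our method", i.e.\ by the volume method), so the step you identify as "the main obstacle" is genuinely missing rather than merely technical.

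The paper's actual argument is much more direct and does not pass through any boundedness of fibers. One first produces an admissible graded linear series $V_\bullet$ of the big divisor $\Delta$, i.e.\ an eventually birational graded linear series with $(X,\Gamma)$ log canonical for all $\Gamma\in\frac{1}{m}|V_m|$ (Lemma \ref{lem:perturb div into lin sys}, via an Izumi-type estimate). Since each $\Gamma\sim_{\bR}\Delta$, every step of the $(K_X+\Delta)$-MMP is also a step of a $(K_X+\Gamma)$-MMP (Lemma \ref{lem:MMP type change boundary}), so log canonicity of $(X',\Gamma')$ is preserved and the pushforward $\varphi_*V_\bullet$ remains admissible on $X'$ with the same volume; this is the monotonicity of the log canonical volume (Lemma \ref{lem:Vol MMP}). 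Finally, Proposition \ref{prop:alpha-vol inequality linear series} --- a pigeonhole argument comparing $\dim V_m$ with the colength of the valuation ideals of a normalized volume minimizer (Lemma \ref{lem:vol estimate}) --- gives $\hvol(x',X')\ge \alpha(X';\varphi_*V_\bullet)^n\cdot\vol(\varphi_*V_\bullet)\ge \vol(V_\bullet)>0$ at every closed point. You should redo the proof along these lines, or at minimum replace the appeal to Theorem \ref{main:fiber} by a self-contained construction of a bounded family of germs, which is not currently available.
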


Here $\hvol(x',X')$ denotes the local volume of the klt singularity $x'\in X'$ (see Section \ref{ss:local volume}). To prove this theorem, we further introduce the \emph{log canonical volume} (see Section \ref{ss:lc vol}) of the big $\mathbb R$-divisor $\Delta$, defined as the largest volume of a graded linear series $V_\bullet$ of $\Delta$ such that $\left(X,\frac{1}{m}\Gamma\right)$ is log canonical for any positive integer $m$ and any $\mathbb R$-divisor $\Gamma$ in the linear system $|V_m|$. It turns out that the log canonical volume is always non-decreasing in the MMP (Lemma \ref{lem:Vol MMP}), and the local volume of singularities is always bounded from below by the log canonical volume (Lemma \ref{lem:local vs global vol}), which implies Theorem \ref{main:vol bdd}. On the other hand, the local volume controls many other invariants of the singularity. In particular, by the local K-stability theory of klt singularities (see \cite{Z-SDC-survey} for a recent survey), the index and minimal log discrepancy of a klt singularity are bounded from above once the local volume is bounded away from zero, and Theorem \ref{main:index+mld} follows from this. 

Local volume also plays an important role in the proof of Theorem \ref{main:fiber}. A basic idea is that if $(X',\Delta')\to Z$ is an extremal contraction in a $(K_X+\Delta)$-MMP and $\Delta$ is big, then the local volume of the relative cone (see Section \ref{ss:rel cone} for the construction) of $(X',\Delta')$ over $Z$ is also bounded from below by the log canonical volume of $\Delta$ (up to a suitable multiplicative constant that only depends on the coefficients of $\Delta$), see Theorem \ref{thm:cone volume}. The local K-stability theory then implies that the relative cones are at least specially bounded (namely, they are bounded up to degenerations induced by special test configurations), though in this case a significant amount of work is still needed to deduce the boundedness in Theorem \ref{main:fiber} from the above special boundedness.

\subsection{Outline of the article} In Section \ref{sec: preliminary} we summarize some preliminary results that will be used in the later part of this paper. In Section \ref{sec: bdd inv mmp} we prove Theorems \ref{main:index+mld} and \ref{main:vol bdd}. In Section \ref{sec:fiber bdd} we prove Theorem \ref{main:fiber}. In the appendix, we prove that if the pairs that appear in a log canonical MMP are bounded, then the MMP terminates after finitely many steps.

\subsection*{Acknowledgment} 
The authors would like to thank Christopher D. Hacon, Jihao Liu, James M\textsuperscript{c}Kernan, Vyacheslav Shokurov, Chenyang Xu for helpful discussions and comments. JH is supported by NSFC for Excellent Young Scientists (\#12322102), and the National Key R\&D Program of China (\#2023YFA1010600). JH is a member of LMNS, Fudan University. LQ is partially supported by Shanghai Sailing program 24YF2709800. ZZ is partially supported by the NSF Grants DMS-2240926, DMS-2234736, a Sloan research fellowship, a Packard fellowship, and the Simons Collaboration on Moduli of Varieties.

\section{Preliminary}\label{sec: preliminary}

In this section, we collect some preliminary materials that will be used throughout the article. We suggest skimming through most of this section on first reading and returning to the relevant subsections for more details when certain definitions and statements are needed.

\subsection{Notation and conventions}

In this paper, varieties are assumed to be quasi-projective unless otherwise specified. We follow the standard terminology from \cites{KM98,Kol13}.

Given two $\bR$-divisors $\Delta$ and $\Delta'$ on a variety $X$, we write $\Delta'\ge \Delta$ if $\mult_D\Delta'\geq\mult_D\Delta$ for any prime divisor $D$ on $X$. 

A \emph{sub-pair} $(X,\Delta)$ consists of a normal variety $X$ together with an $\bR$-divisor $\Delta$ on $X$ (a priori, we do not require that $K_X+\Delta$ is $\bR$-Cartier). It is called a \emph{pair} if $\Delta\geq 0$. We denote by $\Coef(\Delta)$ the (finite) set of coefficients of $\Delta$ (in particular, we require that $0\in\Coef(\Delta)$). A \emph{log (sub-)pair} is a (sub-)pair $(X,\Delta)$ such that $K_X+\Delta$ is $\mathbb R$-Cartier. A \emph{singularity} $x\in (X,\Delta)$ consists of a pair $(X,\Delta)$ and a closed point $x\in X$. If there is no confusion, we shall assume that $X$ is affine and $x$ is contained in any irreducible component of $\Supp(\Delta)$. A sub-pair $(X,\Delta)$ is called \emph{log smooth} if $X$ is smooth and $\Supp(\Delta)$ is a simple normal crossing divisor. 

Let $(X,\Delta)$ be a log sub-pair. A proper birational morphism $\pi\colon (Y,\Delta_Y)\rightarrow (X,\Delta)$ from a sub-pair is called \emph{crepant} if $K_Y+\Delta_Y=\pi^*(K_X+\Delta)$, in which case we also call $(Y,\Delta_Y)$ the \emph{crepant pullback} of $(X,\Delta)$ (via $\pi$). 

An $\bR$-divisor $D$ on a variety $X$ is \emph{big} if $D=A+E$ for some ample $\bR$-divisor $A$ and some $E\ge 0$. Let $X\to Z$ be a morphism. We say $D$ is \emph{big over $Z$} if $D\sim_{\bR,Z} D'$ for some big $\bR$-divisor $D'$ on $X$. 

Let $f\colon X\to Z$ be a morphism and $z\in Z$ a point. We denote by $X_z:=f^{-1}(z)$ the (scheme-theoretic) fiber over $z$.

An $\bR$-divisor $D$ on a projective variety $X$ is \emph{pseudo-effective} if its numerical class is a limit of $\bR$-divisors $D_i\ge 0$ ($i=1,2,\dots$). Given a projective morphism $X\to Z$, we say $D$ is \emph{pseudo-effective over $Z$} if its restriction to the fiber $X_\eta$ is pseudo-effective, where $\eta\in Z$ is the image of the generic point of $X$.

A \emph{test configuration} of a pair $(X,\Delta)$ consists of the following data:
\begin{enumerate}
    \item A pair $(\cX,\cD)$ and a flat morphism $\pi\colon \cX \to \mathbb{A}^1$.    
    \item A $\mathbb{G}_m$-action on $(\cX,\cD)$ lifting the canonical multiplication action of $\mathbb{G}_m$ on $\mathbb{A}^1$.
    \item An isomorphism $(\cX_t,\cD_t) \cong (X,\Delta)$ for all $0\neq t\in \bA^1$. 
\end{enumerate}

A \emph{fibration} is a proper surjective morphism $f\colon X\to Z$ of normal varieties such that $f_* \cO_X=\cO_Z$. A \emph{fibration germ}, denoted by $f\colon X\to Z\ni z$, consists of a fibration $f\colon X\to Z$ where $Z$ is affine and a closed point $z\in Z$. If $X\to Z$ is a fibration and $(X,\Delta)$ is a pair, by abuse of notation we also call $(X,\Delta)\to Z$ a fibration.

Let $f: X\dashrightarrow Y$ be a birational map and let $D$ be an $\bR$-Cartier $\bR$-divisor on $Y$. The \emph{birational pullback} $f^*D$ is defined as $p_* q^* D$ where $p:Z\to X$, $q:Z\to Y$ is a resolution of the indeterminacy of $f$ (one can check that the definition is independent of the resolution).

Let $X$ be a variety. For any proper birational morphism $\pi\colon Y\to X$ from a normal variety and any prime divisor $E$ on $Y$, we call $E$ a \emph{(prime) divisor over} $X$ and denote by $c_X(E)$ 
the \emph{center} of $E$ on $X$, which is the generic point of $\pi(E)$. A \emph{divisor over} $x\in X$ is a divisor over $X$ with center $x$.  
We denote by $\Ex(\pi)\subseteq Y$ the exceptional locus of $\pi$. The \emph{discrepancy} of $E$ with respect to a log sub-pair $(X,\Delta)$, is defined by
\[
a(E,X,\Delta):=\mult_E (K_Y-\pi^*(K_X+\Delta)).
\]
The \emph{log discrepancy} of $E$ with respect to $(X,\Delta)$ is defined by $A_{X,\Delta}(E):=1+a(E,X,\Delta)$.

The notation of \emph{klt} and \emph{log canonical} (sub-)pairs are defined as in \cite[Definitions 2.34]{KM98}. The \emph{non-klt locus} of a log pair $(X,\Delta)$ is defined as
\[
\Nklt(X,\Delta) := \{x\in X\,|\,(X,\Delta)\mbox{ is not klt near }x\}.
\]

\subsection{Log discrepancy}

In this and the next two subsections, we recall some invariants of pairs and their basic properties.

\begin{defn}
For any log canonical sub-pair $(X,\Delta)$ and a (not necessarily closed) point $x\in X$, we define
\[
\mld(x,X,\Delta):=\inf\{A_{X,\Delta}(E)\mid E\text{ is a prime divisor over } x\in X\} 
\]
as the \emph{minimal log discrepancy} (\emph{mld}) of $(X,\Delta)$ at $x$, and define
\[
\mld(X,\Delta):=\inf\{A_{X,\Delta}(E)\mid E\text{ is a prime divisor over } X\} 
\]
as the \emph{total minimal log discrepancy} (\emph{total mld}) of $(X,\Delta)$.
\end{defn}

\begin{lem} \label{lem:index bdd imply mld discrete}
Let $n$ and $r$ be two positive integers and $S_0\subseteq [0,1]$ a finite set. Then there exists a discrete set $S\subseteq \bR_{\ge 0}$ depending only on $n,r$ and $S_0$ satisfying the following. Assume that $(X,\Delta)$ is a log canonical pair of dimension $n$ with $\Coef(\Delta)\subseteq S_0$ such that $rL$ is Cartier for any $\mathbb Q$-Cartier Weil divisor $L$ on $X$.
Then for any prime divisor $E$ over $X$, we have $A_{X,\Delta}(E)\in S$.
\end{lem}

\begin{proof}
By  \cite{HLS24}*{Theorem 5.6}, there exist real numbers $a_1,\dots,a_k\in (0,1]$ and a positive integer $N$ depending only on $n$ and $S_0$, and $\bQ$-divisors $\Delta_1,\dots,\Delta_k\ge 0$ such that $\sum_{i=1}^k a_i=1$, $\Delta=\sum_{i=1}^k a_i \Delta_i$, each pair $(X,\Delta_i)$ is log canonical, and $N\Delta_i$ is a Weil divisor for each $i$. For each $i$, the divisor $Nr(K_X+\Delta_i)$ is Cartier by assumption, hence $A_{X,\Delta_i}(E)\in \frac{1}{Nr}\bN$. Thus the set
$$S=\frac{1}{Nr}\left\{\sum_{i=1}^k m_ia_i\middle|\, m_i\in\mathbb N\right\}$$
satisfies the statement of the lemma.
\end{proof}


\subsection{Local volume} \label{ss:local volume}
Let $X$ be a variety and $x\in X$ a closed point. A \emph{valuation} on the function field $\mathbb C(X)$ of a variety $X$, also called a \emph{valuation on $X$}, is an $\mathbb R$-valued function $v: \mathbb C(X)^*\rightarrow\mathbb R$ such that $v|_{\bC^*}=0$, $v(ab)=v(a)+v(b)$ for all $a,b\in \mathbb C(X)^*$, and $v(a+b)\ge \min\{v(a),v(b)\}$ if $a+b\neq 0$. We say that $v$ is a valuation over $x\in X$ if $v$ is centered at $x$. The set of valuations on $X$ is denoted by $\Val_{X}$. The set of valuations over $x\in X$ is denoted by $\Val_{X,x}$.

Given a klt sub-pair $(X,\Delta)$, the log discrepancy function
\[
A_{X,\Delta}\colon \Val_X\to \bR \cup\{+\infty\},
\]
is defined as in \cite{JM-val-ideal-seq} and \cite{BdFFU-log-discrepancy}*{Theorem 3.1}. For a divisorial valuation $v=\lambda\cdot \ord_E$ (where $\lambda>0$ and $E$ is a prime divisor over $X$), one has $A_{X,\Delta}(v)=\lambda\cdot A_{X,\Delta}(E)$. Since $(X,\Delta)$ is klt, $A_{X,\Delta}(v)>0$ for all $v\in \Val_{X,x}$. However, it is possible that $A_{X,\Delta}(v) = +\infty$ for some $v\in \Val_X$, see e.g. \cite{JM-val-ideal-seq}*{Remark 5.12}.

The volume of a valuation $v\in\Val_{X,x}$ is defined as
\[
\vol(v)=\vol_{X,x}(v)=\limsup_{m\to+\infty}\frac{\ell(\cO_{X,x}/\fa_m(v))}{m^n/n!},
\] 
where $n=\dim X$ and $\fa_m (v)$ denotes the valuation ideal, i.e.
\[
\fa_m (v):=\{f\in \cO_{X,x}\mid v(f)\ge m\}. 
\]
Thanks to the works of \cites{ELS03, LM-Okounkov-body, Cut13}, the above limsup is actually a limit.

\begin{defn} \label{defn:local volume}
Let $(X,\Delta)$ be a klt pair of dimensional $n$ and $x\in X$ a closed point. For any $v\in \Val_{X,x}$, we define the \emph{normalized volume} of $v$ as
\[
\hvol_{X,\Delta}(v):=
\begin{cases}
A_{X,\Delta}(v)^n\cdot\vol_{X,x}(v), & \mbox{if } A_{X,\Delta}(v)<+\infty, \\
+\infty, & \mbox{if } A_{X,\Delta}(v)=+\infty.
\end{cases}
\]
The \emph{local volume} of $(X,\Delta)$ at $x$ is defined as
\[
  \hvol(x,X,\Delta):=\inf_{v\in\Val_{X,x}} \hvol_{X,\Delta}(v).
\]
\end{defn}

By \cite{Li-normalized-volume}*{Theorem 1.2}, the local volume of a klt singularity is always positive. By \cite{Blu-minimizer-exist}, there always exists some valuation $v$ such that $\hvol(x,X,\Delta)=\hvol_{X,\Delta}(v)$. We also need the following properties of the local volume.

\begin{thm}[{\cite{XZ-minimizer-unique}*{Corollary 1.4}}] \label{thm:index bound via volume}
Let $(X,\Delta)$ be a klt pair of dimensional $n$, let $x\in X$ be a closed point, and let $D$ be a $\bQ$-Cartier Weil divisor on $X$. Then $rD$ is Cartier near $x$ for some positive integer $r\leq\frac{n^n}{\hvol(x,X,\Delta)}$.
\end{thm}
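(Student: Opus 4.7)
The plan is to reduce the Cartier index question to a multiplicativity property of the local volume under finite quasi-étale Galois covers, together with the universal upper bound $\hvol\le n^n$ for any $n$-dimensional klt singularity.

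First I would form the index-one cyclic cover of $D$. After passing to an étale neighborhood (or henselization) of $x$, we may assume that the Cartier index of $D$ at $x$ equals some integer $r$ and that no proper divisor of $r$ makes $D$ Cartier at $x$. Let $f\colon Y\to X$ be the associated cyclic cover of degree $r$ obtained from the $\bZ/r\bZ$-graded $\cO_X$-algebra $\bigoplus_{i=0}^{r-1}\cO_X(-iD)$. By construction $f$ is étale in codimension one, Galois with group $\bZ/r\bZ$, and satisfies $f^{-1}(x)=\{y\}$ for a single point $y\in Y$. Because $f$ is quasi-étale, the divisor $\Delta_Y:=f^{*}\Delta$ is well-defined and
\[
K_Y+\Delta_Y=f^{*}(K_X+\Delta),
\]
so $(Y,\Delta_Y)$ is klt at $y$.

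Next I would invoke the behavior of $\hvol$ under such covers. The main input, which is precisely the content of \cite{XZ-minimizer-unique}*{Corollary 1.4}, is the identity
\[
\hvol(y,Y,\Delta_Y)=r\cdot\hvol(x,X,\Delta).
\]
This in turn rests on the uniqueness (up to rescaling) of the normalized volume minimizer: since the minimizer $v^{*}$ of $\hvol_{X,\Delta}$ at $x$ pulls back to a $\bZ/r\bZ$-invariant minimizer on $Y$, and since $A$ and $\vol$ transform by factors of $1$ and $r$ respectively under a quasi-étale Galois cover with a unique point in the fiber, the identity follows. I would finish by combining this with the universal upper bound $\hvol(y,Y,\Delta_Y)\le n^n$ (with equality exactly when the singularity is smooth), which gives
\[
r=\frac{\hvol(y,Y,\Delta_Y)}{\hvol(x,X,\Delta)}\le\frac{n^n}{\hvol(x,X,\Delta)},
\]
as claimed. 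To pass from the statement at $x$ to ``Cartier near $x$'', one notes that the locus where $rD$ fails to be Cartier is closed and does not contain $x$.

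The main obstacle is the multiplicativity identity $\hvol(y,Y,\Delta_Y)=r\cdot\hvol(x,X,\Delta)$. The inequality $\hvol(y,Y,\Delta_Y)\le r\cdot\hvol(x,X,\Delta)$ is easy, obtained by pulling back an almost-minimizing valuation on $X$. The reverse inequality is the hard direction: it requires producing a $G$-invariant valuation on $Y$ that almost computes $\hvol(y,Y,\Delta_Y)$, and this relies on the uniqueness of the minimizer (plus its equivariance under any finite group action fixing the singularity). Everything else in the argument—the construction of the cover, the transformation of $A$ and $\vol$ under a single-point quasi-étale degree-$r$ cover, and the bound $\hvol\le n^n$—is standard once this multiplicativity is available.
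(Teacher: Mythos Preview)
The paper does not give a proof of this statement; it simply records it as a citation to \cite{XZ-minimizer-unique}*{Corollary 1.4} and uses it as a black box. Your sketch is correct and is precisely the argument in the cited reference: take the index-one cyclic cover, use the multiplicativity $\hvol(y,Y,\Delta_Y)=r\cdot\hvol(x,X,\Delta)$ under quasi-\'etale Galois covers (the hard direction being the uniqueness of the minimizer from \cite{XZ-minimizer-unique}), and combine with the universal bound $\hvol\le n^n$ of \cite{LX-cubic-3fold}*{Theorem A.4}.
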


\begin{thm} \label{thm:mld bound via volume}
Fix $n\in\bN$ and $\varepsilon>0$. Then there exists some constant $A>0$ depending only on $n$ and $\varepsilon$, such that for any $n$-dimensional klt pair $(X,\Delta)$ satisfying $\hvol(x,X,\Delta)\ge \varepsilon$ for all closed points $x\in X$, we have
\[
\mld(x,X,\Delta)\le A
\]
for all (not necessarily) closed points $x\in X$.
\end{thm}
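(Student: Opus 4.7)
The plan is to control mld at closed points using the local K-stability boundedness provided by the hypothesis $\hvol \ge \varepsilon$, and then to extend the bound to non-closed points by cutting with general hyperplanes.

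For a closed $x \in X$, by Theorem \ref{thm:index bound via volume} the Cartier index of every $\bQ$-Cartier Weil divisor near $x$ is at most $\lceil n^n/\varepsilon\rceil$, so $\Coef(\Delta)$ lies in a fixed finite subset of $[0,1)$ depending only on $n$ and $\varepsilon$. Following the local K-stability theory surveyed in \cite{Z-SDC-survey}, there exists a minimizer $v^* \in \Val_{X,x}$ of $\hvol_{X,\Delta}$, and after rescaling one may take $v^* = \ord_E$ for a Koll\'ar component $E$ over $x$, which is a K-semistable log Fano via adjunction. Combining the identity
\[
A_{X,\Delta}(E)^n \cdot \vol_{X,x}(\ord_E) \;=\; \hvol(x,X,\Delta),
\]
the hypothesis $\hvol(x, X, \Delta) \ge \varepsilon$, and the boundedness of K-semistable log Fanos whose volume is bounded from below (applied to the Koll\'ar component), one obtains an upper bound $A_{X,\Delta}(E) \le A_0(n, \varepsilon)$ on the log discrepancy. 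Since $\mld(x, X, \Delta) \le A_{X,\Delta}(E)$, this yields $\mld \le A_0$ at every closed point.

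For a non-closed $x \in X$ with $W := \overline{\{x\}}$ of positive dimension $d$, I would reduce to the closed case by slicing. Choose a general closed $y \in W$ and a general complete intersection $S = H_1 \cap \cdots \cap H_d$ of hyperplanes in $X$ through $y$ meeting $W$ transversely at $y$. By Bertini and inversion of adjunction, $(S, \Delta|_S)$ is a klt pair of dimension $n - d$; a Bertini-type estimate combined with the hypothesis $\hvol \ge \varepsilon$ on $X$ yields a lower bound $\hvol(y', S, \Delta|_S) \ge \varepsilon' = \varepsilon'(n, \varepsilon)$ at all closed $y' \in S$. Applying the closed-point bound from the previous paragraph to $(S, \Delta|_S)$ gives $\mld(y, S, \Delta|_S) \le A_0(n - d, \varepsilon')$. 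Lifting a divisor computing the mld on $S$ to a divisor $E$ over $X$ with $\Center_X(E) = W$ and tracking log discrepancies through the adjunction $(K_X + \Delta + \sum H_i)|_S = K_S + \Delta|_S$ gives $\mld(x, X, \Delta) \le \mld(y, S, \Delta|_S) + d$, so $A := \max_{0 \le d \le n}\bigl(A_0(n - d, \varepsilon') + d\bigr)$ works.

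The main technical difficulty lies in showing that the lower bound on $\hvol$ persists, up to an explicit loss, under restriction to a general hyperplane section of $X$: slicing can a priori decrease normalized volumes in an uncontrolled way. One way to handle this is via a direct comparison of $\vol_{X,x}$ on $X$ with $\vol_{S,x}$ on $S$ through Izumi-type bounds. A robust alternative that bypasses the slicing step altogether is to invoke local boundedness of $n$-dimensional klt germs with $\hvol \ge \varepsilon$, and to apply constructibility of the mld function on the total space of the resulting bounded family, which then yields the mld bound simultaneously at closed and non-closed points without any reduction.
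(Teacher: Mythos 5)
Your treatment of closed points is in essence what the paper does: it simply cites the local K-stability results (\cite[Theorem 1.3]{XZ-local-bdd} or \cite{HLQ24}*{Corollary 2.7}) whose proofs run along the lines you sketch. Two caveats on your sketch: the minimizer of $\hvol_{X,\Delta}$ is in general only a quasi-monomial valuation, not a Koll\'ar component, so ``after rescaling one may take $v^*=\ord_E$'' is not correct as stated (one has to pass to a Koll\'ar component approximating the minimizer, or to the K-semistable Fano cone degeneration); and the index bound from Theorem \ref{thm:index bound via volume} does \emph{not} put $\Coef(\Delta)$ in a finite set, since $\Delta$ is an $\bR$-divisor. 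Also, the step ``$A_{X,\Delta}(E)\le A_0$ from boundedness of K-semistable log Fanos with volume bounded below'' is exactly where the real work lies (one must first bound the Fano volume of the Koll\'ar component from below, which is the content of the cited results), so this paragraph is an appeal to known theory rather than a proof; that is fine, since the paper does the same by citation.

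The genuine gap is in the passage from closed to non-closed points. The paper disposes of this in one line via \cite{Amb-mld}*{Proposition 2.3}: for a general closed point $y$ of $W=\overline{\{x\}}$ one has $\mld(y,X,\Delta)=\mld(x,X,\Delta)+\dim W$, so a bound at closed points immediately bounds $\mld$ at all points, with no slicing and no need to control $\hvol$ on a hyperplane section. Your primary route requires a lower bound for $\hvol(y',S,\Delta|_S)$ on a general complete-intersection slice $S$, which you correctly flag as the main difficulty and do not establish; there is no known general inequality of this type, and the claimed comparison $\mld(x,X,\Delta)\le \mld(y,S,\Delta|_S)+d$ via lifting divisors also needs justification (adjunction inequalities a priori go the other way). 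Your fallback --- boundedness of germs with $\hvol\ge\varepsilon$ plus constructibility of $\mld$ in families --- does not close the gap either, because the boundedness available (Theorem \ref{thm:special bdd klt singularity}, \cite{XZ-local-bdd}) is only boundedness \emph{up to special degeneration}, and lower semicontinuity of $\mld$ under such degenerations gives an inequality in the wrong direction for transferring an upper bound from the bounded family back to the original singularity. The fix is simply to replace the entire second half of your argument by the citation of Ambro's precise inverse adjunction.
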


\begin{proof}
By \cite{Amb-mld}*{Proposition 2.3}, it suffices to prove that $\mld(x,X,\Delta)\le A$ for all closed point $x\in X$. We are done by \cite[Theorem 1.3]{XZ-local-bdd} or \cite{HLQ24}*{Corollary 2.7}.
\end{proof}

\subsection{Log canonical threshold}

For any log canonical sub-pair $(X,\Delta)$ and any $\mathbb R$-Cartier $\mathbb R$-divisor $D$ on $X$, we denote by
\[
\lct(X,\Delta;D):=\sup\{\,t\,|\, (X,\Delta+tD)\text{ is log canonical}\,\} \in \bR\cup \{+\infty\} 
\]
the \emph{log canonical threshold} of $D$ with respect to $(X,\Delta)$. For any $x\in X$, we also denote by
\[
\lct_x(X,\Delta;D):=\sup\{\,t\,|\,  (X,\Delta+tD)\text{ is log canonical at }x\,\}
\]
the log canonical threshold at $x$. The following result shows how the log canonical thresholds behave when we perturb the boundary. 

\begin{lem} \label{lem:lct inequality}
Let $(X,\Delta)$ be a klt pair and let $D,D'\ge 0$ be $\mathbb R$-Cartier $\mathbb R$-divisors on $X$. Then
\[
\lct(X,\Delta;D+D')\ge \left(\frac{1}{\lct(X,\Delta;D)}+\frac{1}{\lct(X,\Delta;D')}\right)^{-1}.
\]
If $(X,\Delta+D')$ is log canonical, then we also have
\[
\lct(X,\Delta+D';D)\ge \left(1-\frac{1}{\lct(X,\Delta;D')}\right)\cdot \lct(X,\Delta;D).
\]
\end{lem}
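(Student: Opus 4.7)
The plan is to deduce both inequalities from two elementary facts: (i) a log (sub-)pair $(X,\Gamma)$ is log canonical if and only if $A_{X,\Gamma}(E)\ge 0$ for every prime divisor $E$ over $X$, and (ii) for a fixed $E$, the function $\Gamma\mapsto A_{X,\Gamma}(E) = 1-\mathrm{mult}_E(\pi^*(K_X+\Gamma)-K_Y)$ (for a log resolution $\pi\colon Y\to X$) is affine in $\Gamma$. Thus, any convex combination of log canonical pairs on $X$ (with the same underlying $X$) remains log canonical. Both bounds will be obtained by exhibiting the relevant pair as an explicit convex combination of $(X,\Delta+cD)$ and $(X,\Delta+c'D')$, where $c:=\lct(X,\Delta;D)$ and $c':=\lct(X,\Delta;D')$ are the log canonical thresholds (both pairs are log canonical by definition of the lct, as lc-ness is a closed condition).

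For the first inequality, set $t:=\bigl(\tfrac{1}{c}+\tfrac{1}{c'}\bigr)^{-1}=\tfrac{cc'}{c+c'}$, and put $\alpha:=t/c$, $\beta:=t/c'$. Then $\alpha,\beta\ge 0$ and $\alpha+\beta=1$, and we have the identity
\[
\Delta+t(D+D') \;=\; \alpha(\Delta+cD)+\beta(\Delta+c'D').
\]
By the convexity principle above, $(X,\Delta+t(D+D'))$ is log canonical, so $\lct(X,\Delta;D+D')\ge t$, which is what is claimed.

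For the second inequality, assume $(X,\Delta+D')$ is log canonical, i.e.\ $c'\ge 1$, and set $s:=\bigl(1-\tfrac{1}{c'}\bigr)c$. Solving $\alpha c=s$, $\beta c'=1$, $\alpha+\beta=1$ forces $\alpha=\tfrac{c'-1}{c'}\ge 0$ and $\beta=\tfrac{1}{c'}\ge 0$, and one checks the identity
\[
\Delta+D'+sD \;=\; \tfrac{c'-1}{c'}(\Delta+cD)+\tfrac{1}{c'}(\Delta+c'D').
\]
Again by convexity, $(X,\Delta+D'+sD)$ is log canonical, giving $\lct(X,\Delta+D';D)\ge s$ as required.

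There is essentially no main obstacle here; the only things needing attention are degenerate cases: if $c=+\infty$ or $c'=+\infty$, the corresponding reciprocal is $0$ and the inequalities become trivial or follow by continuity from the finite case (alternatively, one directly argues with $cD=0$ or $c'D'=0$); if $c'=1$ the second bound is the vacuous $\lct\ge 0$. Everything else is the two algebraic identities above, and the use of the valuative criterion of log canonicity together with affineness of $A_{X,\Gamma}(E)$ in $\Gamma$.
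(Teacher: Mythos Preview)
Your proof is correct and follows essentially the same approach as the paper: both arguments exhibit the target pair as an explicit convex combination of $(X,\Delta+cD)$ and $(X,\Delta+c'D')$ (with the same weights you found) and then invoke convexity of log canonicity. You spell out the valuative/affine justification for convexity and address the degenerate cases a bit more explicitly, but the core argument is identical.
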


\begin{proof}
 Let $a:=\lct(X,\Delta;D)$ and $a':=\lct(X,\Delta;D')$. Then $(X,\Delta+aD)$ and $(X,\Delta+a'D')$ are log canonical, so the convex combination 
\[
\left(X,\Delta+\frac{a'}{a+a'}\cdot aD+\frac{a}{a+a'}\cdot a'D' \right) = \left(X,\Delta+\frac{aa'}{a+a'}(D+D')\right)
\]
is log canonical,  which gives the first inequality $\lct(X,\Delta;D+D')\ge \frac{aa'}{a+a'}=\left(\frac{1}{a}+\frac{1}{a'}\right)^{-1}$. If $a'\ge 1$, then the convex combination 
\[
\left(X,\Delta+\frac{a'-1}{a'}\cdot aD+\frac{1}{a'}\cdot a'D' \right) = \left(X,\Delta+D'+\frac{a'-1}{a'}\cdot aD\right)
\]
is log canonical, which implies the second inequality $\lct(X,\Delta+D';D)\ge\left(1-\frac{1}{a'}\right)a$.
\end{proof}

The next statement is a version of Izumi's inequality and should be well-known to experts. It will be useful in proving effective local volume estimates in Section \ref{sec:vol explicit bdd}. 

\begin{lem} \label{lem:Izumi}
Let $\varepsilon\in [0,1]$, let $(X,\Delta)$ be a log smooth sub-pair such that the coefficients of $\Delta$ are at most $1-\varepsilon$, and let $D\ge 0$ be an $\mathbb R$-divisor on $X$. Then for any closed point $x\in\Supp (D)$, we have
\[
\lct_x(X,\Delta;D)\ge \frac{\varepsilon}{\mult_x D}.
\]
\end{lem}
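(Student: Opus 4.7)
The plan is a convexity argument combined with the classical multiplicity--log canonical threshold bound on smooth varieties. First handle the trivial cases: the statement is vacuous when $\varepsilon=0$, and the hypothesis $x\in\Supp D$ guarantees $m:=\mult_x D>0$, so I would reduce to the case $\varepsilon\in(0,1]$ with $t:=\varepsilon/m$.

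Next I would reduce to the effective case. Writing $\Delta=\Delta^+-\Delta^-$ with $\Delta^\pm\ge 0$, one checks $A_{X,\Delta+tD}(E)=A_{X,\Delta^++tD}(E)+\ord_E(\Delta^-)\ge A_{X,\Delta^++tD}(E)$ for every divisor $E$ over $X$, so $\lct_x(X,\Delta^+;D)\le \lct_x(X,\Delta;D)$. Since $\Coef(\Delta^+)\subseteq[0,1-\varepsilon]$ and $(X,\Delta^+)$ is still log smooth, I may replace $\Delta$ by $\Delta^+$ and assume $\Delta\ge 0$, so that $(X,\Delta)$ is klt. I would then exhibit the convex decomposition
\[
\Delta+tD \;=\; (1-\varepsilon)\cdot\tfrac{\Delta}{1-\varepsilon}\;+\;\varepsilon\cdot\tfrac{D}{m}
\]
(reading the first summand as $0$ if $\varepsilon=1$). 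The pair $\bigl(X,\tfrac{\Delta}{1-\varepsilon}\bigr)$ is log smooth with coefficients at most $1$, hence log canonical. For the pair $\bigl(X,\tfrac{D}{m}\bigr)$, note that $X$ itself is smooth, and apply the classical multiplicity--lct inequality $\lct_x(X,0;D)\ge 1/\mult_x D$ on smooth varieties to conclude that it too is log canonical at $x$. Because the log discrepancy function is affine-linear in the boundary, any convex combination of pairs that are log canonical at $x$ is log canonical at $x$; this gives $\lct_x(X,\Delta;D)\ge t=\varepsilon/\mult_x D$.

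The main technical input, and the only non-formal step, is the smooth multiplicity--lct bound $\lct_x(X,0;D)\ge 1/\mult_x D$. Equivalently, this is the Izumi-type estimate $\ord_E(D)\le A_X(E)\cdot\mult_x D$ for every divisor $E$ over $x$, which can be established by passing to a log resolution of $(X,D)$ and directly verifying nonnegativity of the discrepancies of $(X,D/m)$ along strict transforms (for which the coefficient bound follows from $\mult_x D\ge\mult_x D_i$) and along exceptional divisors (which one reduces to the Izumi inequality by induction on iterated blowups of smooth centers). This is a standard input in the pair-theoretic literature, and the remaining bookkeeping is routine.
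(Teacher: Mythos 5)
Your argument is correct, but it proceeds differently from the paper's. After the common first step (discarding the negative part of $\Delta$, exactly as the paper does), you isolate $D$ from the boundary via the convex decomposition $\Delta+\tfrac{\varepsilon}{m}D=(1-\varepsilon)\cdot\tfrac{\Delta}{1-\varepsilon}+\varepsilon\cdot\tfrac{D}{m}$ and reduce everything to the classical smooth-point bound $\lct_x(X;D)\ge 1/\mult_x D$; this is legitimate, since $(X,\Delta/(1-\varepsilon))$ is SNC with coefficients at most $1$, hence log canonical, and log discrepancies are affine in the boundary, so the convex combination is log canonical at $x$. The paper instead proves the statement in one pass, with $\Delta$ carried along: it blows up $x$, normalizes $\mult_x D=1$ so that $\tD|_E\sim_{\bR}H$ on $E\cong\bP^{\dim X-1}$, uses the fact that $(E,\tDelta|_E)$ is toric together with \cite{BJ-delta}*{Theorem F} to get $\lct(E,\tDelta|_E;G)\ge\varepsilon$ for every $0\le G\sim_{\bR}H$, and concludes by inversion of adjunction plus the discrepancy computation $A_{X,\Delta+\varepsilon D}(E)\ge 0$. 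Your route is more elementary in that it avoids toric alpha invariants and inversion of adjunction, at the cost of outsourcing the key step to the standard multiplicity--lct inequality (Koll\'ar's ``Singularities of pairs'' or Lazarsfeld's Positivity II); note that the paper's argument specialized to $\Delta=0$, $\varepsilon=1$ is itself a proof of that classical input. The only soft spot is your closing sketch of that input: the ``induction on iterated blowups of smooth centers'' is not a complete argument as stated (a log resolution of $(X,D)$ is not naturally organized this way), but since the inequality $\ord_E(D)\le A_X(E)\cdot\mult_x D$ for divisors over a smooth point is genuinely standard, citing it rather than reproving it is fine.
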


\begin{proof}
By removing the components of $\Delta$ with negative coefficients, we obtain an $\bR$-divisor $\Delta^{\ge 0}$ with $\lct_x(X,\Delta;D)\ge\lct_x(X,\Delta^{\ge 0};D)$. Possibly replacing $\Delta$ with $\Delta^{\ge 0}$, it suffices to prove the lemma when $\Delta\ge 0$. Possibly replacing $D$ with a multiple, we may also assume that $\mult_x D=1$. We need to show that $x\in (X,\Delta+\varepsilon D)$ is log canonical.

Let $\pi\colon\tX\to X$ be the blowup of $x$ and let $E$ be the exceptional divisor. Let $\tDelta$ (resp. $\tD$) be the strict transform of $\Delta$ (resp. $D$) on $\tilde X$. Since $\mult_x D=1$, we have $\tD|_E\sim_\bR H$ where $H$ is the hyperplane class on $E$ under the natural isomorphism $E\cong \bP^{\dim X-1}$. Note that $(E,\tDelta|_E)$ is a toric pair and the coefficients of $\tDelta|_E$ are at most $1-\varepsilon$, thus $\lct(E,\tDelta|_E;G)\ge \varepsilon$ for any torus invariant $\bR$-divisor $0\le G\sim_\bR H$ on $E$. By \cite{BJ-delta}*{Theorem F}, we deduce that $\lct(E,\tDelta|_E;G)\ge \varepsilon$ for any $\bR$-divisor $0\le G\sim_\bR H$ on $E$. In particular, the pair $(E,(\tDelta+\varepsilon \tD)|_E)$ is log canonical. By inversion of adjunction ({\it cf.} \cite{KM98}*{Theorem 5.50}), the pair $(\tX,\tDelta+\varepsilon \tD+E)$ is log canonical near $E$. Since the coefficients of $\Delta$ are at most $1-\varepsilon$, we have  $A_{X,\Delta}(E)\ge \varepsilon$, hence $A_{X,\Delta+\varepsilon D}(E)=A_{X,\Delta}(E)-\varepsilon\cdot  \mult_x D\ge 0$. Then $K_{\tX}+\tDelta+\varepsilon \tD+E\ge \pi^*(K_X+\Delta+\varepsilon D)$, so $ (X,\Delta+\varepsilon D)$ is log canonical at $x$.
\end{proof}

\subsection{Linear series} \label{ss:linear series}

For any $\bR$-divisor $D$ on a normal variety $X$, we set
\[
H^0(X,D):=\{0\neq s\in \mathbb C(X)\,|\,\mathrm{div}(s)+D\ge 0\}\cup \{0\}.
\]
Any finite dimensional subspace $V$ of $H^0(X,D)$ is called a \emph{linear series} of $D$. It defines a \emph{linear system}
\[
|V|:=\{D'=\mathrm{div}(s)+D\,|\,0\neq s\in V\}\cong \bP(V).
\]
A graded linear series $V_\bullet$ of $D$ is a sequence of finite dimensional subspaces $V_m\subseteq H^0(X,mD)$ (one for each $m\in\bN$) such that $V_m\cdot V_l\subseteq V_{m+l}$ for all $m,l\in \bN$. Its volume is defined as
\begin{equation} \label{eq:vol(V.)=limsup}
\vol(V_\bullet):=\limsup_{m\to+\infty} \frac{\dim(V_m)}{m^n/n!}
\end{equation}
where $n=\dim X$. We denote by $M(V_\bullet)\subseteq \bN^+$ the semigroup of those positive integers $m$ with $V_m\neq 0$. If $D$ is $\bR$-Cartier, and $\Delta\ge 0$ is an $\bR$-divisor such that $(X,\Delta)$ is klt, we also define the \emph{alpha invariant} of $V_\bullet$ as
\[
\alpha(X,\Delta;V_\bullet):=\inf\left\{\,\lct(X,\Delta;\Gamma)\,\middle|\ m\in M(V_\bullet), \,\Gamma\in \frac{1}{m}|V_m| \,\right\}
\]
and simply write $\alpha(V_\bullet)$ if the klt pair $(X,\Delta)$ is clear from the context. When $V_\bullet$ is the complete linear series of $D$, the corresponding alpha invariant is denoted by $\alpha(X,\Delta;D)$.

We say that a graded linear series $V_\bullet$ is \emph{eventually birational}, if the rational map $X\dashrightarrow \bP(V^\vee_m)$ induced by the linear system $|V_m|$ is birational onto its image for all sufficiently large $m\in M(V_\bullet)$. This condition is preserved under birational contractions; by the following lemma, it also ensures that the limsup in \eqref{eq:vol(V.)=limsup} is a limit. For this reason, in the rest of this paper we will exclusively consider graded linear series that are eventually birational.

\begin{lem} \label{lem:vol=lim}
Let $(X,\Delta)$ be a klt pair and let $V_\bullet$ be an eventually birational graded linear series such that $\alpha(V_\bullet)>0$. Then $\vol(V_\bullet)>0$ and we have
\begin{equation} \label{eq:vol=lim}
    \vol(V_\bullet)=\lim_{M(V_\bullet)\ni m\to+\infty} \frac{\dim(V_m)}{m^n/n!}.
\end{equation}
\end{lem}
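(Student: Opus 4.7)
The plan is to use Newton--Okounkov semigroup theory to pass from $\limsup$ to $\lim$, and to exhibit a big subsystem that forces positive volume. The alpha hypothesis ensures that $V_m$ is nontrivial for some $m\geq 1$ so that eventual birationality has content; the heavy lifting is done by the birational hypothesis.

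\emph{Step 1 (Positivity).} I would pick some $m_0 \in M(V_\bullet)$ for which $\phi_{m_0}\colon X \dashrightarrow \bP(V_{m_0}^\vee)$ is birational onto its image $Z \subseteq \bP(V_{m_0}^\vee)$, necessarily of dimension $n$ and positive degree. The image of the multiplication map $\Sym^k V_{m_0} \to V_{km_0}$ then coincides with the degree-$k$ piece of the homogeneous coordinate ring of $Z$ relative to the embedding by $V_{m_0}$, so
\[
\dim V_{km_0} \;\geq\; h^0\bigl(Z,\cO_Z(k)\bigr) \;=\; \frac{\deg Z}{n!}\,k^n + O(k^{n-1}),
\]
which immediately yields $\vol(V_\bullet) \geq \deg(Z)/m_0^n > 0$.

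\emph{Step 2 (Semigroup setup).} Next I would pass to a smooth birational model $\pi\colon Y \to X$ resolving the base locus of $V_{m_0}$, write $\pi^*|V_{m_0}| = |M_0| + F_0$ with $|M_0|$ base-point-free, and fix an admissible flag $Y = Y_0 \supset Y_1 \supset \cdots \supset Y_n$ of smooth subvarieties, chosen compatibly with $|M_0|$ so that $Y_1 \not\subseteq \Supp(F_0)$. Let $\nu\colon \bC(X)^* \to \bZ^n$ be the induced rank-$n$ valuation with one-dimensional leaves, and form the graded semigroup
\[
\Gamma(V_\bullet) \;=\; \bigl\{(m,\nu(s)) : 0 \neq s \in V_m,\; m \in M(V_\bullet)\bigr\} \;\subseteq\; \bN \times \bZ^n,
\]
so that $\dim V_m = \#\bigl(\Gamma(V_\bullet)\cap (\{m\}\times\bZ^n)\bigr)$. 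Applying $\nu$ to the image of the multiplication $\Sym^k V_{m_0} \hookrightarrow V_{km_0}$ and using that $M_0$ is big on $Y$, I would then verify that $\Gamma(V_\bullet)$ generates a subgroup of full rank $n+1$ and spans a strongly convex cone of maximal dimension.

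\emph{Step 3 (Limit theorem).} With the rank-$(n+1)$ hypothesis in place, I would invoke Khovanskii's counting theorem for graded subsemigroups (in the form used by Lazarsfeld--Musta\c{t}\u{a} \cite{LM-Okounkov-body} and extended to the present level of generality in \cite{Cut13}) to conclude
\[
\lim_{M(V_\bullet) \ni m \to \infty} \frac{\dim V_m}{m^n/n!} \;=\; n!\cdot \vol_{\bR^n}\bigl(\Delta(V_\bullet)\bigr),
\]
where $\Delta(V_\bullet)$ denotes the Okounkov body, i.e.\ the closed convex hull of $\bigcup_m \tfrac{1}{m}\nu(V_m\setminus 0)$. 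This upgrades the $\limsup$ in \eqref{eq:vol(V.)=limsup} to an honest limit along $M(V_\bullet)$ and thereby establishes \eqref{eq:vol=lim}. The main obstacle is the rank-$(n+1)$ verification in Step 2: one has to coordinate the choice of admissible flag with the movable part $|M_0|$ on $Y$ so that $\nu$ detects $n$ asymptotically independent directions --- standard but delicate bookkeeping at the technical heart of the Newton--Okounkov construction.
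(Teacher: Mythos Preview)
Your overall route through Newton--Okounkov bodies matches the paper's, and Step~1 is fine, but there is a genuine gap in Step~2: you never make substantive use of the hypothesis $\alpha(V_\bullet)>0$. Your remark that it ``ensures that $V_m$ is nontrivial for some $m\ge 1$'' is both incorrect (if every $V_m=0$ then $\alpha=+\infty$ vacuously) and unnecessary (eventual birationality already forces $V_m\neq 0$ for large $m$). The actual role of $\alpha>0$ is to guarantee condition~(A) of \cite{LM-Okounkov-body}*{Definition~2.4}, i.e.\ that the semigroup $\Gamma(V_\bullet)$ sits in a strongly convex cone.

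You assert strong convexity in Step~2, but your only justification is that $M_0$ is big on $Y$. Bigness of $M_0$ controls only the sub-semigroup coming from $\Sym^\bullet V_{m_0}\hookrightarrow V_\bullet$; it gives no bound on $\nu(s)$ for a general $s\in V_m$. Since $X$ is merely quasi-projective in this lemma, $H^0(X,mL)$ can be infinite-dimensional, so there is no a~priori reason for $\nu(V_m\setminus 0)$ to lie in a region of size $O(m)$, and the closed cone of $\Gamma(V_\bullet)$ may well contain a line. The paper closes this gap by a specific choice of flag: blow up a smooth point $x\in X$ and take the flag $X'\supset E\supset\cdots$ through the exceptional divisor $E\cong\bP^{n-1}$. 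The first valuation coordinate is then $\mult_x$, and $\alpha(V_\bullet)>0$ immediately gives $\mult_x\Gamma\le Cm$ for all $\Gamma\in|V_m|$; because $E$ is \emph{projective}, the remaining coordinates are then bounded by \cite{LM-Okounkov-body}*{Lemma~1.11}. This is precisely the ``delicate bookkeeping'' you allude to, but it hinges on $\alpha>0$, not on properties of $|M_0|$.
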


\begin{proof}
We refer to \cite{LM-Okounkov-body} for the construction of Okounkov bodies and their general properties. It is clear that $\vol(V_\bullet)>0$ as $|V_m|$ defines a birational map for large enough $m$. To prove \eqref{eq:vol=lim}, let $\pi\colon X'\to X$ be the blowup of some smooth point $x\in X$ and consider the Okounkov body $\Delta(V_\bullet)$ constructed using a full flag of subvarieties of the form $X'\supseteq E\supseteq \cdots \supseteq \{\mathrm{pt\}}$, where $E$ is the exceptional divisor of the blowup. Since $\alpha(V_\bullet)>0$, there exists some constant $C>0$ such that $\mult_E(\pi^*\Gamma)=\mult_x (\Gamma)\le Cm$ for all $m\in \bN$ and $\Gamma\in |V_m|$. As $E\cong \bP^{n-1}$ is projective, by \cite{LM-Okounkov-body}*{Lemma 1.11} we know that $\Delta(V_\bullet)$ satisfies condition (A) from \cite{LM-Okounkov-body}*{Definition 2.4}. Since $V_\bullet$ is eventually birational, it also satisfies condition (B) from \cite{LM-Okounkov-body}*{Definition 2.5}. By \cite{LM-Okounkov-body}*{Theorem 2.13}, we see that 
\[
\lim_{M(V_\bullet) \ni m\to+\infty} \frac{\dim(V_m)}{m^n/n!}
\]
exists and equals $\vol(V_{\bullet})$.
\end{proof}

We also need the following Bertini type result.

\begin{lem} \label{lem:Bertini ample Q-div}
Let $(X,\Delta)$ be a klt pair and let $H$ be an ample $\bQ$-divisor on $X$. Then there exist some $\delta>0$ and some $m_0\in \bN$ such that for any positive integer $m\ge m_0$, there exists some $D\in |mH|$ with
\[
\lct(X,\Delta;D)\ge \delta.
\]
\end{lem}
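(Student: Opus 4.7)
The plan is to pass to a log resolution of $(X,\Delta)$ and extract a uniform lower bound for $\delta$ from the klt hypothesis via a Bertini argument. Let $\pi\colon Y\to X$ be a log resolution of $(X,\Delta)$ and write the crepant pullback
\[
K_Y+B_Y=\pi^*(K_X+\Delta),
\]
and decompose $B_Y=\Gamma_Y-E_Y$ with $\Gamma_Y,E_Y\ge 0$ having no common components. Since $(X,\Delta)$ is klt, every coefficient of $B_Y$, and a fortiori of $\Gamma_Y$, is strictly less than $1$; as only finitely many prime divisors appear, there exists $\varepsilon_0>0$ such that every coefficient of $\Gamma_Y$ is at most $1-\varepsilon_0$. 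I would aim to show that $\delta:=\varepsilon_0$ works.

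The next step is to choose $m_0$ large enough that $mH$ is very ample on $X$ for every $m\ge m_0$ (for which $mH$ is integral Cartier). Since $|mH|$ separates points on $X$, the pulled-back linear subsystem $\pi^*|mH|\subseteq |\pi^*(mH)|$ is base point free on the smooth variety $Y$. Applying Bertini's theorem in characteristic zero, together with its standard refinement for a fixed SNC divisor, gives that for a general $D\in|mH|$ the pullback $\pi^*D$ is smooth on $Y$ and meets $\Gamma_Y$ transversely, so $\Gamma_Y+\pi^*D$ is SNC on $Y$. In particular, such a general $D$ avoids each center $\pi(E_i)$ of an exceptional divisor of $\pi$, and $\pi^*D$ coincides with the strict transform of $D$.

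For such a general $D$, the coefficients of the log smooth divisor $\Gamma_Y+\varepsilon_0\pi^*D$ all lie in $[0,1]$, so $(Y,\Gamma_Y+\varepsilon_0\pi^*D)$ is log canonical. Subtracting the effective divisor $E_Y$ then shows that $(Y,B_Y+\varepsilon_0\pi^*D)$ is sub-log canonical. Since
\[
K_Y+B_Y+\varepsilon_0\pi^*D=\pi^*(K_X+\Delta+\varepsilon_0D),
\]
this forces $(X,\Delta+\varepsilon_0D)$ to be log canonical, giving $\lct(X,\Delta;D)\ge\varepsilon_0$. The only technical points are the base point freeness of $\pi^*|mH|$ on $Y$ (immediate from very ampleness of $|mH|$ on $X$) and the log Bertini theorem for transversality to $\Gamma_Y$; neither poses any genuine obstacle, and the essential point is that $\varepsilon_0$ depends only on the log resolution of $(X,\Delta)$ and not on $m$.
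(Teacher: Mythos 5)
Your Bertini argument on a log resolution is fine as far as it goes, but it only proves the lemma for those $m$ for which $mH$ is an integral (very ample) Cartier divisor, i.e.\ for $m$ in an arithmetic progression. The lemma asserts the existence of $D\in|mH|$ with $\lct(X,\Delta;D)\ge\delta$ for \emph{every} integer $m\ge m_0$, and this full strength is used later (e.g.\ in the proof of Theorem \ref{thm:cone volume}, where the relevant multiple $mr-\lfloor\alpha mr\rfloor k$ runs over essentially arbitrary large integers). When $mH$ is not integral, every member of $|mH|=|\lfloor mH\rfloor|+\{mH\}$ contains the fixed fractional part $\{mH\}$ (and $|\lfloor mH\rfloor|$ need not even be base point free), so no choice of $D$ is "general" in the Bertini sense: you cannot arrange $\pi^*D$ to be smooth and transverse to $\Gamma_Y$, and your uniform bound $\delta=\varepsilon_0$ is not justified for such $m$.

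The paper's proof is built precisely to absorb this: it fixes a divisible $m_0$ with $|m_0H|$ base point free, chooses an \emph{arbitrary} $D_i\in|(m_0+i)H|$ for each residue $i=0,\dots,m_0-1$, and sets $\delta=\min_i\lct(X,\Delta;D_i)>0$ (positive only because $(X,\Delta)$ is klt — no transversality is claimed for the $D_i$). For general $m=(\ell+1)m_0+i$ it then takes $D=D_i+D'$ with $D'\in|\ell m_0H|$ a general Bertini member, so that $(X,\Delta+\delta D_i+D')$ stays log canonical. To repair your proof you would need the same extra step (or an equivalent use of Lemma \ref{lem:lct inequality}): handle the finitely many residue classes of $m$ modulo the denominator of $H$ by fixed divisors whose lct is bounded below by kltness, and reserve the Bertini/transversality argument for the genuinely base-point-free integral part. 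A minor further inaccuracy: a general member of a very ample system does not "avoid" positive-dimensional centers $\pi(E_i)$; what you actually need (and what is true) is only that $\pi^*D$ has no exceptional components, i.e.\ $\mathrm{mult}_{\pi(E_i)}D=0$.
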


\begin{proof}
Let $m_0$ be a sufficiently large and divisible integer such that $|m_0 H|$ is base point free and $|mH|\neq \emptyset$ for all $m\ge m_0$. For each $i=0,1,\dots,m_0-1$, choose some $D_i \in |(m_0+i)H|$ and let
\[
\delta := \min_{0\le i\le m_0-1}\{\lct(X,\Delta;D_i)\} >0.
\]
For any integer $m\ge m_0$ we may write $m=(\ell +1)m_0+i$ for some $\ell \in \bN$ and some $0\le i\le m_0-1$. By our choice of $m_0$ and $\delta$, we know that $|\ell m_0 H|$ is base point free and $(X,\Delta+\delta D_i)$ is log canonical, thus by Bertini's theorem, there exists some $D'\in |\ell m_0 H|$ such that $(X,\Delta+\delta D_i+D')$ is still log canonical. Note that $\delta\le 1$. Then $D=D_i+D'\in |mH|$ satisfies $\lct(X,\Delta;D)\ge \delta$.
\end{proof}

\subsection{Minimal Model Program (MMP)}

We refer to e.g. \cite[Paragraph 3.31]{KM98} for a description of the MMP. In this subsection, we recall some results on the MMP that will be used in this paper. First we recall the two basic operations in the MMP.

\begin{defn} 
Let $(X,\Delta)$ be a $\mathbb Q$-factorial log canonical pair and let $f \colon X \longrightarrow Z$ be a fibration. Then $f$ is called a $(K_X+\Delta)$-\emph{flipping contraction} if
\begin{enumerate}
    \item $f$ is small birational,
    \item the relative Picard number $\rho(X/Z) = 1$, and
    \item $-(K_X + \Delta)$ is $f$-ample.
\end{enumerate}
The $(K_X+\Delta)$-\emph{flip} $f^+ \colon X^+ \longrightarrow Z$ of the flipping contraction $f \colon X \longrightarrow Z$ is a small birational fibration $f^+ \colon X^+ \longrightarrow Z$ such that $K_{X^+} + \Delta^+$ is $f^+$-ample, where $\Delta^+$ is the strict transform of $\Delta$ on $X^+$. When the pair $(X,\Delta)$ is clear from the context, we shall refer to $f$ (resp. $f^+$) as simply the flipping contraction (resp. the flip). By abuse of terminology, we also call the composition $(f^+)^{-1}\circ f\colon X\dashrightarrow X^+$ a flip (however, the fibers of a flip only refer to the fibers of $f^+$). 
\end{defn}

\begin{defn}
Let $(X, \Delta)$ be a $\mathbb{Q}$-factorial log canonical pair and let $f \colon X \longrightarrow Z$ be a projective morphism of normal varieties. Then $f$ is called a \emph{divisorial contraction} if
\begin{enumerate}
    \item $f$ is birational with exceptional locus a divisor,
    \item the relative Picard number $\rho(X/Z) = 1$, and
    \item $-(K_X + \Delta)$ is $f$-ample.
\end{enumerate}
\end{defn}

For some of our proofs, it will be convenient to work with a broader class of birational maps as follows.

\begin{defn} \label{defn:MMP type contraction}
A birational map $\varphi\colon (X,\Delta)\dashrightarrow (X',\Delta')$ of log pairs is called an \emph{MMP type contraction} if $\varphi^{-1}$ has no exceptional divisor,  and there exist proper birational morphisms $f\colon Y\to X$, $g\colon Y\to X'$ such that
\[
f^*(K_X+\Delta)-g^*(K_{X'}+\Delta')\ge 0
\]
and $g=\varphi\circ f$. In particular, we have $\varphi_*\Delta\ge \Delta'$. 
\end{defn}

Observe that if $\varphi\colon (X,\Delta)\dashrightarrow (X',\Delta')$ is an MMP type contraction and $(X,\Delta)$ is klt (resp. log canonical), then so is $(X',\Delta')$. As the name suggests, if $\varphi\colon (X,\Delta)\dashrightarrow (X',\Delta')$ is obtained by running a $(K_X+\Delta)$-MMP, then $\varphi\colon (X,\Delta)\dashrightarrow (X',\Delta')$ is an MMP type contraction. If $\Delta'=\varphi_*\Delta$, then the birational contraction  
$\varphi\colon (X,\Delta)\dashrightarrow (X',\Delta')$ is an MMP type contraction if and only if $\varphi$ is $(K_X+\Delta)$-non-positive in the sense of \cite{BCHM}*{Definition 3.6.1}. We will also use the following simple observation.

\begin{lem} \label{lem:MMP type change boundary}
Let $\varphi\colon (X,\Delta+D)\dashrightarrow (X',\Delta'+D')$ be an MMP type contraction with $\Delta,\Delta',D\ge 0$ and $D'=\varphi_* D$. Let $\Gamma\ge 0$ be an $\bR$-divisor on $X$ such that $K_X+\Gamma\sim_\bR \mu (K_X+D)$ for some $\mu\ge 0$, and let $\Gamma'=\varphi_* \Gamma$. Then $\varphi$ also induces an MMP type contraction $\varphi: (X,\mu\Delta+\Gamma)\dashrightarrow (X',\mu\Delta'+\Gamma')$.
\end{lem}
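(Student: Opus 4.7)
The plan is to take the proper birational morphisms $f\colon Y\to X$ and $g\colon Y\to X'$ that already certify $\varphi\colon(X,\Delta+D)\dashrightarrow(X',\Delta'+D')$ is an MMP type contraction, and to verify that these same $f,g$ witness the MMP type contraction property for $\varphi\colon(X,\mu\Delta+\Gamma)\dashrightarrow(X',\mu\Delta'+\Gamma')$. Condition (1) in Definition \ref{defn:MMP type contraction}, that $\varphi^{-1}$ has no exceptional divisor, is already part of the hypothesis, and $g=\varphi\circ f$ is the same relation as before, so only the inequality
$$f^{*}(K_X+\mu\Delta+\Gamma)\;\geq\;g^{*}(K_{X'}+\mu\Delta'+\Gamma')$$
on $Y$ needs to be established.

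The key observation is the algebraic decomposition
$$K_X+\mu\Delta+\Gamma \;=\; \mu(K_X+\Delta+D) \;+\; B,\qquad B:=K_X+\Gamma-\mu(K_X+D),$$
together with the analogous identity on $X'$ with $B':=K_{X'}+\Gamma'-\mu(K_{X'}+D')$. By hypothesis $B\sim_{\bR}0$, so $K_X+\mu\Delta+\Gamma$ is $\bR$-Cartier. If one can show $f^{*}B=g^{*}B'$ on $Y$, then scaling the MMP type inequality for $(X,\Delta+D)$ by $\mu\ge 0$ gives
$$f^{*}(K_X+\mu\Delta+\Gamma)-g^{*}(K_{X'}+\mu\Delta'+\Gamma') \;=\; \mu\bigl[f^{*}(K_X+\Delta+D)-g^{*}(K_{X'}+\Delta'+D')\bigr]\;\geq\;0,$$
as desired.

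To verify $f^{*}B=g^{*}B'$, I would fix a specific $\bR$-linear equivalence $B=\sum_i r_i\,\mathrm{div}_X(h_i)$ with $h_i\in\bC(X)^{*}$ and $r_i\in\bR$. Via the birational identification of function fields $\bC(X)=\bC(X')=\bC(Y)$ induced by $\varphi$, $f$ and $g$, each $h_i$ is also a rational function on $X'$ and on $Y$. Since $\varphi^{-1}$ contracts no divisor, every prime divisor on $X'$ is the strict transform of a prime divisor on $X$, while $\varphi$-contracted components push forward to zero; together with compatible choices of canonical divisors $K_{X'}=\varphi_{*}K_X$, this gives $B'=\varphi_{*}B=\sum_i r_i\,\mathrm{div}_{X'}(h_i)$, which is in particular $\bR$-Cartier, and hence so is $K_{X'}+\mu\Delta'+\Gamma'$. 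Pulling back both expressions for $B$ and $B'$ to $Y$ through $f$ and $g$ respectively then yields $f^{*}B=\sum_i r_i\,\mathrm{div}_Y(h_i)=g^{*}B'$, completing the proof.

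There is no serious geometric obstacle here; the whole statement is a bookkeeping consequence of the decomposition above. The mildest subtlety worth highlighting is the compatible choice of canonical divisors and of the specific principal $\bR$-divisor realizing $B\sim_{\bR}0$, so that $f^{*}B$ and $g^{*}B'$ coincide on the nose as $\bR$-divisors on $Y$ rather than merely being $\bR$-linearly equivalent.
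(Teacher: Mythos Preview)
Your proof is correct and follows essentially the same structure as the paper's: the same resolution $f,g$, the same decomposition $K_X+\mu\Delta+\Gamma=\mu(K_X+\Delta+D)+B$ with $B=K_X+\Gamma-\mu(K_X+D)\sim_{\bR}0$, and the same reduction to showing $f^*B=g^*B'$. The only difference is in this last step: the paper observes that $f^*B-g^*B'$ is $g$-exceptional (since $g_*f^*B=\varphi_*B=B'$) and $\sim_{\bR}0$, then invokes the negativity lemma to conclude it vanishes, whereas you write $B$ explicitly as an $\bR$-principal divisor and track it through the birational identification of function fields. Your route is slightly more elementary and makes the $\bR$-Cartier property of $K_{X'}+\mu\Delta'+\Gamma'$ transparent; the paper's route is more in the standard MMP idiom and doesn't require care with compatible canonical divisors. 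Either way the content is the same.
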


\begin{proof}
Let $f: Y\rightarrow X, g: Y\rightarrow X'$ be two proper birational morphisms such that
$$E:=f^*(K_X+\Delta+D)-g^*(K_{X'}+\Delta'+D')\geq 0$$
and $g=\varphi\circ f$. So
\begin{equation}\label{equ: mu e}
    \mu E=f^*(\mu K_X+\mu\Delta+\mu D)-g^*(\mu K_{X'}+\mu\Delta'+\mu D')\geq 0.
\end{equation}
Since $K_X+\Gamma\sim_{\mathbb R}\mu(K_X+D)$ and $\varphi^{-1}$ does not contract any divisor, 
$$K_{X'}+\Gamma'-\mu(K_{X'}+D')=g_*f^*(K_X+\Gamma-\mu(K_X+D))\sim_{\mathbb R}0.$$
Thus by the negativity lemma, we have
\begin{equation}\label{equ: crepant gamma d}
    f^*(K_X+\Gamma-\mu(K_X+D))-g^*(K_{X'}+\Gamma'-\mu(K_{X'}+D'))=0,
\end{equation}
as the left hand side is $g$-exceptional and $\sim_\bR 0$.
Combining \eqref{equ: mu e} and \eqref{equ: crepant gamma d}, we get
$$ \mu E=f^*(K_X+\mu\Delta+\Gamma)-g^*(K_{X'}+\mu\Delta'+\Gamma')\geq 0.$$
Hence $\varphi: (X,\mu\Delta+\Gamma)\dashrightarrow (X',\mu\Delta'+\Gamma')$ is an MMP type contraction.
\end{proof}

We will frequently use the following observation, which is well known to the experts ({\it cf.} \cite[Lemma 3.7.5 and Proof of Theorem 1.2]{BCHM}), to reduce questions about general type MMPs to MMPs with a big boundary.

\begin{lem}\label{lem: gt klt sim to big boundary}
Let $(X,\Delta)$ be a klt pair and $X\rightarrow T$ a morphism. 
\begin{enumerate}
    \item If $K_X+\Delta$ is big over $T$, then there exists a klt pair $(X,\Delta')$, such that $K_X+\Delta'\sim_{\Rr,T} \mu(K_X+\Delta)$ for some $\mu>0$ and $\Delta'$ is big. 
    \item  If $\Delta$ is big over $T$, then there exists a klt pair $(X,\Delta_0)$ such that $K_X+\Delta\sim_{\Rr,T}K_X+\Delta_0$ and $\Delta_0\geq A\geq 0$ for some $\mathbb R$-divisor $A$ that is ample over $T$. \qed
\end{enumerate} 
\end{lem}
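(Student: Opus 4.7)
The plan is to extract an ample summand from the given bigness hypothesis in each part and absorb it into a small perturbation of $\Delta$. Unwinding the definition of ``big over $T$'', the hypothesis in (1) yields a decomposition $K_X+\Delta\sim_{\bR,T} A+E$ with $A$ an ample $\bR$-divisor on $X$ and $E\geq 0$, while the hypothesis in (2) yields $\Delta\sim_{\bR,T} A+E$ of the same shape. In either case, since $A$ is ample I would choose an effective $\bR$-divisor $A'\sim_{\bR} A$ (such a representative exists because $A$ is big); note that $A'$ is automatically ample, since ampleness depends only on the numerical class. I will also take $A'$ sufficiently general so that $\Supp A'$ avoids the components of $\Supp(\Delta+E)$ and is compatible with a log resolution of $(X,\Delta+E)$.

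For part (1) I would simply set $\Delta':=\Delta+\varepsilon(A'+E)$ for sufficiently small $\varepsilon>0$. Three quick checks finish the argument: $\Delta'$ is effective; $(X,\Delta')$ is klt because klt is an open condition under small effective perturbations; and
\[
K_X+\Delta'\sim_{\bR,T} K_X+\Delta+\varepsilon(A+E)\sim_{\bR,T}(1+\varepsilon)(K_X+\Delta),
\]
so $\mu:=1+\varepsilon$ does the job. Finally, $\Delta'\geq\varepsilon A'$ with $\varepsilon A'$ effective and ample, hence $\Delta'$ is big.

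For part (2) the only adjustment is that $\Delta_0$ must lie in the \emph{same} $\bR$-linear class over $T$ as $\Delta$, not a positive rescaling of it. The natural fix is a convex combination: set $\Delta_0:=(1-\varepsilon)\Delta+\varepsilon(A'+E)$ for small $\varepsilon\in(0,1)$. Then $\Delta_0\geq 0$, $(X,\Delta_0)$ is klt for small $\varepsilon$, and the identity
\[
\Delta_0-\Delta=\varepsilon(A'+E-\Delta)\sim_{\bR,T}\varepsilon(A+E-\Delta)\sim_{\bR,T}0
\]
gives $K_X+\Delta_0\sim_{\bR,T} K_X+\Delta$. Since $\Delta_0\geq\varepsilon A'$ with $\varepsilon A'$ effective and ample (in particular ample over $T$), taking $A:=\varepsilon A'$ concludes the proof.

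I do not anticipate a genuine obstacle here. The whole argument rests on two standard facts: every ample $\bR$-Cartier divisor is $\bR$-linearly equivalent to an effective divisor (choose $m\gg 0$ so that $|mA|$ has sections and rescale), and klt is preserved under sufficiently small effective $\bR$-Cartier perturbations. If forced to name a ``hardest'' step it would be the klt preservation under the convex combination in (2), but this follows immediately from a log resolution of $(X,\Delta+A'+E)$ together with the openness of the klt locus of coefficients.
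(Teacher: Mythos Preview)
Your proof is correct and follows essentially the same approach as the paper. In part (1) the paper absorbs a single effective big divisor $D\sim_{\bR,T}K_X+\Delta$ via $\Delta':=\Delta+\varepsilon D$, whereas you first split $D=A'+E$, but this is purely cosmetic; in part (2) your convex combination $(1-\varepsilon)\Delta+\varepsilon(A'+E)$ is exactly the paper's construction.
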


\subsection{\texorpdfstring{$\bT$}{}-singularities}

In this subsection, we collect some results concerning singularities with a torus action. Such singularities naturally appear in the local K-stability theory of klt singularities. We only need the results in this subsection when we prove the boundedness of the fibers of extremal contractions and flips in Section \ref{sec:fiber bdd}.

Let $X=\Spec(R)$ be an affine normal variety and let $\rho\colon \bG_m\to \Aut(X)$ be a one-parameter subgroup. Then it induces a weight decomposition $R=\oplus_{m\in\bZ} R_m$ and a $\bG_m$-equivariant valuation $\wt_\rho$ on $X$ such that $\wt_\rho (f) = m$ when $0\neq f\in R_m$. 

In general, let $\bT\subseteq \Aut(X)$ be an algebraic torus (i.e. $\bT\cong \bG_m^r$ for some $r>0$). Let $N:=\Hom(\bG_m,\bT)$ be the co-weight lattice, and $M:=\Hom(\bT,\bG_m)$ the weight lattice. Then we have a weight decomposition 
\[
R=\oplus_{\alpha\in M} R_\alpha.
\]
For any $f\in R$, we denote by $f_\alpha$ the component of weight $\alpha\in M$ in this weight decomposition. For any $0\neq \xi\in N_\bR$, we can define a valuation $\wt_\xi$ on $X$ by
\[
\wt_\xi (f):=\min\{\langle \xi, \alpha \rangle\mid \alpha\in M, f_\alpha\neq 0\}.
\]
A one-parameter subgroup $\rho\colon \bG_m\to \bT$ can be viewed as a primitive element of $N$. The corresponding valuation $\wt_\rho$ is exactly the one defined in the previous paragraph.

We say that the $\bT$-action on $X$ is \emph{good} if there is a unique closed point $x\in X$ that is in the orbit closure of any $\bT$-orbit. This is equivalent to the condition that $R_0=\mathbb C$. The \emph{weight monoid} $S$ consists of all $\alpha\in M$ such that $R_\alpha\neq 0$, and the \emph{weight cone} $\sigma\subseteq M_\bR$ is defined as the (convex, closed, polyhedral) cone generated by $S$. Note that the valuation $\wt_\xi$ has a center in $X$ if and only if $\xi$ is contained in the dual $\sigma^\vee$ of the weight cone. 

\begin{lem} \label{lem:log discrep linear T-variety}
Let $(X,\Delta)$ be a log pair and $\bT\subseteq \Aut(X,\Delta)$ a torus. Assume that $X$ is affine. Then there exists some $\alpha\in M_\bR$ such that $A_{X,\Delta}(\wt_\xi) = \langle \xi,\alpha\rangle$ for all $\xi\in\sigma^\vee$, where $\sigma\subseteq M_\bR$ is the weight cone.
\end{lem}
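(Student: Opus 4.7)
The plan is to compute $A_{X,\Delta}(\wt_\xi)$ on a $\bT$-equivariant log resolution, where linearity in $\xi$ becomes manifest from the standard formula for log discrepancies of quasi-monomial valuations.

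First, I would apply $\bT$-equivariant desingularization together with Sumihiro's theorem to obtain a $\bT$-equivariant log resolution $\pi\colon Y\to X$ of $(X,\Delta)$ such that the reduced divisor $E:=\Ex(\pi)+\pi^{-1}_*\Supp(\Delta)$ has SNC support; since $\bT$ is connected, every prime component $E_j$ of $E$ is automatically $\bT$-invariant. Writing $K_Y+\Delta_Y=\pi^*(K_X+\Delta)$, the coefficient of $E_j$ in $\Delta_Y$ is $1-A_{X,\Delta}(E_j)$.

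Next I would localize at a $\bT$-fixed closed point $y\in Y$ lying over the center of $\wt_\xi$; such a point can be produced by passing to a $\bT$-equivariant completion when needed, and the log discrepancy only depends on local data near this center. Smoothness of $Y$ at $y$ together with the linearization of the $\bT$-action on $T_yY$ yields \'etale-local coordinates $z_1,\dots,z_n$ in which $\bT$ acts diagonally, $t\cdot z_i=\chi_{\alpha_i}(t)\,z_i$ for weights $\alpha_i\in M$, chosen so that the $\bT$-invariant components of $E$ through $y$ are cut out by some subset of $\{z_i=0\}$. Writing any local function $f\in\widehat{\mathcal{O}}_{Y,y}$ as a formal power series $f=\sum c_m z^m$ with monomial $z^m$ of $\bT$-weight $\sum m_i\alpha_i$, a direct computation shows that for any $\xi\in\sigma^\vee$ with $\langle\xi,\alpha_i\rangle\ge 0$ for all $i$ (i.e., $\wt_\xi$ centers at $y$), the valuation $\wt_\xi$ equals the quasi-monomial valuation on the log smooth model $(Y,\sum_i E_i)$ assigning weight $\langle\xi,\alpha_i\rangle$ to $E_i$. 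The standard log discrepancy formula for quasi-monomial valuations on crepant log smooth pairs then yields
\[
A_{X,\Delta}(\wt_\xi)=A_{Y,\Delta_Y}(\wt_\xi)=\sum_{i=1}^n\langle\xi,\alpha_i\rangle\cdot A_{X,\Delta}(E_i)=\langle\xi,\alpha_y\rangle,
\]
where $\alpha_y:=\sum_{i=1}^nA_{X,\Delta}(E_i)\,\alpha_i\in M_\bR$ and $A_{X,\Delta}(E_i):=1$ whenever $\pi$ is an isomorphism at the generic point of $E_i$.

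Finally, the same vector $\alpha:=\alpha_y$ then works on all of $\sigma^\vee$: since $\{\xi\in\sigma^\vee:\langle\xi,\alpha_i\rangle\ge 0\ \forall i\}$ is top-dimensional and a linear functional on $N_\bR$ is determined by its restriction to any such subcone, an analogous computation at any other fixed point must return the same $\alpha$. The main technical obstacle I foresee is guaranteeing the simultaneous existence of a suitable $\bT$-fixed point and compatibility of the diagonal linearization with the SNC divisor $E$; this is a delicate but standard combination of Sumihiro's theorem, Luna's \'etale slice theorem, and $\bT$-equivariant resolution of singularities, and can be handled by a further equivariant blowup and passage to a $\bT$-equivariant completion if necessary.
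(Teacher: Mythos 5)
Your local computation is essentially sound: on a $\bT$-equivariant log resolution, at a $\bT$-fixed point $y$ with diagonalized coordinates of weights $\alpha_1,\dots,\alpha_n$, the valuation $\wt_\xi$ does agree with the monomial valuation of weights $\langle\xi,\alpha_i\rangle$ (weight components of a regular function on $X$ map to the corresponding weight components of its power series expansion, and these vanish only if the original components do), and the quasi-monomial log discrepancy formula then gives $A_{X,\Delta}(\wt_\xi)=\langle\xi,\alpha_y\rangle$. But this identity is only valid on the subcone $\tau_y=\{\xi\in\sigma^\vee : \langle\xi,\alpha_i\rangle\ge 0\ \forall i\}$, i.e.\ for those $\xi$ whose center passes through $y$, and your final step --- ``an analogous computation at any other fixed point must return the same $\alpha$'' --- is a genuine gap, not a technicality. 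Different fixed points $y,y'$ give cones $\tau_y,\tau_{y'}$ that typically meet only along a proper face, and a function that is linear on each cone of a fan subdividing $\sigma^\vee$ need not be globally linear; agreement on a lower-dimensional wall does not force $\alpha_y=\alpha_{y'}$. Concretely, take $X=\{xy=zw\}\subseteq\bA^4$ with its $3$-torus: on a resolution the several fixed points cut $\sigma^\vee$ into proper subcones, and for a torus-invariant Weil divisor that is \emph{not} $\bR$-Cartier the analogous function is honestly piecewise linear and non-linear. So the global linearity is exactly equivalent to the $\bR$-Cartier hypothesis on $K_X+\Delta$, and your argument never uses that hypothesis to glue the local functionals --- the conclusion is being assumed at the patching step. (A secondary, acknowledged but unresolved issue is that a $\bT$-fixed point need not exist on the center inside a resolution of the affine $X$, and passing to an equivariant completion puts the fixed point in the boundary, where the coordinate divisors are no longer divisors over $X$ and their log discrepancies with respect to $(X,\Delta)$ are undefined.)

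The paper's proof avoids both problems by a different reduction: by Altmann--Hausen there is an equivariant modification $\tX\to X$ fibering over the Chow quotient $\tX/\!\!/\bT$ with toric generic fiber $F$, and since $\wt_\xi$ is trivial on functions pulled back from the quotient, one computes $A$ on the single affine toric variety $F$ with its invariant crepant boundary. There the statement is the classical toric fact that an $\bR$-Cartier invariant divisor on an affine toric variety has a \emph{linear} (not merely piecewise linear) support function on the whole cone --- precisely the input your patching step is missing. To repair your approach you would need to supply an argument of this type, e.g.\ by showing that the $\bR$-Cartier divisor $K_X+\Delta$ itself determines a single element of $M_\bR$ near the generic point of the common center, rather than deducing it pointwise from fixed points of the resolution.
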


\begin{proof}
({\it cf.} \cite{LX-stability-higher-rank}*{Theorem 2.15(3)}.) By \cite[Theorems 3.1 and 3.4]{AH-T-variety}, there exist a proper birational $\bT$-equivariant morphism $\varphi\colon \tX\to X$ and a morphism $\pi\colon \tX\to Y=\tX/\!\!/\bT$ whose generic fiber $F$ is an affine toric variety. In particular, $X$ is $\bT$-equivariantly birational to $\bT\times Y$ (where $\bT$ acts trivially on $Y$). Let $K_{\tX}+\widetilde{\Delta}=\varphi^*(K_X+\Delta)$ be the crepant pullback. Then $\widetilde{\Delta}|_F$ is a torus invariant $\mathbb R$-divisor. Since $A_{X,\Delta}(\wt_\xi)=A_{\tX,\widetilde{\Delta}}(\wt_\xi)$ and the center of $\wt_\xi$ dominates $Y$, it suffices to compute the log discrepancy along the generic fiber $F$ of $\pi$. Thus we reduce to the toric case, where the desired statement is well known (see, for example, \cite{Amb-mld}*{Section 4}).
\end{proof}

Given a morphism $\cX\to \cB$, we call a torus $\bT$-action on $\cX$ that commutes with the trivial $\bT$-action on $\cB$ a \emph{fiberwise $\bT$-action}. We next observe that the weight cone and the log discrepancy function are locally constant for any $\bR$-Gorenstein family of klt singularities with a fiberwise torus action.

\begin{defn} \label{defn:R-Gor family}
We call $f\colon (\cX,\cD)\to\cB$ an \emph{$\bR$-Gorenstein family of pairs} if 
\begin{enumerate}
    \item $f\colon \cX\to \cB$ is an affine, flat morphism,
    \item for any closed point $b\in \cB$, the fiber $\cX_b$ is connected and normal, 
    \item $\cD\geq 0$ is an $\bR$-divisor on $\cX$ whose support does not contain any fiber $\cX_b$ of $f$,
    \item $\cX$ and $\cB$ are normal, and $K_{\cX/\cB}+\cD$ is $\bR$-Cartier.
\end{enumerate}
If $f\colon (\cX,\cD)\to \cB$ is an $\bR$-Gorenstein family of pairs, and $B\subseteq \cX$ is a section of $f$, We call $\cB\subseteq (\cX,\cD)\xrightarrow{f} \cB$ an \emph{$\bR$-Gorenstein family of singularities}.
If in addition $b\in (\cX_b,\cD_b)$ is a klt singularity for any closed point $b\in \cB$, we say it is an \emph{$\bR$-Gorenstein family of klt singularities}.
\end{defn}

\begin{lem} \label{lem:weight monoid constant}
Let $\cX\to \cB$ be an affine, surjective morphism of varieties with a fiberwise torus $\bT$-action. Then the weight monoid of the $\bT$-action on the fiber $\cX_b$ is locally constant as $b\in \cB$ varies. 
\end{lem}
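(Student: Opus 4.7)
Since the statement is local on $\cB$, we may assume $\cB$ is affine, and then $\cX$ is affine by the affine hypothesis on the map. The plan is to realize $W_b$ as the non-vanishing locus of the $\cO_\cB$-modules in the weight decomposition of $\cO(\cX)$, and to show that this locus is independent of $b$, so that $W_b$ is actually constant (in particular locally constant). Write $\cA := \cO(\cX)$. Since the $\bT$-action is fiberwise, $\cO_\cB$ lies in the invariant subring $\cA_0$, and the weight decomposition $\cA = \bigoplus_{\alpha \in M} \cA_\alpha$ is one of $\cO_\cB$-modules. Because tensoring commutes with direct sums,
\[
\cO(\cX_b) \;=\; \cA \otimes_{\cO_\cB} \kappa(b) \;=\; \bigoplus_{\alpha \in M} \cA_\alpha \otimes_{\cO_\cB} \kappa(b)
\]
is the weight decomposition of the fiber, so $W_b = \{\alpha \in M : \cA_\alpha \otimes_{\cO_\cB} \kappa(b) \neq 0\}$. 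It therefore suffices to prove that $\cA_\alpha \neq 0 \Rightarrow \cA_\alpha \otimes_{\cO_\cB} \kappa(b) \neq 0$ for every $b \in \cB$.

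Since $\cX$ is a variety, $\cA$ is an integral domain and each $\cA_\alpha$ is torsion-free over $\cO_\cB$. Fix $b$ and set $R := \cO_{\cB, b}$, $A := \cA \otimes_{\cO_\cB} R$, $A_\alpha := \cA_\alpha \otimes_{\cO_\cB} R$. Because $\fm_R \subseteq \cA_0$, comparing weight-$\alpha$ components yields $A_\alpha \cap \fm_R A = \fm_R A_\alpha$, so the assumption $A_\alpha / \fm_R A_\alpha = 0$ iterates to give $A_\alpha \subseteq \bigcap_n \fm_R^n A$. The surjectivity of $\cX \to \cB$ ensures $A/\fm_R A \neq 0$, so there is a maximal ideal $\fn \subseteq A$ containing $\fm_R A$; the localization $A_\fn$ is a local Noetherian domain, and Krull's intersection theorem gives $\bigcap_n \fn^n A_\fn = 0$. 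Hence the image of $A_\alpha$ in $A_\fn$ is contained in $\bigcap_n \fm_R^n A_\fn \subseteq \bigcap_n \fn^n A_\fn = 0$. Since $A$ is a domain, the map $A_\alpha \hookrightarrow A_\fn$ is injective, so $A_\alpha = 0$; torsion-freeness of $\cA_\alpha$ over $\cO_\cB$ then forces $\cA_\alpha = 0$, contradicting the hypothesis.

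The main obstacle is that $\cA_\alpha$ is typically not finitely generated as an $\cO_\cB$-module: this failure occurs precisely when the weight cone of the fiberwise $\bT$-action is not strongly convex, so that infinitely many monomials in a set of $\bT$-semi-invariant generators of $\cA$ over $\cO_\cB$ may share the weight $\alpha$. Consequently, naive Nakayama does not apply to the individual graded pieces, and one must instead exploit the integral-domain structure of $\cA$ via Krull's intersection theorem, applied after localizing at a maximal ideal of $A$ that is produced from the surjectivity hypothesis.
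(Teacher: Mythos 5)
Your proof is correct, but it takes a genuinely different route from the paper's. The paper first factors $\cX\to\cB$ through the GIT quotient $X_0=\Spec(R_0)$ and replaces $\cB$ by $X_0$; over $R_0$ each graded piece $R_\alpha$ \emph{is} finitely generated (this is exactly where the appeal to Altmann--Hausen, or the remark that $R$ is generated by finitely many homogeneous elements, enters), so ordinary Nakayama applies and shows that $R_\alpha$ is a torsion $R_0$-module, hence zero because $R$ is a domain. You instead stay over $\cO_\cB$, where, as you correctly note, the graded pieces need not be finitely generated, and you replace Nakayama by Krull's intersection theorem, applied in $A_{\mathfrak{n}}$ for a maximal ideal $\mathfrak{n}\supseteq \fm_R A$ whose existence is guaranteed by surjectivity of $\cX\to\cB$. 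Both arguments close the same way, using that a submodule of the domain $\cA$ that localizes to zero must vanish. Your version is more self-contained since it avoids the finite-generation input altogether; the paper's is shorter once that input is granted. One small inaccuracy in your final remark: the failure of finite generation of $\cA_\alpha$ over $\cO_\cB$ is not characterized by non-strong-convexity of the weight cone (already for $\cB$ a point and $\cX=\bA^2$ with $\bG_m$ scaling one coordinate, the weight cone is strongly convex but every weight space is infinite-dimensional); rather it reflects whether $R_0$ is finite over $\cO_\cB$. This does not affect the validity of your argument.
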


\begin{proof}
We may assume that both $\cB$ and $\cX$ are affine, and $\cB$ is connected. Let $\cX=\Spec(R)$. It suffices to show that for any $\alpha\in M$ with $R_\alpha\neq 0$, we have $R_\alpha |_{\cX_b}\neq 0$ for all $b\in \cB$. Since we have a fiberwise $\bT$-action, the morphism $\cX\to \cB$ factors through $X_0:=\cX/\!\!/\bT=\Spec(R_0)$, thus we may replace $\cB$ by $X_0$ to show that the restriction $R_\alpha |_{\cX_b}$ is nonzero. Since $R$ is generated by finitely many homogeneous elements, it is not hard to see that each $R_\alpha$ is finitely generated over $R_0$. Alternatively, this also follows directly from \cite[Theorems 3.1 and 3.4]{AH-T-variety}: there exist a projective morphism $Y\to \Spec(R_0)$ and a semiample line bundle $L_\alpha$ (one for each $\alpha$ in the weight monoid of $R$) on $Y$ such that $R_\alpha = H^0(Y,L_\alpha)$. If $R_\alpha |_{X_b}=0$, then Nakayama's lemma implies that $R_\alpha=0$ in a neighborhood of $b\in X_0$, thus $R_\alpha$ is a torsion $R_0$-module. Since $R$ is an integral domain, we deduce that $R_\alpha=0$. This proves the lemma.
\end{proof}

\begin{lem} \label{lem:log discrep constant T-family}
Let $\cB\subseteq (\cX,\cD)\to \cB$ be an $\bR$-Gorenstein family of singularities with a fiberwise torus $\bT$-action and let $\sigma$ be the weight cone of the this $\bT$-action. Then for any $\xi\in \sigma^\vee$, the log discrepancy function $b\mapsto A_{\cX_b,\cD_b}(\wt_\xi)$ is locally constant on $\cB$.
\end{lem}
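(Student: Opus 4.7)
After shrinking $\cB$, by Lemma \ref{lem:weight monoid constant} we may assume the weight cone $\sigma\subseteq M_\bR$ of the fiberwise $\bT$-action is independent of $b\in \cB$. Lemma \ref{lem:log discrep linear T-variety} then produces, for each $b\in \cB$, an element $\alpha_b\in M_\bR$ (unique modulo the annihilator of $\sigma^\vee$) such that $A_{\cX_b,\cD_b}(\wt_\xi) = \langle \xi, \alpha_b\rangle$ for every $\xi\in\sigma^\vee$. Our task is to show that $\alpha_b$ is locally constant on $\cB$. By linearity in $\xi$ and density of $N\cap\sigma^\vee$ in $\sigma^\vee$, it suffices to verify that $b\mapsto A_{\cX_b,\cD_b}(\wt_\xi)$ is locally constant for each rational $\xi$; for such $\xi$, the valuation $\wt_\xi$ on each fiber is cut out by a single one-parameter subgroup $\rho_\xi\colon \bG_m\to \bT\subseteq \Aut(\cX/\cB)$, uniformly across the family.

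The key step is to represent $-(K_{\cX/\cB}+\cD)$ $\bT$-semi-invariantly in a neighborhood of a chosen fiber. Since $K_{\cX/\cB}+\cD$ is $\bR$-Cartier and $\bT$-invariant, we may write it as a finite $\bR$-linear combination of Cartier divisors; after further shrinking $\cB$ and using local $\bT$-linearization of the corresponding line bundles near the section $\cB\hookrightarrow \cX$ (Sumihiro-type), followed by $\bT$-averaging of local trivializations, we obtain an expression
\[
-(K_{\cX/\cB}+\cD) \;=\; \sum_{i=1}^k c_i\,\mathrm{div}(f_i)
\]
with $c_i\in\bR$ and each $f_i$ a $\bT$-semi-invariant rational function on $\cX$ of weight $\beta_i\in M$ whose restriction to every fiber in the chosen neighborhood is nonzero.

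For any $\xi\in\sigma^\vee$ we then claim
\[
A_{\cX_b,\cD_b}(\wt_\xi) \;=\; \sum_i c_i\,\wt_\xi(f_i|_{\cX_b}) \;=\; \Bigl\langle \xi,\;\sum_i c_i\beta_i\Bigr\rangle,
\]
which forces $\alpha_b = \sum_i c_i\beta_i$ to be independent of $b$. By the Altmann--Hausen reduction employed in the proof of Lemma \ref{lem:log discrep linear T-variety}, verifying this identity reduces to the toric case, where it amounts to the classical formula $A_{X,\sum a_j D_j}(\wt_\xi) = \sum(1-a_j)\xi_j$ for monomial valuations on toric log pairs (with $D_j = \mathrm{div}(x_j)$ of weight $e_j$ and coefficient $c_j = 1-a_j$). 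The main technical obstacle lies in setting up the $\bT$-semi-invariant $\bR$-representation compatibly with $\bR$-Cartier bookkeeping and verifying this weight formula on each fiber; once both are in place, local constancy of $\alpha_b$ follows immediately.
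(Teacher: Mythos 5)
There is a genuine gap at the heart of the argument: the identity
\[
A_{\cX_b,\cD_b}(\wt_\xi)\;=\;\sum_i c_i\,\wt_\xi(f_i|_{\cX_b})
\]
is false for a general $\bT$-semi-invariant representation $-(K_{\cX/\cB}+\cD)=\sum_i c_i\,\mathrm{div}(f_i)$. The log discrepancy of $\wt_\xi$ is not determined by the divisor class $-(K+\Delta)$ together with the weights of defining rational functions; it also contains the contribution of the torus action on the canonical sheaf itself (the Jacobian term $1+\ord_E(K_Y)$ in the definition of discrepancy). Concretely, on $X=\bA^1$ with $\Delta=0$ and the scaling $\bG_m$-action, $-K_X=\mathrm{div}(1)$ is a perfectly valid semi-invariant representation (weight $0$), and your formula would give $A_X(\ord_0)=0$ rather than $1$; the representation $-K_X=\mathrm{div}(x)$ gives the correct answer $1$. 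So the quantity $\sum_i c_i\beta_i$ depends on the chosen linearizations and trivializations, and your construction (``local $\bT$-linearization \ldots followed by $\bT$-averaging'') does not single out the correct one. The correct version of your formula works with semi-invariant \emph{sections} of $\cO_X(-m(K_X+\Delta))$, with weights taken for the natural $\bT$-action on pluri-log-canonical forms; proving that formula (and that such a section restricts compatibly to every fiber, which already uses the $\bR$-Gorenstein hypothesis) is essentially the whole content of the lemma, and the appeal to ``reduction to the toric case via Altmann--Hausen'' does not transport your chosen $f_i$ to toric monomials, so it does not close the gap.

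For comparison, the paper avoids any choice of trivialization: after reducing to $\bT=\bG_m$ exactly as you do, it builds the divisor computing $\wt_\xi$ \emph{in the family}, as the exceptional divisor $\cE\cong\Proj(\cR)$ of $\cY=\Proj\bigl(\bigoplus_m\fa_m\bigr)\to\cX$ where $\fa_m=\sum_{k\ge m}\cR_k$, checks that $\cE_b$ is a prime divisor on $\cY_b$ with $\wt_\xi=\ord_{\cE_b}$ on each fiber, and then restricts the single crepant relation $K_{\cY}+\cD_{\cY}+\cE\sim_\bR\pi^*(K_{\cX}+\cD)+a\cE$ to the fibers using the $\bR$-Gorenstein condition. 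If you want to salvage your approach, you would need to replace the $f_i$ by a semi-invariant nowhere-vanishing section of $\cO(-m(K_{\cX/\cB}+\cD))$ (after reducing to rational coefficients) and prove the weight formula $A_{\cX_b,\cD_b}(\wt_\xi)=\tfrac{1}{m}\langle\xi,\mu\rangle$ for its weight $\mu$; as written, the proposal does not establish the key identity.
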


Note that the morphism $\cX\to \cB$ is surjective since it has a section. Thus by Lemma \ref{lem:weight monoid constant}, the weight monoid $S$ and the weight cone $\sigma$ of the $\bT$-action on the fiber $b\in (\cX_b,\cD_b)$ is locally constant in the family.

\begin{proof}
We may assume that $\cB$ is affine and connected. Since the log discrepancy function is linear on the weight cone by Lemma \ref{lem:log discrep linear T-variety}, it suffices to prove the statement when $\xi$ is a primitive element in the dual $S^\vee$ of the weight monoid. In this case $\xi$ corresponds to a one-parameter subgroup of $\bT$. After replacing $\bT$ by this one-parameter subgroup, we reduce to the case when $\bT=\bG_m$.

Let $\cX=\Spec(\cR)$. Then we have a weight decomposition $\cR=\oplus_{m\in\bN} \cR_m$. Let $\fa_m:=\sum_{k\ge m} \cR_k \subseteq \cR$ and $\cY:=\Proj \left(\bigoplus_{m\in \bN} \fa_m\right)$. Then $\fa_m|_{\cX_b}$ is the valuation ideal of $\wt_\xi$ (as a valuation on $\cX_b$), and we have a proper birational morphism $\pi\colon \cY\to \cX$ with a unique exceptional divisor $\cE\cong \Proj(\cR)$. Note that $\cE$ is $\bQ$-Cartier, we have $\cE_b \cong \Proj(\cR_b)$ (where $\cR_b:=\cR\otimes k(b)$) is a prime divisor on $\cY_b$ for all $b\in \cB$ (since $\cX_b\cong \Spec(\cR_b)$ is integral), and $\wt_\xi = \ord_{\cE_b}$ as valuations on $\cX_b$. Let $\cD_{\cY}$ be the strict transform of $\cD$ on $\cY$. Then 
\[
K_{\cY}+D_{\cY}+\cE \sim_\bR \pi^*(K_{\cX}+\cD)+a\cE
\]
where $a=A_{\cX,\cD}(\cE)$. As the family $(\cX,\cD)\to \cB$ is $\bR$-Gorenstein, by restricting to the fibers we get 
\[
K_{\cY_b}+D_{\cY,b}+\cE_b \sim_\bR \pi^*(K_{\cX_b}+\cD_b)+a\cE_b
\]
for all $b\in \cB$; in other words, $A_{\cX_b,\cD_b}(\wt_\xi)=A_{\cX_b,\cD_b}(\cE_b)=a$ is independent of $b\in \cB$.
\end{proof}

We also need the special boundedness of klt singularities when the local volume is bounded away from zero. To state the precise result, we need some definitions.

\begin{defn}
A \emph{test configuration} of a singularity $x\in (X,\Delta)$ consists of a test configuration $(\cX,\cD)\to \bA^1$ of the pair $(X,\Delta)$ and a $\bG_m$-equivariant section $\sigma\colon \bA^1\to \cX$ such that $\sigma(t)=x$ for $t\neq 0$ under the isomorphism $(\cX_t,\cD_t)\cong (X,\Delta)$. We still denote the test configuration by $(\cX,\cD)\to \bA^1$ as the section $\sigma$ is uniquely determined by this data. When $(X,\Delta)$ is klt, the test configuration $(\cX,\cD)\to \bA^1$ is called \emph{special} if $(\cX,\cX_0+\cD)$ is plt. In this case $(\cX_0,\cD_0)$ is also klt (by adjunction) and we call $\sigma(0)\in (\cX_0,\cD_0)$ a \emph{special degeneration} of $x\in (X,\Delta)$.
\end{defn}

\begin{expl} \label{exp:tc induced by divisor}
Let $x\in (X=\Spec(R),\Delta)$ be a singularity and let $E$ be a prime divisor over $x\in X$. Let
\[
\fa_m:=\{f\in R\,|\,\ord_E(f)\ge m\}.
\]
Assume that the graded algebra $\gr_E R:=\bigoplus_{m\in\bN} \fa_m/\fa_{m+1}$ is finitely generated. Then so is $\bigoplus_{m\in \bN} \fa_m$. By the Rees construction, we get a flat family $\pi\colon \cX=\Spec(\cR)\to \bA^1$ where 
\[
\cR=\bigoplus_{m\in \bZ} t^{-m} \fa_m
\]
is the (extended) Rees algebra and $t$ is the affine coordinate on $\bA^1$. The $\bZ$-grading on $\cR$ induces a $\bG_m$-action on the family and this gives a test configuration of the singularity $x\in X$. Note that $\cX_0\cong \Spec(\gr_E R)$, and $\gr_E R$ is an integral domain, hence $\cX_0$ is integral. 

Let $\cD$ be the closure of $\Delta\times (\bA^1\setminus \{0\})$ in $\cX$ under the isomorphism $\cX\setminus \cX_0\cong X\times (\bA^1\setminus \{0\})$. Then $(\cX,\cD)\to \bA^1$ is a test configuration of $x\in (X,\Delta)$. Let us prove that if $K_X+\Delta$ is $\bR$-Cartier, then so is $K_{\cX}+\cD$. Indeed, by shrinking $X$ around $x$ (which preserves $\gr_E R$ and thus does not change the central fiber $\cX_0$) we may assume that $K_X+\Delta\sim_\bR 0$, thus $K_{\cX\setminus\cX_0}+\cD|_{\cX\setminus\cX_0}\sim_\bR 0$. As $\cX_0$ is irreducible, we deduce that $K_{\cX}+\cD\sim_\bR c\cX_0$ for some $c\in \bR$. But $\cX_0\sim_\bR 0$, hence $K_{\cX}+\cD\sim_\bR 0$. In particular, $K_{\cX}+\cD$ is $\bR$-Cartier. It follows (by inversion of adjunction) that if $x\in (X,\Delta)$ is klt, then $(\cX,\cD)\to \bA^1$ is a special test configuration if and only if $(\cX_0,\cD_0)$ is klt. 
\end{expl}

The following special boundedness statement is one of the main results in the local K-stability theory of klt singularities.

\begin{thm} \label{thm:special bdd klt singularity}
Let $n\in \bN$, $\varepsilon>0$ and let $I\subseteq [0,1]$ be a finite set. Then there exists an $\bR$-Gorenstein family $\cB\subseteq (\cX,\cD)\to \cB$ of klt singularities with the action of an algebraic torus $\bT$ over $\cB$, such that:

For any klt singularity $x\in (X,\Delta)$ of dimension $n$, local volume at least $\varepsilon$ and coefficients in $I$, and any torus $\bT_0\subseteq \Aut(X,\Delta)$, there exists a $\bT_0$-equivariant special degeneration of $x\in (X,\Delta)$ to $b\in (\cX_b,\cD_b)$ for some $b\in \cB$ that identifies $\bT_0$ with a subtorus of $\bT_b$.
\end{thm}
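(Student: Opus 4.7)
The plan is to reduce the statement to the boundedness of K-semistable log Fano cone singularities by way of the stable degeneration theory of klt singularities.

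First, for each klt singularity $x\in(X,\Delta)$ of dimension $n$ with $\hvol(x,X,\Delta)\ge \varepsilon$ and coefficients in $I$, I would invoke the Stable Degeneration Conjecture: by \cite{Blu-minimizer-exist} there exists a valuation $v_0\in\Val_{X,x}$ realizing the infimum in Definition \ref{defn:local volume}, and by subsequent work of Xu and Xu--Zhuang this minimizer is quasi-monomial with finitely generated associated graded algebra $\gr_{v_0}\cO_{X,x}$. The Rees construction of Example \ref{exp:tc induced by divisor} (applied in its quasi-monomial generalization) then produces a special test configuration of the singularity $x\in(X,\Delta)$ whose central fiber $o\in (X_0,\Delta_0)$ is a K-semistable log Fano cone singularity satisfying $\hvol(o,X_0,\Delta_0)=\hvol(x,X,\Delta)\ge \varepsilon$.

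Second, the uniqueness of the minimizer up to rescaling, established in \cite{XZ-minimizer-unique}, is the key to the $\bT_0$-equivariance: for any torus $\bT_0\subseteq \Aut(X,\Delta)$, the valuation $v_0$ is automatically $\bT_0$-invariant, since pushing forward by any element of $\bT_0$ would otherwise yield a distinct minimizer. Consequently the associated test configuration is $\bT_0$-equivariant, and $\bT_0$ descends to an action on $(X_0,\Delta_0)$ commuting with the canonical $\bG_m$-action coming from the Rees construction; the product embeds $\bT_0$ as a subtorus of a torus on the central fiber, which may be enlarged to a maximal torus $\bT_o\subseteq\Aut(X_0,\Delta_0)$.

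Third, the bulk of the work consists in showing that K-semistable log Fano cone singularities of dimension $n$ with local volume at least $\varepsilon$ and coefficients in $I$ form a bounded family with a fiberwise maximal torus action. One strategy is to pass to the orbifold base (which is a K-semistable log Fano pair whose anti-canonical volume is controlled by the cone's local volume together with the torus data), apply the boundedness of K-semistable log Fano pairs provided by the construction of the K-moduli space, and then reconstruct the cones by a uniform polarization argument. Packaging the resulting universal family as an $\bR$-Gorenstein family $\cB\subseteq(\cX,\cD)\to\cB$ over a finite type base $\cB$ with a fiberwise torus action $\bT$, and combining with the first two steps, yields the desired statement.

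The main obstacle is the third step: establishing the boundedness of K-semistable log Fano cones of bounded local volume, and equipping the universal family with a fiberwise torus $\bT$ large enough to realize each individual $\bT_0$ as a fiberwise subtorus of $\bT_b$. This requires a careful use of the K-moduli theory for log Fano pairs together with Lemma \ref{lem:weight monoid constant} and Lemma \ref{lem:log discrep constant T-family} to control how the torus actions and weight data vary in families.
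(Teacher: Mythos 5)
Your first two steps coincide with the paper's argument: existence and uniqueness (up to rescaling) of the normalized volume minimizer gives a $\bT_0$-invariant quasi-monomial valuation $v$, and the degeneration to $\gr_v R$ is a $\bT_0$-equivariant special degeneration to a K-semistable log Fano cone with the same local volume. One technical point you gloss over: a quasi-monomial $v$ of rank $>1$ does not directly give a test configuration over $\bA^1$; the paper fixes this by perturbing $v$ ($\bT_0$-equivariantly) to a divisorial valuation $v_0=\lambda\cdot\ord_E$ with $\gr_{v_0}R\cong\gr_v R$ via \cite{LX-stability-higher-rank}*{Lemma 2.10}, and then applying the Rees construction of Example \ref{exp:tc induced by divisor} to $E$. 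This is a genuine step, not just a ``quasi-monomial generalization'' of the Rees construction.

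The real gap is in your third step. The paper does not re-derive boundedness of K-semistable cones from K-moduli of log Fano pairs; it invokes \cite{XZ-local-bdd}*{Theorem 1.4}, which asserts boundedness of \emph{polarized} log Fano cone singularities $x_0\in(X_0,\Delta_0;\xi_0)$ of bounded dimension, coefficients in a fixed finite set, local volume $\ge\varepsilon$, and volume ratio $\Theta\ge\tfrac12$. To apply it one must choose a rational polarization: the paper perturbs the (possibly irrational) K-semistable Reeb field $\xi_v$ inside the Reeb cone to some $\xi_0$ whose generated torus contains both $\langle\xi_v\rangle$ and $\bT_0$ and still satisfies $\Theta(X_0,\Delta_0;\xi_0)\ge\tfrac12$ (using continuity of the normalized volume on the Reeb cone); boundedness then guarantees $\xi_0$, hence $\bT_0$, lies in the fiberwise torus $\bT_b$. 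Your proposed route via the ``orbifold base'' is only available for quasi-regular cones (rank-one, rational Reeb field); for irregular K-semistable cones there is no orbifold quotient by the Reeb action, and handling exactly this case is the content of \cite{XZ-local-bdd}. You also never address the volume-ratio hypothesis, without which the cited boundedness theorem does not apply. As written, step three would not close.
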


\begin{proof}
This essentially follows from the stable degeneration theory of klt singularities together with the main results of \cite{XZ-local-bdd}. We refer to \cite[Section 2.2-2.3]{XZ-local-bdd} for the relevant definitions that appear in this proof. Enlarging the finite set $I$ if necessarily, we may assume that for any $a,b\in I$ with $a+b\le 1$ we have $a+b\in I$. By the Stable Degeneration Theorem \cites{Blu-minimizer-exist,LX-stability-higher-rank,Xu-quasi-monomial,XZ-minimizer-unique,XZ-SDC} (see also \cite{Z-SDC-survey} for a survey and \cite{Z-mld^K-2} for the extension to real coefficients), for every klt singularity $x\in (X=\Spec(R),\Delta)$, up to rescaling there is a unique and hence $\bT_0$-invariant quasi-monomial valuation $v$ minimizing the normalized volume function $\hvol_{X,\Delta}$. It induces a $\bT_0$-equivariant degeneration of $x\in (X,\Delta)$ to a K-semistable log Fano cone singularity $x_0\in (X_0=\Spec(\gr_v R),\Delta_0;
\xi_v)$ with 
\[
\hvol(x,X,\Delta)=\hvol(x_0,X_0,\Delta_0).
\]
Note that this implies that the torus $\langle \xi_v \rangle$ generated by the Reeb vector field $\xi_v$ commutes with $\bT_0$. Here $\gr_v R$ is defined by $\bigoplus_{\lambda\in\bR} \fa_\lambda/\fa_{>\lambda}$ where $\fa_\lambda = \{f\in R\,|\,v(f)\ge \lambda\}$ and $\fa_{>\lambda} = \{f\in R\,|\,v(f)>\lambda\}$.

We claim that $x_0\in (X_0,\Delta_0)$ can be realized as a $\bT_0$-equivariant special degeneration of $x\in (X,\Delta)$. Indeed, by \cite[Lemma 2.10]{LX-stability-higher-rank}, there exists some divisorial valuation $v_0$ over $x\in X$ (obtained from a small perturbation of the weights defining the quasi-monomial valuations $v$) such that $\gr_v R \cong \gr_{v_0} R$. Moreover, since $v$ is $\bT_0$-invariant, the construction in the proof of {\it loc. cit.} can be made $\bT_0$-equivariant (for example, we can choose $\bT_0$-invariant generators of $\gr_v R$ as well as their lifts to $R$), thus we may assume that the perturbed valuation $v_0$ is also $\bT_0$-invariant. Write $v_0=\lambda\cdot \ord_E$ where $E$ is a prime divisor over $x\in X$. Since $\gr_E R\cong \gr_{v_0} R\cong \gr_v R$, the divisor $E$ induces a $\bT_0$-equivariant special degeneration of $x\in (X,\Delta)$ to $x_0\in (X_0,\Delta_0)$ by the construction in Example \ref{exp:tc induced by divisor}. This proves the claim.

By construction, $\hvol(x_0,X_0,\Delta_0)\ge \varepsilon$ and the coefficients of $\Delta_0$ belong to $I$. Since $x_0\in (X_0,\Delta_0;
\xi_v)$ is K-semistable, we also have $\Theta(X_0,\Delta_0;\xi_v)=1$ where $\Theta(X_0,\Delta_0;\xi_v)$ denotes the volume ratio (see \cite[Definition 2.7]{XZ-local-bdd}). By perturbing $\xi_v$ in the Reeb cone, we can choose some Reeb vector field $\xi_0$ such that the torus $\langle \xi_0 \rangle$ it generates contains both $\langle \xi_v \rangle$ and $\bT_0$ and we have $\Theta(X_0,\Delta_0;\xi_0)\ge \frac{1}{2}$ (this uses the continuity of the normalized volume function on the Reeb cone, see \cite[Lemma 2.11]{Z-mld^K-2}). By \cite[Theorem 1.4]{XZ-local-bdd}, such polarized log Fano cone singularities $x_0\in (X_0,\Delta_0;\xi_0)$ are bounded. By definition (see \cite[Definition 2.16]{XZ-local-bdd}), this means that there exists an $\bR$-Gorenstein family $\cB\subseteq (\cX,\cD)\to \cB$ of klt singularities with the action of an algebraic torus $\bT$ over $\cB$, such that $x_0\in (X_0,\Delta_0;\xi_0)$ is isomorphic to some fiber $b\in (\cX_b,\cD_b)$ and $\xi_0$ is contained in the Reeb cone of $\bT_b$. Thus $\bT_b$ contains the torus generated by $\xi_0$ and in particular it contains $\bT_0$. This proves the theorem.
\end{proof}

\subsection{Log Fano and log Calabi-Yau fibrations} 

A key step in bounding the fibers of extremal contractions and flips (Section \ref{sec:fiber bdd}) is the special boundedness of certain log Fano and log Calabi-Yau fibrations. In this subsection, we discuss some properties of such fibrations and define their special boundedness.

\begin{defn} 
Let $(X,\Delta)$ be a pair and $X\to Z$ a projective morphism. 
\begin{enumerate}
    \item We say that $(X,\Delta)$ is \emph{log Fano} over $Z$ if $(X,\Delta)$ is klt and $-(K_X+\Delta)$ is ample over $Z$.
    \item We say that $(X,\Delta)$ is \emph{log Calabi-Yau} over $Z$ if $(X,\Delta)$ is log canonical and $K_X+\Delta\sim_{\bR,Z} 0$.
    \item We say that $(X,\Delta)$ is \emph{of Fano type} over $Z$ if there exists some $\bR$-divisor $\Delta'\ge \Delta$ such that $(X,\Delta')$ is log Fano over $Z$. Equivalently, $-(K_X+\Delta)$ is big over $Z$ and there exists some $\bR$-divisor $0\le D\sim_{\bR,Z} -(K_X+\Delta)$ such that $(X,\Delta+D)$ is klt.
\end{enumerate}
A fibration $(X,\Delta)\to Z$ (or fibration germ $(X,\Delta)\to Z\ni z$) is called a \emph{log Fano} (resp. \emph{log Calabi-Yau}, resp. \emph{Fano type}) \emph{fibration} (or \emph{fibration germ}) if $(X,\Delta)$ is log Fano (resp. log Calabi-Yau, resp. of Fano type) over $Z$.
\end{defn}

\begin{lem} \label{lem:Fano type}
Let $f\colon (X',\Delta')\dashrightarrow (X,\Delta)$ be a birational map between pairs that are projective over $Z$. Assume that $K_X+\Delta$ is $\bR$-Cartier, $f^*(K_X+\Delta)\ge K_{X'}+\Delta'$, and $(X,\Delta)$ is of Fano type over $Z$. Then $(X',\Delta')$ is also of Fano type over $Z$.
\end{lem}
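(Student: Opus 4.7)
The plan is to use the equivalent characterization of Fano type recalled in the definition: it suffices to produce an effective $\bR$-divisor $D'$ with $D' \sim_{\bR, Z} -(K_{X'}+\Delta')$ such that $(X', \Delta' + D')$ is klt, and to verify that $-(K_{X'}+\Delta')$ is big over $Z$.

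The first step is to choose $D_0 \ge 0$ with $D_0 \sim_{\bR, Z} -(K_X+\Delta)$ such that $(X, \Delta_0 := \Delta + D_0)$ is klt; such a $D_0$ exists because $(X,\Delta)$ is of Fano type over $Z$, and it is automatically big over $Z$. In particular, $(X, \Delta_0)$ is a klt \emph{log Calabi--Yau} pair over $Z$, which is the key leverage for the whole argument. I now pick a common resolution $p \colon W \to X'$, $q \colon W \to X$ of $f$, and define $\Delta'_0$ on $X'$ by $K_{X'} + \Delta'_0 := f^*(K_X+\Delta_0) = p_* q^*(K_X+\Delta_0)$. Using linearity of the birational pullback together with the hypothesis $f^*(K_X+\Delta) \ge K_{X'}+\Delta'$ and the effectivity of $f^*D_0$, one immediately checks that $\Delta'_0 \ge \Delta' + f^*D_0 \ge \Delta' \ge 0$.

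The main step is to verify that $(X',\Delta'_0)$ is klt. Consider the $\bR$-divisor $E := q^*(K_X+\Delta_0) - p^*(K_{X'}+\Delta'_0)$ on $W$. It is $p$-exceptional by construction of $\Delta'_0$, and $\bR$-linearly trivial over $Z$ because both $K_X+\Delta_0$ and $K_{X'}+\Delta'_0$ are. Hence for every curve $C$ contracted by $p$, which necessarily lies over a point of $Z$, we have $E \cdot C = 0$; in particular both $E$ and $-E$ are $p$-nef. Applying the negativity lemma to each forces $E = 0$. Consequently, the sub-boundary $q^*(K_X+\Delta_0) - K_W$, whose coefficients are all $<1$ by the klt hypothesis on $\Delta_0$, coincides with $p^*(K_{X'}+\Delta'_0) - K_W$, so $(X',\Delta'_0)$ is sub-klt; combined with $\Delta'_0 \ge 0$ this gives klt.

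Setting $D' := \Delta'_0 - \Delta'$ yields $D' \ge 0$ with $D' \sim_{\bR, Z} -(K_{X'}+\Delta')$ (using $K_{X'}+\Delta'_0 \sim_{\bR, Z} 0$), and $(X',\Delta'+D') = (X',\Delta'_0)$ klt. Bigness of $-(K_{X'}+\Delta')$ over $Z$ then follows from $D' \ge f^*D_0$, combined with the fact that the birational pullback of a big divisor is big, which in turn follows from the inclusion of sections $H^0(W, m\, q^*D_0) \subseteq H^0(X', m\, f^*D_0)$. The main obstacle is the negativity lemma step: the hypothesis only provides an inequality of discrepancies, not an equality, so klt-ness cannot be directly transferred along $f$. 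The trick of passing first to the klt log Calabi--Yau completion $(X,\Delta_0)$ is precisely what makes $E$ numerically $p$-trivial rather than merely $p$-anti-nef, which promotes the one-sided estimate $E \ge 0$ of the negativity lemma to the equality $E = 0$ that is actually needed.
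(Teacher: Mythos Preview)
Your proof is correct and follows the same approach as the paper's: choose a klt log Calabi--Yau completion $(X,\Delta_0)$ on the target, and use that $K_X+\Delta_0\sim_{\bR,Z}0$ to make the crepant pullback to $X'$ work via the negativity lemma. The paper's proof is a terse two-sentence version (``by taking crepant pullback we see that both conditions are also satisfied''); you have supplied exactly the details it suppresses.

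One small imprecision: in your bigness step you invoke the section inclusion $H^0(W,m\,q^*D_0)\subseteq H^0(X',m\,f^*D_0)$ to conclude that the birational pullback of a big divisor is big, but you only know $D_0$ is big \emph{over $Z$}, not absolutely big. The cleanest fix is to restrict everything to the generic fiber $X_\eta\to\Spec k(\eta)$, where ``big over $Z$'' becomes absolute bigness on a projective variety and your section-count argument applies verbatim; alternatively, one can first perturb $D_0$ (via a small convex combination with an effective representative of $-(K_X+\Delta)$ containing an ample part) so that $D_0$ itself is big. Either way the conclusion stands.
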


\begin{proof}
Since $(X,\Delta)$ is of Fano type over $Z$, we know that $-(K_X+\Delta)$ is big over $Z$ and there exists some $\bR$-divisor $0\le D\sim_{\bR,Z} -(K_X+\Delta)$ such that $(X,\Delta+D)$ is klt. As $f^*(K_X+\Delta)\ge K_{X'}+\Delta'$, by taking crepant pullback we see that both conditions are also satisfied by $(X',\Delta')$, hence $(X',\Delta')$ is of Fano type over $Z$.
\end{proof}

\begin{defn}
A \emph{test configuration} of a fibration $f\colon (X,\Delta)\to Z$ consists of a test configuration $(\cX,\cD)\to \bA^1$ of the pair $(X,\Delta)$ and a $\bG_m$-equivariant fibration $\widetilde{f}\colon (\cX,\cD)\to \cZ$ over $\bA^1$, where $\cZ$ is a test configuration of $Z$, such that $\widetilde{f}|_{\cX_t}=f$ for all $0\neq t \in \bA^1$. For simplicity, we denote the test configuration by $(\cX,\cD)\to \cZ$. Test configuration of a fibration germ $(X,\Delta)\to Z\ni z$ is defined as a test configuration $(\cX,\cD)\to \cZ$ of the fibration $(X,\Delta)\to Z$ such that $\cZ$ is also a test configuration of the singularity $z\in Z$.

A test configuration $(\cX,\cD)\to \cZ$ of a log Fano fibration $(X,\Delta)\to Z$ is called \emph{special} if $(\cX,\cX_0+\cD)$ is plt, $\cZ_0$ is normal and $-(K_{\cX}+\cD)$ is ample over $\cZ$. Note that under these conditions $\cX_0$ is normal by \cite{Kol13}*{Theorem 4.16}. In this case $(\cX_0,\cD_0)\to \cZ_0$ is also a log Fano fibration and we call it a \emph{special degeneration} of $(X,\Delta)\to Z$. Special test configurations and special degenerations of a log Fano fibration germ are defined analogously.
\end{defn}

\begin{defn}
We say that $(\cX,\cD)\to \cZ$ is an \emph{$\bR$-Gorenstein family of log Fano fibrations} over a finite type base $\cB$ if:
\begin{enumerate}
    \item Both $\cX$ and $\cZ$ are flat over $\cB$, and $\cX\to \cZ$ is a fibration,
    \item for any $b\in \cB$, the fibers $\cX_b$ and $\cZ_b$ are connected and normal,
    \item $\cD\geq 0$ is an $\bR$-divisor on $\cX$ whose support does not contain any fiber $\cX_b$ of $\cX\to \cB$,
    \item $\cX$ and $\cB$ are normal, $-(K_{\cX/\cB}+\cD)$ is $\bR$-Cartier and ample over $\cZ$, and
    \item the pair $(\cX_b,\cD_b)$ is klt for any closed point $b\in \cB$.
\end{enumerate}
In particular, $(\cX_b,\cD_b)\to \cZ_b$ is a log Fano fibration for every $b\in \cB$.
\end{defn}

\begin{defn} \label{defn:Fano type fibration bdd}
We say that a set $\cS$ of Fano type fibrations is \emph{bounded} if there exist an $\bR$-Gorenstein family of log Fano fibration $(\cX,\widetilde{\cD})\to \cZ$ over a finite type base $\cB$ and an $\bR$-divisor $0\leq\cD\le \tcD$, such that any Fano type fibration $(X,\Delta)\to Z$ in $\cS$ is isomorphic to $(\cX_b,\cD_b)\to \cZ_b$ for some $b\in \cB$.
\end{defn}

For log Fano fibrations, it is tempting to define their boundedness by requiring $\cD=\tcD$ in the above definition. The next lemma shows that these two definitions are equivalent.

\begin{lem} \label{lem:bdd defn equiv}
Let $\cS$ be a set of log Fano fibrations. Assume that $\cS$ is bounded as a set of Fano type fibrations. Then there exists a $\bR$-Gorenstein family $(\cX,\cD)\to \cZ$ of log Fano fibrations over a finite type base $\cB$ such that any log Fano fibration $(X,\Delta)\to Z$ in $\cS$ is isomorphic to $(\cX_b,\cD_b)\to \cZ_b$ for some $b\in \cB$.
\end{lem}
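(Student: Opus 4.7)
The plan is to stratify the base $\cB$ and then, on each stratum, produce an $\bR$-Gorenstein family of log Fano fibrations in which $\cD$ itself serves as the boundary. The strategy combines a Noetherian-induction and constructibility argument with a relative anticanonical model construction for the pair $(\cX,\cD)$.

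First I would apply generic flatness to each irreducible component of $\cD$ and $\tcD$, and invoke Noetherian induction on $\cB$, to reduce to the case where $\cB$ is connected and reduced and the fiberwise combinatorial type (supports and multiplicities) of $\cD$ is constant across $\cB$. Within this reduction, the locus $\cB_0 \subseteq \cB$ of points $b$ for which $(\cX_b,\cD_b)\to\cZ_b$ is log Fano is constructible (klt is automatic from $\cD \le \tcD$, and fiberwise ampleness together with $\bR$-Cartier-ness of $\cD_b$ is constructible once the fiberwise combinatorics of $\cD$ are fixed). By further stratification and a second Noetherian-induction step, I assume $\cB_0 = \cB$.

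Next I observe that $(\cX,\cD)$ is klt (since $(\cX,\tcD)$ is klt and $\cD \le \tcD$) and of Fano type over $\cZ$ (with witness $\tcD$: by hypothesis $-(K_{\cX/\cB}+\tcD)$ is ample over $\cZ$ and $\tcD-\cD \ge 0$). The plan is then to pass to the relative anticanonical model of $(\cX,\cD)$ over $\cZ$; equivalently, to take relative $\Proj$ of the anticanonical section ring
\[
\bigoplus_{m} (\pi_{\cZ})_* \cO_{\cX}\big(\lfloor -m(K_{\cX/\cB}+\cD) \rfloor\big),
\]
which is expected to be finitely generated by Fano-type finite generation in the style of BCHM. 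This produces a new family $(\cX',\cD')\to\cZ'$ over $\cB$ with $-(K_{\cX'/\cB}+\cD')$ $\bR$-Cartier and ample over $\cZ'$, i.e.\ an $\bR$-Gorenstein family of log Fano fibrations in the required sense.

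Finally, I would verify that for every $b\in\cB$ the fiber $(\cX'_b,\cD'_b)\to\cZ'_b$ is isomorphic to $(\cX_b,\cD_b)\to\cZ_b$. Since by the reduction each fiber is already log Fano, it coincides with its own relative anticanonical model, so the birational modifications appearing in the construction are fiberwise isomorphisms and hence isomorphisms on a neighborhood of every fiber. The main obstacle I anticipate is the relative MMP / finite generation statement over the multi-dimensional base $\cB$ together with the verification that the construction commutes with base change to each closed point $b\in\cB$; once these technical points are in place, the new family bounds all log Fano fibrations in $\cS$ in the stronger sense required.
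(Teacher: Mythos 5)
Your route is genuinely different from the paper's, and it has two real gaps. The paper's proof is short: since $(\cX,\tcD)\to\cB$ is a bounded family of klt pairs, all fibers $(\cX_b,\tcD_b)$ (hence all members of $\cS$) are uniformly $\varepsilon$-klt for some $\varepsilon>0$, and then \cite{HX-CY-bdd}*{Proposition 2.4} together with Noetherian induction shows that, after stratifying $\cB$, the divisor $K_{\cX/\cB}+\cD$ is already $\bR$-Cartier on the \emph{original} family; no new model is constructed. The whole content of the lemma is thus the passage from fiberwise to relative $\bR$-Cartierness, which is a klt deformation-theory statement.

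The first gap is your assertion that the locus of $b$ where $(\cX_b,\cD_b)\to\cZ_b$ is log Fano is constructible ``once the fiberwise combinatorics of $\cD$ are fixed.'' The delicate part of that locus is where $K_{\cX_b}+\cD_b$ is $\bR$-Cartier; this is not a formal constructibility statement and does not follow from fixing supports and multiplicities. It is precisely the point where one must invoke $\bQ$-Cartier approximation / deformation results for klt singularities (de Fernex--Hacon, Koll\'ar, or \cite{HX-CY-bdd}*{Proposition 2.4}), and the uniform $\varepsilon$-klt bound coming from the bounded family $(\cX,\tcD)$ is exactly the hypothesis that makes those results applicable. Without this you cannot carry out the reduction to $\cB_0=\cB$, and without that reduction your final family is not a family of log Fano fibrations. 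The second gap is the one you flag yourself: the commutation of the relative anticanonical model with base change to closed points of $\cB$. The fact that each fiber is its own ample model does not imply that the relative ample model is a fiberwise isomorphism: the birational modification $\cX\dashrightarrow\cX'$ is computed from sections over $\cB$, these need not restrict surjectively to sections over a special fiber, and the modification could in principle contract a divisor contained in a special fiber. Making this work would require deformation-invariance-of-sections inputs of the type used elsewhere in the paper (e.g.\ \cite{HMX-BirAut}*{Theorem 1.8} in Lemma \ref{lem:pseff cone constant}), i.e.\ it is not a routine verification but comparable in difficulty to the statement being proved. (There is also a minor issue with forming section rings of the $\bR$-divisor $-(K_{\cX/\cB}+\cD)$ via rounding, but this is secondary.)
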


\begin{proof}
Let $(\cX,\tcD)\to \cZ$ and $0\leq\cD\le \tcD$ be as in Definition \ref{defn:Fano type fibration bdd}. Then there exists some constant $\varepsilon>0$ such that the all fibers of $(\cX,\tcD)\to \cB$ are $\varepsilon$-klt (i.e. their total minimal log discrepancies are larger than $\varepsilon$). It follows that for any $(X,\Delta)\to Z$ in $\cS$ we also have $(X,\Delta)$ is $\varepsilon$-klt. By \cite[Proposition 2.4]{HX-CY-bdd} and Noetherian induction, we deduce that possibly after stratifying $\cB$, we may assume that $K_{\cX/\cB}+\cD$ is $\bR$-Cartier. Thus $(\cX,\cD)\to \cZ$ is an $\bR$-Gorenstein family of log Fano fibrations and we are done.
\end{proof}

\begin{defn}
Let $(X,\Delta)\to Z$ be a Fano type fibration (or fibration germ). A test configuration $(\cX,\cD)\to \cZ$ of $(X,\Delta)\to Z$ is said to be \emph{of Fano type} if there exists an $\bR$-divisor $\widetilde{\cD}\ge \cD\geq 0$ on $\cX$ such that 
$(\cX,\tcD)\to \cZ$ is an $\bR$-Gorenstein family of log Fano fibrations over $\bA^1$. In particular, $(\cX_0,\cD_0)\to \cZ_0$ is a Fano type fibration. In this case we also say that $(X,\Delta)\to Z$ degenerates to $(\cX_0,\cD_0)\to \cZ_0$ by a Fano type test configuration.
\end{defn}

\begin{lem} \label{lem:Fano type tc}
Let $(\cX,\cD)\to \cZ$ be a Fano type test configuration (of some fibration), let $(\cX',\cD')\to \cZ$ be another test configuration (of a possibly different fibration), and let $f\colon \cX'\dashrightarrow \cX$ be a $\bG_m$-equivariant birational map over $\cZ$. Assume that $\cX'_0$ is irreducible, $f$ is an isomorphism at the generic point of $\cX_0$, and $f^*(K_{\cX}+{\cD})\ge K_{\cX'}+\cD'$. Then $(\cX',\cD')\to \cZ$ is also a Fano type test configuration.
\end{lem}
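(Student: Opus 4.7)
\textit{Plan.} I seek $\tcD'\ge \cD'$ on $\cX'$ such that $(\cX',\tcD')\to \cZ$ is an $\bR$-Gorenstein family of log Fano fibrations over $\bA^1$. Fix $\tcD\ge\cD$ witnessing the Fano type structure of $(\cX,\cD)\to\cZ$, so that $(\cX_0,\tcD_0)$ is klt and by inversion of adjunction $(\cX,\cX_0+\tcD)$ is plt near $\cX_0$. Since $f$ is a birational map over $\bA^1$ and both $\cX_0$ and $\cX'_0$ are the flat pullbacks of $\{0\}\in\bA^1$, we have $f^*\cX_0=\cX'_0$; moreover, $\tcD-\cD$ is $\bR$-Cartier as the difference of the $\bR$-Cartier divisors $K_\cX+\tcD$ and $K_\cX+\cD$, so $f^*(\tcD-\cD)\ge 0$ is a well-defined $\bR$-Weil divisor.

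My first step is to apply Lemma \ref{lem:Fano type}. Setting $\Theta := \cD'+f^*(\tcD-\cD)$, the hypothesis $f^*(K_\cX+\cD)\ge K_{\cX'}+\cD'$ gives $f^*(K_\cX+\tcD)\ge K_{\cX'}+\Theta$, so $(\cX',\Theta)$ is of Fano type over $\cZ$. Adding $\cX_0$ to both sides (using $f^*\cX_0=\cX'_0$) yields $f^*(K_\cX+\cX_0+\tcD)\ge K_{\cX'}+\cX'_0+\Theta$. On a common $\bG_m$-equivariant log resolution $\mu\colon W\to \cX$, $\nu\colon W\to \cX'$, this divisor-level comparison shows that the log discrepancies of $(\cX',\cX'_0+\Theta)$ at every divisorial valuation $E\ne\cX'_0$ centered near $\cX'_0$ dominate the corresponding log discrepancies of $(\cX,\cX_0+\tcD)$ near $\cX_0$, which are positive by plt-ness. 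Hence $(\cX',\cX'_0+\Theta)$ is plt near $\cX'_0$; in particular $\cX'_0$ is reduced and normal, and the different $\Theta_0=\Theta|_{\cX'_0}$ (taken using that $\cX'_0$ is Cartier) makes $(\cX'_0,\Theta_0)$ klt.

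The remaining step is to complete $\Theta$ to a Fano type boundary $\tcD'\ge \Theta$ with $(\cX',\tcD')$ klt log Fano over $\cZ$ that \emph{also} preserves plt near $\cX'_0$; this is the main obstacle, since an arbitrary Fano completion can reduce log discrepancies on vertical divisorial valuations centered near $\cX'_0$ enough to break plt. To arrange it, I would decompose $-(K_{\cX'}+\Theta)=A+F$ with $A$ ample over $\cZ$ and $F\ge 0$, and use Lemma \ref{lem:Bertini ample Q-div} to pick a general $D\in |mA|$ (for $m$ sufficiently large and divisible) whose restriction to $\cX'_0$ is transverse to $\Theta_0$; setting $\tcD' := \Theta + \tfrac{1}{m}D + F'$, where $F'$ is a small effective perturbation of $F$ chosen so that $(\cX',\tcD')$ is klt log Fano over $\cZ$, the restriction $\tcD'|_{\cX'_0}$ stays klt with respect to $(\cX'_0,\Theta_0)$. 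By inversion of adjunction, $(\cX',\cX'_0+\tcD')$ is then plt near $\cX'_0$, giving $(\cX'_0,\tcD'_0)$ klt. After subtracting any vertical component of $\tcD'$ (harmless since $\cX'_0\sim 0$ and subtraction of an effective vertical divisor only raises log discrepancies on vertical valuations), the divisor $\tcD'$ is horizontal; the remaining conditions—flatness, normality of $\cX'$, $\bR$-Cartierness and relative ampleness of $-(K_{\cX'/\bA^1}+\tcD')$ over $\cZ$, and klt-ness of fibers for $b\ne 0$—are then routine, so $(\cX',\tcD')\to \cZ$ is the required $\bR$-Gorenstein family of log Fano fibrations over $\bA^1$.
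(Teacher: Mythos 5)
Your strategy---transfer the polarizing boundary $\tcD$ from $\cX$ to $\cX'$ first, then rebuild a log Fano boundary---is genuinely different from the paper's, but it breaks at its central step. The paper instead reformulates the definition in a birationally robust way: $(\cX,\cD)\to\cZ$ is a Fano type test configuration iff $-(K_\cX+\cD)$ is big over $\cZ$ and there is some $0\le \cG\sim_{\bR,\cZ}-(K_\cX+\cD)$ with $(\cX,\cX_t+\cD+\cG)$ plt for every $t$. Because $K_\cX+\cX_t+\cD+\cG\sim_{\bR,\cZ}0$, the hypothesis $f^*(K_\cX+\cD)\ge K_{\cX'}+\cD'$ upgrades, via the negativity lemma exactly as in the proof of Lemma \ref{lem:adjunction ineq}, to an honest crepant identity between $(\cX,\cX_t+\cD+\cG)$ and $(\cX',\cX'_t+\cD'+\cG')$ with $\cG'\ge 0$, so plt-ness transfers for free. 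Your route transfers the relatively \emph{anti-ample} class $K_\cX+\tcD$ instead, which is exactly the situation where such comparisons fail.

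Concretely, the gap is the assertion that $f^*(K_\cX+\cX_0+\tcD)\ge K_{\cX'}+\cX'_0+\Theta$ forces $A_{\cX',\cX'_0+\Theta}(E)\ge A_{\cX,\cX_0+\tcD}(E)$. The hypothesis is an inequality of divisors on $\cX'$ (after pushing forward by $\nu$ from the common resolution $W$) and says nothing about the coefficients of the $\nu$-exceptional part of $\mu^*(K_\cX+\cX_0+\tcD)-\nu^*(K_{\cX'}+\cX'_0+\Theta)$; moreover $K_{\cX'}+\Theta$ is not known to be $\bR$-Cartier, so the right-hand log discrepancies need not even be defined. Worse, where the comparison can be made, it goes the wrong way: if $f$ is a small modification, then $\mu^*\tcD-\nu^*\Theta$ is $\nu$-nef (as $\tcD$ is ample over $\cZ$) with trivial $\nu$-pushforward, so the negativity lemma gives $\mu^*\tcD\le\nu^*\Theta$, i.e.\ $A_{\cX',\cX'_0+\Theta}(E)\le A_{\cX,\cX_0+\tcD}(E)$, with strict inequality for a nontrivial flop. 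So the domination you use is false; the plt conclusion for $(\cX',\cX'_0+\Theta)$ is true, but only via the log Calabi--Yau detour above (transfer $\cD+\cG$ crepantly, then discard the effective excess). Your final completion step also needs repair---with $\tfrac1mD+F'\sim_{\bR,\cZ}-(K_{\cX'}+\Theta)$ the divisor $K_{\cX'}+\tcD'$ is $\sim_{\bR,\cZ}0$ rather than anti-ample, so one must scale down the big part---but that is minor compared with the transfer issue.
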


\begin{proof}
The condition that $(\cX,\cD)\to \cZ$ is a Fano type test configuration is equivalent to saying that $-(K_{\cX}+\cD)$ is big over $\cZ$ and there exists some $\bR$-divisor $0\le \cG\sim_{\bR,\cZ} -(K_{\cX}+\cD)$ on $\cX$ such that $(\cX,\cX_t+\cD+\cG)$ is plt for every $t\in \bA^1$. These conditions are preserved by the given birational map $f$.
\end{proof}

\begin{defn} \label{defn:special bdd}
We say that a set $\cS$ of log Fano fibrations germs is \emph{specially bounded} if there exists a bounded set $\cT$ of log Fano fibrations such that any log Fano fibration germ $(X,\Delta)\to Z\ni z$ in $\cS$ specially degenerates to some $(X_0,\Delta_0)\to Z_0\ni z_0$ in $\cT$.

We say that a set $\cS$ of Fano type fibrations germs is \emph{bounded up to Fano type degenerations} if there exists a bounded set $\cT$ of Fano type fibrations such that any Fano type fibration germ $(X,\Delta)\to Z\ni z$ in $\cS$ degenerates to some $(X_0,\Delta_0)\to Z_0\ni z_0$ in $\cT$ by a Fano type test configuration. 

Finally, we say that a set $\cS$ of fibration germs is \emph{log specially bounded} if there exists a bounded set $\cT$ of Fano type fibrations, such that for any fibration germ $(X,\Delta)\to Z\ni z$ in $\cS$, we can find an $\bR$-divisor $D\ge 0$ on $X$ with $\Supp(\Delta)\subseteq \Supp(D)$, a Fano type fibration $(X_0,\Delta_0)\to Z_0\ni z_0$ in $\cT$, and some $\bR$-divisor $D_0\ge 0$ on $X_0$ with $\Supp(D_0)\subseteq \Supp(\Delta_0)$, such that $(X,D)\to Z\ni z$ is a Fano type fibration, and it degenerates to $(X_0,D_0)\to Z_0\ni z_0$ by a Fano type test configuration. Intuitively, the conditions mean that $\Supp(\Delta)$ degenerates into $\Supp(\Delta_0)$ by some Fano type test configuration.
\end{defn}

\section{Boundedness of invariants in MMP}\label{sec: bdd inv mmp}

The major goal of this section is to prove the following theorem, which is a relative version of Theorem \ref{main:index+mld}.

\begin{thm} \label{thm:index+mld in MMP}
Let $(X,\Delta)$ be a $\bQ$-factorial klt pair and $X\rightarrow T$ a projective morphism. Assume that either $K_X+\Delta$ or $\Delta$ is big over $T$. Then there exist a positive integer $r$ and a finite set $S\subseteq \bR_{+}$, depending only on $(X,\Delta)$ and $\pi$, such that for any sequence of steps of a $(K_X+\Delta)$-MMP $(X,\Delta)\dashrightarrow (Y,\Delta_Y)$ over $T$,
\begin{enumerate}
    \item the Cartier index of any Weil divisor on $Y$ is at most $r$, and
    \item for any (not necessarily closed) point $y\in Y$, we have $\mld(y,Y,\Delta_Y)\in S$.
\end{enumerate}
\end{thm}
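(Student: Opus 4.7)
The plan is to reduce to the case where $\Delta$ itself is big over $T$, apply the relative analogue of Theorem \ref{main:vol bdd} to obtain a uniform positive lower bound on local volumes, and then run everything through the local K-stability results (Theorems \ref{thm:index bound via volume} and \ref{thm:mld bound via volume}) together with Lemma \ref{lem:index bdd imply mld discrete}.

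For the reduction, if only $K_X+\Delta$ is big over $T$, Lemma \ref{lem: gt klt sim to big boundary}(1) produces a klt pair $(X,\Delta')$ with $\Delta'\ge\Delta$, $\Delta'$ big over $T$, and $K_X+\Delta'\sim_{\bR,T}\mu(K_X+\Delta)$ for some $\mu>0$. Since the two log canonical divisors are positive multiples modulo $\sim_{\bR,T}$, every $(K_X+\Delta)$-MMP step over $T$ is simultaneously a $(K_X+\Delta')$-MMP step over $T$, so the MMP output $Y$ carries both klt boundaries $\Delta_Y=\varphi_*\Delta$ and $\Delta'_Y=\varphi_*\Delta'$ with $\Delta_Y\le\Delta'_Y$. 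The relative version of Theorem \ref{main:vol bdd} applied to $(X,\Delta')$ then produces $\varepsilon>0$ such that $\hvol(y,Y,\Delta'_Y)\ge\varepsilon$ at every closed point $y\in Y$ and every MMP output. Monotonicity of log discrepancies in the boundary gives $A_{Y,\Delta_Y}(v)\ge A_{Y,\Delta'_Y}(v)$ for every valuation $v$, hence $\hvol(y,Y,\Delta_Y)\ge\hvol(y,Y,\Delta'_Y)\ge\varepsilon$.

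With this uniform positive lower bound in hand, Theorem \ref{thm:index bound via volume} bounds the Cartier index of any Weil divisor on $Y$ locally near each closed point by $n^n/\varepsilon$, which globalizes to a uniform integer $r$ (e.g.\ $r:=\lfloor n^n/\varepsilon\rfloor!$) and gives (1). Theorem \ref{thm:mld bound via volume} yields a constant $A=A(n,\varepsilon)$ such that $\mld(y,Y,\Delta_Y)\le A$ at every (not necessarily closed) point $y\in Y$. Since $(Y,\Delta_Y)$ is klt, hence dlt with $Y$ itself serving as its unique log canonical center of positive dimension, Lemma \ref{lem:index bdd imply mld discrete} applied with $W=Y$ produces a locally finite set $S_1\subseteq\bR_{\ge 0}$, depending only on $n$, $r$, and $\Coef(\Delta)$, such that $A_{Y,\Delta_Y}(E)\in S_1$ for every prime divisor $E$ over $Y$; in particular $\mld(y,Y,\Delta_Y)\in S_1$. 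The finite set $S:=S_1\cap(0,A]$ then contains every such mld, yielding (2).

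The main obstacle is establishing the relative version of Theorem \ref{main:vol bdd} used in the second paragraph. The absolute case is proved via the log canonical volume (Lemmas \ref{lem:Vol MMP} and \ref{lem:local vs global vol}); carrying this over to pairs only projective over $T$ — either by a compactification that preserves bigness of $\Delta$ over $T$, or through a direct relative definition of the log canonical volume along fibers — is the technical heart of the argument. Once that bound is in place, the remainder is a short and essentially mechanical chain of implications from the previously cited results.
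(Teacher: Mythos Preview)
Your overall architecture is correct and matches the paper's proof: reduce to a big auxiliary boundary via Lemma~\ref{lem: gt klt sim to big boundary}, get a uniform lower bound on local volumes, feed this into Theorems~\ref{thm:index bound via volume} and~\ref{thm:mld bound via volume}, and finish with Lemma~\ref{lem:index bdd imply mld discrete}. The one place where you diverge from the paper is in the paragraph you flag as the ``main obstacle,'' and there the obstacle does not actually exist.

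The point is that the paper's definition of ``big over $T$'' is: $D$ is big over $T$ if $D\sim_{\bR,T} D'$ for some \emph{absolutely} big $\bR$-divisor $D'$ on $X$. With this convention, Lemma~\ref{lem: gt klt sim to big boundary} in either case produces a klt pair $(X,\Delta_0)$ with $\Delta_0$ big in the absolute sense (not merely big over $T$) and such that every $(K_X+\Delta)$-MMP over $T$ is also a $(K_X+\Delta_0)$-MMP. So there is no need for a relative version of Theorem~\ref{main:vol bdd}, nor for any compactification or fiberwise log canonical volume. The paper simply invokes Theorem~\ref{thm:index+mld+vol bound}, whose hypothesis is an MMP type contraction from a klt pair $(X,\Delta+D)$ with $D$ absolutely big; this statement does not assume projectivity of $X$ and already covers the quasi-projective setting. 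In the paper's application one takes $\Delta=0$ and $D=\Delta_0$, and Theorem~\ref{thm:index+mld+vol bound}(3) gives $\hvol(y,Y)\ge \varepsilon$ directly (slightly cleaner than your route through $\hvol(y,Y,\Delta'_Y)$ and monotonicity, though both work).

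In short: your reduction step is already stronger than you realize, the ``technical heart'' you anticipate dissolves once you read the paper's bigness convention, and the rest of your argument is exactly the paper's.
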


\subsection{Local volumes in MMP type contractions}

We shall deduce Theorem \ref{thm:index+mld in MMP} from the following more general result on the boundedness of invariants in MMP type contractions (Definition \ref{defn:MMP type contraction}).

\begin{thm} \label{thm:index+mld+vol bound}
Let $(X,\Delta+D)$ be a klt pair where $\Delta, D\ge 0$ and $D$ is big. Then there exist a positive integer $r$ and two real number $\varepsilon,A>0$, depending only on $X$, $\Delta$ and $D$, such that for any MMP type contraction $\varphi\colon (X,\Delta+D)\dashrightarrow (X',\Delta'+D')$ with $\Delta'\ge 0$ and $D'=\varphi_* D$, we have:
\begin{enumerate}
    \item The Cartier index of any $\bQ$-Cartier Weil divisor on $X'$ is at most $r$,
    \item for any (not necessarily closed) point $x'\in X'$, we have $\mld(x',X',\Delta'+D')\le A$, and
    \item if $K_{X'}+\Delta'$ is $\bR$-Cartier, then $\hvol(x',X',\Delta')\ge \varepsilon$ for all closed points $x'\in X'$.
\end{enumerate}
\end{thm}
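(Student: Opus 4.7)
The overall strategy, as outlined in the introduction, is to produce a uniform positive lower bound on the local volumes at every closed point of $X'$, and then invoke the local K-stability theorems (Theorems \ref{thm:index bound via volume} and \ref{thm:mld bound via volume}) to deduce (1) and (2). The lower bound is obtained via the auxiliary \emph{log canonical volume}
\[
\mathrm{Vol}_{\mathrm{lc}}(X,\Delta+D;D):=\sup\bigl\{\,\vol(V_\bullet)\,\bigm|\,\alpha(X,\Delta+D;V_\bullet)\ge 1\,\bigr\},
\]
where the supremum ranges over eventually birational graded linear series $V_\bullet$ of $D$. Since $D$ is big, writing $D\sim_\bR A+E$ with $A$ an ample $\bQ$-divisor and $E\ge 0$ and applying Lemma \ref{lem:Bertini ample Q-div} to $(X,\Delta+D)$ produces, for large $m$, divisors $D_m\in|mA|$ with $\lct(X,\Delta+D;D_m)$ uniformly bounded below by some $\delta>0$. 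Combining suitable rescalings of these with multiples of $E$ yields an eventually birational graded linear series of $D$ with $\alpha\ge 1$ and positive volume, so $\mathrm{Vol}_{\mathrm{lc}}(X,\Delta+D;D)>0$.

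The two remaining ingredients are the \emph{monotonicity}
\[
\mathrm{Vol}_{\mathrm{lc}}(X',\Delta'+D';D')\ \ge\ \mathrm{Vol}_{\mathrm{lc}}(X,\Delta+D;D)
\]
under the MMP type contraction $\varphi$, and the \emph{local-vs-global comparison}
\[
\hvol(x',X',\Delta'+D')\ \ge\ C_n\cdot\mathrm{Vol}_{\mathrm{lc}}(X',\Delta'+D';D')
\]
for every closed $x'\in X'$, where $C_n>0$ depends only on $n=\dim X$. For the monotonicity, the birational pushforward $V'_m:=\varphi_*V_m\subseteq H^0(X',mD')$ is well defined and injective because $\varphi^{-1}$ contracts no divisors, so $V'_\bullet$ is eventually birational with $\vol(V'_\bullet)=\vol(V_\bullet)$. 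On a common resolution $f\colon Y\to X$, $g\colon Y\to X'$, the MMP type hypothesis $f^*(K_X+\Delta+D)-g^*(K_{X'}+\Delta'+D')\ge 0$ combined with a direct negativity-lemma argument in the spirit of Lemma \ref{lem:MMP type change boundary} gives, for each $\Gamma\in|V_m|$ with $\Gamma':=\varphi_*\Gamma$,
\[
f^*\bigl(K_X+\Delta+D+\tfrac{1}{m}\Gamma\bigr)\ \ge\ g^*\bigl(K_{X'}+\Delta'+D'+\tfrac{1}{m}\Gamma'\bigr),
\]
so log canonicity is preserved under pushforward and $\alpha(X',\Delta'+D';V'_\bullet)\ge 1$. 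For the local-vs-global comparison, one uses Okounkov-body techniques (in the spirit of Lemma \ref{lem:vol=lim}): for any $v\in\Val_{X',x'}$, the condition $\alpha\ge 1$ forces $A_{X',\Delta'+D'}(v)\ge T_v(V'_\bullet):=\lim_m\frac{1}{m}\max_{\Gamma\in|V'_m|}v(\Gamma)$, while a standard Okounkov-body estimate gives $\vol_{X',x'}(v)\cdot T_v(V'_\bullet)^n\ge C_n\cdot\vol(V'_\bullet)$; multiplying out yields the required inequality.

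Combining the three ingredients produces
\[
\hvol(x',X',\Delta'+D')\ \ge\ \varepsilon\ :=\ C_n\cdot\mathrm{Vol}_{\mathrm{lc}}(X,\Delta+D;D)\ >\ 0
\]
for every closed point $x'\in X'$, depending only on $(X,\Delta,D)$. Conclusion (3) is immediate since $\hvol(x',X',\Delta')\ge\hvol(x',X',\Delta'+D')\ge\varepsilon$ (a larger boundary only decreases log discrepancies and hence the local volume). Because $(X',\Delta'+D')$ is klt, Theorem \ref{thm:index bound via volume} applied to it bounds the Cartier index of any $\bQ$-Cartier Weil divisor near $x'$ by $r:=\lceil n^n/\varepsilon\rceil$, proving (1), and Theorem \ref{thm:mld bound via volume} gives a uniform upper bound $A$ on $\mld(x',X',\Delta'+D')$ at every (not necessarily closed) point of $X'$, proving (2). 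The principal obstacle is the monotonicity step: although defining the pushforward is straightforward, verifying that $\alpha\ge 1$ persists requires careful tracking of $\varphi$-exceptional divisors on a common resolution, with the negativity lemma providing the decisive control.
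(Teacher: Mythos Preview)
Your overall strategy matches the paper's, but the choice of base pair for the alpha invariant creates a genuine gap. You require $\alpha(X,\Delta+D;V_\bullet)\ge 1$, i.e.\ that $(X,\Delta+D+\tfrac{1}{m}\Gamma)$ be log canonical for every $\Gamma\in|V_m|$; but this can fail for \emph{every} effective $\tfrac{1}{m}\Gamma\sim_\bR D$, making your $\mathrm{Vol}_{\mathrm{lc}}(X,\Delta+D;D)$ vanish and collapsing the whole argument. Concretely, let $X=\Bl_p\bP^2$ with exceptional curve $E$ and an ample divisor $H$, and set $\Delta=0$, $D=(1-\epsilon)E+\epsilon'H$ with $\epsilon<\tfrac12$ and $0<\epsilon'\ll 1$. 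Then $(X,D)$ is klt and $D$ is big, but since $D\cdot E<0$, any effective $D'\sim_\bR D$ satisfies $\mult_E D'\ge 1-\epsilon-\epsilon'(H\cdot E)$, hence $\mult_E(D+D')>1$ and $(X,D+D')$ is never log canonical. Your construction via Lemma \ref{lem:Bertini ample Q-div} cannot avoid this: the ``multiples of $E$'' you must add to land in $|mD|$ already push the coefficient of the rigid component past $1$.

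The paper instead takes $\alpha$ with respect to $(X,\Delta)$, requiring only $(X,\Delta+\tfrac{1}{m}\Gamma)$ to be log canonical. Then Lemma \ref{lem:perturb div into lin sys} gives positivity, Lemma \ref{lem:MMP type change boundary} (swapping $D$ for $\tfrac{1}{m}\Gamma\sim_\bR D$) gives the monotonicity, and Proposition \ref{prop:alpha-vol inequality linear series} yields $\hvol(x',X',\Delta')\ge\vol(V_\bullet)$ directly. This route needs $K_{X'}+\Delta'$ to be $\bR$-Cartier, which is exactly why (3) carries that hypothesis and why (1) and (2) are proved by first passing to a small $\bQ$-factorial modification $X''\to X'$ rather than by applying Theorems \ref{thm:index bound via volume} and \ref{thm:mld bound via volume} on $X'$. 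One smaller point: your inequality $\vol_{X',x'}(v)\cdot T_v(V'_\bullet)^n\ge C_n\cdot\vol(V'_\bullet)$ is correct (with $C_n=1$), but it is not an Okounkov body estimate; it is the pigeonhole count on valuation ideals that the paper records as Lemma \ref{lem:vol estimate}.
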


Before we prove this more general result, let us explain how it implies Theorem \ref{thm:index+mld in MMP}.

\begin{proof}[Proof of Theorem \ref{thm:index+mld in MMP} assuming Theorem \ref{thm:index+mld+vol bound}]
By Lemma \ref{lem: gt klt sim to big boundary}, there exists some $\bR$-divisor $\Delta_0\ge 0$ on $X$ such that $(X,\Delta_0)$ is klt, $\Delta_0$ is big, and any $(K_X+\Delta)$-MMP over $T$ is also a $(K_X+\Delta_0)$-MMP. Part (1) then follows directly from Theorem \ref{thm:index+mld+vol bound}(1). By Lemma \ref{lem:index bdd imply mld discrete}, this implies that the log discrepancies $A_{Y,\Delta_Y}(E)$ of prime divisors $E$ over $Y$ belong to a discrete set $S_0$ that depends only on the coefficients of $\Delta$ and the index bound $r$. Note that $Y$ is $\bQ$-factorial. Thus by Theorem \ref{thm:index+mld+vol bound}(3), we also know that there exists some positive real number $\varepsilon$ (depending only on $X$ and $\Delta_0$) such that $\hvol(y,Y)\ge \varepsilon$ for any closed point $y\in Y$. Combined with Theorem \ref{thm:mld bound via volume}, we see that there exists some constant $A>0$ (depending only on $\dim X$ and $\varepsilon$) such that $\mld(y,Y)\le A$ for all points $y\in Y$, hence $\mld(y,Y,\Delta_Y)\le A$ as well. It follows that $\mld(y,Y,\Delta_Y)$ belongs to the finite set $S:=S_0\cap [0,A]$.
\end{proof}

In the rest of this subsection, we prove Theorem \ref{thm:index+mld+vol bound}. Since local volume controls most other invariants such as index and mld, we will primarily focus on bounding the local volume away from zero. A key ingredient is the following generalization of \cite{XZ-local-bdd}*{Lemma 2.13} ({\it cf.} \cite{BJ-delta}*{Theorem D}).

\begin{prop} \label{prop:alpha-vol inequality linear series}
Let $(X,\Delta)$ be a klt pair of dimension $n$, and let $V_\bullet$ be an eventually birational graded linear series (see Section \ref{ss:linear series}) of some $\bR$-Cartier $\bR$-divisor $L$ on $X$. Then for any closed point $x\in X$ we have
\[
\hvol(x,X,\Delta)\ge \alpha(X,\Delta;V_\bullet)^n \cdot \vol(V_\bullet).
\]
\end{prop}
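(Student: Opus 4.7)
The plan is to establish the inequality valuation-by-valuation: for every $v \in \Val_{X,x}$ with $A_{X,\Delta}(v) < \infty$ (the infinite case is vacuous), we will show
\[
A_{X,\Delta}(v)^n\cdot\vol(v)\ge \alpha(V_\bullet)^n\cdot\vol(V_\bullet),
\]
and then take the infimum over $v$. We may assume $\alpha:=\alpha(X,\Delta;V_\bullet)>0$, as otherwise the conclusion is trivial. The bridge between the two sides is the maximum vanishing quantity
\[
T(v):=\limsup_{m\in M(V_\bullet)}\max_{0\neq s\in V_m}\frac{v(s)}{m},
\]
where $v(s):=v(\mathrm{div}(s)+mL)\ge 0$ is the valuation of the effective divisor attached to $s$ (interpreted, when $L$ is merely $\bR$-Cartier, via a local generator of the rank-one reflexive stalk $\cO_{X,x}(mL)$).

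First I would prove the easy lct estimate $A_{X,\Delta}(v)\ge \alpha\, T(v)$: for any nonzero $s\in V_m$, the divisor $\Gamma:=\tfrac{1}{m}(\mathrm{div}(s)+mL)\in \tfrac{1}{m}|V_m|$, so by definition $\mathrm{lct}(X,\Delta;\Gamma)\ge \alpha$, whence $A_{X,\Delta}(v)\ge \alpha\cdot v(\Gamma)=\alpha\, v(s)/m$; the claim follows by taking the supremum over $s$ and the $\limsup$ over $m$. In particular, since $A_{X,\Delta}(v)<\infty$, we have $T(v)<\infty$. The second and main ingredient is the volume estimate $\vol(V_\bullet)\le T(v)^n\vol(v)$. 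For $\lambda>0$, set $\cF^\lambda V_m:=\{s\in V_m:v(s)\ge \lambda\}$. A local generator of $\cO_{X,x}(mL)$ as a rank-one $\cO_{X,x}$-module gives a linear injection $V_m\hookrightarrow \cO_{X,x}(mL)$; composing with reduction modulo $\fa_\lambda(v)\cdot \cO_{X,x}(mL)$ has kernel exactly $\cF^\lambda V_m$, so
\[
\dim V_m-\dim \cF^\lambda V_m\le \ell\bigl(\cO_{X,x}(mL)/\fa_\lambda(v)\cdot\cO_{X,x}(mL)\bigr)=\ell\bigl(\cO_{X,x}/\fa_\lambda(v)\bigr).
\]
For any $t>T(v)$, the definition of $\limsup$ gives $\cF^{\lceil tm\rceil}V_m=0$ for all sufficiently large $m\in M(V_\bullet)$, and so $\dim V_m\le \ell(\cO_{X,x}/\fa_{\lceil tm\rceil}(v))$. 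Dividing by $m^n/n!$ and letting $m\to \infty$, using Lemma \ref{lem:vol=lim} on the left and the limit formula for $\vol(v)$ on the right, we obtain $\vol(V_\bullet)\le t^n\vol(v)$ for every $t>T(v)$, hence $\vol(V_\bullet)\le T(v)^n\vol(v)$.

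Combining the two estimates gives
\[
A_{X,\Delta}(v)^n\vol(v)\ge \alpha^n\, T(v)^n\vol(v)\ge \alpha^n\vol(V_\bullet),
\]
and taking the infimum over $v\in \Val_{X,x}$ completes the proof. The main obstacle is the key injection $V_m/\cF^\lambda V_m\hookrightarrow \cO_{X,x}(mL)/\fa_\lambda(v)\cdot\cO_{X,x}(mL)$ in the $\bR$-Cartier setting, where no honest local trivialization of $\cO(mL)$ exists; this is resolved by working directly with the rank-one reflexive stalk $\cO_{X,x}(mL)$ and observing that reduction modulo an $\fa_\lambda$-multiple preserves its length. A secondary point is the passage to the limit, which uses the hypothesis $\alpha>0$ together with eventual birationality of $V_\bullet$ to replace the $\limsup$ defining $\vol(V_\bullet)$ by a genuine limit via Lemma \ref{lem:vol=lim}. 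The strategy is essentially a local analogue of the Fujita--Odaka/Blum--Jonsson $\alpha$-vs-volume comparison used in the Fano K-stability theory, adapted here to arbitrary graded linear series on klt singularities as in \cite{XZ-local-bdd}*{Lemma 2.13}.
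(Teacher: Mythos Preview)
Your strategy---bounding $A_{X,\Delta}(v)^n\vol(v)$ from below for each valuation via the intermediate quantity $T(v)$---is sound and essentially equivalent to the paper's: both rest on the same dimension count (inject $V_m$ into $\cO_{X,x}/\fa_{tm}(v)$) and the same lct bound $A_{X,\Delta}(v)\ge\alpha\cdot v(\Gamma)$. The paper packages the dimension count as Lemma~\ref{lem:vol estimate}, applied only to a normalized-volume minimizer, while you run it for arbitrary $v$ and take the infimum at the end; this is a cosmetic difference. For $\bQ$-Cartier $L$ your argument is complete once you restrict to $m$ divisible by the Cartier index, so that $mL$ admits an honest local trivialization near $x$.

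The gap is in the $\bR$-Cartier case. You assert that the reflexive stalk $\cO_{X,x}(mL)$ has ``a local generator'' and that
\[
\ell\bigl(\cO_{X,x}(mL)/\fa_\lambda(v)\cdot\cO_{X,x}(mL)\bigr)=\ell\bigl(\cO_{X,x}/\fa_\lambda(v)\bigr).
\]
Both fail whenever the Weil divisor $\lfloor mL\rfloor$ is not Cartier at $x$: a rank-one reflexive module over a normal local domain is cyclic iff the corresponding divisor class is trivial, and for instance at the $A_1$ singularity $R=\bC[a,b,c]/(ac-b^2)$ with $M=(a,b)$ the ideal of the non-Cartier ruling one has $\dim_\bC M/\fm M=2\neq 1$. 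Since $\lfloor mL\rfloor$ need never be Cartier when $L$ has irrational coefficients, this cannot be repaired by restricting $m$. The paper instead first treats $\bQ$-Cartier $L$ and then approximates: choose $\bQ$-Cartier $L'$ with $L\le L'\le L+\varepsilon D$ for a fixed Cartier $D\ge 0$ containing $\Supp(L)$, transport $V_\bullet$ along $H^0(mL)\hookrightarrow H^0(mL')$, and use Lemma~\ref{lem:lct inequality} to show $\alpha(V'_\bullet)\to\alpha(V_\bullet)$ as $\varepsilon\to 0$. An alternative fix within your framework: write $L=\sum a_iL_i$ near $x$ with $L_i$ Cartier and local equations $g_i$, and inject $V_m\hookrightarrow\cO_{X,x}$ via $s\mapsto \tilde s:=s\prod g_i^{\lceil ma_i\rceil}$; then $|v(\tilde s)-v(\mathrm{div}(s)+mL)|\le\sum_i|v(g_i)|$ is bounded independently of $m$, which is harmless in the limit $m\to\infty$.
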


The proof is in turn based on the following auxiliary lemma that we will apply again in Section \ref{sec:fiber bdd}.

\begin{lem} \label{lem:vol estimate}
Let $(X,\Delta)$ be a klt pair of dimension $n$, let $x\in X$ be a closed point, and let $c_0>c_1$ be positive constants. Then for any sufficiently large integer $m$ and any linear subspace $V\subseteq \cO_{X,x}$ of dimension at least $c_0\frac{m^n}{n!}$, there exists some nonzero element $s\in V$ such that 
\[
m\cdot \lct_x (X,\Delta;\{s=0\})\le \left(\frac{\hvol(x,X,\Delta)}{c_1}\right)^{1/n}.
\]
\end{lem}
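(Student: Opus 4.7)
The strategy is to apply a standard volume/log-discrepancy comparison using a near-minimizer of the normalized volume function at $x$, combined with a dimension count on valuation ideals. First, I would fix $\varepsilon > 0$ small enough that $(1+\varepsilon)^{n+2} c_1 \le c_0$, which is possible since $c_0 > c_1$. By the definition of $\hvol(x,X,\Delta)$ as an infimum over $\Val_{X,x}$, choose a valuation $v \in \Val_{X,x}$ with $A_{X,\Delta}(v) < +\infty$ and
\[
\hvol_{X,\Delta}(v) \le (1+\varepsilon)\,\hvol(x,X,\Delta).
\]

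Next, using that $\vol_{X,x}(v) = \lim_{k\to\infty} \ell(\cO_{X,x}/\fa_k(v))/(k^n/n!)$, for $k$ large enough we have $\ell(\cO_{X,x}/\fa_k(v)) \le (1+\varepsilon)\,\vol_{X,x}(v)\cdot k^n/n!$. Setting
\[
k_m := \left\lfloor m \cdot \left(\frac{c_0}{(1+\varepsilon)\vol_{X,x}(v)}\right)^{1/n}\right\rfloor,
\]
for all sufficiently large $m$ one has $k_m$ large and $\ell(\cO_{X,x}/\fa_{k_m}(v)) < c_0\, m^n/n! \le \dim V$. The intersection $V \cap \fa_{k_m}(v)$ then contains a nonzero element $s$, which satisfies $v(s) \ge k_m$ by definition of $\fa_{k_m}(v)$.

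Finally, combining the standard inequality $\lct_x(X,\Delta;\{s=0\}) \le A_{X,\Delta}(v)/v(s)$ with $v(s) \ge k_m$ and the definition of $k_m$, a short computation yields
\[
m \cdot \lct_x(X,\Delta;\{s=0\}) \le \frac{m\, A_{X,\Delta}(v)}{k_m} \le \frac{(1+\varepsilon)^{1+2/n}}{c_0^{1/n}}\,\hvol(x,X,\Delta)^{1/n},
\]
and by the choice of $\varepsilon$ the right-hand side is at most $(\hvol(x,X,\Delta)/c_1)^{1/n}$.

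I do not expect serious obstacles here, as each step is either a standard property of the normalized volume or a dimension count. The main point to verify is that the threshold for "sufficiently large $m$" depends only on $(X,\Delta)$, $x$, $c_0$ and $c_1$ (and not on the particular subspace $V$), which is automatic: once these data are fixed, $\varepsilon$ and $v$ can be chosen a priori, and the rate of convergence in the limit defining $\vol_{X,x}(v)$ depends only on $v$.
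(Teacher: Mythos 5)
Your proof is correct and follows essentially the same route as the paper's: the paper takes an exact minimizer $v$ of $\hvol_{X,\Delta}$ (which exists by the result of Blum already cited), rescales so that $A_{X,\Delta}(v)=1$, and runs the same dimension count on $\cO_{X,x}/\fa_{cm}(v)$ followed by $\lct_x(X,\Delta;\{s=0\})\le A_{X,\Delta}(v)/v(s)$, whereas you work with a near-minimizer and track the resulting $(1+\varepsilon)$-factors. The only point worth tightening is that your estimate yields $\ell(\cO_{X,x}/\fa_{k_m}(v))\le c_0\,m^n/n!$ rather than a strict inequality, so to guarantee that $V\to \cO_{X,x}/\fa_{k_m}(v)$ fails to be injective you should shrink $k_m$ slightly (e.g.\ build in one more factor of $(1+\varepsilon)$), a purely cosmetic fix.
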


\begin{proof}
This is proved by the same argument as in \cite{BJ-delta}*{Theorem D} or \cite{XZ-local-bdd}*{Lemma 2.13}. Let $c:=\left(\frac{\hvol(x,X,\Delta)}{c_1}\right)^{-1/n}$ and let $v$ be a valuation over $x\in X$ such that $\hvol_{X,\Delta}(v)=\hvol(x,X,\Delta)$. Since the normalized volume is rescaling invariant (i.e. $\hvol_{X,\Delta}(\lambda v) = \hvol_{X,\Delta}(v)$ for all $v\in \Val_{X,x}$ and $\lambda>0$), we may rescale the minimizing valuation $v$ and assume that $A_{X,\Delta}(v)=1$. It follows that $\vol(v)=\hvol(x,X,\Delta)$. Let $\fa_\lambda:=\fa_\lambda(v)$ ($\lambda\in \bR$) be the valuation ideals of $v$. Since $c_0>c_1>0$, for sufficiently large $m$ we have
\[
\dim (\cO_{X,x}/\fa_{cm}) = \hvol(x,X,\Delta)\cdot \frac{(cm)^n}{n!} + o(m^n) = \frac{c_1 m^n}{n!} + o(m^n) < \frac{c_0 m^n}{n!}.
\]
Thus for any linear subspace $V\subseteq \cO_{X,x}$ of dimension at least $c_0\frac{m^n}{n!}$, the natural map $V\to \cO_{X,x}/\fa_{cm}$ cannot be injective. It follows that there exists some nonzero element $s\in V\cap \fa_{cm}(v)$; in particular $v(s)\ge cm$ and
\[
m\cdot \lct_x (X,\Delta;\{s=0\})\le m\cdot \frac{A_{X,\Delta}(v)}{v(s)}\le \frac{1}{c} = \left(\frac{\hvol(x,X,\Delta)}{c_1}\right)^{1/n}.
\]
This completes the proof.
\end{proof}

\begin{proof}[Proof of Proposition \ref{prop:alpha-vol inequality linear series}]
First assume that $L$ is $\bQ$-Cartier. For each sufficiently divisible positive integer $m$, after choosing a local trivialization of $mL$ around $x\in X$, we may view each $V_m$ as a subspace of $\cO_{X,x}$. Let $0<\varepsilon\ll 1$ and set $c_0=\vol(V_\bullet)-\varepsilon$ and $c_1=\vol(V_\bullet)-2\varepsilon$. Then $\dim V_m \ge c_0\frac{m^n}{n!}$ when $m\gg 0$. Thus Lemma \ref{lem:vol estimate} gives $\hvol(x,X,\Delta)\ge c_1\cdot \alpha(X,\Delta;V_\bullet)^n$. Letting $\varepsilon\to 0$ and hence $c_1\to \vol(V_\bullet)$, we get the desired result.

In general, choose some Cartier divisor $D\ge 0$ on $X$ such that $\Supp(L)\subseteq \Supp(D)$. For any $\varepsilon>0$, there exists some $\bQ$-Cartier $\bQ$-divisor $L'$ such that $L\le L'\le L+\varepsilon D$. Let $V'_\bullet$ be the image of $V_\bullet$ under the natural embedding $H^0(mL)\hookrightarrow H^0(mL')$. Since $L'$ is $\bQ$-Cartier, the previous case implies that
\[
\hvol(x,X,\Delta)\ge \alpha(X,\Delta;V'_\bullet)^n \cdot \vol(V'_\bullet) = \alpha(X,\Delta;V'_\bullet)^n \cdot \vol(V_\bullet).
\]
On the other hand, as $|V'_m|=|V_m|+m(L'-L)$ and $0\le L'-L\le \varepsilon D$, by Lemma \ref{lem:lct inequality}, we deduce that 
\[
\left(\frac{1}{\alpha(X,\Delta;V_\bullet)}+\frac{\varepsilon}{\lct(X,\Delta;D)} \right)^{-1}\le \alpha(X,\Delta;V'_\bullet) \le \alpha(X,\Delta;V_\bullet),
\]
hence $\alpha(X,\Delta;V'_\bullet)$ can be arbitrarily close to $\alpha(X,\Delta;V_\bullet)$ when $\varepsilon\to 0$. The proposition thus follows.
\end{proof}

In the context of Theorem \ref{thm:index+mld+vol bound}, the linear systems we shall use come from the following construction.

\begin{lem} \label{lem:perturb div into lin sys}
Let $(X,\Delta+D)$ be a  klt pair such that $\Delta,D\ge 0$ and $D$ is big. Then there exists an eventually birational graded linear series of $D$ such that $(X,\Delta+\Gamma)$ is log canonical for all $m\in M(V_\bullet)$ and all $\Gamma\in \frac{1}{m}|V_m|$.
\end{lem}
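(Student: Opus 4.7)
The plan is to combine Kodaira's lemma with a Bertini-type construction. Since $D$ is a big $\bR$-Cartier divisor, Kodaira's lemma gives a decomposition $D \sim_{\bR} A + F$ with $A$ an ample $\bQ$-Cartier $\bQ$-divisor and $F \ge 0$ an $\bR$-divisor. Replacing $A$ by $\varepsilon A$ and $F$ by $(1-\varepsilon) D + \varepsilon F$ for sufficiently small $\varepsilon > 0$, I would arrange that $(X, \Delta + A + F)$ is klt, using that the klt condition is open and that $(1-\varepsilon) D + \varepsilon F + \varepsilon A \le D + \varepsilon(A+F)$.

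Next, applying Lemma \ref{lem:Bertini ample Q-div} to the klt pair $(X, \Delta + F)$ and the ample $\bQ$-divisor $A$ yields constants $\delta > 0$ and $m_0$ such that for every sufficiently divisible integer $m \ge \max(m_0, 1/\delta)$ with $mA$ integral Cartier, there exists $D_m \in |mA|$ with $(X, \Delta + F + \delta D_m)$ log canonical. For each such $m$, the $\bR$-linear equivalence $mD \sim_{\bR} mA + mF$ together with the canonical section $1$ of $mF$ (available because $F \ge 0$) induces a natural inclusion $H^0(X, mA) \hookrightarrow H^0(X, mD)$ after fixing a rational function to account for the linear equivalence: a section $r \in H^0(X, mA)$ maps to $s \in H^0(X, mD)$ with $\mathrm{div}(s) + mD = G + mF$, where $G = \mathrm{div}(r) + mA \in |mA|$. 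In particular, every $\Gamma \in \frac{1}{m}|V_m|$ will be of the form $\Gamma = F + \frac{1}{m}G$, and the log canonicity of $(X, \Delta + \Gamma)$ reduces to that of $(X, \Delta + F + \frac{1}{m}G)$.

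I then define $V_m$ as the image under this inclusion of a suitable subspace $W_m \subseteq H^0(X, mA)$. The main task is to choose $W_m$ so that it is both (i) large enough that $|W_m|$ induces a birational map $X \dashrightarrow \bP(W_m^\vee)$, and (ii) small enough that every $G \in |W_m|$ yields $(X, \Delta + F + \frac{1}{m}G)$ log canonical. Taking $W_m = H^0(X, mA)$ guarantees (i) but not (ii), while the single good divisor produced by Lemma \ref{lem:Bertini ample Q-div} gives (ii) but only a one-dimensional $W_m$. The hard part will be arranging (i) and (ii) simultaneously; I expect to handle this either via an iterated Bertini argument---repeatedly decomposing $A$ as a sum of smaller ample $\bQ$-divisors to accumulate further good sections---or by passing to a log resolution of $(X, \Delta + A + F)$ and using the Izumi estimate of Lemma \ref{lem:Izumi} to convert a uniform multiplicity bound on the members of $|W_m|$ into a uniform lower bound on log canonical thresholds, then taking $W_m$ to consist of sections with uniformly bounded vanishing order along a finite collection of prime divisors (a condition cut out by finitely many linear equations, so compatible with birationality for $m$ large). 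The fact that $F$ may only be an $\bR$-divisor, so that $mF$ is not integral, can be absorbed via a $\bQ$-approximation of $F$ whose small discrepancy is folded into $\varepsilon$.
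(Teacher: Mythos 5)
Your overall architecture is right --- decompose $D$ into an ample part plus an effective part, shrink the moving piece so that its coefficient is small, and use Lemma \ref{lem:Izumi} to convert a uniform multiplicity bound into a uniform log canonical threshold bound --- and your $\varepsilon$-rescaling plays the same role as the paper's $\left(1-\frac{1}{\ell}\right)D+\frac{1}{\ell}(\cdot)$ reindexing. But the proposal stops exactly at the step that carries the content: you acknowledge that you need a subspace $W_m$ that is simultaneously birational and has \emph{every} member uniformly controlled, and neither of your two fallback strategies closes this gap. Iterated Bertini only produces general (or single) good members, whereas the lemma demands log canonicity for \emph{all} $\Gamma\in\frac{1}{m}|V_m|$, so no Bertini-type argument can finish by itself. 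And bounding the vanishing order of sections along a finite collection of prime divisors does not bound $\mult_x$ at an arbitrary closed point $x$ (a section can acquire large multiplicity at a point without vanishing to high order along any fixed divisor), so that condition cannot feed into Lemma \ref{lem:Izumi}; moreover, since $X$ is only quasi-projective, $H^0(X,mA)$ need not even be finite-dimensional, so some choice of subspace is forced on you and you never specify it.

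The paper's resolution of this tension is to take a normal projective compactification $\oX$ of $X$ on which the ample part $A$ extends to an ample $\bQ$-divisor $\overline{A}$, so that $D$ extends to a big $\bR$-divisor $\oD=\overline{A}+\overline{B}$, and to define $V_m$ as the image of the restriction map $H^0(\oX,m\oD)\to H^0(X,mD)$. This kills both birds at once: $V_\bullet$ is eventually birational because $\oD$ is big on the projective $\oX$, and every $\Gamma\in\frac{1}{m}|V_m|$ extends to a member of $\frac{1}{m}|m\oD|$, whence $\mult_x\Gamma\le(\oD\cdot H^{n-1})$ for a very ample $H$ on $\oX$, uniformly in $m$, $\Gamma$ and $x$. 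Feeding this into Lemma \ref{lem:Izumi} on a log resolution of $(X,\Delta+D)$ gives an $\ell$ with $(X,\Delta+(1-\frac{1}{\ell})D+\frac{1}{\ell}\Gamma)$ log canonical for all such $\Gamma$, and the final series is $|V'_m|=m(1-\frac{1}{\ell})D+|V_{m/\ell}|$. If you replace your unspecified $W_m$ by this restricted series, the rest of your outline goes through.
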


\begin{proof}
Since $D$ is big, we may write $D=A+B$ where $A$ is an ample $\bQ$-divisor and $B\ge 0$. Let $\oX$ be a normal projective compactification of $X$ such that $A$ extends to an ample $\bQ$-divisor $\overline{A}$ (for example, consider the morphism from $X$ into some projective space induced by the linear system $|qA|$ for some sufficiently positive and divisible integer $q$, and take $\oX$ to be the normalization of the closure of the image). Then $D$ also extends to a big $\bR$-divisor $\oD=\overline{A}+\overline{B}$ on $\oX$, where $\overline{B}$ is the closure of $B$. Let $V_m \subseteq H^0(X,mD)$ be the image of the restriction map $H^0(\oX,m\oD)\to H^0(X,mD)$. Since $(X,\Delta+D)$ is klt, by Lemma \ref{lem:Izumi} applied to some log resolution of the pair $(X,\Delta+D)$, we know that there exists some sufficiently large integer $\ell>0$ such that $(X,\Delta+\left(1-\frac{1}{\ell}\right)D + \frac{1}{\ell} G)$ is log canonical for all $m\in \bN^+$ and all $G\in \frac{1}{m}|V_m|$. Define a new graded linear series $V'_\bullet$ of $D$ by 
\[
|V'_m| = m\left(1-\frac{1}{\ell}\right)D+|V_{m/\ell}|
\]
when $\ell$ divides $m$, and $V'_m=0$ otherwise. Since $\oD$ is big, both $V_\bullet$ and $V'_\bullet$ are eventually birational. By construction, $(X,\Delta+\Gamma)$ is log canonical for all $m\in \ell\cdot \mathbb N^+$ and $\Gamma\in \frac{1}{m}|V'_m|$. Thus $V'_\bullet$ satisfies the desired conditions. 
\end{proof}

We are now ready to prove the theorem.

\begin{proof}[Proof of Theorem \ref{thm:index+mld+vol bound}]
We first prove (3). Since $K_{X'}+\Delta'$ is $\bR$-Cartier, so is $D'$. By Lemma \ref{lem:perturb div into lin sys}, we can find an eventually birational graded linear series $V_\bullet$ of $D$ such that $(X,\Delta+\Gamma)$ is log canonical for all $m\in M(V_\bullet)$ and all $\Gamma\in \frac{1}{m}|V_m|$. Since $\Gamma\sim_\bR D$ and $\varphi$ is an MMP type contraction between $(X,\Delta+D)$ and $(X',\Delta'+D')$, by Lemma \ref{lem:MMP type change boundary} it is also an MMP type contraction between $(X,\Delta+\Gamma)$ and $(X',\Delta'+\Gamma')$ where $\Gamma':=\varphi_* \Gamma$. It follows that $(X',\Delta'+\Gamma')$ is log canonical and thus $\alpha(X',\Delta';\varphi_*V_\bullet)\ge 1$. Note that $\varphi_*V_\bullet$ is a graded linear series of $D'$. By Proposition \ref{prop:alpha-vol inequality linear series}, we obtain 
$$\hvol(x',X',\Delta')\ge \vol(\varphi_* V_\bullet)=\vol(V_\bullet)$$ 
for all closed points $x'\in X'$. Hence (3) holds with $\varepsilon:=\vol(V_\bullet)>0$.

Next we prove (1). Let $n=\dim X$. Take a small $\bQ$-factorial modification $\pi\colon X''\to X'$ (which exists by \cite{BCHM}*{Corollary 1.4.3}) and let $\Delta'',D''$ be the strict transforms of $\Delta',D'$ respectively. Then $K_{X''}+\Delta''$ is $\bR$-Cartier and the induced map $(X,\Delta+D)\dashrightarrow (X'',\Delta''+D'')$ is also an MMP type contraction. By the base point free theorem \cite{KM98}*{Theorem 3.24}, the Cartier index of any $\bQ$-Cartier Weil divisor $L'$ on $X'$ is the same as the Cartier index of $\pi^*L'$ on $X''$. Now for any closed point $x''\in X''$ we have $\hvol(x'',X'',\Delta'')\ge \varepsilon$ by (3) and hence the Cartier index of $\pi^*L'$ is at most $\left\lfloor \frac{n^n}{\varepsilon}\right\rfloor$ at $x''$ by Theorem \ref{thm:index bound via volume}. Thus (1) holds with $r=\left\lfloor \frac{n^n}{\varepsilon}\right\rfloor!$.

Finally, by Theorem \ref{thm:mld bound via volume}, there exists some constant $A>0$ (depending only on $n,\varepsilon$) such that $\mld(x'',X'',\Delta'') \le A$ for all points $x''\in X''$. As $\pi$ is small, we have $\pi^*(K_{X'}+\Delta'+D')=K_{X''}+\Delta''+D''$, hence $$\mld(x',X',\Delta'+D')\le \mld(x'',X'',\Delta''+D'')\le \mld(x'',X'',\Delta'')$$ for any $x''\in \pi^{-1}(x)$. Thus it also holds that $\mld(x',X',\Delta'+D')\le A$, which gives (2).
\end{proof}

\begin{rem} \label{rem:local vol not monotone}
It is not clear if the local volume satisfies any monotonicity in the MMP; at least the na\"ive versions fail. For example, there exist flops $X_1\dashrightarrow X_2$ and valuations $v$ whose center is a closed point on both $X_1$ and $X_2$ but $\hvol_{X_1}(v)\neq \hvol_{X_2}(v)$. To get an explicit example, let $Z$ be the cone over the smooth quadric surface $Q\cong \bP^1\times \bP^1\subseteq \bP^3$ and consider the Atiyah flop $X_1\dashrightarrow X_2$ between its two small resolutions. Then $Z$ is a toric variety and most toric valuations $v$ satisfy $\hvol_{X_1}(v)\neq \hvol_{X_2}(v)$. By adding appropriate boundaries to $X_i$ we can turn the flop $X_1\dashrightarrow X_2$ into either a flip or an anti-flip. Thus there exist flips $(X,\Delta)\dashrightarrow (X^+,\Delta^+)$  where the normalized volume of valuations increases (resp. decreases).

Alternatively, one may ask whether the smallest local volume $\inf_{x\in X} \hvol(x,X,\Delta)$ of the klt pair $(X,\Delta)$ is monotone in the MMP. The answer is again negative. Indeed, by \cite{LX-cubic-3fold}*{Theorem A.4}, we have $\inf_{x\in X}\hvol(x,X,\Delta)\le n^n$ with equality if and only if $X$ is smooth and $\Delta=0$. There are flips $X\dashrightarrow X^+$ such that $X$ is singular (resp. smooth) whereas $X^+$ is smooth (resp. singular), in which case the smallest local volume increases (resp. decreases) after the flips. For some explicit examples, consider the total space $\tX$ of the line bundle $L=\cO_{\bP^m\times \bP^n}(-1,-2)$ on $\bP^m\times \bP^n$. Let $X_1$ (resp. $X_2$) be obtained from $\tX$ by contracting the zero section $E\cong \bP^m\times \bP^n$ using the first (resp. second) projection of $\bP^m\times \bP^n$. Then $X_1$ is smooth and $X_2$ is singular. On the other hand, a direct calculation shows that the induced birational map $X_1\dashrightarrow X_2$ (resp. $X_2\dashrightarrow X_1$) is a flip if and only if $2m+1<n$ (resp. $2m+1>n$).
\end{rem}

\subsection{Log canonical volume} \label{ss:lc vol}

Although the local volume does not seem to satisfy monotonicity in the MMP, we can nonetheless introduce a global variant that does. The definition is motivated by the argument in the previous subsection. We begin with an auxiliary definition.

\begin{defn} \label{defn:admissible linear series}
Let $(X,\Delta)$ be a pair and let $L$ be a big $\bR$-divisor such that $K_X+\Delta+L$ is $\bR$-Cartier. A graded linear series $V_\bullet$ of $L$ is called \emph{admissible} (with respect to $(X,\Delta)$) if it is eventually birational and $(X,\Delta+\Gamma)$ is log canonical for all $m\in M(V_\bullet)$ and all $\Gamma\in \frac{1}{m}|V_m|$. Note that the latter condition is equivalent to $\alpha(X,\Delta;V_{\bullet})\geq 1$ when $K_X+\Delta$ is $\bR$-Cartier. 
\end{defn}

\begin{lem}\label{lem: admissible-exist-klt}
Let $(X,\Delta)$ and $L$ be as in Definition \ref{defn:admissible linear series}. Then an admissible graded linear series of $L$ exists if and only if there exists some $0\le D\sim_\bR L$ such that $(X,\Delta+D)$ is klt.
\end{lem}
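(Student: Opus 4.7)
I will treat the two directions separately. The ``if'' direction reduces directly to Lemma \ref{lem:perturb div into lin sys}: given $0\le D\sim_\bR L$ with $(X,\Delta+D)$ klt, the divisor $D$ is automatically big (since $L$ is), and that lemma then produces an eventually birational graded linear series $V_\bullet$ with $(X,\Delta+\Gamma)$ log canonical for every $\Gamma\in\frac1m|V_m|$. Using the $\bR$-linear equivalence $D\sim_\bR L$ to identify sections, this gives the desired admissible graded linear series of $L$.

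For the ``only if'' direction, the plan is first to show that admissibility of an eventually birational $V_\bullet$ forces $(X,\Delta)$ to be klt, and then to construct $D$ as a convex combination of two admissible divisors. Assume for contradiction that some divisorial valuation $E$ over $X$ satisfies $A_{X,\Delta}(E)=0$. Admissibility combined with $\Gamma\ge 0$ gives $\ord_E(\mathrm{div}(s)+mL)=0$ for every nonzero $s\in V_m$, so $V_m$ meets the filtration piece $F^1_E H^0(X,mL):=\{s:\ord_E(\mathrm{div}(s)+mL)\ge 1\}$ trivially and hence injects into the quotient $H^0(X,mL)/F^1_E$. Passing to a birational model where $E$ is a Cartier divisor, the usual restriction short exact sequence identifies this quotient with a subspace of $H^0(E,(m\pi^*L)|_E)$, whose dimension is $O(m^{n-1})$. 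This contradicts $\dim V_m=\Theta(m^n)$ coming from Lemma~\ref{lem:vol=lim} (whose klt hypothesis is used only through $\alpha>0$, and admissibility supplies $\alpha(V_\bullet)\ge 1$). Hence $(X,\Delta)$ must be klt.

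With $(X,\Delta)$ klt, I fix $m\in M(V_\bullet)$ large and pick any $\Gamma_0\in\frac1m|V_m|$; the pair $(X,\Delta+\Gamma_0)$ is lc and has finitely many log canonical places $E_1,\ldots,E_k$, each with $\ord_{E_i}(\Gamma_0)=A_{X,\Delta}(E_i)>0$. For each $i$ the subset
\[
S_{E_i}:=\bigl\{[s]\in\bP(V_m):\ord_{E_i}(\mathrm{div}(s)+mL)\ge m\cdot A_{X,\Delta}(E_i)\bigr\}
\]
is a linear subspace of $\bP(V_m)$, and exactly the same filtration/restriction argument as above shows $S_{E_i}\subsetneq\bP(V_m)$ (otherwise $\dim V_m=O(m^{n-1})$). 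Since $\bP(V_m)$ is irreducible, $\bigcup_i S_{E_i}$ is a proper closed subset, so I can pick $\Gamma_1\in\frac1m|V_m|$ with $\ord_{E_i}(\Gamma_1)<A_{X,\Delta}(E_i)$ for every $i$, i.e.\ with no $E_i$ a log canonical place of $(X,\Delta+\Gamma_1)$. For any $\epsilon\in(0,1)$, setting $D:=(1-\epsilon)\Gamma_0+\epsilon\Gamma_1\sim_\bR L$, the identity
\[
A_{X,\Delta+D}(F)=(1-\epsilon)A_{X,\Delta+\Gamma_0}(F)+\epsilon A_{X,\Delta+\Gamma_1}(F)
\]
at every divisorial valuation $F$ shows $(X,\Delta+D)$ is klt: both summands are nonnegative, and by construction no $F$ is an lc place of both $(X,\Delta+\Gamma_0)$ and $(X,\Delta+\Gamma_1)$ simultaneously.

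The main obstacle is the sublinear bound $\dim V_m=O(m^{n-1})$ that appears in both Step~1 and Step~2: it requires a careful case analysis depending on whether $E$ is a divisor on $X$ or is exceptional over $X$, and the contradiction with the leading growth $\Theta(m^n)$ from Lemma~\ref{lem:vol=lim} is what powers the entire ``only if'' direction. Once this dimension comparison is in hand, the finite-union pigeonhole and the convex-combination check in Step~2 are completely elementary.
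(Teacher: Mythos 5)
Your ``if'' direction is essentially the paper's argument and is fine, apart from one glossed point: to identify a graded linear series of $D$ with one of $L$ you need $mD\sim mL$ as Weil divisors for divisible $m$, which requires first perturbing $D$ so that $D\sim_{\bQ}L$ (the paper does exactly this); mere $\bR$-linear equivalence does not let you ``identify sections''. The ``only if'' direction, however, has a genuine gap at the heart of Step~2. For any prime divisor $E$ over $X$, admissibility caps $\ord_E(\Gamma)$ from above by $A_{X,\Delta}(E)$ for \emph{every} $\Gamma\in\frac1m|V_m|$, while a general member minimizes $\ord_E(\cdot)$ over $\bP(V_m)$. Hence if $E$ is a log canonical place of $(X,\Delta+\Gamma_0)$ for a \emph{general} $\Gamma_0$, then $\ord_E(\Gamma)=A_{X,\Delta}(E)$ for all $\Gamma$, i.e.\ $S_E=\bP(V_m)$, and no $\Gamma_1$ in the linear system can avoid $E$. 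In other words, your pigeonhole succeeds exactly when the only lc places of $\Gamma_0$ are accidents of a special choice (where the lemma is easy), and fails exactly in the hard case where the general member is not klt. The dimension count you invoke to exclude $S_E=\bP(V_m)$ does not rescue this: for the \emph{fixed} $m$ you chose, the bound is $\dim V_m\le C_E\,m^{n-1}$ with $C_E$ depending on $E$, and $E$ depends on $\Gamma_0$, hence on $m$ — so there is no contradiction with $\dim V_m\ge c\,m^n$. Turning this into a real contradiction requires comparing different levels $jm$ of the graded series (and, for quasi-projective $X$ — the lemma is applied with $X$ affine over an affine base — the restriction-to-$E$ bound fails outright since $h^0$ of a non-proper variety is infinite; one needs Okounkov-body or compactification arguments). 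Two further problems: an lc pair can have \emph{infinitely} many lc places (e.g.\ over a stratum of a coefficient-one snc boundary), so ``finitely many log canonical places $E_1,\dots,E_k$'' is false and your ``proper closed subset'' conclusion needs at least a countability/Baire argument; and since only $K_X+\Delta+L$ is assumed $\bR$-Cartier, the quantities $A_{X,\Delta}(E)$ and $\ord_E(\Gamma)$ for exceptional $E$ that you use throughout are not even defined in the lemma's generality.

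For contrast, the paper's proof avoids all of this by passing to a log resolution $\pi\colon X'\to X$ resolving the base locus of $|V_m|$, absorbing the worst-case (over the whole linear system) discrepancies into a sub-boundary $\Delta'$ so that the induced system $|V_m'|$ is base point free, proving that the sub-pair $(X',\Delta')$ is klt by intersecting any would-be non-klt center with a high-degree hypersurface pulled back from $\bP^N$ (this is where admissibility at all levels $\ell m$ is used), and then concluding by Bertini that a general member yields a klt pair. You would need to reorganize your argument along these lines; the convex-combination trick inside a single $\frac1m|V_m|$ cannot work as stated.
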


\begin{proof}
If there exists some $0\le D\sim_\bR L$ such that $(X,\Delta+D)$ is klt, then we can choose such $D$ with $D\sim_\bQ L$. Thus one direction is implied by Lemma \ref{lem:perturb div into lin sys}. For the other direction, let $V_\bullet$ be an admissible graded linear series of $L$. By assumption, there exists some $m\in\bN_+$ such that the rational map $f\colon X\dashrightarrow \bP^N$ induced by the linear system $|V_m|$ is birational onto its image. Let $\pi\colon X'\to X$ be a log resolution of $(X,\Delta)$ that resolve the indeterminacy of $f$ and let $g\colon X'\to \bP^N$ the induced morphism. For any prime divisor $F$ on $X'$, let
\[
a_F:=-\sup\left\{a(F,X,\Delta+D)\middle|\ D\in\frac{1}{m} |V_m|\right\},
\]
and let $\Delta':=\sum a_F F$. Note that this is a finite sum since $a_F=0$ unless $F$ is $\pi$-exceptional or the strict transform of some component of $\Delta+D$ for any $D\in |V_m|$. We also note that $\pi_*\Delta'-\Delta$ is the fixed part of $|V_m|$.

Let $L':=\pi^*(K_X+\Delta+L)-(K_{X'}+\Delta')$. Then the linear series $V_m$ of $mL$ induces a linear series $V'_m$ of $mL'$ defined by
\[
|V'_m| = \{\pi^*(mK_X+m\Delta+D)-mK_{X'}-m\Delta' \,|\,D\in |V_m| \},
\]
and the morphism $g$ is induced by $|V'_m|$. In particular, the linear system $|V'_m|$ is base point free. We claim that the sub-pair $(X',\Delta')$ is klt. In fact, since $V_\bullet$ is admissible, we know that $(X,\Delta+D)$ is log canonical for any positive integer $\ell$ and all $D\in \frac{1}{\ell m} |V_{\ell m}|$. This implies that for any hypersurface $H\subseteq \bP^N$ of degree $\ell$ whose support does not contain $g(X')$, the sub-pair $(X',\Delta'+\frac{1}{\ell m}g^*H)$ is log canonical, as $\frac{1}{\ell m}g^*H\sim L'$ and thus the sub-pair is the crepant pullback of $(X,\Delta+D)$ where $D=\pi_*\Delta'-\Delta + \frac{1}{\ell m}\pi_*g^*H\in \frac{1}{\ell m} |V_{\ell m}|$. If $(X',\Delta')$ is not klt, then there exists some prime divisor $E$ over $X'$ such that $A_{X',\Delta'}(E)\le 0$. As $g$ is a birational morphism, we can choose some hypersurface $H$ as above of sufficiently large degree such that $g^*H$ contains the center of $E$. But then $A_{X',\Delta'+\frac{1}{\ell m}g^*H}(E)<0$ and hence $(X',\Delta'+\frac{1}{\ell m}g^*H)$ cannot be log canonical, a contradiction. This proves the lemma.
\end{proof}

We now introduce the global invariant that behaves well in the MMP.

\begin{defn} \label{defn:Vol}
Let $(X,\Delta)$ be a pair and let $L$ be a big $\bR$-divisor such that $K_X+\Delta+L$ is $\bR$-Cartier. We define the \textit{log canonical volume} of $L$ with respect to $(X,\Delta)$, denoted by $\hVol_{X,\Delta}(L)$, as follows:
\begin{enumerate}
    \item If $(X,\Delta+D)$ is klt for some $0\le D\sim_\bR L$, we set
    \[
    \hVol_{X,\Delta}(L):=\sup\{\vol(V_\bullet)\,|\,V_\bullet \mbox{ is an admissible graded linear series of } L\}.
    \]
    In this case $\hVol_{X,\Delta}(L)>0$ by Lemma \ref{lem:perturb div into lin sys}.
    \item Otherwise, by Lemma \ref{lem: admissible-exist-klt}, there are no admissible graded linear series of $L$ and we set $\hVol_{X,\Delta}(L):=0$. 
\end{enumerate}
\end{defn}

While we do not need the next statement in this paper, we note that the log canonical volume defined above only depends on the numerical equivalence class of $L$. Throughout the rest of this subsection, let $n=\dim X$.

\begin{lem}
Let $(X,\Delta)$ and $L$ be as in Definition \ref{defn:Vol}, and let $L'$ be another $\bR$-divisor on $X$ such that $L'-L$ extends to an $\bR$-divisor in $\Pic^0(\oX)\otimes \bR$ for some projective compactification $\oX$ of $X$. Then
\[
\hVol_{X,\Delta}(L)=\hVol_{X,\Delta}(L').
\]
\end{lem}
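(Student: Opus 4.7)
By symmetry (noting $-\overline{N}:=\overline{L}-\overline{L'}$ also lies in $\Pic^0(\oX)\otimes\bR$), it suffices to prove $\hVol_{X,\Delta}(L)\leq\hVol_{X,\Delta}(L')$. The case $\hVol_{X,\Delta}(L)=0$ is trivial; otherwise Lemma~\ref{lem: admissible-exist-klt} supplies some $0\leq D\sim_{\bR}L$ with $(X,\Delta+D)$ klt. My plan is to take an arbitrary admissible graded linear series $V_\bullet$ of $L$ and construct an admissible graded linear series $V'_\bullet$ of $L'$ satisfying $\vol(V'_\bullet)\geq\vol(V_\bullet)$, from which the desired inequality follows by taking the supremum in Definition~\ref{defn:Vol}.

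Fix an ample $\bQ$-divisor $\overline{H}$ on $\oX$ and set $H:=\overline{H}|_X$. Since $\overline{N}:=\overline{L'}-\overline{L}$ is numerically trivial, both $\overline{N}+\epsilon\overline{H}$ and $-\overline{N}+\epsilon\overline{H}$ are ample on $\oX$ for every $\epsilon>0$. Applying Lemma~\ref{lem:Bertini ample Q-div} to the klt pair $(X,\Delta+D)$ (after a $\bQ$-Cartier approximation) yields, for each small $\epsilon>0$, effective $\bR$-divisors $E_\epsilon^{\pm}\sim_{\bR}\pm N+\epsilon H$ on $X$ with $\lct(X,\Delta+D;E_\epsilon^{\pm})\geq\delta_0>0$ uniformly in $\epsilon$. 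The transfer $V_\bullet\to V'_\bullet$ is then defined by multiplying each $s\in V_m$ by a fixed $\bR$-rational section realizing the equivalence $\tfrac{m}{2}(E_\epsilon^+-E_\epsilon^-)\sim_{\bR}mN$, producing $s'$ whose associated divisor is $\Gamma'=\Gamma+\tfrac{m}{2}(E_\epsilon^+-E_\epsilon^-)$ for $\Gamma\in|V_m|$, and $\Gamma'\sim_{\bR}mL'$. Admissibility of $V'_\bullet$ follows from Lemma~\ref{lem:lct inequality} applied to the convex combination $\tfrac{1}{m}\Gamma+\tfrac{1}{2}(E_\epsilon^+-E_\epsilon^-)$, using the bound $\lct(X,\Delta;\tfrac{1}{m}\Gamma)\geq 1$ from the admissibility of $V_\bullet$ together with the uniform lct bounds on $E_\epsilon^{\pm}$; for $\epsilon>0$ small, the resulting lct stays $\geq 1$. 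Injectivity of the transfer yields $\dim V'_m=\dim V_m$ and hence $\vol(V'_\bullet)=\vol(V_\bullet)$.

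The main obstacle will be ensuring that the transferred divisor $\Gamma'$ is genuinely effective on $X$, not merely an $\bR$-linear equivalence representative in the class $mL'$: because $E_\epsilon^-$ is effective and enters with a negative sign in $\Gamma'$, one must exploit the freedom in the choice of $\bR$-rational function realizing $E_\epsilon^+-E_\epsilon^-\sim_{\bR}2N$, together with the positive contributions from $\Gamma$, $E_\epsilon^+$, and an auxiliary effective representative of $\epsilon\overline{H}$, to dominate the negative $-\tfrac{m}{2}E_\epsilon^-$ contribution. This is the technical heart of the argument; as a fallback one may instead first construct $V'_\bullet$ as an admissible series of the nearby $\bR$-divisor $L'+\epsilon'H$ (for which effectivity is automatic), then pass $\epsilon'\to 0$ via the symmetric transfer in the reverse direction, using the bigness of $L$ and $L'$ on $\oX$ to guarantee $\hVol_{X,\Delta}(L'+\epsilon'H)\to\hVol_{X,\Delta}(L')$.
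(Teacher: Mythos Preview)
Your proposal has a genuine gap at exactly the point you identify as the ``main obstacle'': the divisor $\Gamma' = \Gamma + \tfrac{m}{2}(E_\epsilon^+ - E_\epsilon^-)$ need not be effective, so $V'_\bullet$ is not a graded linear series of $L'$ in the sense of Section~\ref{ss:linear series}. You do not actually resolve this. Moreover, Lemma~\ref{lem:lct inequality} applies only to effective divisors, so you cannot use it to control the log canonical threshold of $\tfrac{1}{m}\Gamma + \tfrac{1}{2}(E_\epsilon^+ - E_\epsilon^-)$; that combination is not effective in general, and there is no reason the lct should remain $\ge 1$. Your fallback is also incomplete: you would need $\hVol_{X,\Delta}(L' + \epsilon' H) \to \hVol_{X,\Delta}(L')$ from above as $\epsilon' \to 0$, but only the monotone lower bound (Lemma~\ref{lem:Vol add ample boundary}) and lower semicontinuity (Lemma~\ref{lem:Vol lsc}) are available, which go the wrong way.

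The paper's argument avoids this effectivity problem entirely by a different maneuver: instead of trying to carry all of $V_m$ across, it sacrifices a $\tfrac{1}{\ell}$-fraction of $L$ and uses that freed-up positivity to absorb the numerically trivial shift $L'-L$. Concretely, since $D = A + B$ with $A$ ample and $L'-L$ numerically trivial, one finds an effective $D' \sim_{\bQ} \ell(L'-L) + L$ with $(X,\Delta+D')$ klt, and then sets $|V'_m| = |V_{(1-1/\ell)m}| + \tfrac{m}{\ell}D'$. Every divisor in $|V'_m|$ is a sum of two effective divisors, admissibility follows from convexity of log canonicity, and one obtains $\vol(V'_\bullet) = (1-\tfrac{1}{\ell})^n \vol(V_\bullet)$; letting $\ell \to \infty$ finishes. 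The key idea you are missing is this ``convex combination with an effective absorber'' trick, which trades a negligible loss in volume for guaranteed effectivity.
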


\begin{proof}
It suffices to show that $\hVol_{X,\Delta}(L')\ge \hVol_{X,\Delta}(L)$, as by symmetry we also get the reverse inequality. To prove this, we may assume that there exists some big $\bR$-divisor $0\le D\sim_\bR L$ such that $(X,\Delta+D)$ is klt, and it is enough to show that $\hVol_{X,\Delta}(L')\ge \left(1-\frac{1}{\ell}\right)^n \vol(V_\bullet)$ for any sufficiently large integer $\ell$ and any admissible graded linear series $V_\bullet$ of $L$. Since $L'-L \in \Pic^0(\oX)\otimes \bR$, we can find some $\bR$-divisor $0\le D'\sim_\bQ \ell(L'-L)+L$ such that $(X,\Delta+D')$ is also klt. To see this, write $D=A+B$ where $A$ is ample and $B\ge 0$ such that the pair $(X,\Delta+B)$ is also klt, then $A+\ell(L'-L)$ is still ample and we can take $D'=A'+B$ for some general $0\le A'\sim_\bQ A+\ell(L'-L)$. By construction, $(1-\frac{1}{\ell})L+\frac{1}{\ell}D'\sim_\bQ L'$. For any sufficiently divisible integer $m$, let $V'_m$ be the linear series of $mL'$ defined by $|V'_m|=|V_{(1-\frac{1}{\ell})m}|+\frac{m}{\ell}D'$. Then $V'_\bullet$ is admissible by the convexity of log canonicity. It follows that $\hVol_{X,\Delta}(L')\ge \vol(V'_\bullet) = \left(1-\frac{1}{\ell}\right)^n \vol(V_\bullet)$ as desired.
\end{proof}

The proof of Theorem \ref{thm:index+mld+vol bound} can be reformulated in terms of some properties of the log canonical volume $\hVol_{X,\Delta}(L)$. These include its monotonicity in the MMP, and its comparison with the local volume.

\begin{lem} \label{lem:Vol MMP}
Let $\varphi\colon (X,\Delta+D)\dashrightarrow (X',\Delta'+D')$ be an MMP type contraction between klt pairs. Assume that $\Delta,\Delta'\ge 0$, $D$ is big, and $D'=\varphi_* D$. Then $\hVol_{X',\Delta'}(D')\ge \hVol_{X,\Delta}(D)$.
\end{lem}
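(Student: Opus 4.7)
The plan is to show monotonicity by transferring admissible graded linear series from $(X,\Delta)$ to $(X',\Delta')$ via pushforward along $\varphi$, while preserving both the volume and admissibility. If $\hVol_{X,\Delta}(D)=0$ the inequality is trivial, so assume it is positive, and fix an admissible graded linear series $V_\bullet$ of $D$ with respect to $(X,\Delta)$.

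First I would construct $V'_\bullet$ by identifying each $s\in V_m\subseteq H^0(X,mD)$ with a rational function on $X'$ via $\bC(X)=\bC(X')$, and setting $V'_m\subseteq H^0(X',mD')$ to be its image. Since $\varphi^{-1}$ contracts no divisor and $\varphi_*D=D'$, we have $\mathrm{div}_{X'}(s)+mD'=\varphi_*(\mathrm{div}_X(s)+mD)\ge 0$, so $s$ indeed lies in $H^0(X',mD')$. The assignment is a graded linear injection, hence $\dim V'_m=\dim V_m$ and $\vol(V'_\bullet)=\vol(V_\bullet)$; moreover $V'_\bullet$ remains eventually birational because the rational map defined by $|V'_m|$ is the composition of $\varphi^{-1}$ with that defined by $|V_m|$.

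The essential step, which I view as the main obstacle, is the admissibility of $V'_\bullet$. Given any $\Gamma=D+\frac{1}{m}\mathrm{div}_X(s)\in\frac{1}{m}|V_m|$, let $\Gamma'=\varphi_*\Gamma=D'+\frac{1}{m}\mathrm{div}_{X'}(s)\in\frac{1}{m}|V'_m|$. Take proper birational morphisms $f\colon Y\to X$ and $g\colon Y\to X'$ from Definition \ref{defn:MMP type contraction} with $f^*(K_X+\Delta+D)\ge g^*(K_{X'}+\Delta'+D')$, and add the equality $\frac{1}{m}f^*\mathrm{div}_X(s)=\frac{1}{m}g^*\mathrm{div}_{X'}(s)$ (both equal $\frac{1}{m}\mathrm{div}_Y(s)$, since $s$ determines a rational function on all three birational models) to each side; this produces
\[
f^*(K_X+\Delta+\Gamma)\ge g^*(K_{X'}+\Delta'+\Gamma'),
\]
so $\varphi\colon(X,\Delta+\Gamma)\dashrightarrow(X',\Delta'+\Gamma')$ is again an MMP type contraction. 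Since $V_\bullet$ is admissible, $(X,\Delta+\Gamma)$ is log canonical, and the preservation observation stated immediately after Definition \ref{defn:MMP type contraction} then forces $(X',\Delta'+\Gamma')$ to be log canonical, giving admissibility of $V'_\bullet$.

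With $V'_\bullet$ admissible and $\vol(V'_\bullet)=\vol(V_\bullet)$, I conclude $\hVol_{X',\Delta'}(D')\ge\vol(V_\bullet)$, and taking the supremum over all admissible $V_\bullet$ on $(X,\Delta)$ yields the desired inequality. The definability of the right-hand side is automatic: $D'$ is big because bigness is preserved under birational pushforward along a birational contraction, and $K_{X'}+\Delta'+D'$ is $\bR$-Cartier because $(X',\Delta'+D')$ is klt by hypothesis.
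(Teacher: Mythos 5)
Your proof is correct and follows essentially the same route as the paper's: push forward an admissible graded linear series along $\varphi$ and check that admissibility and volume are preserved. The only (harmless) difference is that you justify the log-canonicity transfer directly from the identity $f^*\mathrm{div}_X(s)=g^*\mathrm{div}_{X'}(s)=\mathrm{div}_Y(s)$, whereas the paper invokes Lemma \ref{lem:MMP type change boundary}, which treats the more general situation of an $\bR$-linearly equivalent perturbation via the negativity lemma; in your special case the perturbation is an honest principal divisor, so that lemma is not needed.
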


\begin{proof}
If $V_\bullet$ is an admissible graded linear series of $D$, then $\varphi_* V_\bullet$ is an admissible graded linear series of $D'$ as in the proof of Theorem \ref{thm:index+mld+vol bound}.
\end{proof}

\begin{lem} \label{lem:local vs global vol}
Let $(X,\Delta)$ be a klt pair and let $D\ge 0$ be a big $\bR$-Cartier $\bR$-divisor on $X$. 
Then for any closed point $x\in X$ we have $\hvol(x,X,\Delta)\ge \hVol_{X,\Delta}(D)$.
\end{lem}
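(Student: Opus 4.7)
The plan is to reduce the inequality to a direct application of Proposition \ref{prop:alpha-vol inequality linear series}, exploiting the compatibility between the definition of admissibility and the alpha invariant condition in that proposition.

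First I would dispose of the trivial case: if $\hVol_{X,\Delta}(D)=0$, then since $(X,\Delta)$ is klt, the local volume $\hvol(x,X,\Delta)>0$ by \cite{Li-normalized-volume}*{Theorem 1.2}, so the inequality holds. Thus I may assume $\hVol_{X,\Delta}(D)>0$, which by Definition \ref{defn:Vol} means the supremum is taken over a nonempty set of admissible graded linear series of $D$.

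Next, fix any admissible graded linear series $V_\bullet$ of $D$. By Definition \ref{defn:admissible linear series}, $V_\bullet$ is eventually birational and $(X,\Delta+\Gamma)$ is log canonical for every $m\in M(V_\bullet)$ and every $\Gamma\in\frac{1}{m}|V_m|$. Since $K_X+\Delta$ is $\bR$-Cartier (as $(X,\Delta)$ is klt), this log canonicity condition translates to $\lct(X,\Delta;\Gamma)\ge 1$ for all such $\Gamma$, whence
\[
\alpha(X,\Delta;V_\bullet)\ge 1.
\]
Applying Proposition \ref{prop:alpha-vol inequality linear series} to the eventually birational graded linear series $V_\bullet$, I obtain
\[
\hvol(x,X,\Delta)\ge \alpha(X,\Delta;V_\bullet)^n\cdot \vol(V_\bullet)\ge \vol(V_\bullet),
\]
where $n=\dim X$.

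Finally, taking the supremum of the right-hand side over all admissible graded linear series $V_\bullet$ of $D$ yields $\hvol(x,X,\Delta)\ge \hVol_{X,\Delta}(D)$, as desired. There is no real obstacle here: all the work is contained in Proposition \ref{prop:alpha-vol inequality linear series}, and the admissibility condition is tailored precisely so that the alpha invariant drops out of the estimate.
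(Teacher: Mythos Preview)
Your proof is correct and follows exactly the approach the paper takes: the paper's proof is the single line ``This follows immediately from Proposition \ref{prop:alpha-vol inequality linear series},'' and you have simply unpacked the obvious details (handling the trivial case, noting that admissibility forces $\alpha(X,\Delta;V_\bullet)\ge 1$, and taking the supremum).
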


\begin{proof}
This follows immediately from Proposition \ref{prop:alpha-vol inequality linear series}. 
\end{proof}

We make a few more elementary observations on the log canonical volume $\hVol_{X,\Delta}(L)$ that will be useful in the next several sections. 

\begin{lem} \label{lem:index bound via global Vol}
Let $(X,\Delta)$ be a klt pair such that $\Delta$ is big. Then the Cartier index of any $\bQ$-Cartier Weil divisor on $X$ is at most $\left\lfloor\frac{n^n}{\hVol_X(\Delta)}\right\rfloor !$.
\end{lem}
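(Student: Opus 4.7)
My plan is to combine Theorem \ref{thm:index bound via volume}, which bounds the local Cartier index of any $\bQ$-Cartier Weil divisor at a closed point by $\lfloor n^n/\hvol\rfloor$, with Lemma \ref{lem:local vs global vol}, which lower-bounds local volumes by the log canonical volume; the global index then divides the factorial of the resulting pointwise bound.

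First I would reduce to the case where $X$ is $\bQ$-factorial. By \cite{BCHM}*{Corollary 1.4.3} there exists a small $\bQ$-factorial modification $\pi\colon X''\to X$. Let $\Delta''$ be the strict transform of $\Delta$. Smallness gives $K_{X''}+\Delta''=\pi^*(K_X+\Delta)$, so $(X'',\Delta'')$ is klt with $\Delta''$ still big. Exactly as in the proof of Theorem \ref{thm:index+mld+vol bound}(1), the base point free theorem implies that the Cartier index of any $\bQ$-Cartier Weil divisor $L$ on $X$ coincides with the Cartier index of $\pi^*L$ on $X''$. Moreover, since $\pi$ is small and crepant, $\pi^*$ and $\pi_*$ identify sections $H^0(X,mL)\cong H^0(X'',\pi^*(mL))$ and preserve log canonicity, setting up a volume-preserving bijection between admissible graded linear series (Definition \ref{defn:admissible linear series}) of $\Delta$ on $X$ and of $\Delta''$ on $X''$; in particular $\hVol_{X''}(\Delta'')=\hVol_X(\Delta)$.

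Since $X''$ is $\bQ$-factorial klt, both $K_{X''}$ and $\Delta''$ are $\bR$-Cartier, and $(X'',0)$ is itself a klt log pair because $a(E,X'',0)\ge a(E,X'',\Delta'')>-1$ for every prime divisor $E$ over $X''$. I would then apply Lemma \ref{lem:local vs global vol} to the klt pair $(X'',0)$ with big $\bR$-Cartier divisor $D=\Delta''$, which yields
\[
\hvol(x'',X'')\ge \hVol_{X''}(\Delta'')=\hVol_X(\Delta)
\]
for every closed point $x''\in X''$. Theorem \ref{thm:index bound via volume} applied to $(X'',0)$ then says that the Cartier index at $x''$ of any $\bQ$-Cartier Weil divisor divides some positive integer $\le n^n/\hvol(x'',X'')\le n^n/\hVol_X(\Delta)$, hence is at most $r:=\lfloor n^n/\hVol_X(\Delta)\rfloor$. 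Since the global Cartier index is the least common multiple of the local Cartier indices, each a positive integer at most $r$, it divides $r!$, giving the desired bound.

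The main technical point is the identity $\hVol_{X''}(\Delta'')=\hVol_X(\Delta)$ under the small $\bQ$-factorial modification, which boils down to the routine compatibility of admissible linear series under $\pi^*$ and $\pi_*$ (smallness makes $\pi$ an isomorphism in codimension one and crepant, so both the volume and the log canonicity condition transfer); every other step is a direct citation of Theorem \ref{thm:index bound via volume} and Lemma \ref{lem:local vs global vol}.
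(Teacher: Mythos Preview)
Your proof is correct and follows essentially the same approach as the paper: pass to a small $\bQ$-factorial modification, use Lemma~\ref{lem:local vs global vol} together with Theorem~\ref{thm:index bound via volume} to bound local Cartier indices, and conclude via the base point free theorem. The only cosmetic difference is that the paper invokes Lemma~\ref{lem:Vol MMP} (applied in both directions, since a small crepant modification is an MMP type contraction either way) to get $\hVol_{X''}(\Delta'')=\hVol_X(\Delta)$, whereas you argue this identity directly from the bijection of admissible linear series under a small crepant map; both justifications are valid.
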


\begin{proof}
Let $X'\to X$ be a small $\bQ$-factorial modification and let $\Delta'$ be the strict transform of $\Delta$. By Lemma \ref{lem:local vs global vol}, we have $\hvol(x',X')\ge \hVol_{X'}(\Delta')$ for any closed point $x'\in X'$, hence by Theorem \ref{thm:index bound via volume} we know that the Cartier index of any Weil divisor on $X'$ is at most $\left\lfloor\frac{n^n}{\hVol_{X'}(\Delta')}\right\rfloor !$. Note that $\hVol_{X'}(\Delta')=\hVol_X(\Delta)$ by Lemma \ref{lem:Vol MMP}. The result then follows from the base point free theorem as in the proof of Theorem \ref{thm:index+mld+vol bound}.
\end{proof}

\begin{lem} \label{lem:Vol change boundary}
Let $(X,\Delta)$ be a klt pair such that $\Delta$ is big, let $\lambda\in [0,1)$, and let $0\le \Delta_0\le \lambda \Delta$ be an $\bR$-divisor. Then $$\hVol_X(\Delta)\ge \hVol_{X,\Delta_0}(\Delta-\Delta_0)\ge (1-\lambda)^n \hVol_X(\Delta).$$
\end{lem}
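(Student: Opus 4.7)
The plan is to prove each inequality by converting an admissible graded linear series for one side into an admissible one for the other side, then compare volumes using Lemma \ref{lem:vol=lim}. Throughout, note that $(X,\Delta_0)$ is klt (since $0\le \Delta_0\le \Delta$), that $\Delta-\Delta_0\ge (1-\lambda)\Delta$ is big, and that $\Delta_0+(\Delta-\Delta_0)=\Delta$ with $(X,\Delta)$ klt, so $\hVol_{X,\Delta_0}(\Delta-\Delta_0)$ is well-defined and positive.

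\smallskip

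\textbf{First inequality.} Suppose $V_\bullet$ is an admissible graded linear series of $\Delta-\Delta_0$ with respect to $(X,\Delta_0)$. Since $\Delta_0\ge 0$, the identity on sections gives a natural inclusion $H^0(X,m(\Delta-\Delta_0))\hookrightarrow H^0(X,m\Delta)$; let $W_m$ be the image of $V_m$, so that $|W_m|=m\Delta_0+|V_m|$. Then $W_\bullet$ is a graded linear series of $\Delta$ with $\dim W_m=\dim V_m$, and it is eventually birational because its linear systems differ from those of $V_\bullet$ only by the fixed divisor $m\Delta_0$. Any $\Gamma'\in\frac{1}{m}|W_m|$ has the form $\Delta_0+\Gamma$ for some $\Gamma\in\frac{1}{m}|V_m|$, and admissibility of $V_\bullet$ gives that $(X,\Gamma')=(X,\Delta_0+\Gamma)$ is log canonical. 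Thus $W_\bullet$ is admissible with respect to $(X,0)$, and $\vol(W_\bullet)=\vol(V_\bullet)$, which yields $\hVol_X(\Delta)\ge \hVol_{X,\Delta_0}(\Delta-\Delta_0)$ after taking the supremum.

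\smallskip

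\textbf{Second inequality.} For the reverse direction, start with $V_\bullet$ admissible for $\Delta$ with respect to $(X,0)$ and fix a rational $\mu\in(0,1-\lambda)$. For $m$ divisible enough that $k:=m\mu\in\bN$, define $W_m\subseteq H^0(X,m(\Delta-\Delta_0))$ to consist of the sections in $V_k$, included via the decomposition
\[
\mathrm{div}(s)+m(\Delta-\Delta_0)=\bigl(\mathrm{div}(s)+k\Delta\bigr)+\bigl((m-k)\Delta-m\Delta_0\bigr),
\]
whose first summand is effective since $s\in V_k$ and whose second summand is effective because $\Delta_0\le\lambda\Delta$ implies $(m-k)\Delta-m\Delta_0\ge m(1-\mu-\lambda)\Delta\ge 0$. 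For any $\Gamma'\in\frac{1}{m}|W_m|$ coming from $s\in V_k$, setting $\Gamma_s:=\frac{1}{k}\mathrm{div}(s)+\Delta\in\frac{1}{k}|V_k|$, a direct computation gives
\[
\Delta_0+\Gamma'=\Delta+\tfrac{1}{m}\mathrm{div}(s)=(1-\mu)\Delta+\mu\Gamma_s,
\]
which is log canonical as a convex combination of the klt pair $(X,\Delta)$ and the log canonical pair $(X,\Gamma_s)$. Eventual birationality of $W_\bullet$ follows since each $|W_m|$ is $|V_k|$ shifted by a fixed $\bR$-divisor.

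\smallskip

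\textbf{Volume computation and conclusion.} To compute $\vol(W_\bullet)$, I pick $N\in M(V_\bullet)$ with $V_N$ inducing a birational map, write $\mu=p/q$, and restrict attention to $m\in qN\cdot\bZ_{>0}$ so that $k=m\mu\in pN\cdot\bZ_{>0}\subseteq M(V_\bullet)$; Lemma \ref{lem:vol=lim} then gives $\dim W_m/(m^n/n!)=\mu^n\cdot\dim V_k/(k^n/n!)\to\mu^n\vol(V_\bullet)$. Hence $\hVol_{X,\Delta_0}(\Delta-\Delta_0)\ge\mu^n\vol(V_\bullet)$, and letting $\mu\nearrow 1-\lambda$ through rationals followed by taking the supremum over admissible $V_\bullet$ yields the desired lower bound $(1-\lambda)^n\hVol_X(\Delta)$. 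The only subtlety in the whole argument is arranging that the truncation index $k$ lies in the semigroup $M(V_\bullet)$, which the Veronese-type restriction above handles; everything else is elementary bookkeeping and convexity of log canonicity.
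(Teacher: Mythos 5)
Your proof is correct and follows essentially the same route as the paper: both inequalities are obtained by transporting admissible graded linear series between $\Delta$ and $\Delta-\Delta_0$, adding the fixed effective part $m(\lambda\Delta-\Delta_0)$ (resp.\ $m\Delta_0$) and invoking convexity of log canonicity. The only cosmetic difference is that the paper reduces to $\lambda\in\bQ$ and truncates at degree $(1-\lambda)m$, whereas you truncate at a rational $\mu<1-\lambda$ and let $\mu\nearrow 1-\lambda$; the two devices are interchangeable.
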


\begin{proof}
If $V_\bullet$ is an admissible graded linear series of $\Delta-\Delta_0$ with respect to $(X,\Delta_0)$, then its image $V'_\bullet$ under the natural inclusion $H^0(X,m(\Delta-\Delta_0))\hookrightarrow H^0(X,m\Delta)$ is an admissible graded linear series of $\Delta$ with respect to $X$. This gives the first inequality. To prove the second inequality we may assume that $\lambda\in \bQ$. For any admissible graded linear series $V_\bullet$ of $\Delta$, let $V'_\bullet$ be the graded linear series of $\Delta-\Delta_0$ defined by $|V'_m|=|V_{(1-\lambda)m}|+m(\lambda \Delta - \Delta_0)$ for sufficiently divisible $m\in\bN$. By convexity of log canonicity, we know that $(X,\lambda\Delta+(1-\lambda)D)$ is log canonical for all $m\in M(V_\bullet)$ and $D\in \frac{1}{m}|V_m|$. This implies that $V'_\bullet$ is admissible with respect to $(X,\Delta_0)$. Since $V_\bullet$ is arbitrary and $\vol(V'_\bullet)=(1-\lambda)^n \vol(V_\bullet)$, the lemma follows.
\end{proof}

\begin{lem} \label{lem:Vol add ample boundary}
Let $(X,\Delta+D)$ be a klt pair where $\Delta\ge 0$ and $D$ is big. Let $H$ be an ample $\bR$-divisor on $X$. Then
\[
\hVol_{X,\Delta}(D+H)\ge \hVol_{X,\Delta}(D).
\]
\end{lem}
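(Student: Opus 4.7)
The plan is to reduce to a convenient small $\bQ$-Cartier effective representative of $H$, and then to construct from any admissible graded linear series $V_\bullet$ of $D$ an admissible graded linear series $W_\bullet$ of $D+H$ of nearly the same volume. If $\hVol_{X,\Delta}(D)=0$ the inequality is automatic, so we may assume admissible series of $D$ exist. For simplicity the plan treats $H$ as ample $\bQ$-Cartier; the general ample $\bR$-Cartier case should follow by writing $H$ as a positive $\bR$-combination of ample $\bQ$-Cartier divisors and applying an $\bR$-analogue of Lemma~\ref{lem:Bertini ample Q-div}.

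Fix a large integer $k$ such that $kH$ is Cartier. By Lemma~\ref{lem:Bertini ample Q-div} applied to the klt pair $(X,\Delta+D)$ and the ample $\bQ$-divisor $H$, there exist $\delta>0$ (independent of $k$) and $H_k\in |kH|$ with $\lct(X,\Delta+D;H_k)\geq\delta$. Set $H_0:=\tfrac{1}{k}H_k$, so $\tau_k:=\lct(X,\Delta+D;H_0)\geq k\delta\to\infty$. Let $s\in H^0(X,kH)$ be a section with $\mathrm{div}(s)+kH=H_k$. For $m\in k\bN$, multiplication by $s^{m/k}$ yields an isomorphism $H^0(X,m(D+H_0))\xrightarrow{\sim}H^0(X,m(D+H))$ under which corresponding sections define the same effective divisor. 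Hence admissible graded linear series of $D+H_0$ supported on $k\bN$ biject (preserving volumes) with those of $D+H$; since any admissible graded linear series can be restricted to the sublattice $k\bN$ without changing its volume (by Lemma~\ref{lem:vol=lim}) or its admissibility, we obtain $\hVol_{X,\Delta}(D+H)=\hVol_{X,\Delta}(D+H_0)$.

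Next, fix admissible $V_\bullet$ of $D$ and rational $c\in(0,1)$ with $c<1-1/\tau_k$. For sufficiently divisible $m$, define
\[
W_m := V_{cm}\ \subseteq\ H^0(X,cmD)\ \subseteq\ H^0(X,m(D+H_0)),
\]
where the last inclusion holds because $m(D+H_0)-cmD=(1-c)mD+mH_0\ge 0$. The $W_m$ form a graded linear series of $D+H_0$ with $\vol(W_\bullet)=c^n\vol(V_\bullet)$, whose typical member is $\Gamma=c\Gamma_0+(1-c)D+H_0$ with $\Gamma_0\in\tfrac{1}{cm}|V_{cm}|$. Since $(X,\Delta+\Gamma_0)$ is lc by admissibility of $V_\bullet$ and $(X,\Delta+D+\tfrac{1}{1-c}H_0)$ is lc by $\tfrac{1}{1-c}<\tau_k$, the convex combination
\[
c(\Delta+\Gamma_0)+(1-c)\bigl(\Delta+D+\tfrac{1}{1-c}H_0\bigr)=\Delta+c\Gamma_0+(1-c)D+H_0
\]
is lc, so $W_\bullet$ is admissible. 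Combined with the preceding reduction, this gives $\hVol_{X,\Delta}(D+H)\geq c^n\vol(V_\bullet)$; letting $c\uparrow 1-1/\tau_k$, then $k\to\infty$, and finally taking the supremum over $V_\bullet$ yields $\hVol_{X,\Delta}(D+H)\geq\hVol_{X,\Delta}(D)$.

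The main subtlety is the equality $\hVol_{X,\Delta}(D+H)=\hVol_{X,\Delta}(D+H_0)$: we cannot directly invoke the general numerical invariance of $\hVol$ (stated earlier but not used elsewhere in the paper), so the plan sidesteps it by an explicit bijection of linear systems via multiplication by $s^{m/k}$, which is what allows the loss factor $(1-1/\tau_k)^n$ to be driven to $1$. Once this reduction is in place, the remainder is a scaling of $V_\bullet$ by $c$ together with a standard convex combination of log canonical pairs.
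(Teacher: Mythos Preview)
Your proof is correct and takes essentially the same approach as the paper: both shrink an admissible $V_\bullet$ of $D$ by a factor $c<1$ and add a fixed effective part linearly equivalent to $(1-c)D+H$, verifying admissibility via convexity of log canonicity. The paper's version is slightly more direct---it defines $|V'_m|=|V_{(1-1/\ell)m}|+\tfrac{m}{\ell}(D+H_0)$ for a general $H_0\in|\ell H|$ with $(X,\Delta+D+H_0)$ log canonical, constructing an admissible series of $D+H$ in one step rather than first passing through the auxiliary $D+H_0$---but the underlying idea is identical.
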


\begin{proof}
Let $\ell$ be a sufficiently large and divisible integer. It is enough to show that $\hVol_{X,\Delta}(D+H)\ge (1-\frac{1}{\ell})^n \vol(V_\bullet)$ for any admissible graded linear series $V_\bullet$ of $D$. Since $(X,\Delta+D)$ is klt, $H$ is ample, and $\ell$ is sufficiently large and divisible, by Bertini's theorem we can find some $H_0\in |\ell H|$ such that $(X,\Delta+D+H_0)$ is log canonical. Define a graded linear series $V'_\bullet$ of $D+H$ by $|V'_m| = |V_{\left(1-\frac{1}{\ell}\right)m}|+\frac{m}{\ell}(D+H_0)$ for all $m\in\ell\cdot\bN$. Then $V'_\bullet$ is admissible by convexity of log canonicity as before and this proves the result.
\end{proof}

We also prove a semi-continuity result for the log canonical volume, which allows us to reduce many questions to the case of rational coefficients. Let $\mathrm{Div}(X)$ be the free group generated by the prime divisors on $X$. 
\begin{lem} \label{lem:Vol lsc}
Let $(X,\Delta)$ be a pair, let $\Lambda\subseteq \mathrm{Div}(X)_\bR$ be a finite dimensional subspace, and let $\cL(\Lambda)\subseteq\Lambda$ be the subset of $\bR$-divisors $D\geq 0$ such that $(X,\Delta+D)$ is klt and $D$ is big. Then the function $D\mapsto \hVol_{X,\Delta}(D)$ is lower semi-continuous in $\cL(\Lambda)$.
\end{lem}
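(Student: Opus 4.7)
The plan is to show, for each $D_0 \in \cL(\Lambda)$ and each $\delta > 0$, that $\hVol_{X,\Delta}(D) \geq \hVol_{X,\Delta}(D_0) - \delta$ for all $D \in \cL(\Lambda)$ sufficiently close to $D_0$. The basic idea is to construct, out of an almost optimal admissible graded linear series $V_\bullet$ of $D_0$, an admissible graded linear series $V'_\bullet$ of $D$ of only slightly smaller volume, by taking a product of a rescaled piece of $V_\bullet$ with a single fixed section of an auxiliary admissible series on a ``magnified'' divisor $C$.

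First I would fix an admissible $V_\bullet$ of $D_0$ with $\vol(V_\bullet) > \hVol_{X,\Delta}(D_0) - \delta/2$, and pick a positive integer $\ell$ large enough that $(1-1/\ell)^n \vol(V_\bullet) > \hVol_{X,\Delta}(D_0) - \delta$. For $D$ near $D_0$, set $C := \ell D - (\ell - 1) D_0$, so that $D = (1 - 1/\ell) D_0 + (1/\ell) C$. The key observation is that $\Delta + C = \ell(\Delta + D) - (\ell - 1)(\Delta + D_0)$ is automatically $\bR$-Cartier, and that $C \to D_0$ as $D \to D_0$; openness of the klt and bigness conditions therefore guarantees that $C \geq 0$ is big and $(X, \Delta + C)$ is klt once $D$ lies in a small enough neighborhood of $D_0$ (the threshold depending on $\ell$). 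Lemma \ref{lem:perturb div into lin sys} then supplies an admissible graded linear series $W_\bullet$ of $C$ with respect to $(X, \Delta)$; fix $k \in M(W_\bullet)$ and a nonzero $s_k \in W_k$, and for $m \in k\ell\bN$ define
\[
V'_m := V_{(1-1/\ell)m} \cdot s_k^{m/(k\ell)} \subseteq H^0(X, mD).
\]
Direct checks show that $V'_\bullet$ is a graded linear series of $D$, that it is eventually birational (multiplication by a fixed nonzero section does not change the associated rational map), and that every $\Gamma' \in \frac{1}{m} |V'_m|$ decomposes as a convex combination $(1 - 1/\ell) \Gamma + (1/\ell) \Gamma_C$ with $\Gamma \in \frac{1}{(1-1/\ell)m} |V_{(1-1/\ell)m}|$ and $\Gamma_C \in \frac{1}{k} |W_k|$. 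Admissibility of $V'_\bullet$ then follows from admissibility of $V_\bullet$ and $W_\bullet$ together with convexity of log canonicity, and since multiplication by $s_k^{m/(k\ell)}$ is a linear isomorphism onto $V'_m$ we have $\dim V'_m = \dim V_{(1-1/\ell)m}$. Combined with Lemma \ref{lem:vol=lim} this yields $\vol(V'_\bullet) = (1 - 1/\ell)^n \vol(V_\bullet)$, hence $\hVol_{X,\Delta}(D) \geq \vol(V'_\bullet) > \hVol_{X,\Delta}(D_0) - \delta$.

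The main subtlety is the interplay between $\ell$ and the size of the neighborhood on which $C = \ell D - (\ell - 1) D_0$ remains klt and big: as $\ell$ grows, which is needed to make $(1-1/\ell)^n$ close to $1$, the admissible neighborhood of $D_0$ shrinks like $1/\ell$. Making this dependence quantitative, by invoking the openness of the klt and bigness conditions on the finite dimensional space $\Lambda$ and the fact that both $\Delta + D$ and $\Delta + D_0$ being $\bR$-Cartier forces $\Delta + C$ to be $\bR$-Cartier, is the most delicate step of the argument.
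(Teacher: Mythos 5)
Your argument is correct and follows essentially the same route as the paper: both decompose $D$ as $(1-\tfrac{1}{\ell})D_0+\tfrac{1}{\ell}C$ with $C=D_0+\ell(D-D_0)$ close to $D_0$, and transfer an almost-optimal admissible graded linear series of $D_0$ to one of $D$ using convexity of log canonicity, with the neighborhood shrinking as $\ell$ grows. The only (immaterial) difference is that the paper simply translates the linear system by the fixed effective divisor $\tfrac{m}{\ell}C$ itself — which needs only $C\ge 0$ and $(X,\Delta+C)$ klt — whereas you multiply by a power of a single section of an auxiliary admissible series of $C$ supplied by Lemma \ref{lem:perturb div into lin sys}, which additionally uses that $C$ is big (also automatic near $D_0$).
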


\begin{proof}
Let $\ell\in\bN$ be a sufficiently large integer. It suffices to show that $\hVol_{X,\Delta}(D')\ge\left(1-\frac{1}{\ell}\right)^n \hVol_{X,\Delta}(D)$ whenever $D'\in \cL(\Lambda)$ is sufficiently close to $D$. To see this, we may assume that $D'\ge\left(1-\frac{1}{\ell}\right)D$ and $$\left(X,\Delta+\ell\left(D'-\left(1-\frac{1}{\ell}\right)D\right)\right)=(X,\Delta+D+\ell(D'-D))$$ is klt. If $V_\bullet$ is an admissible graded linear series of $D$, and define
\[
\left|V'_m\right| = \left|V_{\left(1-\frac{1}{\ell}\right)m}\right|+mD'-m\left(1-\frac{1}{\ell}\right)D
\]
when $\ell$ divides $m$ (otherwise set $V'
_m=0$), then by convexity of log canonicity as above we see that $V'_\bullet$ is an admissible graded linear series of $D'$ and by construction $\vol(V'_\bullet)\ge (1-\frac{1}{\ell})^n \vol(V_\bullet)$. This proves the result.
\end{proof}

\subsection{Explicit bounds} \label{sec:vol explicit bdd}

In the remaining part of this section, we prove some effective versions of Theorem \ref{thm:index+mld+vol bound}, giving explicit bounds on the invariants at least when the pair is log smooth and projective (in general, one can reduce to this case by passing to a log resolution of some projective compactification). 

We start with an explicit bound of the log canonical volume.

\begin{lem} \label{lem:Vol explicit bdd D big}
Let $(X,\Delta)$ be a projective log smooth klt pair of dimension $n$ such that $\Delta$ is big, and let $H$ be a very ample divisor on $X$. Then
\[
\hVol_X(\Delta)\ge \left\lceil \frac{(\Delta\cdot H^{n-1})}{1-\max \Coef(\Delta)}\right\rceil^{-n} \cdot \vol(\Delta)
\]
\end{lem}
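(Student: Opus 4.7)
Let $\varepsilon:=1-\max\Coef(\Delta)\in(0,1]$ and $k:=\lceil(\Delta\cdot H^{n-1})/\varepsilon\rceil$. The strategy is to absorb most of $\Delta$ into the boundary via Lemma~\ref{lem:Vol change boundary}, and then exhibit the complete linear series of the remainder as an admissible series. Concretely, set $\Delta_0:=(1-1/k)\Delta$ and $L:=\Delta-\Delta_0=\tfrac{1}{k}\Delta$. Since $\Delta_0=\lambda\Delta$ with $\lambda=1-1/k\in[0,1)$, Lemma~\ref{lem:Vol change boundary} yields
\[
\hVol_X(\Delta)\ge \hVol_{X,\Delta_0}(L),
\]
so it suffices to produce an admissible graded linear series of $L$ with respect to $(X,\Delta_0)$ of volume at least $\vol(L)=k^{-n}\vol(\Delta)$.

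I would take $V_\bullet$ to be the complete graded linear series of $L$, namely $V_m:=H^0(X,\lfloor mL\rfloor)$. Since $L$ is big, $V_\bullet$ is eventually birational and $\vol(V_\bullet)=\vol(L)=k^{-n}\vol(\Delta)$. The remaining task is to verify admissibility: for every $m\in M(V_\bullet)$ and every $\Gamma\in\tfrac{1}{m}|V_m|$, the pair $(X,\Delta_0+\Gamma)$ is log canonical. By construction any such $\Gamma\ge 0$ satisfies $\Gamma\sim_{\bQ} L$.

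The check combines two inputs. First, since $H$ is very ample and $X$ is smooth, a general complete intersection curve $C=H_1\cap\cdots\cap H_{n-1}$ through a closed point $x$ with $C\not\subseteq\Supp(\Gamma)$ gives the Bezout-type bound
\[
\mult_x\Gamma\le \Gamma\cdot H^{n-1}=L\cdot H^{n-1}=\tfrac{\Delta\cdot H^{n-1}}{k}\le\varepsilon
\]
by the choice of $k$. Second, $(X,\Delta_0)$ is log smooth with $\max\Coef(\Delta_0)=(1-1/k)(1-\varepsilon)=1-\varepsilon_0$, where $\varepsilon_0:=\varepsilon+(1-\varepsilon)/k\ge\varepsilon$. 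Lemma~\ref{lem:Izumi} then gives
\[
\lct_x(X,\Delta_0;\Gamma)\ge\frac{\varepsilon_0}{\mult_x\Gamma}\ge\frac{\varepsilon_0}{\varepsilon}\ge 1
\]
for every $x\in\Supp(\Gamma)$; elsewhere $(X,\Delta_0+\Gamma)$ coincides with the klt pair $(X,\Delta_0)$ near the point. Hence $(X,\Delta_0+\Gamma)$ is log canonical everywhere, so $V_\bullet$ is admissible and the lemma follows.

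No substantial obstacle is foreseen, since every ingredient is already in hand. The central observation is simply that by redistributing $\Delta$ between the boundary and the divisor via Lemma~\ref{lem:Vol change boundary}, the multiplicity bound coming from Bezout applied to $\tfrac{1}{k}\Delta$ becomes small enough to be absorbed by the Izumi-type estimate of Lemma~\ref{lem:Izumi}, which is what forces the factor $\lceil(\Delta\cdot H^{n-1})/(1-\max\Coef(\Delta))\rceil^{-n}$ into the final bound.
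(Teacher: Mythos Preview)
Your proof is correct and is essentially the same argument as the paper's, just repackaged. The paper applies Lemma~\ref{lem:Izumi} with the Bezout bound $\mult_x D\le(\Delta\cdot H^{n-1})$ to get $\lct(X,\Delta;D)\ge 1/\ell$ for any $0\le D\sim_{\bR}\Delta$, and then invokes the construction in Lemma~\ref{lem:perturb div into lin sys} with this explicit $\ell$ to produce an admissible series of $\Delta$ of the form $|V'_m|=m(1-\tfrac{1}{\ell})\Delta+|V_{m/\ell}|$; your route via Lemma~\ref{lem:Vol change boundary} simply splits off the fixed part $\Delta_0=(1-\tfrac{1}{k})\Delta$ first and then checks admissibility of the complete series of $L=\tfrac{1}{k}\Delta$, which is the same construction read through the identification in the proof of Lemma~\ref{lem:Vol change boundary}. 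The Izumi/Bezout input is identical in both.
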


\begin{proof}
Since $H$ is very ample, we have $\mult_x D\le (\Delta\cdot H^{n-1})$ for any $x\in X$ and any $0\le D\sim_\bR \Delta$. By Lemma \ref{lem:Izumi}, this gives 
\[
\lct(X,\Delta;D)\ge \frac{1-\max \Coef(\Delta)}{(\Delta\cdot H^{n-1})}.
\]
Thus in the proof of Lemma \ref{lem:perturb div into lin sys} we can take $\ell$ to be $\left\lceil \frac{(\Delta\cdot H^{n-1})}{1-\max \Coef(\Delta)}\right\rceil$. It then follows from the argument in \textit{loc. cit. } that there exists an admissible linear series $V_\bullet$ of $\Delta$ with $\vol(V_\bullet)\ge \ell^{-n}\vol(\Delta)$, hence $\hVol_X(\Delta)\ge \ell^{-n}\vol(\Delta)$ as desired.
\end{proof}

\begin{cor} \label{cor:index explicit bdd D big}
Let $(X,\Delta)$ be a projective log smooth klt pair of dimension $n$, and let $N$ be a positive integer. Assume that $N\Delta$ has integer coefficients, $\vol(\Delta)\ge \frac{1}{N}$, and $(\Delta\cdot H^{n-1})\le N$ for some very ample divisor $H$ on $X$. Then in any $(K_X+\Delta)$-MMP, the Cartier index of any $\bQ$-Cartier Weil divisor is at most $(n^n N^{2n+1})!$.
\end{cor}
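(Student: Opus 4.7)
The plan is to combine the explicit log canonical volume bound (Lemma \ref{lem:Vol explicit bdd D big}), its monotonicity under MMP (Lemma \ref{lem:Vol MMP}), and the index bound it provides (Lemma \ref{lem:index bound via global Vol}), all of which are already in hand. The only genuine input is the observation that the combinatorial hypotheses on $\Delta$ translate directly into effective numerical bounds.

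First, I would observe that since $N\Delta$ has integer coefficients and $(X,\Delta)$ is klt, every coefficient of $\Delta$ lies in $\{0,1/N,2/N,\dots,(N-1)/N\}$, so $\max\Coef(\Delta)\le 1-1/N$ and hence $1-\max\Coef(\Delta)\ge 1/N$. Combined with the assumption $(\Delta\cdot H^{n-1})\le N$, this gives
\[
\left\lceil \frac{(\Delta\cdot H^{n-1})}{1-\max\Coef(\Delta)}\right\rceil \le N^{2}.
\]
Applying Lemma \ref{lem:Vol explicit bdd D big} (which uses that $\Delta$ is big, guaranteed by $\vol(\Delta)\ge 1/N>0$) yields
\[
\hVol_{X}(\Delta)\ge N^{-2n}\cdot \vol(\Delta)\ge N^{-2n-1}.
\]

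Next, I would propagate this lower bound through any $(K_X+\Delta)$-MMP. If $(X,\Delta)\dashrightarrow(Y,\Delta_Y)$ is any such sequence, then writing it in the format of Definition \ref{defn:MMP type contraction} with trivial $\Delta$-part and boundary $D=\Delta$ (so $D_Y=\Delta_Y$), it is an MMP type contraction. By Lemma \ref{lem:Vol MMP} we obtain
\[
\hVol_{Y}(\Delta_Y)\ge \hVol_{X}(\Delta)\ge N^{-2n-1}.
\]
Here I would also note that $(Y,\Delta_Y)$ remains klt and that $\Delta_Y$ remains big, since bigness is preserved under the birational contractions of the MMP.

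Finally, I would apply Lemma \ref{lem:index bound via global Vol} to $(Y,\Delta_Y)$: the Cartier index of any $\mathbb{Q}$-Cartier Weil divisor on $Y$ is at most
\[
\left\lfloor \frac{n^{n}}{\hVol_{Y}(\Delta_Y)}\right\rfloor !\;\le\;\left\lfloor n^{n}N^{2n+1}\right\rfloor ! \;=\;(n^{n}N^{2n+1})!,
\]
which is the claim. There is no real obstacle here: the argument is a bookkeeping exercise stringing together earlier results, and the only thing to be careful about is matching the hypotheses of Lemma \ref{lem:Vol explicit bdd D big} and Lemma \ref{lem:index bound via global Vol}, specifically verifying klt-ness and bigness of $\Delta_Y$ on $Y$, both of which are standard consequences of MMP theory.
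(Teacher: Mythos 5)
Your proposal is correct and follows exactly the paper's own proof: bound $1-\max\Coef(\Delta)\ge 1/N$ from the integrality of $N\Delta$ and klt-ness, apply Lemma \ref{lem:Vol explicit bdd D big} to get $\hVol_X(\Delta)\ge N^{-2n-1}$, and then conclude via Lemmas \ref{lem:Vol MMP} and \ref{lem:index bound via global Vol}. The extra verifications you include (that the MMP is an MMP type contraction and that klt-ness and bigness persist) are correct and merely make explicit what the paper leaves implicit.
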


\begin{proof}
By assumption, we have $1-\max \Coef(\Delta)\ge \frac{1}{N}$. Hence by Lemma \ref{lem:Vol explicit bdd D big} and our assumptions, we obtain $\hVol_X(\Delta)\ge N^{-2n-1}$. The result then follows from Lemmas \ref{lem:Vol MMP} and \ref{lem:index bound via global Vol}.
\end{proof}

By a similar argument, we can also bound the index explicitly in a general type MMP.

\begin{lem} \label{lem:vol explicit bdd K+D big}
Let $(X,\Delta)$ be a projective klt pair of dimension $n$ such that $K_X+\Delta$ is big. Then for any sequence of steps of a $(K_X+\Delta)$-MMP $\varphi\colon (X,\Delta)\dashrightarrow (Y,\Delta_Y)$ and any closed point $y\in Y$, we have
\[
\hvol(y,Y,\Delta_Y)\ge \alpha(X,\Delta;K_X+\Delta)^n\cdot \vol(K_X+\Delta).
\]
\end{lem}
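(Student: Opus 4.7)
The plan is to apply Proposition \ref{prop:alpha-vol inequality linear series} to a pushforward of the complete linear series of $K_X+\Delta$. Set $\alpha:=\alpha(X,\Delta;K_X+\Delta)$ and let $V_\bullet$ be the complete graded linear series of $K_X+\Delta$, i.e. $V_m:=H^0(X,m(K_X+\Delta))$. Since $K_X+\Delta$ is big, $V_\bullet$ is eventually birational with $\vol(V_\bullet)=\vol(K_X+\Delta)$, and by definition of $\alpha$, the pair $(X,\Delta+\alpha\Gamma)$ is log canonical for every $m\in M(V_\bullet)$ and $\Gamma\in \frac{1}{m}|V_m|$.

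Fix a common resolution $p\colon W\to X$, $q\colon W\to Y$ of the MMP type contraction $\varphi$. The $(K_X+\Delta)$-non-positivity gives $E:=p^*(K_X+\Delta)-q^*(K_Y+\Delta_Y)\ge 0$, and $E$ is $q$-exceptional. By taking sections, this yields a natural inclusion $V_m\hookrightarrow H^0(Y,m(K_Y+\Delta_Y))$ for each $m$: if $s\in V_m$, then on $W$ we have $\mathrm{div}(s)+mq^*(K_Y+\Delta_Y)\ge -mE$, so pushing forward via $q$ (using that $E$ is $q$-exceptional) gives $\mathrm{div}(s)+m(K_Y+\Delta_Y)\ge 0$ on $Y$. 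Let $W_m$ denote the image of this inclusion. Then $W_\bullet$ is a graded linear series of $K_Y+\Delta_Y$ with $\dim W_m=\dim V_m$, it remains eventually birational, and $\vol(W_\bullet)=\vol(V_\bullet)=\vol(K_X+\Delta)$.

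The key step is to show $\alpha(Y,\Delta_Y;W_\bullet)\ge \alpha$. Given any $m\in M(W_\bullet)$ and $\Gamma'\in\frac{1}{m}|W_m|$, there exists $\Gamma\in\frac{1}{m}|V_m|$ with $\Gamma'=\varphi_*\Gamma$. Since $\Gamma\sim_\bR K_X+\Delta$ and $\Gamma'\sim_\bR K_Y+\Delta_Y$, the $\bR$-divisor $E':=p^*\Gamma-q^*\Gamma'$ is $q$-exceptional (as $q_*E'=\varphi_*\Gamma-\Gamma'=0$) and satisfies $E-E'\sim_{\bR,Y}0$; applying the negativity lemma in both directions forces $E=E'$, hence $p^*\Gamma-q^*\Gamma'\ge 0$. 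Therefore
\[
p^*(K_X+\Delta+\alpha\Gamma)-q^*(K_Y+\Delta_Y+\alpha\Gamma')=(1+\alpha)E\ge 0,
\]
so $\varphi\colon(X,\Delta+\alpha\Gamma)\dashrightarrow(Y,\Delta_Y+\alpha\Gamma')$ is again an MMP type contraction. Since $(X,\Delta+\alpha\Gamma)$ is log canonical, so is $(Y,\Delta_Y+\alpha\Gamma')$, giving $\lct(Y,\Delta_Y;\Gamma')\ge\alpha$ and therefore $\alpha(Y,\Delta_Y;W_\bullet)\ge\alpha$.

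Proposition \ref{prop:alpha-vol inequality linear series} applied to $W_\bullet$ on $(Y,\Delta_Y)$ then yields, for any closed point $y\in Y$,
\[
\hvol(y,Y,\Delta_Y)\ge \alpha(Y,\Delta_Y;W_\bullet)^n\cdot \vol(W_\bullet)\ge \alpha^n\cdot \vol(K_X+\Delta),
\]
which is the desired inequality. The only substantive technical point is the negativity lemma argument identifying $E=E'$, which ensures that perturbing the boundary by an arbitrary $\Gamma\sim_\bR K_X+\Delta$ preserves the MMP type contraction structure; the rest is a direct translation of Proposition \ref{prop:alpha-vol inequality linear series} to the birational model $Y$ through the pushforward linear system.
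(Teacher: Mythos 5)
Your proof is correct and follows essentially the same route as the paper: push forward the complete linear series of $K_X+\Delta$, show its alpha invariant on $(Y,\Delta_Y)$ is still at least $\alpha(X,\Delta;K_X+\Delta)$, and apply Proposition \ref{prop:alpha-vol inequality linear series}. The negativity-lemma argument you spell out for preserving log canonicity of $(X,\Delta+\alpha\Gamma)$ under the MMP is exactly what the paper delegates to the observation that the MMP is also a $(K_X+\Delta+\alpha\Gamma)$-MMP (cf.\ Lemma \ref{lem:MMP type change boundary}).
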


\begin{proof}
Let $\alpha:=\alpha(X,\Delta;K_X+\Delta)$ and let $V_\bullet$ be the complete linear series of $K_X+\Delta$, i.e. $V_m=H^0(X,m(K_X+\Delta))$. Then for any positive integer $m$ and any $D\in \frac{1}{m}|V_m|$, the pair $(X,\Delta+\alpha D)$ is log canonical, and the $(K_X+\Delta)$-MMP is also a $(K_X+\Delta+\alpha D)$-MMP. It follows that $(Y,\Delta_Y+\alpha D_Y)$ is also log canonical, hence $\alpha(Y,\Delta; \varphi_* V_\bullet)\ge \alpha$. Note that $\vol(\varphi_* V_\bullet)=\vol(K_X+\Delta)$. The result then follows from Proposition \ref{prop:alpha-vol inequality linear series}.
\end{proof}

\begin{cor} \label{cor:index explicit bdd K+D big}
Let $(X,\Delta)$ be a projective log smooth klt pair of dimension $n$, and let $N$ be a positive integer. Assume that $N\Delta$ has integer coefficients, $\vol(K_X+\Delta)\ge \frac{1}{N}$, and $((K_X+\Delta)\cdot H^{n-1})\le N$ for some very ample divisor $H$ on $X$. Then in any $(K_X+\Delta)$-MMP, the Cartier index of any $\bQ$-Cartier Weil divisor is at most $(n^n N^{2n+1})!$.
\end{cor}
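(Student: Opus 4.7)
The strategy is completely parallel to the proof of Corollary \ref{cor:index explicit bdd D big}, except that we use Lemma \ref{lem:vol explicit bdd K+D big} in place of Lemma \ref{lem:Vol explicit bdd D big}, since now $K_X+\Delta$ (rather than $\Delta$) is what is big. The plan is to explicitly estimate the alpha invariant $\alpha(X,\Delta;K_X+\Delta)$ from below, feed it into Lemma \ref{lem:vol explicit bdd K+D big} to get a uniform lower bound on the local volume along the MMP, and then invoke Theorem \ref{thm:index bound via volume} to get the index bound at each closed point.

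First I would observe that since $(X,\Delta)$ is klt and $N\Delta$ has integer coefficients, $\max\Coef(\Delta)\le 1-\frac{1}{N}$. Next, fix any positive integer $m$ and any $\Gamma\in\frac{1}{m}|m(K_X+\Delta)|$, so $\Gamma\sim_{\bR}K_X+\Delta$ and hence $(\Gamma\cdot H^{n-1})\le N$. Since $H$ is very ample, cutting $\Gamma$ with $n-1$ general members of $|H|$ through a closed point $x\in\Supp(\Gamma)$ gives $\mult_x\Gamma\le (\Gamma\cdot H^{n-1})\le N$. Lemma \ref{lem:Izumi} applied with $\varepsilon=\frac{1}{N}$ then yields
\[
\lct_x(X,\Delta;\Gamma)\ge \frac{1/N}{\mult_x\Gamma}\ge \frac{1}{N^{2}},
\]
and taking infima first over $x$ and then over $m,\Gamma$ gives $\alpha(X,\Delta;K_X+\Delta)\ge \frac{1}{N^{2}}$.

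Now I would apply Lemma \ref{lem:vol explicit bdd K+D big}: for any sequence of steps of a $(K_X+\Delta)$-MMP $(X,\Delta)\dashrightarrow (Y,\Delta_Y)$ and any closed point $y\in Y$, we have
\[
\hvol(y,Y,\Delta_Y)\ge \alpha(X,\Delta;K_X+\Delta)^n\cdot \vol(K_X+\Delta)\ge \frac{1}{N^{2n}}\cdot \frac{1}{N}=\frac{1}{N^{2n+1}}.
\]
Theorem \ref{thm:index bound via volume} then says the Cartier index of any $\bQ$-Cartier Weil divisor at the point $y$ is at most $n^nN^{2n+1}$. Since this bound is uniform in $y$, taking the least common multiple of all possible local indices (which divides $(n^nN^{2n+1})!$) gives the required global bound on the Cartier index on $Y$.

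There is no real obstacle: the only content is combining Izumi's inequality (Lemma \ref{lem:Izumi}) with the multiplicity estimate $\mult_x\Gamma\le(\Gamma\cdot H^{n-1})$ coming from very ampleness of $H$. Everything else is a direct citation of the preceding lemmas and theorems.
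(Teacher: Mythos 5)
Your proposal is correct and follows exactly the paper's own argument: bound $\alpha(X,\Delta;K_X+\Delta)\ge \frac{1}{N^2}$ via the multiplicity estimate from very ampleness together with Lemma \ref{lem:Izumi}, then apply Lemma \ref{lem:vol explicit bdd K+D big} and Theorem \ref{thm:index bound via volume}. The only difference is that you spell out the details that the paper compresses into a reference to the proof of Lemma \ref{lem:Vol explicit bdd D big}.
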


\begin{proof}
By Lemma \ref{lem:Izumi}, we have 
\[
\alpha(X,\Delta;K_X+\Delta)\ge \frac{1-\max \Coef(\Delta)}{((K_X+\Delta)\cdot H^{n-1})}\ge \frac{1}{N^2}
\]
as in the proof of Lemma \ref{lem:Vol explicit bdd D big}. We then conclude by Lemma \ref{lem:vol explicit bdd K+D big} and Theorem \ref{thm:index bound via volume}.
\end{proof}

\section{Boundedness of fibers} \label{sec:fiber bdd}

In this section, we improve the argument from the previous section to prove the boundedness of the fibers of the extremal contractions and flips in the MMP.

\begin{thm} \label{thm:fiber bdd}
Let $(X,\Delta)$ be a $\bQ$-factorial klt pair and $X\rightarrow T$ a projective morphism. Assume that $K_X+\Delta$ or $\Delta$ is big over $T$. Then there exists a projective family $\cW\to\cB$ over a finite type base $\cB$, such that in any sequence of a $(K_X+\Delta)$-MMP over $T$, every fiber of the extremal contractions or the flips is isomorphic to $
\cW_b$ for some $b\in \cB$.
\end{thm}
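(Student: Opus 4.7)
My approach combines the log canonical volume estimates from Section \ref{sec: bdd inv mmp} with the special boundedness of klt singularities (Theorem \ref{thm:special bdd klt singularity}), via a relative cone construction attached to each extremal step of the MMP.

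\textbf{Reduction and relative cone.} By Lemma \ref{lem: gt klt sim to big boundary} I may assume that $\Delta$ is big over $T$. For any extremal step $X'\to Z$ (either a divisorial/flipping contraction $f$, or a flipped contraction $f^+$; the latter can be reduced to a Fano-type situation, e.g.\ by passing to a common $\bQ$-factorial dlt modification $Y\to X, Y\to X^+$ via Lemma \ref{lem:lift MMP dlt} and using Lemma \ref{lem:Fano type}) and any closed point $z\in Z$, I pick an ample $\bR$-divisor $L$ on $Z$ such that $L_{X'} := f^*L - (K_{X'}+\Delta')$ is ample on $X'$, and construct an affine relative cone $C$ with a good $\bG_m$-action whose Proj recovers the germ $(X',\Delta')\to Z\ni z$. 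The cone carries an induced $\bR$-divisor $\Delta_C\ge 0$ with $K_C+\Delta_C\sim_{\bR} 0$, and the vertex $x_0 \in (C,\Delta_C)$ is a klt singularity.

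\textbf{Volume bound for the cone.} The key estimate is
\[
\hvol(x_0, C, \Delta_C) \;\geq\; c\cdot \hVol_{X,\Delta}(\Delta),
\]
where $c>0$ depends only on $\Coef(\Delta)$. An admissible graded linear series $V_\bullet$ of $\Delta$ on $X$ pushes forward to an admissible graded linear series on $(X',\Delta')$ (Lemma \ref{lem:Vol MMP}); combining this with sections of multiples of $L_{X'}$ via the $\bG_m$-equivariant Rees-type construction yields a graded linear series on $C$. After absorbing the difference $\Delta_C-\Delta'$ using Lemma \ref{lem:Vol change boundary} and Lemma \ref{lem:Vol add ample boundary}, Proposition \ref{prop:alpha-vol inequality linear series} applied on $C$ yields the displayed inequality. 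In particular, $\hvol(x_0,C,\Delta_C)$ is uniformly bounded below by a constant depending only on $(X,\Delta)$.

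\textbf{Special boundedness of the cones.} Theorem \ref{thm:index+mld+vol bound} bounds the Cartier indices and minimal log discrepancies uniformly along the MMP, so the coefficients of $\Delta_C$ lie in a fixed finite set. Theorem \ref{thm:special bdd klt singularity} then produces a $\bG_m$-equivariant special degeneration of $x_0\in(C,\Delta_C)$ into a bounded $\bR$-Gorenstein family of klt singularities with a fiberwise torus containing the original $\bG_m$. Taking Proj with respect to this $\bG_m$ and invoking Lemma \ref{lem:Fano type tc} gives a Fano-type test configuration degenerating the fibration germ $(X',\Delta')\to Z\ni z$ into a member of a bounded family of log Fano fibrations over some finite-type base $\cB'$, i.e.\ the collection of these fibration germs is log specially bounded in the sense of Definition \ref{defn:special bdd}.

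\textbf{Main obstacle.} The hardest step is promoting this log special boundedness of fibration germs into honest boundedness of the individual fibers. The test configuration of each germ provides a flat $\bG_m$-equivariant family of fibers over $\bA^1$ whose general member is the original fiber and whose central member lies in the bounded family produced above. Using the uniform Cartier index bound from Theorem \ref{thm:index+mld+vol bound} together with a uniform bound on fiber degrees (coming from the admissible linear series above and the bounded relative Picard number), I would choose a uniformly very ample polarization, e.g.\ a uniform multiple of $-(K+\Delta)|_{\mathrm{fiber}}$ (or its analog on the dlt modification in the flipped case), which embeds every such fiber into a fixed projective space with bounded Hilbert polynomial. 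Boundedness of the central member then propagates to the general member by a standard relative Hilbert scheme argument, and a Noetherian stratification of the base $\cB'$ glues all such families together into the single finite-type family $\cW\to \cB$ claimed by the theorem.
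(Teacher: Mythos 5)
Your overall architecture matches the paper's: reduce to a big boundary, attach a relative cone to each extremal step, bound the local volume of the cone vertex from below by the log canonical volume (the paper's Theorem \ref{thm:cone volume}), invoke the special boundedness of klt singularities (Theorem \ref{thm:special bdd klt singularity}), and descend to a Fano type degeneration of the fibration germ. However, the step you yourself flag as the ``main obstacle'' is a genuine gap, and it is precisely the content of the paper's Lemma \ref{lem:fiber of special bdd set}. Your proposed fix --- ``choose a uniformly very ample polarization, e.g.\ a uniform multiple of $-(K+\Delta)|_{\mathrm{fiber}}$ \dots{} boundedness of the central member then propagates to the general member by a standard relative Hilbert scheme argument'' --- does not work as stated. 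A bounded Cartier index does not give you a uniform bound on the degree $\bigl((-(K+\Delta))|_{W}\bigr)^{\dim W}$ of the fiber $W$, and without that the Hilbert polynomial is uncontrolled; moreover, in a flat degeneration over $\bA^1$ the boundedness of the central fiber does not automatically propagate outward. What makes it propagate in the paper is a nontrivial construction: one extends the very ample polarization $L_0$ of the (bounded) central fiber to a line bundle on an analytic neighborhood in the total space of the test configuration using $R^1f_*\cO=R^2f_*\cO=0$ and the exponential sequence, and then shows via Nakayama's lemma and flatness of the section ring that the generators and relations of bounded degree also extend, so that every nearby fiber is cut out by equations of bounded degree in a fixed $\bP^{N-1}$. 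None of this is a ``standard Hilbert scheme argument,'' and your sketch supplies no substitute for it.

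There are two secondary gaps. First, you apply the cone construction and Theorem \ref{thm:special bdd klt singularity} directly to $(X',\Delta')$, but the relative cone over $-(K_{X'}+\Delta')$ only makes sense when $r(K_{X'}+\Delta')$ is an integral Weil divisor for some fixed $r$; with real (or merely unbounded rational) coefficients this fails, which is why the paper first strips the boundary (Theorem \ref{thm:fibration special bdd, no boundary}), passes to the anticanonical model, and then recovers $X'$ from its ample model up to finite ambiguity via the finiteness-of-models results (Theorems \ref{thm:bir model bdd} and \ref{thm:bir model log special bdd}). Second, for the flipped contraction $f^+\colon X^+\to Z$ the pair $(X^+,\Delta^+)$ is not log Fano over $Z$; your reduction via a common dlt modification $Y\to X$, $Y\to X^+$ changes the fibers, so you would again need a finiteness-of-models argument to recover the fibers of $f^+$ from those of $Y\to Z$. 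The paper instead adds $0\le H\sim_{\bR,Z}-(K+\Delta)$ before the flip and tracks its strict transform, keeping the log canonical volume bounded below on $X^+$ by Lemma \ref{lem:Vol MMP}.
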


Recall that a projective morphism $f\colon (X,\Delta)\to Z$ is called an extremal contraction if it is the contraction of a $(K_X+\Delta)$-negative extremal ray. There are three types of extremal contractions: divisorial contractions, flipping contractions, and Mori fiber spaces. 

\subsection{Strategy} \label{ss:straregy for fiber bdd}

Our strategy towards Theorem \ref{thm:fiber bdd} is to first prove that the germs of the extremal contractions and the flips are bounded up to special degenerations, and then analyze the possible deformations to show the boundedness of their fibers. For the special boundedness step, we shall prove the following statement.

\begin{thm} \label{thm:fibration special bdd, no boundary}
Let $n\in \bN$ and $\varepsilon>0$. Let $\cS$ be the set of Fano type fibration germs $X\to Z\ni z$ with $\dim X=n$ such that $\hVol_X(\Delta)\ge \varepsilon$ for some $\bR$-divisor $0\le \Delta\sim_{\bR,Z} -K_X$. Then $\cS$ is bounded up to Fano type degenerations (Definition \ref{defn:special bdd}).
\end{thm}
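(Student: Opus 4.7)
The plan is to reduce to the special boundedness of klt singularities (Theorem \ref{thm:special bdd klt singularity}) via a relative cone construction over $z$, then reverse the construction on the central fiber to produce a Fano type test configuration of the fibration. Concretely, given $X \to Z \ni z$ in $\cS$ with $\hVol_X(\Delta) \ge \varepsilon$ and $0 \le \Delta \sim_{\bR,Z} -K_X$, I would first perturb to an auxiliary boundary $0 \le \Delta' \le \Delta$ with $(X,\Delta')$ log Fano over $Z$, using Lemma \ref{lem:Fano type} and Lemma \ref{lem:Vol change boundary} to ensure $\hVol_{X,\Delta'}(\Delta-\Delta')$ stays bounded below by some $\varepsilon' = \varepsilon'(n, \varepsilon, \Coef(\Delta))$.

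Next, for a sufficiently divisible $r$ making $L := -r(K_X+\Delta')$ a relatively ample Cartier divisor, I would form the relative cone
\[
X^c := \Spec_Z \bigoplus_{m \ge 0} f_* \cO_X(mL), \qquad \Delta^c := \overline{\Delta - \Delta'},
\]
together with its natural $\bG_m$-action and vertex section $z^c$ over $z$. This gives an $(n+1)$-dimensional klt germ $z^c \in (X^c, \Delta^c)$. The heart of the argument is to show that
\[
\hvol(z^c, X^c, \Delta^c) \ge c(n, r, \Coef(\Delta)) \cdot \hVol_{X,\Delta'}(\Delta - \Delta'),
\]
which one establishes by lifting any admissible graded linear series of $\Delta - \Delta'$ on $X$ to a bi-graded admissible linear series of a suitable divisor on $X^c$, and then applying Proposition \ref{prop:alpha-vol inequality linear series} at the vertex; the cone formulas translate $\vol$ and $\alpha$ between the two settings. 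With this in hand, Theorem \ref{thm:special bdd klt singularity} yields a $\bG_m$-equivariant special degeneration of $z^c \in (X^c, \Delta^c)$ to some $b \in (\cX^c_b, \cD^c_b)$ in a bounded $\bR$-Gorenstein family $(\cX^c, \cD^c) \to \cB$ carrying a fiberwise torus action containing the $\bG_m$-factor from the cone.

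Finally, I would reverse the cone construction relatively over $\cB$: taking $\Proj_{\cB}$ of the graded algebra with respect to the $\bG_m$-weight produces a bounded $\bR$-Gorenstein family $(\cX, \tcD) \to \cZ$ of log Fano fibrations over $\cB$, and the $\bG_m$-equivariant special test configuration of $z^c \in (X^c, \Delta^c)$ descends to a test configuration $(\cX, \cD) \to \cZ$ of the fibration germ $(X, \Delta) \to Z \ni z$ whose central fiber lies in this bounded family; the plt/relative ampleness conditions coming from the special degeneration guarantee that $(\cX,\cD) \to \cZ$ is of Fano type in the sense of Section~\ref{sec: preliminary}. The main obstacle is the volume comparison in Step~2: when $\dim Z \ge 1$, the relative cone is not a cone over a single projective variety, and one must track how admissible linear series on $X$ (computing $\hVol_{X,\Delta'}(\Delta-\Delta')$) induce linear series on $X^c$ with controlled log canonical thresholds at the vertex, which requires a careful relative variant of the cone formulas used in the absolute setting.
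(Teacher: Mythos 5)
Your outline correctly captures the cone-construction core of the argument (forming the relative cone over $z$, bounding its local volume from below by the log canonical volume via admissible linear series, invoking the special boundedness of klt singularities, and then descending the special test configuration back to a test configuration of the fibration). But it has a genuine gap at the very first step. The germs in $\cS$ are only \emph{Fano type} fibration germs: $-K_X$ is big over $Z$, not ample over $Z$. Your plan to choose $0\le \Delta'\le \Delta$ with $(X,\Delta')$ log Fano over $Z$ requires $\Delta-\Delta'\sim_{\bR,Z}-(K_X+\Delta')$ to be \emph{ample} over $Z$, i.e.\ that $\Supp(\Delta)$ itself carries an effective relatively ample combination. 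There is no reason for this to hold, and in the intended application it typically fails: for the fiber of a flip $Y'\to Z$ one takes $\Delta=\Delta'_{Y'}+H'$ with $H'\sim_{\bR,Z}-(K_{Y'}+\Delta'_{Y'})$ \emph{anti}-ample over $Z$, so no sub-divisor of $\Delta$ need be relatively ample. Without relative ampleness of $L=-r(K_X+\Delta')$ the algebra $\bigoplus_m f_*\cO_X(mL)$ does not give the relative cone you want (it is not finitely generated with $\Proj$ recovering $X$), so the entire construction cannot be run on $X$ itself.

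The missing idea is to change the birational model first: pass to the ample model $h\colon X\dashrightarrow X'$ of $-K_X$ over $Z$ (which exists since $X$ is of Fano type over $Z$), note that $X'\to Z\ni z$ \emph{is} a log Fano fibration germ with $\hVol_{X'}(h_*\Delta)\ge\hVol_X(\Delta)\ge\varepsilon$ by the monotonicity of the log canonical volume, and apply your cone argument there (this is Theorem \ref{thm:fibration special bdd, finite rational coef}). One must then recover $X$ from $X'$: since $h^*K_{X'}\ge K_X$ by negativity, $X$ is obtained from $X'$ by extracting divisors of non-positive discrepancy, and one needs a family version of the finiteness of such models that is moreover compatible with the special degenerations produced in the first step. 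This is the content of Theorems \ref{thm:bir model bdd} and \ref{thm:bir model log special bdd} (constancy of relative Picard groups and pseudo-effective cones in families of Fano type fibrations, finiteness of birational contractions, and extension of birational maps across test configurations), none of which appears in your proposal. By contrast, the obstacle you flag at the end — the relative volume comparison at the vertex — is real work but is exactly what Theorem \ref{thm:cone volume} accomplishes; it is not where the argument breaks down.
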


Observe that the Fano type assumption implies that $\Delta\sim_{\bR,Z} -K_X$ is big over $Z$. As $Z$ is affine (by the definition of a fibration germ), the $\bR$-divisor $\Delta$ is in fact big, thus its log canonical volume $\hVol_X(\Delta)$ is well-defined.

Using similar arguments, we also prove the following more general result on log Calabi-Yau fibration germs, which is in turn a variant of \cite{HLQ-vol-ACC}*{Conjecture 8.9} and \cite{HX-CY-bdd}*{Theorem 1.3}.

\begin{thm} \label{thm:fibration special bdd}
Let $n\in \bN$ and $\varepsilon,\delta>0$. Let $\cS$ be the set of log Calabi-Yau fibration germs $(X,\Delta)\to Z\ni z$ such that 
\begin{enumerate}
    \item $\dim X=n$, and all the nonzero coefficients of $\Delta$ are at least $\delta$,
    \item $X$ is of Fano type over $Z$, and
    \item $\hVol_X(\Delta)\ge \varepsilon$ (Definition \ref{defn:Vol}).
\end{enumerate}
Then $\cS$ is log specially bounded (Definition \ref{defn:special bdd}).
\end{thm}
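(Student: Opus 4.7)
The plan is to reduce to the special boundedness of klt singularities (Theorem~\ref{thm:special bdd klt singularity}) via a relative cone construction. First I would perturb the boundary to obtain a log Fano fibration: since $X$ is Fano type over $Z$, there exists an $\bR$-divisor $\Delta'\ge 0$ with $(X,\Delta')$ log Fano over $Z$, and for a small rational $\lambda\in(0,1)$ the $\bR$-divisor $D:=(1-\lambda)\Delta+\lambda\Delta'$ satisfies $\Supp(\Delta)\subseteq\Supp(D)$, is klt by convexity of log discrepancies (using that $(X,\Delta)$ is log canonical and $(X,\Delta')$ is klt), and has $-(K_X+D)\sim_{\bR,Z}-\lambda(K_X+\Delta')$ ample over $Z$. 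Thus $(X,D)\to Z$ is a log Fano fibration, and Lemmas~\ref{lem:Vol change boundary} and~\ref{lem:Vol add ample boundary} translate the hypothesis $\hVol_X(\Delta)\geq\varepsilon$ into a uniform lower bound on the log canonical volume of $-(K_X+D)$ depending only on $n,\varepsilon,\delta,\lambda$.

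Next I would construct the relative affine cone. By Lemma~\ref{lem:index bound via global Vol} applied to $(X,D)$, the Cartier index on $X$ is uniformly bounded, so I can choose an integer $N$ (depending only on $n,\varepsilon,\delta,\lambda$) such that $L:=-N(K_X+D)$ is Cartier and ample over $Z$, and set
$$\tX:=\Spec_{\cO_Z}\bigoplus_{m\geq 0}\pi_*\cO_X(mL).$$
The cone scaling yields a $\bG_m$-action on $\tX$, and the vertex section $Z\hookrightarrow\tX$ sends $z$ to a closed point $\tilde{z}$. Taking $\tD_{\mathrm{cone}}$ to be the cone boundary built from $D$ in the standard way, the singularity $\tilde{z}\in(\tX,\tD_{\mathrm{cone}})$ is klt. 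Applying Proposition~\ref{prop:alpha-vol inequality linear series} to graded linear series on $\tX$ at $\tilde{z}$ induced from the admissible linear series that realize $\hVol_X(\Delta)\geq\varepsilon$, combined with a cone-type volume identity, yields a lower bound $\hvol(\tilde{z},\tX,\tD_{\mathrm{cone}})\geq c(n,\varepsilon,\delta,\lambda)>0$. Then Theorem~\ref{thm:special bdd klt singularity}, applied with $\bT_0=\bG_m$ the cone torus, produces a $\bG_m$-equivariant special degeneration of $\tilde{z}\in(\tX,\tD_{\mathrm{cone}})$ into a single $\bR$-Gorenstein family carrying a fiberwise torus action containing this $\bG_m$. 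Taking the $\Proj$ of the weight decomposition relative to the base then recovers a bounded family of log Fano fibrations into which $(X,D)\to Z\ni z$ degenerates by a Fano type test configuration, and the prime divisors in $\Supp(\Delta)$ lift through the degeneration to produce the divisor $D_0$ with $\Supp(D_0)\subseteq\Supp(\Delta_0)$ required by Definition~\ref{defn:special bdd}.

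The main obstacle is the cone-to-fiber volume comparison together with the faithful preservation of the cone $\bG_m$-action through the specialization. Concretely, one must (i) set up admissible linear series on the relative cone so as to convert the lower bound $\hVol_X(\Delta)\geq\varepsilon$ into an effective lower bound on $\hvol(\tilde{z},\tX,\tD_{\mathrm{cone}})$ with constants depending only on $n,\varepsilon,\delta,\lambda$, and (ii) ensure that after applying Theorem~\ref{thm:special bdd klt singularity} the cone $\bG_m$ continues to act faithfully on the central fiber inside the bounded family, so that the $\Proj$ quotient actually produces a fibration of positive relative dimension rather than collapsing. The coefficient lower bound $\delta$ enters precisely to prevent components of $\Supp(\Delta)$ from disappearing in the limit, which is what guarantees that the descended degeneration genuinely satisfies $\Supp(D_0)\subseteq\Supp(\Delta_0)$ on the central fiber.
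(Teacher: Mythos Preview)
Your overall architecture---perturb to a log Fano fibration, take the relative cone, bound the local volume of the vertex, apply Theorem~\ref{thm:special bdd klt singularity}, then $\Proj$ back---is the right shape, but there is a genuine gap at the coefficient step that breaks the argument as stated.

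The boundary $D=(1-\lambda)\Delta+\lambda\Delta'$ you construct has \emph{real coefficients that do not lie in any fixed finite set}: the coefficients of $\Delta$ are only assumed $\ge\delta$, and those of $\Delta'$ are completely unconstrained. This causes two problems. First, the relative cone is built from $L=-r(K_X+D)$ with $r$ chosen so that $rD$ is integral; no such $r$ exists in general, and Lemma~\ref{lem:index bound via global Vol} only bounds the Cartier index of $\bQ$-Cartier Weil divisors, not of arbitrary $\bR$-divisors. Second, Theorem~\ref{thm:special bdd klt singularity} requires the cone boundary to have coefficients in a fixed finite set $I$, which fails here. A related point you do not address: even once you have a bounded family of cone singularities with a fiberwise torus, taking $\Proj$ with respect to \emph{all} one-parameter subgroups does not yield a bounded family of fibrations. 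In the paper's proof of Theorem~\ref{thm:fibration special bdd, finite rational coef}, one uses that the log discrepancy of the cone vertex divisor is exactly $1/r$ for a \emph{fixed} $r$, together with Lemmas~\ref{lem:log discrep linear T-variety} and~\ref{lem:log discrep constant T-family}, to cut down to finitely many one-parameter subgroups.

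The paper's remedy is precisely to replace $\Delta$ by $\Delta_0:=\tfrac{\delta}{2}\Supp(\Delta)$, which has the same support but coefficients in the fixed set $\{0,\tfrac{\delta}{2}\}$; the hypothesis $\Coef(\Delta)\ge\delta$ is used exactly to ensure $\Delta_0\le\tfrac12\Delta$, so that $(X,\Delta_0)$ is still of Fano type over $Z$. One then passes to the ample model $X'$ of $-(K_X+\Delta_0)$, applies the rational-coefficient Theorem~\ref{thm:fibration special bdd, finite rational coef} (which packages the cone argument you outline), and finally invokes Theorem~\ref{thm:bir model log special bdd} to transport log special boundedness from $X'$ back to $X$. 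Your proposal collapses these two separate reductions (to rational coefficients, and to the ample model) into a single perturbation, and that is where it fails.
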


Section \ref{ss:rel cone}--\ref{ss:special bdd} are devoted to the proof of Theorems \ref{thm:fibration special bdd, no boundary} and \ref{thm:fibration special bdd}. Before we delve into this somewhat lengthy proof, we first explain the deformation step in the proof of Theorem \ref{thm:fiber bdd} (assuming Theorem \ref{thm:fibration special bdd, no boundary}). This is done through the following auxiliary result. 

\begin{lem} \label{lem:fiber of special bdd set}
Let $\cS$ be a set of Fano type fibration germs that is bounded up to Fano type degenerations. Then the set 
\[
\cT=\{f^{-1}(z)\,|\,f\colon X\to Z\ni z \mbox{ belongs to } \cS\}
\]
is bounded.
\end{lem}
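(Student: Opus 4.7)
The plan is to show that each fiber $f^{-1}(z)$ appears as the generic fiber of a flat $\bG_m$-equivariant family over $\bA^1$ whose central fiber lies in a uniform bounded family, and then to invoke Birkar's boundedness theorem (BAB) to transfer boundedness to the general fibers.

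First I would unpack the hypothesis. Since $\cS$ is bounded up to Fano type degenerations, there exists a bounded set $\mathcal{U}$ of Fano type fibration germs such that every $f\in\cS$ degenerates to a germ in $\mathcal{U}$ via some Fano type test configuration. By Definition \ref{defn:Fano type fibration bdd} (applied to germs, so incorporating a marked section), $\mathcal{U}$ is witnessed by an $\bR$-Gorenstein family of log Fano fibrations $(\cX,\tcD)\to\cZ\to\cB$ over a finite type base $\cB$, together with a marked section $\sigma\colon\cB\to\cZ$ and a divisor $0\le\cD\le\tcD$. The pullback $\cW:=\cX\times_{\cZ,\sigma}\cB\to\cB$ is then a bounded projective family of fibers over marked points, and there is a uniform $\varepsilon>0$ such that each $(\cW_b,\tcD|_{\cW_b})$ is an $\varepsilon$-klt log Fano pair with uniformly bounded anticanonical volume.

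Next, for each $f\colon X\to Z\ni z$ in $\cS$, I would fix a Fano type test configuration $(\cX_f,\cD_f)\to\cZ_f\to\bA^1$ together with an enhancement $\tcD_f\ge\cD_f$ making $(\cX_f,\tcD_f)\to\cZ_f\to\bA^1$ an $\bR$-Gorenstein family of log Fano fibrations, and let $\sigma_f\colon\bA^1\to\cZ_f$ denote the $\bG_m$-equivariant section. The base change $\cY_f:=\cX_f\times_{\cZ_f,\sigma_f}\bA^1$ is a flat $\bG_m$-equivariant family over $\bA^1$ with $\cY_{f,t}\cong f^{-1}(z)$ for $t\ne 0$ and $\cY_{f,0}\cong\cW_{b_f}$ for some $b_f\in\cB$. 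Since $-K_{\cX_f/\cZ_f}-\tcD_f$ is ample over $\cZ_f$, its restriction to each fiber $\cY_{f,t}$ is ample and its volume is constant in $t$ by flatness; at $t=0$ this matches the anticanonical volume of $(\cW_{b_f},\tcD|_{\cW_{b_f}})$ and is therefore uniformly bounded. Lower semi-continuity of minimal log discrepancy in the $\bR$-Gorenstein flat family $(\cY_f,\tcD_f|_{\cY_f})\to\bA^1$ then transfers the $\varepsilon$-klt property from $\cY_{f,0}$ to the general fiber $(f^{-1}(z),\tcD_f|_{f^{-1}(z)})$.

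Finally, I would conclude via Birkar's BAB theorem: the set $\{f^{-1}(z)\mid f\in\cS\}$ consists of Fano type varieties of dimension at most $\dim X$, each admitting an $\varepsilon$-klt log Fano structure with uniformly bounded anticanonical volume, hence it is bounded. The main obstacle I anticipate is the second step, specifically ensuring that the auxiliary divisors $\tcD_f$ can be chosen with coefficients in a fixed finite set across all $f\in\cS$—this is needed both for the lower semi-continuity of mld and for the applicability of Birkar's theorem, and should be arranged by enlarging the defining family of $\mathcal{U}$ to absorb the possible choices of enhancement produced in the degenerations.
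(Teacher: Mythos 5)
There is a fundamental gap: your argument treats the fibers $f^{-1}(z)$ as if they were ($\varepsilon$-klt, bounded-volume) log Fano varieties and then invokes BAB, but in the intended application $f\colon X\to Z$ is typically a \emph{birational} Fano type fibration (a divisorial or flipping contraction, or the flipped contraction), so $f^{-1}(z)$ is the scheme-theoretic exceptional fiber over $z$ --- e.g.\ a configuration of rational curves for a flipping contraction. Such a fiber has positive codimension in $X$, need not be normal, irreducible, or even reduced, and carries no natural log Fano (or even klt) structure; in particular $\widetilde{\cD}_f|_{f^{-1}(z)}$ is not obtained by any adjunction and the phrase ``$\varepsilon$-klt log Fano pair $(\cW_b,\widetilde{\cD}|_{\cW_b})$'' does not make sense for these objects. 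BAB is a statement about normal projective $\varepsilon$-lc Fano varieties and simply does not apply here. The same objection already defeats the first step of your plan, where you claim the marked fibers $\cW_b$ of the bounded witnessing family are $\varepsilon$-klt log Fanos with bounded anticanonical volume.

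What ``bounded'' has to mean for $\cT$ is boundedness as a family of projective schemes, and the paper's proof goes through the projective embedding rather than through singularity/volume invariants. Concretely: boundedness of the degenerate family $\cS_0$ gives a uniform $N$ and $r$ such that each central fiber $X_0$ embeds projectively normally in $Z_0\times\bP^{N-1}$ cut out in degree $\le r$. One then extends the polarization $L_0$ from the central fiber of the test configuration to a line bundle $\cL$ on an analytic neighborhood of $z_0$ in the total space, using $R^1\widetilde f_*\cO_{\cX}=R^2\widetilde f_*\cO_{\cX}=0$ (Kawamata--Viehweg for Fano type fibrations) and the exponential sequence. The vanishing $R^1\widetilde f_*\cL^m=0$ plus Nakayama's lemma then shows that generators and relations of the section ring of $\cL$ specialize correctly, so \emph{every nearby fiber} --- in particular $f^{-1}(z)$ itself, which appears as $\widetilde f^{-1}(u)$ for some $u\neq 0$ near $0$ --- is cut out by equations of degree $\le r$ in $\bP^{N-1}$. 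This deformation-of-the-embedding step is the actual content of the lemma, and it is entirely absent from your proposal; some replacement for it (transferring information from the central fiber of the test configuration to the general fiber at the level of the projective embedding, not at the level of volumes and mld's) is unavoidable.
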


\begin{proof}
By assumption, there exists a bounded family $\cS_0$ of Fano type fibrations such that any fibration germ $f\colon X\to Z\ni z$ in $\cS$ degenerates to some $f_0\colon X_0\to Z_0\ni z_0$ in $\cS_0$ by a Fano type test configuration $\widetilde{f}\colon \cX\to \cZ$. Since $\cS_0$ is bounded, there exist some positive integers $r$ and $N$ depending only on $\cS_0$, some very ample line bundle $L_0$ on $X_0$, and a surjective homomorphism 
\[
\cE_0:=\cO_{Z_0}^{\oplus N}\to f_{0*}\cO_{X_0}(L_0),
\]
such that the induced map 
\[
\mu\colon \Sym(\cE_0)\to \cR_0:=\oplus_{m\in\bN} f_{0*}\cO_{X_0}(mL_0)
\]
of graded $\cO_{Z_0}$-algebras is also surjective, and its kernel $\cG$ is generated by homogeneous elements of degree at most $r$. 
In particular, the linear system $|L_0|$ induces a projectively normal embedding $X_0\hookrightarrow Z_0\times \bP^{N-1}$ and $X_0$ is scheme-theoretically cut out by equations of degree at most $r$ in $Z_0\times \bP^N$.


Since $\cX$ is of Fano type over $\cZ$, we have $R^1 \widetilde{f}_* \cO_{\cX}=R^2 \widetilde{f}_* \cO_{\cX}=0$ by Kawamata-Viehweg vanishing, thus from the long exact sequence associated to the exponential exact sequence $0\to \bZ\to \cO_\cX\xrightarrow{\mathrm{exp}}\cO^*_\cX\to 0$ we see that over some sufficiently small analytic neighborhood $U\subseteq \cZ$ of $z_0$ we have 
\[
\Pic(\widetilde{f}^{-1}(U))\cong H^2(\widetilde{f}^{-1}(U),\bZ)\cong H^2(\widetilde{f}^{-1}(z_0),\bZ)
\]
where the last isomorphism comes from a deformation retraction of $\widetilde{f}^{-1}(U)$ to $\widetilde{f}^{-1}(z_0)$. Similarly, as $\cX_0$ is of Fano type over $\cZ_0$, there exists some sufficiently small analytic neighborhood $V\subseteq Z_0$ of $z_0$ such that 
\[
\Pic(f_0^{-1}(V))\cong H^2(f_0^{-1}(z_0),\bZ)=H^2(\widetilde{f}^{-1}(z_0),\bZ).
\]
It follows that $L_0$ extends to a line bundle $\cL$ over some small analytic neighborhood $U$ of $z_0\in \cZ$. Possibly shrinking $U$, we may assume that $\cL$ is ample over $U$. 

By Kawamata-Viehweg vanishing, we have $R^1 \widetilde{f}_* \cL^m=0$ for all $m\in\bN$. Thus the long exact sequence associated to $0\to \cL^m\stackrel{t}{\to}\cL^m\to L_0^m\to 0$ (where $t$ is the linear coordinate on $\bA^1$) gives
\begin{equation} \label{eq:exact seq f_*}
    0\to \widetilde{f}_* \cL^m|_U \stackrel{t}{\to} \widetilde{f}_* \cL^m|_U \to f_{0*}\cO_{X_0}(mL_0)|_U\to 0
\end{equation}
for all $m\in\bN$. Since $\cL$ is ample over $U$, the $\cO_U$-algebra
$\cR:=\oplus_{m\in\bN} \widetilde{f}_* \cL^m|_U$ is finitely generated. Recall that $\cR_0$ is generated by at most $N$ homogeneous elements of degree $1$ by our choice of $L_0$. Combined with \eqref{eq:exact seq f_*} and Nakayama's lemma we deduce that $\cR$ is also generated (as an $\cO_U$-algebra) by at most $N$ homogeneous elements of degree $1$. More precisely, on the open set $U$ by \eqref{eq:exact seq f_*} we may lift the homomorphism $\cE_0|_U\to f_{0*}\cO_{X_0}(L_0)|_U$ to a homomorphism 
\[
\cE:=\cO_{U}^{\oplus N} \to \widetilde{f}_* \cL|_U,
\]
and for any positive integer $m$, after possibly shrinking $U$, the induced homomorphism 
\[
\widetilde{\mu}\colon \Sym(\cE)\to \cR
\]
is surjective in degree $\le m$ by Nakayama's lemma. Since $\cR$ is generated by finitely many homogeneous elements, we see that $\widetilde{\mu}$ is surjective after shrinking $U$. In particular,
we have a closed embedding $\widetilde{f}^{-1}(U)\hookrightarrow U\times \bP^{N-1}$. Let $\cF=\ker(\widetilde{\mu})$ and recall $\cG=\ker(\mu)$. By construction, $\mu$ is the reduction of $\widetilde{\mu}$ modulo $t$. Note that $\cR$ is torsion free and hence flat over $\bA^1$. It follows that the exact sequence $0\to \cF\to \Sym(\cE)\to \cR\to 0$ remains exact modulo $t$ and hence $\cG=\cF/t\cF$. Therefore, as $\cG$ is generated by homogeneous elements of degree at most $r$, the same holds for $\cF$ by another application of Nakayama's lemma as before. Thus $\widetilde{f}^{-1}(U)$ is cut out by equations of degree at most $r$ in $U\times \bP^{N-1}$ and hence every fiber $\widetilde{f}^{-1}(u)$, where $u\in U$, is cut out by equations of degree at most $r$ in $\bP^{N-1}$. 

Since $\widetilde{f}\colon \cX\to \cZ$ is a test configuration of $f\colon X\to Z\ni z$, for any open neighborhood $U$ of $z_0\in \cZ$ there exists some $u\in U$ such that $\widetilde{f}^{-1}(u)$ is isomorphic to $f^{-1}(z)$. Therefore, every member of $\cT$ can be cut out by equations of degree at most $r$ in $\bP^{N-1}$ and hence $\cT$ is bounded.
\end{proof}

\begin{proof}[Proof of Theorem \ref{thm:fiber bdd} assuming Theorem \ref{thm:fibration special bdd, no boundary}]

By Lemma \ref{lem: gt klt sim to big boundary}, there exists some big $\bR$-divisor $\Delta_0\ge 0$ on $X$ such that $(X,\Delta_0)$ is klt and every $(K_X+\Delta)$-MMP over $T$ is also a $(K_X+\Delta_0)$-MMP. Replacing $\Delta$ by $\Delta_0$, we see that it suffices to prove the theorem when $\Delta$ is big.

Suppose that $(X,\Delta)\dashrightarrow (Y,\Delta_Y)$ is a sequence of steps of a $(K_X+\Delta)$-MMP and $f\colon (Y,\Delta_Y)\to Z$ is an extremal contraction over $T$. By Lemma \ref{lem:Vol MMP}, we have $\hVol_Y(\Delta_Y)\ge \hVol_X(\Delta)>0$. Since $-(K_Y+\Delta_Y)$ is ample over $Z$, for any closed point $z\in Z$ we can choose some $0\le H\sim_{\bR,Z} -(K_Y+\Delta_Y)$ such that $(Y,\Delta_Y+H)\to Z\ni z$ is a log Calabi-Yau fibration germ (implicitly, here we replace $Z$ by an affine neighborhood of $z$ in order to talk about the fibration germ). By Lemma \ref{lem:Vol add ample boundary}, we also have $\hVol_Y(\Delta_Y+H)\ge \hVol_X(\Delta)$. By Theorem \ref{thm:fibration special bdd, no boundary}, the Fano type fibration germ $Y\to Z\ni z$ belongs to a set depending only on $
\dim X$ and $\hVol_X(\Delta)$ that is bounded up to Fano type degenerations. By Lemma \ref{lem:fiber of special bdd set}, we conclude that the fiber $f^{-1}(z)$ of the extremal contraction belongs to a bounded family depending only on $(X,\Delta)$.

Next suppose that $(Y,\Delta_Y)\to Z$ is a small contraction and $g\colon (Y',\Delta')\to Z$ is the flip. Let $H'$ be the strict transform of $H$ on $Y'$. Then by Lemma \ref{lem:Vol MMP} and the above discussion we have $\hVol_{Y'}(\Delta'+H')\ge \hVol_Y(\Delta_Y+H)\ge \hVol_X(\Delta)$. Thus the same argument above applies to the fibration $Y'\to Z\ni z$ and we conclude that the fiber $g^{-1}(z)$ of the flip also belongs to a bounded family depending only on $(X,\Delta)$. 
\end{proof}

\subsection{Relative cone construction} \label{ss:rel cone}

We now turn to the proofs of Theorems \ref{thm:fibration special bdd, no boundary} and \ref{thm:fibration special bdd}. We first consider a special boundedness result for log Fano fibrations with a fixed finite set of rational coefficients. 

\begin{thm} \label{thm:fibration special bdd, finite rational coef}
Let $n\in \bN$, $\varepsilon>0$, and let $I\subseteq [0,1]\cap \bQ$ be a finite set. Let $\cS$ be the set of log Fano fibration germs $(X,\Delta)\to Z\ni z$ with $\dim X=n$, $\Coef(\Delta)\subseteq I$ such that $\hVol_{X,\Delta}(H)\ge \varepsilon$ for some $\bR$-divisor $H\sim_{\bR,Z} -(K_X+\Delta)$. Then $\cS$ is specially bounded (Definition \ref{defn:special bdd}).
\end{thm}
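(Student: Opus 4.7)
The strategy is to reduce the special boundedness of log Fano fibration germs to the special boundedness of klt singularities (Theorem \ref{thm:special bdd klt singularity}) via a relative cone construction. Given a fibration germ $(X,\Delta)\to Z\ni z$ in $\cS$, since $I$ is a finite set of rational numbers, we may choose a positive integer $r$ independent of $(X,\Delta)\to Z$ such that $L:=-r(K_X+\Delta)$ is an $f$-ample Cartier divisor. Form the relative affine cone
\[
Y:=\Spec_Z\bigoplus_{m\in\bN}f_*\cO_X(mL),
\]
equipped with its natural $\bG_m$-action (scaling in each graded piece), a section $Z\to Y$ given by the augmentation ideal, and a cone vertex $y_0\in Y$ over $z$. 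Let $\Delta_Y$ denote the natural boundary on $Y$ obtained by taking closures of $\Delta\times\bG_m$ under the open embedding. Then $y_0\in(Y,\Delta_Y)$ is a klt singularity, the coefficients of $\Delta_Y$ lie in the finite rational set $I$ (there is no additional conical divisor with nontrivial coefficient coming into $\Delta_Y$ since we used $-r(K_X+\Delta)$ rather than a general polarization), and the cone direction furnishes an effective $\bG_m$-action on $(Y,\Delta_Y)$.

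Next, apply Theorem \ref{thm:cone volume} (the relative cone volume estimate, previewed in the introduction) to obtain a constant $c=c(n,I,r)>0$ such that
\[
\hvol(y_0,Y,\Delta_Y)\ \ge\ c\cdot\hVol_{X,\Delta}(H)\ \ge\ c\varepsilon.
\]
Since the dimension of $Y$, the coefficient set of $\Delta_Y$, and this lower bound for the local volume are all controlled in terms of the data $(n,I,\varepsilon)$, we may invoke Theorem \ref{thm:special bdd klt singularity}, applied with $\bT_0$ equal to the $\bG_m$ arising from the cone grading. This produces a $\bG_m$-equivariant special degeneration of $y_0\in(Y,\Delta_Y)$ to some $b\in(\cX_b,\cD_b)$ in a universal $\bR$-Gorenstein family $\cB\subseteq(\cX,\cD)\to\cB$ carrying a torus $\bT$-action that contains the original $\bG_m$ as a subtorus.

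To translate back, take the Proj-type quotient by this $\bG_m$ in the relative setting. Because the $\bG_m$ acts with positive weights along the cone direction on $Y$ (and this is preserved by equivariant special degeneration since the weight monoid is locally constant by Lemma \ref{lem:weight monoid constant}), the quotient of the complement of the section $\cB\subseteq\cX$ by $\bG_m$ yields a projective family $(\widetilde\cX,\widetilde\cD)\to\widetilde\cZ$ over $\cB$ whose fibers are log Fano fibrations specializing $(X,\Delta)\to Z\ni z$. One then checks that the relative $\bR$-Gorenstein and log-Fano ampleness properties descend from the cone family (using that $-(K_{\cX/\cB}+\cD)$ is $\bR$-Cartier and $\bG_m$-semi-invariant of positive weight, hence descends to an ample $\bR$-Cartier divisor on the quotient). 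Bounded of the cone family and local constancy of the log discrepancy of the Reeb valuation (Lemma \ref{lem:log discrep constant T-family}) imply that the quotient family is itself bounded as a family of log Fano fibrations. Finally, the special test configuration that degenerates $y_0\in(Y,\Delta_Y)$ to $b\in(\cX_b,\cD_b)$ is itself $\bG_m$-equivariant, so taking its $\bG_m$-quotient produces the desired special test configuration of $(X,\Delta)\to Z\ni z$.

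The main obstacle is the last paragraph: verifying that the $\bG_m$-quotient of the universal family of cone singularities really does produce an $\bR$-Gorenstein family of log Fano fibrations, and that the special degeneration of cones descends to a genuine special test configuration of the fibration germ (in particular that $\cZ_0$ is normal and $(\cX,\cX_0+\cD)$ is plt after taking the quotient). The point is that pltness and normality of the central fiber are $\bG_m$-invariant but verifying them on the quotient requires tracking the cone grading carefully through the degeneration; this is where the freedom in Theorem \ref{thm:special bdd klt singularity} to prescribe $\bT_0=\bG_m$ and to realize the special degeneration inside an $\bR$-Gorenstein family (as opposed to a merely $\bQ$-Gorenstein one) is essential.
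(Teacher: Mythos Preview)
Your overall strategy matches the paper's: pass to the relative cone $y\in(Y,D)$ with polarization $L=-r(K_X+\Delta)$, use Theorem~\ref{thm:cone volume} to bound the local volume from below, apply Theorem~\ref{thm:special bdd klt singularity} equivariantly for the cone $\bG_m$, and then take the $\bG_m$-quotient to recover a bounded family of log Fano fibrations. However, there are two genuine gaps.

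First, and most importantly, you have not addressed \emph{which} one-parameter subgroup of the universal torus $\bT$ the cone $\bG_m$ lands in. Theorem~\ref{thm:special bdd klt singularity} only tells you that the induced $\bG_m$ sits inside $\bT_b$; as $(X,\Delta)\to Z\ni z$ varies over $\cS$, the resulting primitive element $\rho\in N=\Hom(\bG_m,\bT)$ can in principle vary as well. Consequently there is no single ``quotient of the universal family by $\bG_m$'' unless you first show that only finitely many $\rho$ occur. The paper does this as follows: the valuation $\wt_\rho$ equals $\ord_E$ for the zero section $E$, and by construction $A_{Y,D}(\ord_E)=\frac{1}{r}$ (this uses the precise boundary in \eqref{eq:boundary in relative cone}, including the $(1-\frac{1}{r})\Gamma$ term in the birational case, together with Lemma~\ref{lem:cone klt fiber type}). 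By Lemma~\ref{lem:log discrep constant T-family} this log discrepancy is constant over each component of $\cB$, and by Lemma~\ref{lem:log discrep linear T-variety} the function $\xi\mapsto A_{\cY,\cD}(\wt_\xi)$ is linear and positive on the dual weight cone. Hence there are only finitely many primitive $\rho$ with $A_{\cY,\cD}(\wt_\rho)=\frac{1}{r}$. You cite Lemma~\ref{lem:log discrep constant T-family} but do not make this finiteness argument; without it the boundedness of the quotient family does not follow.

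Second, your description of the cone boundary is incorrect in the birational case. When $f\colon X\to Z$ is birational, the vertex locus $\Gamma\cong Z$ is a divisor on $Y$, and the paper's boundary is $D=\varphi_*\pi^*\Delta+(1-\frac{1}{r})\Gamma$, not just the closure of $\Delta\times\bG_m$. Without this extra term $K_Y+D$ is not $\bR$-Cartier (see Lemma~\ref{lem:cone klt birational}), so $(Y,\Delta_Y)$ as you define it is not a klt pair and Theorem~\ref{thm:cone volume} does not apply. This also feeds into the first point: it is precisely this coefficient that forces $A_{Y,D}(E)=\frac{1}{r}$ uniformly. Finally, the descent step you flag as the ``main obstacle'' is carried out in the paper as Lemma~\ref{lem:cone special tc induce fibration special tc}, which shows directly that a $\bG_m$-equivariant special test configuration of the cone singularity is itself the relative cone over a special test configuration of the fibration germ.
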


When $X=Z$, this essentially follows from (by Lemma \ref{lem:local vs global vol}) the special boundedness of klt singularities proved in \cite{XZ-local-bdd}. Combined with the argument from the previous subsection, Theorem \ref{thm:fibration special bdd, finite rational coef} already implies the rational coefficient case of Theorem \ref{thm:fiber bdd} for fibers of the extremal contractions.

The main idea of the proof is to consider the relative cone of the log Fano fibration and reduce to \cite{XZ-local-bdd} after analyzing the local volume of the cone singularity. Before we do so, we make a digression to discuss some generality of the relative cone construction. Many results in this subsection are parallel to those of \cite{Kol13}*{Section 3.1}, which deals with cones over projective pairs.

We start with some general setup and introduce some notation that will be fixed throughout this subsection. Let $f\colon X\to Z$ be a fibration and let $L$ be an $f$-ample $\bQ$-Cartier Weil divisor on $X$. Let 
\[
\cR_m := f_*\cO_X(mL)\quad \mbox{and}\quad \cR:=\oplus_{m\in\bN} \cR_m.
\]
We are interested in the relative (orbifold) cone $Y:=\Spec_Z(\cR)$. 
Next, let 
\[
\tY:=\Spec_X \left(\oplus_{m\in\bN} \cO_X(mL) \right)
\]
and let $\pi\colon \tY\to X$ be the induced morphism. It has a section $E\subseteq \tY$ defined by the ideal $\oplus_{m\in \mathbb N^+} \cO_X(mL)$. The morphism $\tY\setminus E\to X$ is a Seifert $\bG_m$-bundle (in the sense of \cite{Kol-Seifert-bundle}) and $E$ can be identified with the zero section. In particular, the divisor $E$ is $\bQ$-Cartier and $E\cong X$. Locally on $Z$ we have $\cR = H^0(\cO_{\tY})$ (in other words $(f\circ \pi)_* \cO_{\tY} = \cR$). Hence we have an induced morphism $\varphi\colon \tY\to Y$ over $Z$ which is an isomorphism away from the zero section $E$ (by the ampleness of $L$ over $Z$), and $\varphi|_E = f$. Let $\Gamma:=\varphi(E)\cong Z$. It gives a section $\sigma\colon Z\to Y$ of the projection $g\colon Y\to Z$. The situation is summarized by the following diagram.

\begin{equation} \label{eq:cone diagram}
\begin{tikzcd}
    E \arrow[hook]{r} \arrow{d} & \tY \arrow[r,"\pi"] \arrow[d,"\varphi"] & X \arrow[d,"f"] \\
    \Gamma \arrow[hook, r] & Y \arrow[r,"g"] & Z \arrow[l, bend left, "\sigma"]
\end{tikzcd}
\end{equation}

Note that if $f\colon X\to Z$ is birational, then $\varphi$ is small and induces an isomorphism $\varphi_*\colon \Cl(\tY)\cong \Cl(Y)$. Otherwise $E$ is the only exceptional divisor of $\varphi$ and we get an isomorphism $\varphi_*\colon \Cl(\tY)/[E]\cong \Cl(Y)$. We also have a natural pullback map $\pi^*\colon \Cl(X)\to \Cl(\tY)$ since $\pi$ is equidimensional. 

\begin{lem} \label{lem:class group}
The induced map $\pi^*\colon \Cl(X)\to \Cl(\tY)$ is an isomorphism. Moreover, we have $-E\sim \pi^*L$ and $K_{\tY}+E\sim \pi^*K_X$.
\end{lem}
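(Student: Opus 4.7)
The plan is to establish the three claims in order, using that $\tY$ is normal (which holds since $\bigoplus_m \cO_X(mL)$ is a reflexive graded $\cO_X$-algebra; compare \cite{Kol13}*{Section 3.1}).

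I will first prove $-E\sim\pi^*L$ via a local computation. Let $s$ be any nonzero rational function on $X$, regarded as a rational section of $\cO_X(L)$, with ``divisor as a section'' $D_s:=\mathrm{div}_X(s)+L\sim L$. Since $s\in\cR_1\otimes K(X)$, it also defines a rational function on $\tY$. On an open $U\subseteq X$ where $L|_U=(g)$ is principal, $\cR|_U$ is the polynomial algebra $\cO_U[\tau]$ with degree-$1$ generator $\tau$ corresponding to $g^{-1}\in g^{-1}\cO_U=\cR_1|_U$, so $\tY|_U\cong U\times \bA^1$ and $E|_U=\{\tau=0\}$. Writing $s|_U=g^{-1}h$ for a rational function $h=gs$ on $U$, the corresponding rational function on $\tY|_U$ is $h\tau$, whose divisor equals $\pi^*\mathrm{div}_U(h)+E|_U=\pi^*D_s|_U+E|_U$. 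Patching yields $\mathrm{div}_{\tY}(s)=\pi^*D_s+E$ as Weil divisors on $\tY$, and hence $\pi^*L+E\sim 0$.

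Next I will derive $K_{\tY}+E\sim\pi^*K_X$. Let $X^\circ\subseteq X$ be the open subset on which $L$ is Cartier and $X$ is smooth; its complement has codimension at least $2$ since $X$ is normal and $L$ is $\bQ$-Cartier. Over $X^\circ$, $\pi$ restricts to a line bundle whose zero section has conormal bundle $\cO_X(L)|_{X^\circ}$, so the standard line-bundle computation gives $\omega_{\tY/X^\circ}\cong \pi^*\cO_X(L)|_{X^\circ}$ and hence $K_{\tY}\sim\pi^*K_X+\pi^*L$ on $\pi^{-1}(X^\circ)$. Combined with $-E\sim\pi^*L$ this yields $K_{\tY}+E\sim\pi^*K_X$ there, and since $\pi^{-1}(X^\circ)$ has complement of codimension at least $2$ in the normal variety $\tY$, the linear equivalence extends.

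Finally I will prove $\pi^*\colon\Cl(X)\to\Cl(\tY)$ is an isomorphism. The embedding $\sigma\colon X\cong E\hookrightarrow \tY$ is a section of $\pi$. For any Weil divisor $D=\sum n_iP_i$ on $X$ with prime components, each $\pi^{-1}(P_i)$ is irreducible (because $\bigoplus_m \cO_X(mL)|_{P_i}$ is a domain, using that $L$ is Cartier at the generic point of $P_i$), so $\pi^*D=\sum n_i\pi^*P_i$ has no $E$-component, and $\sigma^*\pi^*D=D$; if furthermore $\pi^*D=\mathrm{div}_{\tY}(f)$, then $f$ has no zero or pole along $E$, and $f|_E$ is a nonzero rational function on $E\cong X$ with divisor $D$, giving $D\sim 0$ and hence injectivity. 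For surjectivity, let $Q$ be a prime divisor on $\tY$; if $Q$ is vertical over $X$, then $Q=\pi^*\pi(Q)$ by the irreducibility above, while if $Q$ is horizontal, the Seifert $\bG_m$-bundle structure on $\tY\setminus E\to X$ yields $\Cl(\tY\setminus E)\cong\Cl(X)/\bZ[L]$, giving $[Q]|_{\tY\setminus E}=\pi^*[D_Q]$ for some $D_Q\in\Cl(X)$ and hence $[Q]=\pi^*[D_Q]+n[E]=\pi^*[D_Q-nL]$ in $\Cl(\tY)$ for some $n\in\bZ$. The main subtlety is this last Seifert bundle identification, which follows from the excision sequence for $E\hookrightarrow\tY$ and the standard computation of the class group of a principal $\bG_m$-bundle associated to a line bundle.
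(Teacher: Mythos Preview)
Your argument is correct, but the proof of the class-group isomorphism is considerably more circuitous than it needs to be, and in fact the ``main subtlety'' you flag already contains the paper's whole argument.  To justify $\Cl(\tY\setminus E)\cong\Cl(X)/\bZ[L]$ you are implicitly restricting to the open set $U\subseteq X$ where $X$ is smooth (hence $L$ is Cartier), using that the complement of $\pi^{-1}(U)$ in $\tY$ has codimension $\ge 2$ because $\pi$ is equidimensional.  But once you have passed to $U$, the map $\pi^{-1}(U)\to U$ is an honest $\bA^1$-bundle, and the classical fact $\Cl(\bA^1\text{-bundle})\cong\Cl(\text{base})$ gives $\pi^*\colon\Cl(U)\xrightarrow{\sim}\Cl(\pi^{-1}(U))$ in one line; combined with $\Cl(X)\cong\Cl(U)$ and $\Cl(\tY)\cong\Cl(\pi^{-1}(U))$ (both by codimension~$\ge 2$), this is the isomorphism.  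This is exactly the paper's proof.  Your detour through injectivity via the section, the vertical/horizontal case split, excision for $E$, and the Seifert $\bG_m$-bundle computation all work, but they amount to redoing that one-line $\bA^1$-bundle fact by hand.

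Your treatments of $-E\sim\pi^*L$ and $K_{\tY}+E\sim\pi^*K_X$ are essentially the paper's: the rational-section computation for the first is identical, and for the second the paper writes down the log form $\tfrac{dz}{z}$ to get $K_{\tY/X}+E\sim 0$ directly, whereas you first compute $\omega_{\tY/X^\circ}\cong\pi^*\cO_{X^\circ}(L)$ and then substitute $\pi^*L\sim -E$.  These are the same calculation read in two orders.
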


\begin{proof}
Over the smooth locus $U$ of $X$ the divisor $L$ is Cartier, and $\pi^{-1}(U)$ is an $\bA^1$-bundle over $U$, hence $\pi^*$ induces an isomorphism $\Cl(\pi^{-1}(U))\cong \Cl(U)$. Since $\pi$ is equidimensional, the complement of $\pi^{-1}U$ in $\tY$ has codimension at least $2$, thus $\Cl(X)\cong \Cl(U)$ and $\Cl(\tY)\cong \Cl(\pi^{-1}(U))$. It follows that $\pi^*\colon \Cl(X)\to \Cl(\tY)$ is also an isomorphism.

Any rational section of $\cO_X(L)\subseteq \pi_*\cO_{\tY}$ defines a rational function on $\tY$ and induces the linear equivalence relation $\pi^*L+E\sim 0$. If $z$ is the affine coordinate of the fiber of $\pi^{-1}(U)\to U$, then $\frac{dz}{z}$ is a nowhere zero section of $K_{\pi^{-1}(U)/U}+E$. This gives $K_{\pi^{-1}(U)/U}+E\sim 0$ and hence $K_{\tY}+E\sim \pi^*K_X$ as $\pi^{-1}(U)$ (resp. $U$) is a big open subset of $\tY$ (resp. $X$).
\end{proof}

For the next several lemmas, We say that the fibration $f\colon X\to Z$ is \emph{of fiber type} if $\dim X>\dim Z$.


\begin{lem} \label{lem:Cartier criterion}
Let $G$ be a Weil divisor on $X$. Then $\varphi_*\pi^*G$ is Cartier if and only if either $f$ is birational and $G\sim_Z 0$, or $f$ is of fiber type and $G\sim_Z mL$ for some $m\in \bZ$. Moreover, $\pi^*G\sim_Y 0$ if and only if $G\sim_Z 0$.
\end{lem}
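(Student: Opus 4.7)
The plan is to analyze linear equivalence of Weil divisors on $\tY$ and $Y$ via the isomorphism $\pi^*\colon \Cl(X)\xrightarrow{\sim}\Cl(\tY)$ together with the relation $\pi^*L+E\sim 0$ from Lemma~\ref{lem:class group}, and the $\bZ$-grading on $\cR=\oplus_{m\ge 0}\cR_m$ (which gives rise to a fiberwise $\bG_m$-action on $g\colon Y\to Z$ fixing $\Gamma$). The key local observation is that if $s\in\cR_m$ is viewed as a regular function on $\tY$ via the description $\cO_{\tY}=\oplus_m\cO_X(mL)$, then $\ord_E(s)=m$, because the ideal $\cI_E=\oplus_{m\ge 1}\cO_X(mL)$ satisfies $\cI_E^m=\oplus_{k\ge m}\cO_X(kL)$.

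The reverse implications in both assertions are direct computations. If $G\sim_Z 0$, writing $G$ locally as the divisor of a rational function pulled back from $Z$ immediately produces principal equations for $\pi^*G$ on $\tY$ and for $\varphi_*\pi^*G$ on $Y$, so $\varphi_*\pi^*G$ is Cartier. If $f$ is of fiber type and $G\sim_Z mL$, then $\pi^*G+mE\sim_Z 0$ on $\tY$ by Lemma~\ref{lem:class group}, and pushing forward along $\varphi$ eliminates the $E$ term since $\varphi_*E=0$, yielding a relatively principal (hence Cartier) divisor on $Y$.

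For the forward direction of the first assertion, assume $\varphi_*\pi^*G$ is Cartier and work near a closed point $\sigma(z)\in\Gamma$. Writing $\varphi_*\pi^*G=\mathrm{div}(h)$ with $h=\sum_{m\ge 0}h_m$, $h_m\in\cR_m$, the key observation gives $\ord_E(\varphi^*h)=m_0:=\min\{m:h_m\neq 0\}$. Since $\pi^*G$ contains no $E$-component, one obtains the equality $\pi^*G+m_0 E=\mathrm{div}(\varphi^*h)$ on $\tY$; combined with $\pi^*L+E\sim 0$ this yields $\pi^*G\sim m_0\pi^*L$ locally, which by Lemma~\ref{lem:class group} descends to $G\sim m_0 L$ on $f^{-1}(U)$ for a neighborhood $U\ni z$. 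Now push the equation $\pi^*G+m_0 E=\mathrm{div}(\varphi^*h)$ forward to $Y$ and compare with $\varphi_*\pi^*G=\mathrm{div}(h)$: in the birational case $\varphi_*E=\Gamma\ne 0$ and this forces $m_0=0$ (hence $G\sim_Z 0$); in the fiber type case $\varphi_*E=0$ and the equation is automatically consistent, giving $G\sim_Z m_0 L$ for the intrinsically determined integer $m_0$.

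The second assertion is handled by the same argument, sharpened to the case where $\varphi_*\pi^*G$ is actually principal rather than merely Cartier. In the fiber type case the only possibility left to rule out is $m_0\neq 0$, which would give $m_0 L\sim_Z 0$ and contradict the $f$-ampleness of $L$. The main obstacle, and the step requiring the most care, is the passage from local to global in the fiber type case: one must verify that the locally defined integer $m_0$ is constant across $z\in Z$ and that the local linear equivalences $G\sim m_0 L$ on each $f^{-1}(U)$ patch into a single relation $G\sim_Z m_0 L$ on $X$. This follows from the intrinsic valuation-theoretic nature of $\ord_E$ together with a standard patching argument for the principal equations using $f_*\cO_X=\cO_Z$.
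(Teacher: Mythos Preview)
Your overall strategy is workable, but the paper's proof is both shorter and cleaner because it avoids local equations entirely. The paper simply observes that $\varphi^*G' - \pi^*G$ is supported on $E$, writes $\varphi^*G' = \pi^*G + mE$ (with $m=0$ automatically in the birational case since $\varphi$ is small), and then \emph{restricts this equation to the zero section} $E\cong X$: using $\varphi|_E=f$, $E|_E=-L$ and $\pi|_E=\mathrm{id}$, one reads off the global equality $G-mL=f^*\sigma^*G'$ at once. This bypasses the choice of a local equation $h$, the regular-versus-rational issue, and the entire gluing discussion you flagged as ``the step requiring the most care.'' Your route via the grading of $\cR$ recovers the same relation, but only locally, and then needs patching.

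Two concrete gaps in your argument:

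\emph{(i) The local equation $h$ need not lie in $\cR$.} You write $\varphi_*\pi^*G=\mathrm{div}(h)$ with $h=\sum_{m\ge 0}h_m$, $h_m\in\cR_m$, but a Cartier divisor is only locally the divisor of a \emph{rational} function, so $h\in\mathrm{Frac}(\cR)$ in general. The formula $\ord_E(\varphi^*h)=\min\{m:h_m\neq 0\}$ therefore does not apply as stated. This is repairable (write $h=p/q$ with $p,q\in\cR$ and subtract), but you should say so; as written, the identification of $m_0$ is not justified.

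\emph{(ii) The second assertion.} Your claim that $\pi^*G\sim_Y 0$ makes $G'=\varphi_*\pi^*G$ ``actually principal rather than merely Cartier'' is false: $\pi^*G\sim_Y 0$ means $\pi^*G\sim\varphi^*D_0$ for some Cartier $D_0$ on $Y$, and pushing forward only gives $G'\sim D_0$, i.e.\ $G'$ is Cartier, not principal. The sentence ``which would give $m_0L\sim_Z 0$'' is not justified by what precedes it. The correct argument is: from $\varphi^*G'=\pi^*G+mE$ and $\pi^*G\sim\varphi^*D_0$ one gets $mE\sim\varphi^*(G'-D_0)$; pushing forward (fiber type, so $\varphi_*E=0$) yields $G'\sim D_0$, hence $mE\sim 0$ in $\Cl(\tY)\cong\Cl(X)$, i.e.\ $mL\sim 0$; now $f$-ampleness of $L$ (with $f$ of fiber type) forces $m=0$. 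This is essentially what the paper means by its one-line ``If in addition $\pi^*G\sim_Y 0$ then $m=0$,'' but your write-up does not supply these steps.
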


\begin{proof}
First consider the case when $f$ is birational. If $G=f^*G_0$ for some Cartier divisor $G_0$ on $Z$, then $\pi^*G\sim_Y 0$ and $\varphi_*\pi^*G = g^*G_0$ is also Cartier. Conversely, if $G':=\varphi_*\pi^*G$ is Cartier then $\pi^*G = \varphi^* G'$ as $\varphi$ is small. Restricting to the zero section $E\cong X$ gives $G = f^*\sigma^*G'$, hence $G\sim_Z 0$.

Next assume that $f$ is of fiber type. The proof is similar. If $G-mL=f^*G_0$ for some Cartier divisor $G_0$, then by Lemma \ref{lem:class group} we have $\varphi_*\pi^*G = \varphi_*(\pi^*G+mE)\sim \varphi_*\pi^*(G-mL) = g^*G_0$, hence $\varphi_*\pi^*G$ is also Cartier. Conversely, if $G':=\varphi_*\pi^*G$ is Cartier then $\pi^*G +mE = \varphi^* G'$ for some integer $m$. If in addition $\pi^* G\sim_Y 0$ then $m=0$. Restricting to the zero section $E\cong X$ and noting that $-E|_E\cong L$ by Lemma \ref{lem:class group}, we obtain $G-mL \sim f^*\sigma^*G'$, hence $G\sim_Z mL$.
\end{proof}

Next let $\Delta\geq 0$ be an $\bR$-divisor on $X$ and consider the $\bR$-divisor $D:=\varphi_*\pi^*\Delta\geq 0$ on $Y$. Geometrically, it is the relative cone over $\Delta$.

\begin{lem} \label{lem:cone klt birational}
Assume that $f$ is birational and $0< \lambda\le 1$ (resp. $0\le \lambda\le 1$). Then the pair $(Y,D+(1-\lambda) \Gamma)$ is klt (resp. log canonical) if and only if $K_X+\Delta+\lambda L\sim_{\bR,Z} 0$ and $(X,\Delta)$ is klt (resp. log canonical).
\end{lem}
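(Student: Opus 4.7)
The plan is to transfer the question from $(Y,D+(1-\lambda)\Gamma)$ to the relative cone $(\tY,\pi^*\Delta+(1-\lambda)E)$ via the morphism $\varphi$ from \eqref{eq:cone diagram} and exploit a crepant identity coming from Lemma \ref{lem:class group}. Since $f$ is birational, $\varphi\colon \tY\to Y$ is small: the only candidate for a $\varphi$-exceptional prime divisor on $\tY$ is $E$, but $\varphi|_E=f$ is birational, so $E$ dominates $\Gamma$ and is not exceptional; no other prime divisor of $\tY$ is exceptional either, since any such divisor dominates $X$. Crepant pullback under a small birational morphism preserves log discrepancies, so $(Y,D+(1-\lambda)\Gamma)$ is klt (resp.\ lc) if and only if $(\tY,\pi^*\Delta+(1-\lambda)E)$ is.

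Combining $-E\sim \pi^*L$ with $K_{\tY}+E\sim \pi^*K_X$ from Lemma \ref{lem:class group} yields the crepant identity
\[
K_{\tY}+\pi^*\Delta+(1-\lambda)E \ \sim\ \pi^*(K_X+\Delta+\lambda L).
\]
Restricting both sides to $E\cong X$ (and using $K_{\tY}|_E\sim K_X+L$), the left-hand side restricts to $K_X+\Delta+\lambda L$, while any $\varphi^*$-pullback from $Y$ restricts along $\varphi|_E=f$ to an $f$-pullback from $Z$. Hence $K_Y+D+(1-\lambda)\Gamma$ being $\bR$-Cartier — equivalently, the left-hand side above being $\varphi^*$-pulled back from $Y$ — is equivalent to $K_X+\Delta+\lambda L\sim_{\bR,Z}0$. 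Conversely, if $K_X+\Delta+\lambda L\sim_{\bR}f^*B$ for some $\bR$-Cartier $B$ on $Z$, then $K_{\tY}+\pi^*\Delta+(1-\lambda)E\sim_\bR \varphi^*g^*B$, and pushing forward through the small $\varphi$ realises $K_Y+D+(1-\lambda)\Gamma\sim_\bR g^*B$ as $\bR$-Cartier.

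It remains to compare $(\tY,\pi^*\Delta+(1-\lambda)E)$ with $(X,\Delta)$. Away from $E$, the map $\pi\colon \tY\setminus E\to X$ is a Seifert $\bG_m$-bundle, so singularities of the two pairs automatically match. Along $E$, Lemma \ref{lem:class group} gives $(K_{\tY}+E)|_E\sim K_E$, so adjunction yields the clean formula $\Diff_E(\pi^*\Delta)=\Delta$; inversion of adjunction (\cite{KM98}*{Theorem 5.50}) then shows that $(\tY,E+\pi^*\Delta)$ is plt (resp.\ lc) near $E$ if and only if $(X,\Delta)$ is klt (resp.\ lc). In the klt range $0<\lambda\le 1$, the coefficient of $E$ drops to $1-\lambda<1$, so the perturbed pair is klt near $E$ precisely when the coefficient-$1$ pair is plt there, and we conclude $(X,\Delta)$ klt. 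In the lc range $0\le\lambda\le 1$, the case $\lambda=0$ is the direct inversion of adjunction, and the case $\lambda>0$ follows by comparing log discrepancies through the displayed crepant identity.

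The main subtlety lies in the adjunction along $E$ when $L$ is only $\bQ$-Cartier, since $\tY$ can then acquire Seifert-type orbifold singularities along $E$ and an orbifold correction term would \emph{a priori} appear in $\Diff_E$. Lemma \ref{lem:class group} side-steps this precisely via the identity $K_{\tY}+E\sim \pi^*K_X$, which forces $\Diff_E(\pi^*\Delta)=\Delta$ and allows standard inversion of adjunction to apply without any orbifold correction.
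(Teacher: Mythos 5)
Your argument follows essentially the same route as the paper's: the crepant identity coming from Lemma \ref{lem:class group}, the restriction-to-$E$ argument for the $\bR$-Cartier criterion (this is exactly Lemma \ref{lem:Cartier criterion}, which the paper invokes directly), the Seifert-bundle correspondence of singularities from \cite{Kol-Seifert-bundle}, and inversion of adjunction along $E$. The only organizational difference is that you cover $\tY$ by a neighborhood of $E$ (adjunction) and its complement (Seifert quotient), whereas the paper uses the quotient only for the ``only if'' direction and, for the ``if'' direction, propagates plt-ness from a neighborhood of $E$ to all of $\tY$ via the $\bG_m$-action; both decompositions work. Two caveats. First, your derivation of $\Diff_E(\pi^*\Delta)=\Delta$ from ``$(K_{\tY}+E)|_E\sim K_E$ and $\Diff_E\ge 0$'' is not a valid inference: on a non-proper variety an effective divisor linearly equivalent to zero need not vanish (e.g.\ $\mathrm{div}(t)$ on $\bA^1$). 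The vanishing of the orbifold correction term in $\Diff_E$ is true, but it comes from the local structure of Seifert $\bG_m$-bundles, which is also what underlies the paper's unproved assertion $(K_{\tY}+\pi^*\Delta+E)|_E\cong K_X+\Delta$. Second, ``the perturbed pair is klt near $E$ precisely when the coefficient-$1$ pair is plt there'' is only justified in the direction you actually need (plt at coefficient $1$, together with $\Coef(\Delta)\subseteq[0,1)$, gives klt at coefficient $1-\lambda$); the converse is false for a general perturbation, but since your away-from-$E$ Seifert argument already recovers $(X,\Delta)$ klt from the cone being klt, nothing is lost.
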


\begin{proof}
We only treat the klt case since the log canonical version is similar. By Lemma \ref{lem:class group}, we have
\[
K_Y+D+(1-\lambda) \Gamma \sim_\bR \varphi_*(K_{\tY}+\pi^*\Delta+(1-\lambda) E) \sim_\bR \varphi_*\pi^*(K_X+\Delta+\lambda L).
\]
Thus by Lemma \ref{lem:Cartier criterion}, we see that $K_Y+D+(1-\lambda) \Gamma$ is $\bR$-Cartier if and only if $K_X+\Delta+\lambda L\sim_{\bR,Z} 0$, in which case $(\tY,\pi^*\Delta+(1-\lambda) E)$ is the crepant pullback of $(Y,D+(1-\lambda) \Gamma)$.

By \cite{Kol-Seifert-bundle}*{Paragraphs 40-42}, if $(Y,D+(1-\lambda) \Gamma)$ is klt, then so is $(X,\Delta)$ since the latter is the orbifold quotient of $(Y\setminus \Gamma,D|_{Y\setminus \Gamma})$ by the $\bG_m$-action. Conversely, we show that  $(Y,D+(1-\lambda) \Gamma)$ is klt if $(X,\Delta)$ is klt. To see this, it suffices to show that the crepant pullback $(\tY,\pi^*\Delta+(1-\lambda) E)$ is klt. Since 
$(K_{\tY}+\pi^*\Delta+E)|_E\cong K_X+\Delta$ (where we identify $E$ with $X$) and $(X,\Delta)$ is klt, by inversion of adjunction we see that $(\tY,\pi^*\Delta+E)$ is plt in a neighborhood of $E$. Since the natural $\bG_m$-action on $\tY$ preserves the pair $(\tY,\pi^*\Delta+E)$, every non-klt center intersects the $\bG_m$-fixed locus $E$; this implies that $(\tY,\pi^*\Delta+E)$ is plt everywhere. As $0< \lambda\le 1$, we see that $(\tY,\pi^*\Delta+(1-\lambda) E)$ is klt.
\end{proof}

By essentially the same argument, we also have the following.

\begin{lem} \label{lem:cone klt fiber type}
Assume that $f$ is of fiber type. Then the pair $(Y,D)$ is klt (resp. log canonical) if and only if $K_X+\Delta+\lambda L\sim_{\bQ,Z} 0$ for some $\lambda> 0$ (resp. $\lambda\ge 0$) and $(X,\Delta)$ is klt (resp. log canonical). In such case we have $A_{Y,D}(E)=\lambda$.
\end{lem}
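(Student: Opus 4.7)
The plan is to mirror the proof of Lemma \ref{lem:cone klt birational}, with two small modifications arising because $f$ is of fiber type: since $\dim\Gamma=\dim Z<\dim X=\dim E$, the divisor $E$ is $\varphi$-exceptional and $\Gamma$ has codimension at least two in $Y$, so the ``$(1-\lambda)\Gamma$'' term from the birational case disappears, and the $\bR$-Cartier criterion from Lemma \ref{lem:Cartier criterion} takes a different form. I would first combine $-E\sim\pi^*L$ and $K_{\tY}+E\sim\pi^*K_X$ from Lemma \ref{lem:class group} to obtain
\[
K_{\tY}+\pi^*\Delta+(1-\lambda)E \;\sim_{\bR}\; \pi^*(K_X+\Delta+\lambda L)
\]
for any $\lambda\in\bR$. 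Pushing forward by $\varphi$, noting that $\varphi_*E=0$ and $D=\varphi_*\pi^*\Delta$, this yields $K_Y+D\sim_{\bR}\varphi_*\pi^*(K_X+\Delta+\lambda L)$.

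Next I would apply the $\bR$-linear extension of Lemma \ref{lem:Cartier criterion} in the fiber-type case: $\varphi_*\pi^*G$ is $\bR$-Cartier precisely when $G\sim_{\bR,Z}\mu L$ for some $\mu\in\bR$, and such $\mu$ is unique by the $f$-ampleness of $L$. Applied to $G=K_X+\Delta+\lambda L$, this shows that $K_Y+D$ is $\bR$-Cartier if and only if there exists some $\lambda\in\bR$ with $K_X+\Delta+\lambda L\sim_{\bR,Z}0$, in which case $K_Y+D\sim_{\bR,Z}0$ and the displayed identity upgrades (after choosing compatible canonical divisors) to the crepant pullback identity
\[
\varphi^*(K_Y+D)\;=\;K_{\tY}+\pi^*\Delta+(1-\lambda)E.
\]
Since $E$ is not a component of $\pi^*\Delta$, reading off the coefficient of $E$ yields $A_{Y,D}(E)=\lambda$, so $\lambda>0$ (resp.\ $\lambda\ge 0$) is immediately necessary for $(Y,D)$ to be klt (resp.\ log canonical).

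The remaining equivalence of the klt and log canonical properties between $(X,\Delta)$ and $(Y,D)$ would then follow exactly as in Lemma \ref{lem:cone klt birational}. The forward direction is the descent of singularities \cite{Kol-Seifert-bundle}*{Paragraphs 40-42} applied to the Seifert $\bG_m$-bundle $Y\setminus\Gamma\cong\tY\setminus E\to X$. For the converse, assuming $(X,\Delta)$ is klt (resp.\ lc) and $\lambda>0$ (resp.\ $\lambda\ge 0$), it suffices to verify that the crepant pullback $(\tY,\pi^*\Delta+(1-\lambda)E)$ is klt (resp.\ lc); inversion of adjunction applied to $(K_{\tY}+\pi^*\Delta+E)|_E\cong K_X+\Delta$ shows $(\tY,\pi^*\Delta+E)$ is plt (resp.\ lc) near $E$, then $\bG_m$-equivariance extends this globally because every non-klt (resp.\ non-lc) center meets the fixed locus $E$, and finally lowering the coefficient of $E$ from $1$ to $1-\lambda$ preserves the property. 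The main obstacle here is mild: the real work is adapting the Cartier criterion and tracking how the $\Gamma$ boundary drops out; the klt/log canonical transfer is then formally identical to the birational case.
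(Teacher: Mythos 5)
Your proposal is correct and follows essentially the same route as the paper: the paper's proof likewise reduces the $\bR$-Cartier condition on $K_Y+D=\varphi_*\pi^*(K_X+\Delta)$ to Lemma \ref{lem:Cartier criterion}, reads off $A_{Y,D}(E)=\lambda$ from the identity $\varphi^*(K_Y+D)=K_{\tY}+\pi^*\Delta+(1-\lambda)E\sim_{\bR}\pi^*(K_X+\Delta+\lambda L)$, and transfers the klt/lc property exactly as in Lemma \ref{lem:cone klt birational} via the Seifert $\bG_m$-bundle descent, inversion of adjunction along $E$, and $\bG_m$-equivariance.
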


\begin{proof}
Since $K_Y+D=\varphi_*\pi^*(K_X+\Delta)$, the first part follows from Lemma \ref{lem:Cartier criterion} as in the proof of Lemma \ref{lem:cone klt fiber type}. It remains to show that $K_X+\Delta+\lambda L\sim_{\bR,Z} 0$ where $\lambda:=A_{Y,D}(E)$. But this again follows from Lemma \ref{lem:Cartier criterion} as $\pi^*(K_X+\Delta+\lambda L)\sim_\bR K_{\tY}+\pi^*\Delta+(1-\lambda)E = \varphi^*(K_Y+D) \sim_{\bR,Y} 0$.
\end{proof}

\subsection{Volume estimate} 

We next apply the above cone construction to log Fano fibrations and prove the key local volume estimate towards the proof of Theorem \ref{thm:fibration special bdd, finite rational coef}. 

Let $f\colon (X,\Delta)\to Z\ni z$ be a log Fano fibration germ. We assume that the coefficients of $\Delta$ are rational. Let $r$ be a positive integer such that $r\Delta$ has integer coefficients, let 
\[
\cR=\oplus_{m\in\bN} H^0(X,-mr(K_X+\Delta)),
\]
and let $Y=\Spec (\cR)$ be the relative cone with respect to the Weil divisor $L:=-r(K_X+\Delta)$. We keep the notation from the diagram \eqref{eq:cone diagram}. Let 
\begin{equation} \label{eq:boundary in relative cone}
D=
\begin{cases}
    \varphi_*\pi^*\Delta+\left(1-\frac{1}{r}\right)\Gamma, & \quad\mbox{if } f \mbox{ is birational}, \\
    \varphi_*\pi^*\Delta, & \quad\mbox{if } f \mbox{ is of fiber type}.
\end{cases}
\end{equation}
Note that $\Gamma$ is a divisor on $Y$ if and only if $f$ is birational.
By Lemmas \ref{lem:cone klt birational} and \ref{lem:cone klt fiber type}, the pair $(Y,D)$ is klt. Let $y=\sigma(z)\in Y$. The main result of this subsection is the following.

\begin{thm} \label{thm:cone volume}
Under the above setup, we have
\[
\hvol(y,Y,D)\ge \frac{\hVol_{X,\Delta}(H)}{r}
\]
for any $\bR$-divisor $H\sim_{\bR,Z} -(K_X+\Delta)$.
\end{thm}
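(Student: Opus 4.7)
The strategy is to apply Lemma \ref{lem:vol estimate} to the klt singularity $y\in (Y,D)$: for each sufficiently large integer $M$, I will construct a $\bk$-linear subspace $W_M\subseteq \cO_{Y,y}$ such that (a) $\dim W_M \geq c_1 \cdot M^{n+1}/(n+1)!$ for any $c_1 < \vol(V_\bullet)/r$ (provided $M\gg 0$), and (b) $M\cdot \lct_y(Y,D;\{s=0\})\geq 1$ for every nonzero $s\in W_M$. Here $V_\bullet$ is an admissible graded linear series of $H$ on $(X,\Delta)$ whose volume is arbitrarily close to $\hVol_{X,\Delta}(H)$. Granting (a) and (b), Lemma \ref{lem:vol estimate} supplies some $s\in W_M$ with $1 \leq M\cdot \lct_y(Y,D;\{s=0\}) \leq (\hvol(y,Y,D)/c_1)^{1/(n+1)}$, yielding $\hvol(y,Y,D) \geq c_1$; letting $c_1 \to \vol(V_\bullet)/r$ and then taking the supremum over $V_\bullet$ proves the theorem. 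Using Lemma \ref{lem:Vol lsc} and standard perturbation, I may assume $H$ is $\bQ$-Cartier with $rH \sim_\bQ L$, so that $V_{mr} \subseteq H^0(X,mrH) \cong H^0(X,mL) = \cR_m$ identifies elements of $V_{mr}$ with regular functions on $Y = \Spec_Z(\bigoplus_m \cR_m)$.

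\textbf{Construction and dimension count.} Define
\[ W_M := \bigoplus_{m=0}^{\lfloor M/r \rfloor} V_{mr} \subseteq \cO_{Y,y}, \]
with each $V_{mr}$ viewed inside $\cR_m$. These summands sit in distinct weights for the natural $\bG_m$-action on $Y$, so the sum is direct. Lemma \ref{lem:vol=lim} gives $\dim V_{mr} \sim \vol(V_\bullet)(mr)^n/n!$ as $m\to\infty$, and summing over $m \leq \lfloor M/r \rfloor$ yields $\dim W_M \sim (\vol(V_\bullet)/r)\cdot M^{n+1}/(n+1)!$, establishing (a).

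\textbf{Log canonicity (the main step).} For $0 \neq s = \sum_m v_m \in W_M$ with $v_m \in V_{mr}$, let $m_0 := \min\{m : v_m \neq 0\}$. Lemmas \ref{lem:cone klt birational} and \ref{lem:cone klt fiber type} identify the crepant pullback of $(Y,D)$ on $\tY$, uniformly in the birational and fiber type cases, as $(\tY, \pi^*\Delta + (1 - 1/r)E)$. A local computation in trivializations of the Seifert bundle $\pi\colon \tY \to X$ shows $\mathrm{div}_{\tY}(\varphi^* s) = m_0 E + F'$, with $F'$ effective not containing $E$ and $F'|_E = \mathrm{Zeros}_X(v_{m_0})$ under $E \cong X$. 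Hence the pullback of $(Y, D + \mathrm{div}_Y(s)/M)$ is $(\tY, \pi^*\Delta + (1 - 1/r + m_0/M)E + F'/M)$. The coefficient of $E$ is at most $1$ since $m_0 \leq M/r$, and Lemma \ref{lem:class group} gives $\Diff_E(\pi^*\Delta) = \Delta$; inversion of adjunction then reduces log canonicity near $E$ to log canonicity of $(X, \Delta + \mathrm{Zeros}_X(v_{m_0})/M)$. The latter holds because $(X, \Delta + \mathrm{Zeros}_X(v_{m_0})/(m_0 r))$ is log canonical by admissibility of $V_\bullet$, and $M \geq m_0 r$ only decreases the boundary coefficient further, giving (b).

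\textbf{Main obstacle.} The log canonicity step is the most delicate part of the argument. It relies on the precise form of $D$ (in particular the $(1-1/r)\Gamma$ term in the birational case) so that the different $\Diff_E(\pi^*\Delta)$ has no extra contribution, and on the identification of $F'|_E$ with the zero locus of the lowest nonzero weight component $v_{m_0}$ of $s$. With these in hand, the dimension count and application of Lemma \ref{lem:vol estimate} are routine.
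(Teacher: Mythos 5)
Your overall strategy is the same as the paper's: build $\bG_m$-graded subspaces $W_M\subseteq \cO_{Y,y}$ out of an admissible graded linear series $V_\bullet$ of $H$, prove a log canonical threshold lower bound of order $1/M$ using the cone adjunction, and conclude via Lemma \ref{lem:vol estimate}. The lct step is essentially sound: your reduction of a general $s=\sum v_m$ to its lowest-weight component $v_{m_0}$ is exactly the content of Lemma \ref{lem:toric degeneration} (degenerate $\{s=0\}$ to its initial term and use lower semicontinuity of lct), and your inversion-of-adjunction computation is what Lemmas \ref{lem:cone klt birational} and \ref{lem:cone klt fiber type} package; the paper instead checks log canonicity of $(Y,D+\frac{1}{jr}\{s=0\})$ directly by recognizing it as the cone over a log canonical log Calabi--Yau pair $(X,\Delta+\Delta_0)$ over $Z$.

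The genuine gap is in the dimension count for $W_M=\bigoplus_{m\le M/r} V_{mr}$. First, the inclusion $V_{mr}\subseteq H^0(X,mrH)\hookrightarrow H^0(X,mL)=\cR_m$ requires an actual linear equivalence $mrH\sim mL$; your reduction only gives $rH\sim_{\bQ}L$, so this holds only for $m$ divisible by the order $N$ of $rH-L$ in $\Cl(X)$. Second, and independently, $V_{mr}$ can vanish for all $m$ outside a proper sub-semigroup: $M(V_\bullet)$ is only a semigroup, and the admissible series produced by Lemma \ref{lem:perturb div into lin sys} has $V_m=0$ unless $\ell\mid m$ for a possibly large $\ell$. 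Restricting the sum to the admissible $m$ multiplies the leading term of $\dim W_M$ by the density of that sub-semigroup, so the claimed asymptotic $\dim W_M\sim \frac{\vol(V_\bullet)}{r}\cdot\frac{M^{n+1}}{(n+1)!}$ fails in general and your final bound degrades by the same factor — which cannot be removed afterwards since $r$ is fixed by the cone construction. This is exactly why the paper multiplies $V'_{\lfloor\alpha jr\rfloor}$ by auxiliary sections $u_j$ of the complementary ample $\bQ$-divisor $-jr(K_X+\Delta)-\lfloor\alpha jr\rfloor\cdot kH$, chosen by the Bertini-type Lemma \ref{lem:Bertini ample Q-div} to have log canonical threshold $\gtrsim 1/(\varepsilon jr)$: this simultaneously realizes the needed linear equivalence, populates every sufficiently large degree $j$ of $\cR$, and preserves the lct bound after taking convex combinations, at the cost of a factor $(1-\varepsilon)^n$ that disappears as $\varepsilon\to 0$. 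You need this device (or an equivalent one) to make step (a) correct.
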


\begin{proof}
By Lemma \ref{lem:Vol lsc}, and since $\Delta$ has rational coefficient, it is enough to prove the statement when $H$ is a $\bQ$-divisor. Since the question is local on $Z$, we may also shrink $Z$ around $z$ and assume that $H\sim_\bQ -(K_X+\Delta)$. By definition, it suffices to show that 
\[
\hvol(y,Y,D)\ge \frac{(1-\varepsilon)^n \vol(V_\bullet)}{r}
\]
for any $0<\varepsilon\ll 1$ and any admissible graded linear series $V_\bullet$ of $H$. Our plan is to find a sequence of subspaces $W_m\subseteq \cO_{Y,y}$ of increasing dimensions such that 
\[
\min\{\lct_y(s)\,|\,0\neq s\in W_m\}
\]
are reasonably bounded from below and apply Lemma \ref{lem:vol estimate}. Here we use the shorthand notation $\lct_y(s)$ for $\lct_y(Y,D;\{s=0\})$ where $0\neq s\in \cO_{Y,y}$. In the simplest case, if $-r(K_X+\Delta)$ is Cartier and $H=-(K_X+\Delta)$, then each $V_{mr}$ ($m\in\bN$) is already a subspace of $\cR_m\subseteq \cO_{Y,y}$, and we can take $W_m$ to be the span of $V_r,V_{2r},\cdots,V_{mr}$.

In general, let $k$ be a sufficiently divisible positive integer such that $kH$ and $k(K_X+\Delta)$ are both Cartier, $kH\sim -k(K_X+\Delta)$, and $V_{km}\neq 0$ for all $m\in\bN$. Let $V'_\bullet$ be the graded linear series of $kH$ given by $V'_m = V_{km}$. Let $\alpha = \frac{1-\varepsilon}{k}$. Since $H\sim_\bQ -(K_X+\Delta)$ is ample over $Z$, so is
\[
-mr(K_X+\Delta)- \lfloor \alpha mr\rfloor \cdot kH \sim -(mr-\lfloor \alpha mr\rfloor \cdot k)(K_X+\Delta)
\]
for any $m>0$; moreover, for any sufficiently positive integer $m$ (say, when $m\ge m_0$) we have $mr-\lfloor \alpha mr\rfloor \cdot k \ge \varepsilon mr\gg 0$ and thus by Lemma \ref{lem:Bertini ample Q-div} we can find some 
\[
u_m\in H^0(X,-mr(K_X+\Delta)- \lfloor \alpha mr\rfloor \cdot kH)
\]
such that $(X,\Delta+\frac{1}{\varepsilon mr}\{u_m=0\})$ is log canonical.

Multiplication by $u_m$ takes $V'_{\lfloor \alpha mr\rfloor}$ into $\cR_{m} = H^0(X,-mr(K_X+\Delta)) \subseteq \cO_{Y,y}$ and we take $W_m\subseteq \cO_{Y,y}$ to be the span of $u_j\cdot V'_{\lfloor \alpha jr\rfloor}$ for $j=m_0,\dots,m$. We claim that
\begin{equation} \label{eq:lct}
    m\cdot \lct_y(s)\ge \frac{1}{r}
\end{equation}
for all $0\neq s\in W_m$. By the following Lemma \ref{lem:toric degeneration}, since $W_m$ is invariant under the $\bG_m$-action on $Y$ and has a weight decomposition $W_m = \oplus_{j=m_0}^m u_j\cdot V'_{\lfloor \alpha jr\rfloor}$, it suffices to prove the claim when $s =  u_j\cdot w$ for some $m_0\le j\le m$ and some $w\in V'_{\lfloor \alpha jr\rfloor}$.

Since $V_\bullet$ is admissible, we know that the pair $(X,\Delta+\frac{1}{\alpha jkr}\{w=0\}) = (X,\Delta+\frac{1}{(1-\varepsilon)jr}\{w=0\})$ is log canonical. By construction $(X,\Delta+\frac{1}{\varepsilon jr}\{u_j=0\})$ is also log canonical. By taking convex combination, we deduce that
\[
\left(X,\Delta+(1-\varepsilon)\cdot \frac{1}{(1-\varepsilon)jr}\{w=0\}+\varepsilon\cdot \frac{1}{\varepsilon jr}\{u_j=0\}\right)
\]
is log canonical. In other words, the pair 
\[
(X,\Delta+\Delta_0):=\left(X,\Delta+\frac{1}{jr}\{u_j w=0\}\right)
\]
is log canonical. On the other hand, we have $K_X+\Delta+\Delta_0\sim_{\bQ,Z} 0$ as $u_j w \in H^0(X,-jr(K_X+\Delta))$. If $D_0:=\frac{1}{jr}\{s=0\}$ is the corresponding divisor on $Y$, then we also have
\[
D_0 =
\begin{cases}
    \varphi_* \pi^*\Delta_0 + \frac{1}{r}\Gamma & \quad \mbox{if } f \mbox{ is birational}, \\
    \varphi_*\pi^*\Delta_0 & \quad\mbox{if } f \mbox{ is of fiber type}.
\end{cases}
\]
By Lemma \ref{lem:cone klt birational} and \ref{lem:cone klt fiber type}, these imply that $(Y,D+D_0)$ is log canonical at $y$, which gives $\lct_y(s)\ge \frac{1}{jr}\ge \frac{1}{mr}$ and hence the claim \eqref{eq:lct}.

By Lemma \ref{lem:vol=lim} we have
\[
\dim W_m = \sum_{j=m_0}^m \dim V'_{\lfloor \alpha jr\rfloor} = \sum_{j=m_0}^m \dim V_{\lfloor \alpha jr\rfloor\cdot k} = (\alpha k r)^n \vol(V_\bullet)\frac{m^{n+1}}{(n+1)!} + o(m^{n+1}).
\]
Combined with \eqref{eq:lct} and Lemma \ref{lem:vol estimate} we see that
\[
\hvol(y,Y,D)\ge \frac{c_1}{r^{n+1}} 
\]
for any $c_1<(\alpha k r)^n \vol(V_\bullet)$. It follows that
\[
\hvol(y,Y,D)\ge \frac{(\alpha k)^n \cdot \vol(V_\bullet)}{r} = \frac{(1-\varepsilon)^n \vol(V_\bullet)}{r}.
\]
This concludes the proof.
\end{proof}

The following lemma is used in the above proof. We keep the same notation $\lct_y(s)$ for $\lct_y(Y,D;\{s=0\})$

\begin{lem} \label{lem:toric degeneration}
Let $y\in (Y,D)$ be a klt singularity with a torus $\bT$-action and let $V\subseteq \cO_{Y,y}$ be a finite dimensional $\bT$-invariant subspace. Assume that for some constant $c\ge 0$ we have $\lct_y(s)\ge c$ for all nonzero $\bT$-eigenvectors $s\in V$. Then $\lct_y(s)\ge c$ for all nonzero $s\in V$.
\end{lem}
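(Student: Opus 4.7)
The plan is to degenerate any nonzero $s \in V$ to a $\bT$-eigenvector via a generic one-parameter subgroup of $\bT$, and then invoke the lower semicontinuity of log canonical thresholds in a one-parameter family. Since $V$ is $\bT$-invariant and finite dimensional, it decomposes into weight spaces, so I can write $s = \sum_{\alpha \in A} s_\alpha$ with $s_\alpha$ a $\bT$-eigenvector of weight $\alpha$ and $A \subseteq M$ the finite set of weights with $s_\alpha \neq 0$.

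Next, I would choose a one-parameter subgroup $\lambda \colon \bG_m \to \bT$ (equivalently $\lambda \in N$) in general position, so that the linear function $\alpha \mapsto \langle \lambda, \alpha\rangle$ attains its minimum on $A$ uniquely at some $\alpha_0 \in A$; such $\lambda$ exist because the bad locus is a union of finitely many hyperplanes in $N_\bR$. Setting $\mu := \langle \lambda, \alpha_0\rangle$, I would form the algebraic family
\[
s_t \;=\; t^{-\mu}\,\lambda(t)\cdot s \;=\; s_{\alpha_0} \;+\; \sum_{\alpha \neq \alpha_0} t^{\langle \lambda, \alpha\rangle - \mu}\, s_\alpha
\]
of nonzero elements of $\cO_{Y,y}$ parameterized by $\bA^1$, which specializes to $s_0 = s_{\alpha_0}$ at $t = 0$.

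The conclusion then rests on two inputs. First, by the $\bT$-invariance of the pair $(Y,D)$ and of the point $y$, together with the obvious invariance of LCT under scalar multiplication, we have $\lct_y(s_t) = \lct_y(s)$ for every $t \neq 0$. Second, the family $\{s_t = 0\} \subseteq Y \times \bA^1$ is a flat family of Cartier divisors on $Y$, so the standard lower semicontinuity of log canonical thresholds in families yields $\lct_y(s_0) \leq \lct_y(s_t)$ for $t \neq 0$ near $0$. Combined with the hypothesis $\lct_y(s_{\alpha_0}) = \lct_y(s_0) \geq c$, this gives $\lct_y(s) \geq c$, as desired. There is no serious obstacle once the semicontinuity of LCT in flat families is in hand; the only point requiring a mild amount of care is choosing $\lambda$ generically so that the specialization is a single eigenvector rather than a nontrivial partial sum of weight components.
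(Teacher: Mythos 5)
Your proposal is correct and follows essentially the same route as the paper: decompose $s$ into $\bT$-weight components, pick a one-parameter subgroup for which the minimal weight in the support of $s$ is attained uniquely, degenerate $\{s=0\}$ to the $\bT$-invariant divisor cut out by that extremal eigenvector, and conclude by lower semicontinuity of the log canonical threshold (the paper cites Ambro's result for this last step). The only cosmetic difference is that you normalize by $t^{-\mu}$ and phrase the degeneration as an explicit flat family of functions, whereas the paper speaks of the flat limit of $\rho(t)\cdot D$; the content is identical.
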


\begin{proof}
For any $0\neq s\in V$, there exists some one-parameter subgroup $\rho\colon\bG_m\to \bT$ such that the flat limit $D_0$ of $\rho(t)\cdot D$, where $D=\{s=0\}$ and when $t\to 0$, is a $\bT$-invariant divisor. Indeed, if $s=\sum_{i=1}^m s_i$ is the weight decomposition where $s_i$ has weight $\alpha_i\in \Hom(\bT,\bG_m)$, and choose 
$\rho$ such that $\langle\rho,\alpha_1\rangle< \cdots<\langle\rho,\alpha_m\rangle$, then the flat limit is $\{s_1=0\}$. By assumption, we have $\lct_y(D_0)\ge c$, hence by the lower semi-continuity of log canonical threshold \cite{Amb-lct-lsc}*{Corollary 1.10}, we get $\lct_y (D)\ge c$ as well.
\end{proof}

By \cite{XZ-local-bdd} and the stable degeneration theorem (see Theorem \ref{thm:special bdd klt singularity}), we know that klt singularities whose local volumes are bounded away from zero are bounded up to special degeneration. Combined with Theorem \ref{thm:cone volume}, we are now ready to prove Theorem \ref{thm:fibration special bdd, finite rational coef}.

\begin{proof}[Proof of Theorem \ref{thm:fibration special bdd, finite rational coef}]
Choose some positive integer $r$ such that $I\subseteq \frac{1}{r}\bN$. Let $f\colon (X,\Delta)\to Z\ni z$ be a log Fano fibration germ in $\cS$. Let $y\in (Y,D)$ be the relative cone over $(X,\Delta)$ with respect to the ample Weil divisor $L=-r(K_X+\Delta)$, where the boundary $D$ is defined as in \eqref{eq:boundary in relative cone}. By Theorem \ref{thm:cone volume} and our assumption, we have $\hvol(y,Y,D)\ge \frac{\varepsilon}{r}$. Note that $y\in (Y,D)$ carries a $\bG_m$-action by the cone construction. 

By Theorem \ref{thm:special bdd klt singularity}, there exists an $\bR$-Gorenstein family $\cB\subseteq (\cY,\cD)\to \cB$ of klt singularities, depending only on $n,I$ and $\varepsilon$, with the action of an algebraic torus $\bT$ over $\cB$, such that $y\in (Y,D)$ admits a $\bG_m$-equivariant special degeneration to $b\in (\cY_b,\cD_b)$ for some $b\in \cB$, and by specializing the $\bG_m$-action to $b\in (\cY_b,\cD_b)$, we get a one-parameter subgroup of $\bT_b$. 

By base change, we shall assume that on each of the connected components $\cB_1,\dots,\cB_k$ of $\cB$, we have $\bT=\bT_i\times \cB$ for some torus $\bT_i$. By Lemma \ref{lem:weight monoid constant}, the weight monoid $S_i$ is constant over each $\cB_i$. We claim that for each $i\in \{1,\dots,k\}$, there are only finitely many one-parameter subgroups of $\bT_i$ that are possibly induced by the above construction. To see this, suppose that $b\in \cB_i$ (so we may identify $\bT_b$ with $\bT_i$) and let $E$ be the divisor over $Y$ as in the diagram \eqref{eq:cone diagram}; that is, the zero section of the corresponding Seifert $\bG_m$-bundle over $Z$. Then we have $\wt_\rho = \ord_E$ as valuations on $Y$, where we denote by $\rho\colon \bG_m\to \Aut(Y,D)$ the one-parameter subgroup coming from the cone construction. We can identify the induced one-parameter subgroup of $\Aut(\cY_b,\cD_b)$ with some primitive element of $S_i^\vee$, which we still denote by $\rho$. By \eqref{eq:boundary in relative cone} and Lemma \ref{lem:cone klt fiber type}, we know that $A_{Y,D}(E)=\frac{1}{r}$, thus by Lemma \ref{lem:log discrep constant T-family} we deduce that $A_{\cY,\cD}(\wt_\rho)=A_{\cY_b,\cD_b}(\wt_\rho) = \frac{1}{r}$. Since the log discrepancy function $\xi\mapsto A_{\cY,\cD}(\wt_\xi)$ is positive (as $(\cY,\cD)$ is klt) and linear on $S_i^\vee$ (by Lemma \ref{lem:log discrep linear T-variety}), we see that there are at most finitely many $\rho\in S_i^\vee$ such that $A_{\cY,\cD}(\wt_\rho) = \frac{1}{r}$. This proves the claim.

Suppose that $y\in (Y,D)$ specially and $\bG_m$-equivariantly degenerates to $b\in(\cY_b,\cD_b)$. By the following Lemma \ref{lem:cone special tc induce fibration special tc}, it induces a special degeneration of $(X,\Delta)\to Z\ni z$ to the log Fano fibration germ $(\cY_b\setminus \cY_b^{\bG_m},\cD_b\setminus \cD_b^{\bG_m})/\bG_m\to \cY_b/\!\!/\bG_m \ni z_b$, where $z_b$ is the image of $b\in \cY_b$ in the GIT quotient. Thus it remains to show that latter form an $\bR$-Gorenstein family of log Fano fibrations over a finite type base. This is clear since there are only finitely many one-parameter subgroups of $\bT_i$ to consider and $\cB$ is of finite type.
\end{proof}

\begin{lem} \label{lem:cone special tc induce fibration special tc}
Let $(X,\Delta)\to Z\ni z$ be a log Fano fibration germ and let $y\in (Y,D)$ be the relative cone with respect to $L=-r(K_X+\Delta)$ as above for some $r\in \bN$ such that $r\Delta$ has integer coefficients. Let $(\cY=\Spec (\cR),\Delta_{\cY})\to \bA^1$ be a $\bG_m$-equivariant special test configuration of the singularity $y\in (Y,D)$. Then it induces a special test configuration 
\[
(\cX,\cD)\to \cZ
\]
of $(X,\Delta)\to Z\ni z$ such that $(\cY,\Delta_{\cY})$ is the relative cone of $(\cX,\cD)$ over $\cZ$ with respect to $\cL:=-r(K_{\cX}+\cD)$.
\end{lem}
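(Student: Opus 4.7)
The coordinate ring of $\cY$ is a graded $\bC[t]$-algebra, which I will denote by $\widetilde{\cR}$, carrying two commuting $\bG_m$-actions: the ``cone'' action coming from the grading $\widetilde{\cR}=\bigoplus_{m\ge 0}\widetilde{\cR}_m$ (induced by the cone $\bG_m$-action on $Y$ and preserved by the $\bG_m$-equivariance hypothesis of the test configuration), and the ``test configuration'' action lifting the canonical multiplication on $\bA^1=\Spec\bC[t]$. The plan is to perform a relative Proj construction with respect to the cone action to recover the desired test configuration of the fibration $(X,\Delta)\to Z$.

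First I would set $\cZ:=\Spec_{\bA^1}(\widetilde{\cR}_0)$ and $\cX:=\Proj_\cZ(\widetilde{\cR})$; both inherit the test-configuration $\bG_m$-action. For $t\ne 0$, the isomorphism $\cY_t\cong Y$ restricts to $\cZ_t\cong Z$ and $\cX_t\cong\Proj_Z\bigoplus_m f_*\cO_X(mL)=X$, so $\cZ\to\bA^1$ is a test configuration of the singularity $z\in Z$ (via the section induced by the apex section of $\cY\to\bA^1$) and $\cX\to\cZ$ is a test configuration of the fibration $X\to Z$. I would then define $\cD$ as the closure of $\Delta\times(\bA^1\setminus\{0\})$ inside $\cX$. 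The $\bG_m$-equivariance of the cone action forces $(\cY,\Delta_\cY)$ to be the relative cone of $(\cX,\cD)$ over $\cZ$ with respect to $\cL:=-r(K_\cX+\cD)$: the identification holds over $\bA^1\setminus\{0\}$ by construction and extends over $t=0$ by equivariance. Concretely, $\Delta_\cY$ decomposes as $\varphi_{\cZ,*}\pi_\cZ^*\cD$ in the fiber-type case and as $\varphi_{\cZ,*}\pi_\cZ^*\cD+(1-\tfrac{1}{r})\Gamma_\cZ$ in the birational case, where $\pi_\cZ$, $\varphi_\cZ$, $\Gamma_\cZ$ are the family analogues of the objects in \eqref{eq:cone diagram}.

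To verify that $(\cX,\cD)\to\cZ$ is a special test configuration of the log Fano fibration germ, I would reduce to the central fiber via Lemmas \ref{lem:cone klt birational} and \ref{lem:cone klt fiber type}. The plt hypothesis on $(\cY,\cY_0+\Delta_\cY)$ yields klt-ness of $(\cY_0,\Delta_{\cY,0})$ by adjunction, and since the cone $\bG_m$-action restricts to a good action on $\cY_0$ with quotient $\cZ_0=\Spec((\widetilde{\cR}/t\widetilde{\cR})_0)$, the central fiber $\cY_0$ is the relative cone over $\cX_0\to\cZ_0$. The cited lemmas then give klt-ness of $(\cX_0,\cD_0)$, ampleness of $-(K_{\cX_0}+\cD_0)$ over $\cZ_0$, and normality of $\cZ_0$. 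Plt-ness of $(\cX,\cX_0+\cD)$ then follows from inversion of adjunction since $\cX_0$ is a prime Cartier divisor with klt restricted pair, and ampleness of $-(K_\cX+\cD)$ over $\cZ$ is built into the Proj construction via the identification $\cL=-r(K_\cX+\cD)$.

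The main technical obstacle will be showing that $\widetilde{\cR}$ behaves well under reduction mod $t$: specifically, that $\cX_0$ is irreducible and normal and that the closure $\cD$ restricts to the expected divisor $\cD_0$ on $\cX_0$ coming from the cone decomposition of $\Delta_{\cY,0}$. Both issues reduce to the flatness of each graded piece $\widetilde{\cR}_m$ over $\bC[t]$ (a consequence of $\cY\to\bA^1$ being flat) together with the normality and irreducibility of $\cY_0$ guaranteed by the specialness of the test configuration.
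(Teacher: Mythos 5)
Your overall strategy is the same as the paper's: take the weight decomposition of $\cR$ for the fiberwise $\bG_m$-action, set $\cZ=\Spec(\cR_0)$ and $\cX=\Proj(\cR)$, define $\cD$ as the closure of $\Delta\times(\bA^1\setminus\{0\})$, and reduce specialness to the central fiber. However, there is a genuine gap at exactly the step you defer to "equivariance" and then to "flatness of each graded piece," namely the assertion that the cone structure and the identity $\cL\sim -r(K_{\cX}+\cD)$ extend across $t=0$. Equivariance gives you nothing here: a priori the central fiber of $\cY$ could fail to be the relative cone over $\cX_0$, and $-r(K_{\cX}+\cD)$ could fail to be $\bQ$-Cartier along $\cX_0$. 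Flatness of the graded pieces $\cR_m$ over $\bC[t]$ (which does hold, since each is a direct summand of the torsion-free module $\cR$) only tells you that $\cY_0=\Spec(\oplus_m \cR_m/t\cR_m)$; it does not identify $\cR_m/t\cR_m$ with $H^0(\cX_0,m\cL_0)$, nor does it produce the $\bQ$-Cartier divisor $\cL$ on $\cX$ with the right restriction to $\cX_0$.

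The paper closes this gap differently. It considers the Seifert $\bG_m$-bundle $\cY\setminus\cW\to\cX$ (with $\cW=\cY^{\bG_m}$) and checks that its fibers over all codimension-one points of $\cX$ are reduced: away from $\cX_0$ this is automatic from the cone construction on $X$, and over the generic point of $\cX_0$ it uses precisely the specialness hypothesis (so that $\cY_0$ is reduced). Kollár's classification of Seifert $\bG_m$-bundles \cite{Kol-Seifert-bundle}*{Theorem 7} then shows the bundle is induced by a Weil divisor $\cL=\cO_{\cX}(1)$, which is therefore $\bQ$-Cartier and ample over $\cZ$; and the linear equivalence $\cL\sim -r(K_{\cX}+\cD)$, which holds on $\cX\setminus\cX_0$ by construction, propagates over the central fiber because $\cX_0$ is integral (as $\cY_0$ is) and $\cX_0\sim 0$. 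With this in hand, your route to plt-ness of $(\cX,\cX_0+\cD)$ via the central fiber and inversion of adjunction would work, but the paper's direct argument — deducing plt-ness of $(\cX,\cX_0+\cD)$ from plt-ness of $(\cY,\cY_0+\Delta_{\cY})$ via the orbifold quotient description as in Lemma \ref{lem:cone klt birational} — is shorter and avoids having to first identify $\cD_0$ with the cone decomposition of $\Delta_{\cY,0}$.
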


\begin{proof}
Since $(\cY,\Delta_{\cY})\to \bA^1$ is a $\bG_m$-equivariant test configuration, we have a weight decomposition $\cR=\oplus_{m\in\bN} \cR_m$ with respect to the fiberwise $\bG_m$-action. Note that there are no negative weights by Lemma \ref{lem:weight monoid constant}. Let $\cX:=\Proj(\cR)$ and $\cZ:=\Spec (\cR_0)$. Then the natural morphism $\cX\to \cZ$ over $\bA^1$ is a test configuration of the fibration $X\to Z$. Since $\cY\to \bA^1$ is a test configuration of the singularity $y\in Y$, the closure $C$ of $\{y\}\times (\bA^1\setminus \{0\})$ in $\cY$ is a section of the morphism $\cY\to \bA^1$. The latter factors through $\cZ$, hence the closure of $\{z\}\times (\bA^1\setminus \{0\})$ in $\cZ$ coincides with the image of $C$ and is a section of $\cZ\to \bA^1$. This implies that $\cZ\to \bA^1$ is also a test configuration of the singularity $z\in Z$ and therefore $\cX\to \cZ$ is a test configuration of the fibration germ $X\to Z\ni z$.

Let $\cD$ be the closure of $\Delta\times (\bA^1\setminus \{0\})$ in $\cX$. We next show that the Weil divisor $\cL=-r(K_{\cX}+\cD)$ is ample $\bQ$-Cartier and $(\cY,\Delta_{\cY})$ is the relative orbifold cone of $(\cX,\cD)$ over $\cZ$ respect to $\cL$. Let $\cW=\cY^{\bG_m}$ be the fixed locus of the fiberwise $\bG_m$-action. Then $\cY\setminus \cW\to \cX$ is a Seifert $\bG_m$-bundle. We claim that it has reduced (hence smooth) fibers over all codimension one points of $\cX$. In fact, away from $\cX_0$ the Seifert $\bG_m$-bundle is given by the Weil divisor $-r(K_{\cX}+\cD)$, hence the codimension one fibers are reduced by \cite{Kol-Seifert-bundle}*{Theorem 7}. The fiber over the generic point of $\cX_0$ is also reduced since $(\cY,\Delta_{\cY})\to \bA^1$ is a special test configuration and in particular $\cY_0$ is reduced. This proves the claim. By \cite{Kol-Seifert-bundle}*{Theorem 7}, we deduce that the Seifert $\bG_m$-bundle $\cY\setminus \cW\to \cX$ is given by some Weil divisor $\cL$ on $\cX$. We necessarily have $\cL=\cO_{\cX}(1)$ and thus $\cL$ is $\bQ$-Cartier and ample. By construction, $\cL|_{\cX\setminus \cX_0} \sim -r(K_{\cX}+\cD)|_{\cX\setminus \cX_0}$. Since $\cY_0$ is integral, so is $\cX_0$. As $\cX_0\sim 0$, this implies that $\cL\sim -r(K_{\cX}+\cD)$ and therefore $-r(K_{\cX}+\cD)$ is $\bQ$-Cartier and ample. Replacing $\cL$ by $-r(K_{\cX}+\cD)$, we conclude that $(\cY,\Delta_{\cY})$ is the relative orbifold cone of $(\cX,\cD)$ over $\cZ$ with respect to $\cL=-r(K_{\cX}+\cD)$. 
Since $(\cY,\cY_0+\Delta_{\cY})$ is plt by assumption, the base $(\cX,\cX_0+\cD)$ is also plt by \cite{Kol-Seifert-bundle}*{Paragraphs 40-42} as in the proof of Lemma \ref{lem:cone klt birational}. This implies that $(\cX,\cD)\to \cZ$ is a special test configuration and we finish the proof.
\end{proof}

\subsection{Finiteness of models}



To go from the special boundedness of log Fano fibration germs in Theorem \ref{thm:fibration special bdd, finite rational coef} to the log special boundedness of Fano type fibration germs in Theorems \ref{thm:fibration special bdd, no boundary} and \ref{thm:fibration special bdd}, we employ the following proof strategy. For simplicity, suppose we are in the setting of Theorem \ref{thm:fibration special bdd, no boundary}. 
Let $X\to Z\ni z$ be a Fano type fibration germ. Then the ample model $X\dashrightarrow X'$ of $-K_X$ exists over $Z$. Assume that $\hVol_X(\Delta)\ge \varepsilon$ for some $\bR$-divisor $0\le \Delta\sim_{\bR,Z} -K_X$ as in Theorem \ref{thm:fibration special bdd, no boundary}. From Lemma \ref{lem:Vol MMP}, it is not hard to see that $\hVol_{X'}(\Delta')\ge \varepsilon$ where $\Delta'\sim_{\bR,Z} -K_{X'}$ is the strict transform of $\Delta$. Since $X'\to Z\ni z$ is a log Fano fibration germ, it belongs to a specially bounded set by Theorem \ref{thm:fibration special bdd, finite rational coef}. Next we observe that $X$ can be recovered from $X'$ up to finite ambiguity. In fact, all the exceptional divisors of the birational map $X\dashrightarrow X'$ have non-positive discrepancies over $X'$. Since $X'$ is log Fano over $Z$, there are only finitely many birational models $X$ with this property. With this in mind, we prove a family version of such finiteness of models (Theorem \ref{thm:bir model bdd}). As a result, if $X'\to Z$ belongs to a bounded (resp. specially bounded) set, then so is $X\to Z$ (Theorem \ref{thm:bir model log special bdd}).

Turning to more details, we first prove in this subsection the following statement that roughly says a set of Fano type fibrations is bounded if their ample models are bounded. The stronger version, where we replace boundedness by special boundedness, will be established in the next subsection.

\begin{thm} \label{thm:bir model bdd}
Let $\cS$ be a bounded set of log Fano fibrations (Definition \ref{defn:Fano type fibration bdd}). Then there exists a bounded set $\cT$ of Fano type fibrations such that for any $(X,\Delta)\to Z$ in $\cS$ and any birational map $f\colon X'\dashrightarrow X$ over $Z$ such that $f^*(K_X+\Delta)=K_{X'}+\Delta'$ for some $\Delta'\ge 0$, we have $(X',\Delta')\to Z$ belongs to $\cT$.
\end{thm}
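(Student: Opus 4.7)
The plan is to realize every such crepant model $(X',\Delta')$ as the output of a relative MMP applied to a suitable log resolution of the given bounded family, exploiting the fact that for a klt pair only finitely many divisors have log discrepancy $\leq 1$.

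First, by Lemma \ref{lem:bdd defn equiv} realize $\cS$ by an $\bR$-Gorenstein family $(\cX,\cD)\to \cZ$ of log Fano fibrations over a finite type base $\cB$. By Noetherian induction we may stratify $\cB$ and assume it is smooth and irreducible, and after further stratification choose a simultaneous log resolution $\pi\colon \cY\to \cX$ whose restriction to every fiber $\cY_b\to \cX_b$ is a log resolution of $(\cX_b,\cD_b)$. Let $F_1,\dots,F_N$ be the $\pi$-exceptional prime divisors of $\cY$, and set $a_i:=A_{\cX,\cD}(F_i)$. Because $(\cX,\cD)$ is $\bR$-Gorenstein, the numbers $a_i$ equal $A_{\cX_b,\cD_b}(F_i|_{\cY_b})$ on every fiber, and $a_i>0$ by the klt hypothesis.

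Second, observe that any $f$-exceptional prime divisor $E$ on $X'$ satisfies $\mathrm{mult}_E\Delta'=-a(E,X,\Delta)\geq 0$, hence $A_{X,\Delta}(E)\leq 1$; as every such $E$ appears on the log resolution $\cY_b$, it must be one of $F_i$ with $a_i\leq 1$. In particular, $X'$ is determined, up to small birational ambiguity, by a subset $S\subseteq\{i:a_i\leq 1\}$ of ``extracted'' divisors. Since $(X,\Delta)$ is klt we also have $\mathrm{mult}_E\Delta'<1$, so $(X',\Delta')$ is klt; by Lemma \ref{lem:Fano type} it is of Fano type over $Z$.

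Third, for each admissible subset $S$ construct a family $\cX'_S\to \cZ$ over $\cB$ as follows. Set $\Gamma_\cY:=\pi^{-1}_*\cD+\sum_i\max\{0,1-a_i\}F_i$, so that $(\cY,\Gamma_\cY)$ is klt log smooth and
\[
K_{\cY/\cB}+\Gamma_\cY=\pi^*(K_{\cX/\cB}+\cD)+\sum_{a_i>1}(a_i-1)F_i.
\]
For a small rational $\epsilon>0$, run a $(K_\cY+\Gamma_\cY+\epsilon\sum_{i\notin S,\,a_i\leq 1}F_i)$-MMP over $\cX$. Since $\cX$ is of Fano type over $\cZ$ and $\cY\to\cX$ is birational, $(\cY,\Gamma_\cY+\epsilon\cdot\ldots)$ is of Fano type over $\cZ$, so by BCHM the MMP terminates; after possibly stratifying $\cB$ further it can be carried out in families, yielding a finite type family $\cX'_S\to\cZ$ whose $\pi$-exceptional divisors left over are precisely $\{F_i:i\in S\}$. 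The pushforward $\cD'_S$ of $\cD$ to $\cX'_S$ gives a boundary that is crepant to $(\cX,\cD)$ over $\cZ$, hence $(\cX'_S,\cD'_S)$ is again Fano type over $\cZ$. Taking $\cT$ to be the finite union $\bigcup_S(\cX'_S,\cD'_S)\to\cZ$ produces the required bounded set of Fano type fibrations.

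The hard part will be realizing \emph{every} model $X'$ in the statement as a fiber of some $\cX'_S$: on each fiber, the MMP above outputs one particular model, but the crepant model $X'$ we started from need not be $\bQ$-factorial and different MMPs/flops may yield different small modifications with the same extracted set $S$. Handling this will require invoking a family version of the finiteness of marked minimal models (including flops) for Fano type pairs, together with a careful bookkeeping to ensure all small modifications of each $\cX'_S$ are captured after suitable further stratification of $\cB$.
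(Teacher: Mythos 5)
Your outline matches the architecture of the paper's argument — reduce to a family $(\cX,\cD)\to\cZ$, pass (after base change/stratification) to a model $\cY\to\cX$ extracting all divisors of log discrepancy $\leq 1$, and note that every crepant model $X'$ with $\Delta'\geq 0$ is then a birational contraction of the corresponding fiber $\cY_b$. But the step you explicitly defer as "the hard part" is not a routine citation; it is the actual content of the proof, and as written your argument has a genuine gap there.

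Concretely: fixing the set $S$ of extracted divisors determines $X'$ only up to small modification, and you need that \emph{every} small modification of \emph{every} fiber $\cX'_{S,b}$ occurs as a fiber of one of finitely many families. Finiteness of models on each individual fiber (BCHM) is not enough, because a priori the matching between small models of the fiber and small models of the total space $\cX'_S$ can fail: a big movable divisor class on a special fiber need not extend to a big movable class on the total space, and the relative Picard group can jump or have monodromy. The paper resolves this (Lemma \ref{lem:bir contraction bdd}) by first arranging, after a generically finite base change, that $N^1(\cX'_b/\cZ_b)$ is a trivial local system restricting isomorphically from $N^1(\cX'/\cZ)$ (Corollary \ref{cor:rel Pic Fano fibration}, which uses \cite{KM-flip-classification}), and then proving that pseudo-effectivity and bigness of a line bundle on a closed fiber propagate to the generic fiber (Lemma \ref{lem:pseff cone constant}, which rests on the extension theorem \cite{HMX-BirAut}*{Theorem 1.8} applied to a compactified log smooth family). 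Only with these two inputs can one take the birational pullback $\cL_b$ of an ample divisor on a given small model $Y$ of $\cX'_{S,b}$, extend it to a big divisor $\cL$ on $\cX'_S$, and identify $Y$ with the fiber of the ample model of $\cL$ over $\cZ$, of which there are finitely many by BCHM. Your appeal to "a family version of the finiteness of marked minimal models" is naming exactly this statement, but it is not available off the shelf and must be proved; without it the proposal does not close. (A secondary, fixable point: rather than one MMP per subset $S$, the paper extracts all divisors of non-positive discrepancy at once and treats every $X'$ as a birational contraction of $\cY_b$, which avoids having to verify that your $\epsilon$-perturbed MMP contracts exactly the complement of $S$.)
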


This can be viewed as a local version of \cite{HX-CY-bdd}*{Section 2.1} and we essentially mimic the strategy of {\it loc. cit.}. The first step is to prove a local version of \cite{dFH-deform-canonical-pair}*{Lemma 6.6}. Namely, given a family of Fano type fibrations, the relative Picard group is a constant local system possibly after a base change (Corollary \ref{cor:rel Pic Fano fibration}). For this we need a few auxiliary results. For any fibration $f\colon X\to Y$, we let $N^1(X/Y):=\bQ\otimes (\Pic(X)/f^*\Pic(Y))$, and let $H_j(X/Y, \bQ) \subseteq H_j(X, \bQ)$ be the subspace generated by the images of 
$H_j(X_y, \bQ) \to H_j(X, \bQ)$ where $y\in Y$ ({\it cf.} \cite{KM-flip-classification}*{Definition (12.1.2)}). We also let $H_i(X/Y,\bQ)_{\mathrm{alg}}\subseteq H_i(X/Y,\bQ)$ be the subspace generated by the algebraic cycles in the fiber of $f$.

\begin{lem} \label{lem:N^1 dual to H_2}
Let $f\colon X\to Y$ be a fibration. Assume that $R^1 f_*\cO_X=0$ and $X$ has rational singularities. Then the intersection pairing $N^1(X/Y)\times H_2(X/Y,\bQ)\to \bQ$ induces an inclusion $\iota\colon N^1(X/Y)\hookrightarrow H_2(X/Y,\bQ)_{\mathrm{alg}}^\vee$. In particular, $\dim N^1(X/Y)<+\infty$. If in addition $X$ is smooth, then $\iota$ is an isomorphism. 
\end{lem}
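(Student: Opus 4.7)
The plan is to use the Leray spectral sequence for $f$ applied to the exponential short exact sequence, combined with standard Hodge theory on the fibers, and to reduce the singular case to the smooth case via a resolution. First, $\iota$ is well-defined on $N^1(X/Y)$: for any $L\in\Pic(X)$ and any irreducible curve $C$ contained in a fiber $X_y$, the intersection $L\cdot C$ depends only on $[C]\in H_2(X,\bQ)$ and extends bilinearly to $H_2(X/Y,\bQ)_{\mathrm{alg}}$; it vanishes when $L=f^*M$ by the projection formula since $f_*C=0$. So the pairing descends to $N^1(X/Y)$.

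For injectivity, apply $Rf_*$ to $0\to\bZ_X\to\cO_X\to\cO_X^*\to 0$. Using the hypotheses $f_*\cO_X=\cO_Y$ (whence $f_*\cO_X^*=\cO_Y^*$) and $R^1f_*\cO_X=0$, the long exact sequence yields an injection of sheaves $R^1f_*\cO_X^*\hookrightarrow R^2f_*\bZ_X$. Combining with the Leray edge maps
\[
\Pic(Y)\to\Pic(X)\to H^0(Y,R^1f_*\cO_X^*)\to H^2(Y,\cO_Y^*),
\]
I obtain an inclusion $N^1(X/Y)\hookrightarrow H^0(Y,R^2f_*\bZ_X)_\bQ$. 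For each $y\in Y$, the stalk $(R^2f_*\bZ_X)_y$ maps (via proper base change, applied to a suitable compactification) to $H^2(X_y,\bZ)$, and the image of $L$ in this stalk is $c_1(L|_{X_y})$; pairing this class with an algebraic cycle $[C]\in H_2(X_y,\bQ)_{\mathrm{alg}}$ recovers $L\cdot C$. Since, on smooth projective varieties, numerical equivalence agrees with homological equivalence on the image of $c_1$ tensored with $\bQ$, the vanishing of $\iota([L])$ implies the vanishing of $c_1(L|_{X_y})$ in $H^2(X_y,\bQ)$ for every $y$, and constructibility of $R^2f_*\bZ$ then forces the corresponding global section to be zero, whence $[L]=0$ in $N^1(X/Y)$. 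The singular case reduces to the smooth one via a resolution $\pi\colon\tilde X\to X$: rational singularities give $R^1\pi_*\cO_{\tilde X}=0$, hence by Leray $R^1\tilde f_*\cO_{\tilde X}=0$ for $\tilde f=f\circ\pi$, and the injection $\pi^*\colon\Pic(X)_\bQ\hookrightarrow\Pic(\tilde X)_\bQ$ together with the pushforward $\pi_*\colon H_2(\tilde X/Y)_{\mathrm{alg}}\to H_2(X/Y)_{\mathrm{alg}}$ produces a commutative diagram from which injectivity of $\iota_X$ follows from that of $\iota_{\tilde X}$.

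For the isomorphism in the smooth case, I run the argument in reverse: any $\phi\in H_2(X/Y,\bQ)_{\mathrm{alg}}^\vee$ is determined by its values on algebraic cycles in fibers, which via Poincar\'e duality correspond to Hodge $(1,1)$-classes on each fiber. The vanishing of $R^1f_*\cO_X$ allows these fiberwise $(1,1)$-data to be glued into a class in $H^2(X,\bQ)$ whose restriction to every fiber is algebraic, and an application of Lefschetz $(1,1)$ realizes such a global class as a $\bQ$-line bundle on $X$ modulo $f^*H^2(Y,\bQ)$, producing the required preimage in $N^1(X/Y)$. The main technical obstacles will be the careful manipulation of proper base change, Poincar\'e duality on potentially singular fibers, and ensuring injectivity is preserved after tensoring with $\bQ$; the rational singularity hypothesis handles the first two by enabling resolution of the total space (and of fibers when needed), while the condition $R^1f_*\cO_X=0$ supplies the rigidity needed to lift fiberwise information to the total space.
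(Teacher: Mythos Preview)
Your injectivity argument has a gap: even when $X$ is smooth, the fibers $X_y$ need not be smooth (or even irreducible), so the assertion that ``numerical equivalence agrees with homological equivalence on the image of $c_1$'' does not apply to $X_y$ in general. Concretely, you have established $N^1(X/Y)\hookrightarrow H^0(Y,R^2f_*\bZ)_\bQ$, but to deduce injectivity of $\iota$ you need the implication ``$L\cdot C=0$ for all curves $C\subset X_y$ $\Rightarrow$ $c_1(L|_{X_y})=0$ in $H^2(X_y,\bQ)$'' for \emph{every} $y$, and this is exactly Matsusaka's theorem, which requires $X_y$ smooth projective. Restricting to the smooth locus of $f$ is not enough, since a section of the constructible sheaf $R^2f_*\bZ$ is not determined by its restriction to a dense open set. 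The paper sidesteps this entirely by citing \cite{KM-flip-classification}*{Proposition~(12.1.4)}.

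Your surjectivity argument is too vague to be a proof. The phrase ``$R^1f_*\cO_X=0$ allows these fiberwise $(1,1)$-data to be glued into a class in $H^2(X,\bQ)$'' hides several unjustified steps: the fiberwise classes give at best a section of $R^2f_*\bQ$, not a class in $H^2(X,\bQ)$ (the Leray edge map $H^2(X,\bQ)\to H^0(Y,R^2f_*\bQ)$ need not be surjective); and Lefschetz $(1,1)$ requires $X$ projective, which it is not. The paper takes a completely different and much cleaner route: choose a smooth projective compactification $\oX$ of $X$; if $\iota$ were not surjective there would exist a nonzero $\alpha\in H_2(X/Y,\bQ)_{\mathrm{alg}}$ with $(L\cdot\alpha)=0$ for every line bundle $L$ on $\oX$; on the smooth projective $\oX$ this forces $\alpha=0$ in $H_2(\oX,\bQ)$; but the composition $H_2(X/Y,\bQ)\hookrightarrow H_2(X,\bQ)\to H_2(\oX,\bQ)$ is injective by \cite{Voi-Hodge-book-II}*{Proposition~4.23}, a contradiction. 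The compactification trick moves all the Hodge theory to a single smooth projective variety, where the needed statements are classical, and the Voisin injectivity result does the work of descending back to $X$.
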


\begin{proof}
The exponential exact sequence gives rise to a homomorphism $\Pic(X)\to H^2(X,\bZ)$ and hence a natural pairing $\Pic(X)_\bQ \times H_2(X/Y,\bQ)\to H^2(X,\bQ)\times H_2(X,\bQ)\to \bQ$. It descends to $N^1(X/Y)$ as $\langle f^*\Pic(Y),H_2(X/Y,\bQ)\rangle = 0$ by definition. By \cite{KM-flip-classification}*{Proposition (12.1.4)}, we see that the induced map $\iota$ is an inclusion. Suppose that $X$ is smooth and let $\oX$ be a smooth projective compactification of $X$. If $\iota$ is not surjective, then there exists a nonzero algebraic class $\alpha\in H_2(X/Y,\bQ)_{\mathrm{alg}}$ such that $(L\cdot \alpha)=0$ for all line bundle $L$ on $\oX$. As $\oX$ is smooth, this implies that the image of $\alpha$ in $H_2(\oX,\bQ)$ is zero. But by (the dual statement of) \cite{Voi-Hodge-book-II}*{Proposition 4.23}, the composition $H_2(X/Y,\bQ)\hookrightarrow H_2(X,\bQ)\to H_2(\oX,\bQ)$ is injective, a contradiction.
\end{proof}

\begin{lem} \label{lem:Bertini for higher direct image}
Let $f\colon X\to Y$ be a fibration over a reduced base $T$. Let $\cF$ be a coherent sheaf on $X$ such that $R^i f_*\cF = 0$ for some $i\in\bN$. Then there exists a dense open subset $U$ of $T$ such that $R^i f_*\cF_t = 0$ for all $t\in U$.
\end{lem}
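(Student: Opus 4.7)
The plan is to combine Grothendieck's derived base change with generic flatness applied to the finitely many coherent sheaves that naturally arise from $f_*$ and its derived functors.

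\emph{Reduction step.} The statement is Zariski local on $T$, so I would first assume that $T$ is affine. Because $f$ is a fibration and hence proper, the sheaves $R^j f_* \cF$ are coherent on $Y$ and vanish for $j$ larger than the dimension of the fibers of $f$; in particular, only finitely many are nonzero. Generic flatness over the reduced Noetherian base $T$ can then be applied simultaneously (by intersecting the finitely many resulting dense opens) to $\cF$ on $X$, to $\cO_Y$ on $Y$, and to each nonzero $R^j f_* \cF$ on $Y$. After shrinking $T$ to a dense open $U$, all of these sheaves become flat over $T$.

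\emph{Derived base change step.} Fix $t \in U$ and form the Cartesian diagram
\[
\begin{tikzcd}
X_t \arrow[r,"j_t"] \arrow[d,"f_t"'] & X \arrow[d,"f"] \\
Y_t \arrow[r,"i_t"] & Y.
\end{tikzcd}
\]
I would then invoke Grothendieck's derived base change for the proper morphism $f$ to obtain
\[
Li_t^* Rf_*\cF \cong Rf_{t,*}\, Lj_t^*\cF,
\]
and use the $T$-flatness of $\cF$ to identify $Lj_t^*\cF$ with $\cF_t$ concentrated in degree zero, so that $Li_t^* Rf_*\cF \cong Rf_{t,*}\cF_t$.

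\emph{Cohomology extraction.} The final step is to analyze the hyper-Tor spectral sequence
\[
E_2^{p,q} = L^{p} i_t^*\bigl(R^q f_* \cF\bigr) \Rightarrow R^{p+q} f_{t,*}\cF_t.
\]
The flatness of $\cO_Y$ over $\cO_T$ allows one to identify $L^p i_t^* \cG$ with $\mathrm{Tor}^{\cO_T}_{-p}(k(t),\cG)$ via change of rings; since each $R^q f_*\cF$ is $T$-flat on $U$, these Tor's vanish for $p<0$, the spectral sequence degenerates, and one obtains $R^q f_{t,*}\cF_t \cong i_t^* R^q f_* \cF$ for every $q$. Setting $q=i$ and using $R^i f_*\cF=0$ yields the required vanishing on $U$.

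\emph{Main obstacle.} The technical heart is organizing the flatness conditions so that derived base change collapses into ordinary base change on cohomology sheaves. What makes this work is the finiteness of the collection of nonzero $R^j f_*\cF$ (from properness of $f$), which lets one invoke generic flatness for this finite collection simultaneously; without this finiteness one could not shrink $T$ a finite number of times to achieve $T$-flatness everywhere it is needed. A secondary point is the change-of-rings computation for $L i_t^*$, which requires $\cO_Y$ flat over $\cO_T$ and is handled by the same shrinking.
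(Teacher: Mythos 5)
Your argument is correct in substance but packages the proof differently from the paper. The paper avoids the derived category entirely: after shrinking so that $T$ is smooth and $\cF$ and all $R^jf_*\cF$ are $T$-flat, it cuts $T$ down by one smooth hypersurface $Z=\{u=0\}$ at a time, uses injectivity of multiplication by $u$ on $\cF$ and on $R^jf_*\cF$ together with the long exact sequence to get $R^jf_*\cF\otimes\cO_Z\cong R^jf_*(\cF\otimes\cO_Z)$, and concludes by induction on $\dim T$. Your route replaces this induction by the base change spectral sequence $E_2^{p,q}=L^pi_t^*(R^qf_*\cF)\Rightarrow R^{p+q}f_{t,*}\cF_t$, whose degeneration is exactly the same flatness input; the two proofs are the Koszul-resolution and the one-equation-at-a-time versions of the same computation. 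The one point you should tighten is the invocation of derived base change: the isomorphism $Li_t^*Rf_*\cF\cong Rf_{t,*}Lj_t^*\cF$ for the non-flat base change $i_t\colon Y_t\to Y$ is not unconditional; it requires $X$ and $Y_t$ to be Tor-independent over $Y$, which (given $\cO_Y$ flat over $T$) amounts to $\cO_X$ being flat over $T$ near $X_t$. You apply generic flatness to $\cF$, to $\cO_Y$, and to the $R^jf_*\cF$, but not to $\cO_X$; either add $\cO_X$ to that list, or bypass the issue by resolving $k(t)$ over $\cO_T$ by a Koszul complex, tensoring with the $T$-flat sheaf $\cF$, and pushing forward with the projection formula, which yields the same spectral sequence directly. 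With that repair the proof is complete; what each approach buys is a matter of taste, the paper's being more elementary and yours being shorter once the derived machinery is set up.
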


\begin{proof}
By shrinking $T$ we may assume that $T$ is smooth. Since $R^j f_* \cF$ is coherent for all $j$, by generic flatness we know that there exists a dense open subset $U\subseteq T$ such that $R^j f_* \cF$ is flat over $U$ for all $j$. Similarly, after possibly shrinking $U$ we may assume that $\cF$ is flat over $U$. In particular, for any smooth hypersurface $Z\subseteq U$ with local defining equation $u$, the multiplication maps $\cF\xrightarrow{u}\cF$ and $R^j f_*\cF\xrightarrow{u} R^j f_*\cF$ are injective for all $j$. From the long exact sequence, we deduce that $R^j f_*\cF \otimes \cO_Z \cong R^j f_*(\cF\otimes \cO_Z)$. By induction on the dimension of $U$, this implies that $R^j f_* \cF_t \cong R^j f_*\cF \otimes k(t)$ for all $j$. Thus as $R^i f_*\cF = 0$, we see that $R^i f_*\cF_t = 0$ for all $t\in U$.
\end{proof}

\begin{lem} \label{lem:rel Pic local system}
Let $\varphi\colon \cX\to \cZ$ be a fibration over a variety $\cB$. Assume that $\cX$ has rational singularities and $R^1\varphi_*\cO_{\cX}=R^2\varphi_*\cO_{\cX}=0$. Then there exists a generically finite dominant morphism $\cB'\to \cB$ such that after base change $\cX'=\cX\times_{\cB} \cB'$, $\cZ'=\cZ\times_{\cB} \cB'$, the restriction map 
\[
N^1(\cX'/\cZ') \to N^1(\cX'_b/\cZ'_b)
\]
is an isomorphism for every $b\in \cB'$.
\end{lem}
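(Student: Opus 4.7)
The plan is to reduce to a smooth family via resolution of singularities, and then to trivialize the monodromy on a certain topological local system by means of a finite \'etale cover of $\cB$.

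First I would reduce to the smooth case. Take a log resolution $\mu\colon\cY\to\cX$. Since $\cX$ has rational singularities, $R^i\mu_*\cO_\cY=0$ for $i>0$, so the Leray spectral sequence gives $R^i(\varphi\circ\mu)_*\cO_\cY\cong R^i\varphi_*\cO_\cX=0$ for $i=1,2$; hence $\cY\to\cZ$ satisfies the hypotheses. By the negativity lemma, after shrinking $\cB$ so that $\mu$ induces a resolution of $\cX_b$ for every $b\in\cB$ (possible by Noetherian induction on $\cB$), the pullback identifies $N^1(\cX/\cZ)$ (resp.\ $N^1(\cX_b/\cZ_b)$) with the subspace of $N^1(\cY/\cZ)$ (resp.\ $N^1(\cY_b/\cZ_b)$) of classes intersecting trivially with every curve contracted by $\mu$, and this identification is compatible with restriction to fibers. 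Thus it suffices to prove the statement for $\cY\to\cZ$, and we may replace $\cX$ by $\cY$ to assume that $\cX$ is smooth. By generic smoothness and generic flatness, after further shrinking $\cB$, we may also assume that $\cX\to\cB$ and $\cZ\to\cB$ are smooth and that $\cZ\to\cB$ is flat.

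Next I would trivialize the relevant topological local system. By a relative Ehresmann argument (for instance Verdier's generic fibration theorem combined with the properness of $\varphi$), after shrinking $\cB$ once more, both $\cX\to\cB$ and $\cZ\to\cB$ are topologically locally trivial fibrations, so $R^i(\cX/\cB)_*\bQ$ and $R^i(\cZ/\cB)_*\bQ$ are local systems on $\cB$. Consequently the assignment $b\mapsto H_2(\cX_b/\cZ_b,\bQ)$ assembles into a local system $\cH$ on $\cB$, and the subspace $\cH_{\mathrm{alg}}\subseteq\cH$ of algebraic classes at each fiber has bounded rank and is preserved by the monodromy (algebraicity is a Hodge-theoretic condition stable under parallel transport, e.g.\ by Deligne's theorem of the fixed part applied to a smooth projective compactification); moreover, the monodromy permutes the discrete lattice of integral algebraic classes, so the monodromy representation on $\cH_{\mathrm{alg}}$ factors through a finite group. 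Taking $\cB'\to\cB$ to be a corresponding finite \'etale cover, the restriction $\cH_{\mathrm{alg}}|_{\cB'}$ becomes a constant sub-local system.

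To conclude, by Lemma \ref{lem:N^1 dual to H_2} applied to each smooth fiber, $N^1(\cX'_b/\cZ'_b)\cong H_2(\cX'_b/\cZ'_b,\bQ)_{\mathrm{alg}}^\vee=\cH_{\mathrm{alg},b}^\vee$, and the same lemma gives an inclusion $N^1(\cX'/\cZ')\hookrightarrow H_2(\cX'/\cZ',\bQ)_{\mathrm{alg}}^\vee$. The constancy of $\cH_{\mathrm{alg}}|_{\cB'}$ identifies both sides of the restriction map $N^1(\cX'/\cZ')\to N^1(\cX'_b/\cZ'_b)$ with the same finite-dimensional vector space. To lift a class $\alpha\in N^1(\cX'_{b_0}/\cZ'_{b_0})$ to $N^1(\cX'/\cZ')$, I would invoke standard deformation theory: the obstruction to extending $\alpha$ to a formal neighborhood of $\cX'_{b_0}$ in $\cX'$ lies in $H^2(\cX'_{b_0},\cO)/\varphi_{b_0}^*H^2(\cZ'_{b_0},\cO)$, which vanishes by flat base change from the hypothesis $R^i\varphi_*\cO_\cX=0$ for $i=1,2$ together with the Leray spectral sequence for $\varphi_{b_0}$. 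Grothendieck's existence theorem combined with a gluing argument across $\cB'$ (using the monodromy trivialization of $\cH_{\mathrm{alg}}$) then produces the desired global class. The principal obstacle is precisely the trivialization of monodromy on $\cH_{\mathrm{alg}}$, which requires the Hodge-theoretic input that algebraic classes vary with finite monodromy in smooth families; once this is in hand, the rest is a relatively standard application of deformation theory under the given vanishing hypotheses.
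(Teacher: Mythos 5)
Your overall architecture matches the paper's: reduce to a smooth family via a resolution (using rational singularities to transfer the vanishing of $R^1\varphi_*\cO$ and $R^2\varphi_*\cO$), invoke Verdier's generic topological triviality to get a local system interpolating the groups $H_2(\cX_b/\cZ_b,\bQ)_{\mathrm{alg}}$, establish finiteness of the monodromy on the algebraic part, and trivialize by a generically finite cover. Where you genuinely diverge is the final surjectivity of $N^1(\cX'/\cZ')\to N^1(\cX'_b/\cZ'_b)$: the paper gets this essentially for free from the identification $N^1(\cX/\cZ)\cong H_2(\cX/\cZ,\bQ)_{\mathrm{alg}}^{\vee}\cong \mathrm{coker}(H^2(\cZ,\bQ)\to H^2(\cX,\bQ))$ in the smooth case (Lemma \ref{lem:N^1 dual to H_2} plus \cite{KM-flip-classification}*{Theorem (12.1.3)}), so that triviality of the local system immediately identifies the total-space group with the fiber group; you instead propose to lift line-bundle classes by deformation theory, using $R^1\varphi_*\cO_{\cX}=R^2\varphi_*\cO_{\cX}=0$ to kill obstructions. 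That is the route of \cite{dFH-deform-canonical-pair}*{Lemma 6.6}, which the paper explicitly cites as its model, so it is a legitimate alternative; but as written it only produces an extension over a formal neighborhood of $\cX'_{b_0}$, and the algebraization/spreading-out over all of $\cB'$ (and the "gluing argument") is exactly the content that needs to be supplied, not just asserted. The paper's cohomological identification avoids this entirely.

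One step of your argument is wrong as stated: "the monodromy permutes the discrete lattice of integral algebraic classes, so the monodromy representation on $\cH_{\mathrm{alg}}$ factors through a finite group." Preserving a lattice does not force finiteness ($\mathrm{GL}_n(\bZ)$ preserves $\bZ^n$). Finiteness requires an additional boundedness or positivity input: the algebraic classes in $H_2(\cX_b/\cZ_b,\bQ)$ are spanned by classes of curves in fibers whose degrees against a relatively ample class are bounded in the family (equivalently, one uses the polarization to show the monodromy orbit of each integral algebraic class is finite). This is precisely \cite{KM-flip-classification}*{Proposition (12.2.5)}, which is what the paper cites at this point. You correctly identify this as "the principal obstacle," so the fix is to replace your one-line justification with that reference (or with the Hodge--Riemann/boundedness argument it encapsulates); without it the trivializing cover $\cB'\to\cB$ need not exist. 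A smaller caveat: in your reduction to the smooth case, the identification of $N^1(\cX/\cZ)$ with the classes on $\cY$ trivial on $\mu$-contracted curves uses that line bundles numerically trivial over $\cX$ descend, which again relies on rational singularities (the paper phrases this as $N^1(\cX_b/\cZ_b)=\ker\bigl(N^1(\cY_b/\cZ_b)\to N^1(\cY_b/\cX_b)\bigr)$); this should be said rather than attributed to the negativity lemma.
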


\begin{proof}
By Lemma \ref{lem:Bertini for higher direct image} and shrinking $\cB$ if necessary, we shall assume that $\cB$ is smooth, $\cX_b$ has rational singularities, and $R^1\varphi_*\cO_{\cX_b}=R^2\varphi_*\cO_{\cX_b}=0$ for all $b\in \cB$ throughout the proof. First consider the special case when $\cX$ is smooth over $\cB$. By Lemma \ref{lem:N^1 dual to H_2} and \cite{KM-flip-classification}*{Theorem (12.1.3)}, we have 
\[
N^1(\cX/\cZ) \cong H_2(\cX/\cZ,\bQ)_{\mathrm{alg}}^\vee \cong \mathrm{coker}(H^2(\cZ,\bQ)\to H^2(\cX,\bQ))
\]
and similarly
\[
N^1(\cX_b/\cZ_b) \cong \mathrm{coker}(H^2(\cZ_b,\bQ)\to H^2(\cX_b,\bQ)).
\]
Denote the maps $\cX\to \cB$, $\cZ\to \cB$ by $p$ and $q$ respectively. By \cite{Ver-Whitney-stratification}*{Corollaire (5.1)}, there exists a dense, Zariski open subset $U\subseteq \cB$ such that $p^{-1}(U)\to U$ and $q^{-1}(U)\to U$ are locally topologically trivial fibrations (in the analytic topology). After replacing $\cB$ by $U$, we may therefore assume that $\varphi$ induces a map $\varphi^*\colon R^2 q_*\bQ \to R^2 p_*\bQ$ of local systems.  

By the previous discussions, each $N^1(\cX_b/\cZ_b)$ is a fiber of the local system $\mathrm{coker}(\varphi^*)$. As they also form the local system $\cG \cN^1(\cX/B)$ defined in \cite{KM-flip-classification}*{Definition (12.2.4)}, we deduce from \cite{KM-flip-classification}*{Proposition (12.2.5)} that it has finite monodromy. We can then trivialize it by some dominant generically finite cover $\cB'\to \cB$ to produce the desired isomorphism
\[
N^1(\cX'/\cZ')|_{\cX'_b} \cong N^1(\cX'_b/\cZ'_b).
\]
Thus the lemma holds in this case.

In general, let $\pi\colon \cY\to \cX$ be a log resolution and let $\psi\colon \cY\to \cZ$ the induced map. By shrinking $\cB$, we may assume that $\cY$ is smooth over $\cB$. As $\cX$ has rational singularities, we have $R^i\pi_*\cO_{\cY}=0$ for all $i>0$ and hence $R^i \psi_* \cO_{\cY}=R^i\varphi_* \cO_{\cX}$ for all $i\in\bN$. Thus from the smooth case treated above, we know that $N^1(\cY_b/\cZ_b)$ and $N^1(\cY_b/\cX_b)$ each form a local system with finite monodromy possibly after shrinking $\cB$. Since $N^1(\cX_b/\cZ_b)$ is the kernel of the natural restriction map $N^1(\cY_b/\cZ_b)\to N^1(\cY_b/\cX_b)$, we deduce that $N^1(\cX_b/\cZ_b)$ also form a local system with finite monodromy and the lemma follows as before.
\end{proof}

The following provides the first step in the proof of Theorem \ref{thm:bir model bdd}.

\begin{cor} \label{cor:rel Pic Fano fibration}
Let $\varphi\colon \cX\to \cZ$ be a Fano type fibration over a variety $\cB$. Then there exists a generically finite dominant morphism $\cB'\to \cB$ such that after base change $\cX'=\cX\times_\cB \cB'$, $\cZ'=\cZ\times_\cB \cB'$, the natural restriction map 
\[
N^1(\cX'/\cZ') \to N^1(\cX'_b/\cZ'_b)
\]
is an isomorphism for every $b\in \cB'$.
\end{cor}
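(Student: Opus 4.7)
The plan is to reduce Corollary \ref{cor:rel Pic Fano fibration} directly to Lemma \ref{lem:rel Pic local system} by verifying its two hypotheses: namely, that $\cX$ has rational singularities, and that $R^1\varphi_*\cO_\cX = R^2\varphi_*\cO_\cX = 0$.

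Since $\varphi\colon\cX\to\cZ$ is a Fano type fibration, by definition there exists an effective $\bR$-divisor $\Delta$ on $\cX$ such that $(\cX,\Delta)$ is klt and $-(K_\cX+\Delta)$ is $\varphi$-ample. In particular, $\cX$ is klt, and hence has rational singularities by Elkik's theorem (see \cite{KM98}*{Theorem 5.22}), which verifies the first hypothesis of Lemma \ref{lem:rel Pic local system}.

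For the second hypothesis, we invoke relative Kawamata--Viehweg vanishing. Since $-(K_\cX+\Delta)$ is $\varphi$-ample, by a standard Bertini argument (e.g., an application of Lemma \ref{lem:Bertini ample Q-div} relatively over $\cZ$) we can choose an effective $\bR$-divisor $A\sim_{\bR,\cZ}\tfrac{1}{2}\bigl(-(K_\cX+\Delta)\bigr)$ such that $(\cX,\Delta+A)$ remains klt. Then $-(K_\cX+\Delta+A)\sim_{\bR,\cZ}\tfrac{1}{2}\bigl(-(K_\cX+\Delta)\bigr)$ is $\varphi$-ample, so applying Kawamata--Viehweg vanishing to the klt pair $(\cX,\Delta+A)$ with the zero divisor yields $R^i\varphi_*\cO_\cX=0$ for all $i>0$. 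In particular the required vanishings in degrees $1$ and $2$ hold.

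With both hypotheses verified, Lemma \ref{lem:rel Pic local system} applies and produces the desired generically finite dominant morphism $\cB'\to\cB$. There is no substantive obstacle here beyond assembling the two inputs, as the main topological/monodromy work has already been carried out in Lemma \ref{lem:rel Pic local system}; the Fano type hypothesis is used purely to supply the vanishing and rationality of singularities.
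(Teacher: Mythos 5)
Your proposal is correct and matches the paper's argument: the paper likewise verifies rational singularities via \cite{KM98}*{Theorem 5.22} and the vanishing $R^1\varphi_*\cO_{\cX}=R^2\varphi_*\cO_{\cX}=0$ via Kawamata--Viehweg, then applies Lemma \ref{lem:rel Pic local system} directly. Your extra Bertini perturbation for the vanishing step is harmless but unnecessary, since relative Kawamata--Viehweg already applies to $\cO_\cX = \cO_\cX(K_\cX+\Delta+(-(K_\cX+\Delta)))$ with $-(K_\cX+\Delta)$ ample over $\cZ$.
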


\begin{proof}
By \cite{KM98}*{Theorem 5.22} and Kawamata-Viehweg vanishing, we know that $\cX$ has rational singularities and $R^1\varphi_*\cO_{\cX}=R^2\varphi_*\cO_{\cX}=0$. The statement then follows directly from Lemma \ref{lem:rel Pic local system}.
\end{proof}

By essentially the same argument, we also get the following result.

\begin{lem} \label{lem:Q-factorial in family}
Let $\varphi\colon \cX\to \cZ$ be a Fano type fibration over a variety $\cB$. Then there exist a generically finite dominant morphism $\cB'\to \cB$ and a small birational modification $\cY\to \cX'=\cX\times_\cB \cB'$ such that $\cY_b$ is $\bQ$-factorial for every $b\in \cB'$.
\end{lem}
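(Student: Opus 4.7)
The plan is to mimic the strategy of Lemma \ref{lem:rel Pic local system} and Corollary \ref{cor:rel Pic Fano fibration}, combined with a single small $\bQ$-factorial modification step supplied by BCHM. First, I would apply Corollary \ref{cor:rel Pic Fano fibration} to $\cX \to \cZ$ in order to pass, after a generically finite cover of $\cB$, to a situation in which $N^1(\cX/\cZ) \to N^1(\cX_b/\cZ_b)$ is an isomorphism for every $b \in \cB$. Since the resulting $\cX \to \cZ$ is still of Fano type, by \cite{BCHM}*{Corollary 1.4.3} there is a small $\bQ$-factorialization $\cY_0 \to \cX$ over $\cZ$; the composite $\cY_0 \to \cZ$ remains a Fano type fibration, and $\cY_0$ is globally $\bQ$-factorial. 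A further application of Corollary \ref{cor:rel Pic Fano fibration} to $\cY_0 \to \cZ$, possibly requiring one more generically finite base change $\cB'\to \cB$, then produces the candidate $\cY := \cY_0 \times_\cB \cB'$ over $\cZ' := \cZ \times_\cB \cB'$, with $\cY$ $\bQ$-factorial and $N^1(\cY/\cZ')\to N^1(\cY_b/\cZ'_b)$ an isomorphism for every $b\in \cB'$.

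The main content is to verify fiberwise $\bQ$-factoriality. Since $\cY_b$ is of Fano type over $\cZ'_b$, Kawamata--Viehweg vanishing yields $\mathrm{Pic}(\cY_b/\cZ'_b)_\bQ \cong N^1(\cY_b/\cZ'_b)$, so it is enough to show that every prime Weil divisor $D$ on $\cY_b$ has its intersection-theoretic class in $N^1(\cY_b/\cZ'_b)$ represented by a $\bQ$-Cartier divisor on $\cY_b$. Using the constancy of $N^1(\cY/\cZ')$, the class $[D]\in N^1(\cY_b/\cZ'_b)$ lifts to an element of $N^1(\cY/\cZ')$, which is represented by a genuine $\bQ$-Cartier divisor $\widetilde D$ on $\cY$ because $\cY$ is $\bQ$-factorial. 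Then $\widetilde D|_{\cY_b}$ is $\bQ$-Cartier with the same numerical class as $D$ over $\cZ'_b$, and the Fano-type Picard--$N^1$ identification on $\cY_b$ identifies $D$ with a $\bQ$-Cartier divisor up to $\bQ$-linear equivalence.

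The main obstacle lies in this final step, namely bridging the subtle gap between the \emph{class} $[D]\in N^1$ being represented by a $\bQ$-Cartier divisor and the Weil divisor $D$ itself being $\bQ$-Cartier, since the natural map $\mathrm{Cl}(\cY_b/\cZ'_b)_\bQ \to N^1(\cY_b/\cZ'_b)$ need not be injective a priori. To handle this, I would combine the above representation argument with Noetherian induction on $\dim\cB$: for very general $b \in \cB'$ one verifies $\bQ$-factoriality of $\cY_b$ by spreading out $\bQ$-Cartier divisors from $\cY$, and over the proper closed locus where $\bQ$-factoriality fails one stratifies, further base-changes, and repeats the small modification; after finitely many such steps the desired generically finite cover and small modification are produced. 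This ``stratify, base change, and shrink'' pattern is the same Noetherian induction used implicitly in Lemma \ref{lem:rel Pic local system}, justifying the phrase ``by essentially the same argument''.
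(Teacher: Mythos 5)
Your overall architecture (a generically finite base change to trivialize relative $N^1$, then one application of \cite{BCHM}*{Corollary 1.4.3}) matches the paper's, but there is a genuine gap at exactly the point you flag, and the patch you propose does not close it. The group $N^1(\cY_b/\cZ'_b)$ is by definition built from $\Pic(\cY_b)$, so a prime Weil divisor $D$ on $\cY_b$ that is not already $\bQ$-Cartier has no ``intersection-theoretic class'' in $N^1(\cY_b/\cZ'_b)$ to begin with; the discrepancy between $\Cl(\cY_b)_\bQ$ and $N^1(\cY_b/\cZ'_b)$ is precisely the failure of $\bQ$-factoriality you are trying to rule out, so arguing inside $N^1$ is circular. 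The Noetherian-induction patch (``spread out $\bQ$-Cartier divisors from $\cY$ for very general $b$'') also does not work as stated: spreading out produces \emph{some} $\bQ$-Cartier divisors on the fibers but gives no control over whether an arbitrary Weil divisor on $\cY_b$ arises as such a restriction, and fiberwise $\bQ$-factoriality is neither an open nor a closed condition, so the induction has no base case to stand on.

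The missing idea --- and the paper's actual route --- is to control the full class group of the fibers via a simultaneous log resolution rather than via $N^1$ of $\cX_b$ itself. Take a log resolution $\pi\colon\widetilde{\cX}\to\cX$; after shrinking and base-changing, $\widetilde{\cX}$ is smooth over $\cB$ and, by Lemmas \ref{lem:N^1 dual to H_2} and \ref{lem:rel Pic local system} applied to $\widetilde{\cX}\to\cZ$ (using $R^i(\varphi\circ\pi)_*\cO_{\widetilde{\cX}}=0$ for $i>0$), there are finitely many line bundles $\cL_1,\dots,\cL_r$ on $\widetilde{\cX}$ whose restrictions generate $N^1(\widetilde{\cX}_b/\cZ_b)$ for every $b$. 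Since $\widetilde{\cX}_b$ is smooth, $\Cl(\widetilde{\cX}_b)=\Pic(\widetilde{\cX}_b)$, so $\Cl(\cX_b)_\bQ$ is generated by $\varphi^*\Pic(\cZ_b)$ and the $\pi_*\cL_{i,b}$. Now take the small $\bQ$-factorialization $\cY\to\cX$ from \cite{BCHM}*{Corollary 1.4.3}; after shrinking $\cB$ again, $\cY_b\to\cX_b$ is small, so $\Cl(\cY_b)_\bQ\cong\Cl(\cX_b)_\bQ$ is generated by $\Pic(\cZ_b)$ together with the restrictions of the pushforwards $\cL'_i$ of the $\cL_i$ to $\cY$. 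These $\cL'_i$ are $\bQ$-Cartier because $\cY$ is $\bQ$-factorial, and restrictions of $\bQ$-Cartier divisors remain $\bQ$-Cartier, so every Weil divisor class on $\cY_b$ is $\bQ$-Cartier. No stratification or induction over a bad locus is needed once one works with generators of the class group coming from the resolution.
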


\begin{proof}
Let $\pi\colon \widetilde{\cX}\to \cX$ be a log resolution. 
Note that for all $i>0$, we have $R^i\pi_* \cO_{\widetilde{\cX}}=0$ as $\cX$ has rational singularities by \cite{KM98}*{Theorem 5.22}, and $R^i\varphi_*\cO_{\cX}=0$ by Kawamata-Viehweg vanishing, hence $R^i(\varphi\circ\pi)_*\cO_{\widetilde{\cX}}=0$ for all $i>0$ as well. Thus by Lemmas \ref{lem:N^1 dual to H_2} and \ref{lem:rel Pic local system}, possibly after a generically finite base change, we may assume that $\cB$ is smooth, $\widetilde{\cX}$ is smooth over $\cB$, and there exist finitely many line bundles $\cL_1,\dots,\cL_r$ on $\widetilde{\cX}$ whose restriction to the fibers $\widetilde{\cX}_b$ generate $N^1(\widetilde{\cX}_b/\cZ_b)$ for all $b\in \cB$. It follows that the class group $\Cl(\cX_b)$ is generated by $\varphi^*\Pic(\cZ_b)$ and $\pi_*\cL_{i,b}$ ($i=1,\dots,r$). As $\cX$ is of Fano type over $\cZ$, it has a small $\bQ$-factorial modification $\cY\to \cX$ by \cite[Corollary 1.4.3]{BCHM}. By shrinking $\cB$ again, we may assume that the induced map $\cY_b\to \cX_b$ is small and $\Cl(\cY_b)\cong \Cl(\cX_b)$ is generated by $\Pic(\cZ_b)$ and $\cL'_{i,b}$ where $\cL'_i$ is the pushforward of $\cL_i$ to $\cY$. Since $\cY$ is $\bQ$-factorial, every $\cL'_i$ is $\bQ$-Cartier. This implies that $\cY_b$ is $\bQ$-factorial.
\end{proof}

The next step (Lemma \ref{lem:pseff cone constant}) in the proof of Theorem \ref{thm:bir model bdd} is to show that the relative pseudo-effective cone in $N^1(\cX_b/\cZ_b)_\bR$ is also locally constant. Before we prove this, we need an auxiliary result.

\begin{lem} \label{lem:compactify pair}
Let $(X,\Delta)$ be a klt pair over $T$. Assume that $\Delta$ has rational coefficients, $\Delta\ge A\ge 0$ for some ample $\mathbb Q$-divisor $A$. Then there exist a log resolution $\pi\colon X'\to X$, a projective compactification $X'\subseteq \oX$ over $T$, and an SNC $\mathbb Q$-divisor $\overline{\Delta}$ on $\oX$ such that $\oX\setminus X'\subseteq \Supp(\overline{\Delta})$, $\overline{\Delta}\geq\bar A\geq 0$ for some ample $\mathbb Q$-divisor $\bar A$, $(\oX,\overline{\Delta})$ is klt, and
\[
K_{X'}+\Delta'\sim_{\bQ} \pi^* (K_X+\Delta)+E
\]
for some $\pi$-exceptional $\mathbb Q$-divisor $E\ge 0$, where $\Delta'=\overline{\Delta}|_{X'}$.
\end{lem}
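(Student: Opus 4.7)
The plan is to build the required objects in three stages: compactifying $X$ while extending $A$ to an ample divisor, taking a common log resolution, and assembling $\bar{\Delta}$ together with an ample subdivisor $\bar{A}$.

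First, I would choose a projective compactification $\tilde X \to T$ of $X$ with an ample $\bQ$-Cartier $\bQ$-divisor $\tilde A$ on $\tilde X$ satisfying $\tilde A|_X = A$. This is standard: for $m$ divisible enough $mA$ is very ample over $T$, and embedding $X$ via $|mA|$ into a relative projective space, taking the Zariski closure, and adjusting the representative of $\tfrac1m\mathcal{O}_{\tilde X}(1)$ by a principal divisor lifted from $X$ yields such $\tilde X$ and $\tilde A$. Let $\tilde\Delta_0$ be the closure of $\Delta$ in $\tilde X$; then $\tilde\Delta_0 \ge \tilde A$ on every component of $\tilde X$ meeting $X$, since $\Delta \ge A$ on $X$.

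Second, I would take a log resolution $\bar\pi\colon \bar X \to \tilde X$ of $(\tilde X, \tilde\Delta_0 + \tilde A + (\tilde X \setminus X)_{\mathrm{red}})$, set $X' := \bar\pi^{-1}(X)$, and note that $\pi := \bar\pi|_{X'}$ is then a log resolution of $(X, \Delta)$ with $\bar X \setminus X'$ SNC. Defining
\[
\Delta' := \pi^{-1}_*\Delta + \sum_i \max\{0, -a(E_i, X, \Delta)\}\,E_i, \qquad E := \sum_i \max\{0, a(E_i, X, \Delta)\}\,E_i,
\]
summed over the $\pi$-exceptional prime divisors $E_i$, yields $K_{X'} + \Delta' \sim_\bQ \pi^*(K_X+\Delta) + E$ with $\Delta', E \ge 0$, $E$ being $\pi$-exceptional, and $(X', \Delta')$ klt log smooth since the coefficients of $\Delta'$ all lie in $[0,1)$ by the klt hypothesis.

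Third, I would assemble $\bar{\Delta} := \Delta'_{\bar X} + \sum_j c_j \bar B_j$, where $\Delta'_{\bar X}$ is the closure of $\Delta'$ in $\bar X$, the $\bar B_j$ are the prime components of $\bar X \setminus X'$, and $c_j \in (0, 1)$ are chosen close to $1$. For the ample subdivisor I would take $\bar A := \bar\pi^*\tilde A - \varepsilon F$, where $F \ge 0$ is a $\bar\pi$-exceptional $\bQ$-divisor such that $-F$ is $\bar\pi$-ample (available by a standard construction on projective birational morphisms of smooth varieties), and $\varepsilon > 0$ is sufficiently small so that $\bar A$ is ample on $\bar X$ (its pushforward being the ample $\tilde A$). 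The inequality $\bar{\Delta} \ge \bar A$ would then be verified coefficient-by-coefficient on each prime divisor of $\bar X$: on strict transforms of components meeting $X$ it reduces to $\tilde\Delta_0 \ge \tilde A$; on $\bar\pi$-exceptional primes in $X'$ it follows from the largeness of the coefficients of $F$ (together with the positivity of $\max\{0,-a(E_i,X,\Delta)\}$); and on components of $\bar X \setminus X'$ it follows by taking $c_j$ sufficiently close to $1$. The condition $\bar X \setminus X' \subseteq \Supp(\bar{\Delta})$ then holds by construction, and $(\bar X, \bar{\Delta})$ is klt since $\bar X$ is smooth and all coefficients of $\bar{\Delta}$ lie in $[0, 1)$.

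The hard part will be the balancing act in the final stage: $\varepsilon$ must be small so that $\bar A$ remains ample, yet $\varepsilon F$ must carry enough multiplicity on every $\bar\pi$-exceptional prime to dominate the contribution of $\bar\pi^*\tilde A$ and to keep $\bar A \ge 0$. This is resolved by first rescaling $F$ so that the (then fixed) product $\varepsilon F$ has the required large coefficients, after which $\bar\pi^*\tilde A - \varepsilon F$ is still ample; if necessary, one also modifies $\tilde A$ within its ample $\bQ$-linear equivalence class to ensure both $\bar A \ge 0$ and the coefficient inequality along each $\bar\pi$-exceptional prime.
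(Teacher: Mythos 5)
Your stages 1 and 2 are essentially fine and parallel the paper's reduction, but the step you yourself flag as ``the hard part'' in stage 3 is a genuine gap, and the balancing act you propose does not close it. The trouble is that you fix $\overline{\Delta}=\Delta'_{\bar X}+\sum_j c_j\bar B_j$ first and then try to squeeze the \emph{specific} divisor $\bar A=\bar\pi^*\tilde A-\varepsilon F$ between $0$ and $\overline{\Delta}$. Take a $\pi$-exceptional prime $E_i\subseteq X'$ with $a(E_i,X,\Delta)>0$ whose center lies in $\Supp(A)$ (e.g.\ the blow-up of a point of $\Supp(A)$ where the coefficient of $\Delta$ is small). Then $\mult_{E_i}\overline{\Delta}=0$ while $\mult_{E_i}\bar\pi^*\tilde A>0$, so the two inequalities $\overline{\Delta}\ge\bar A$ and $\bar A\ge 0$ force the exact identity $\varepsilon\,\mult_{E_i}F=\mult_{E_i}\bar\pi^*\tilde A$ for \emph{every} such $E_i$ simultaneously, which a single $\varepsilon$ and a single $F$ (constrained by $-F$ being $\bar\pi$-ample) cannot achieve in general. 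Rescaling $F$ only renames the product $\varepsilon F$ and collides both with ampleness of $\bar\pi^*\tilde A-\varepsilon F$ (which needs that product small) and with its effectivity along exceptional primes whose centers avoid $\Supp(\tilde A)$ (where $\mult \bar\pi^*\tilde A=0$ but $\mult F>0$). Moving $\tilde A$ inside its $\bQ$-linear equivalence class, as you suggest as a last resort, destroys the containment $\Supp(\bar A)\subseteq\Supp(\overline{\Delta})$, so $\overline{\Delta}\ge\bar A$ fails for a different reason. There is also a problem on boundary strict transforms $\bar B_j$, where $\mult\bar\pi^*\tilde A$ can be $\ge 1$ and no $c_j<1$ can dominate it.

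The missing idea, and the one the paper's proof turns on, is Bertini: replace the ample class $\pi^*A-\varepsilon F$ (and likewise on the compactification) by a \emph{general} effective member $A'$ of its $\bQ$-linear system, which has tiny coefficients and SNC support, and then build $\overline{\Delta}$ so that it \emph{contains $A'$ as a summand} (the paper takes $\overline{\Delta}=\overline{A}+\overline{B}+\tfrac12 D$). This changes $\Delta'=\overline{\Delta}|_{X'}$ only up to $\bQ$-linear equivalence, which is all that the conclusion $K_{X'}+\Delta'\sim_\bQ\pi^*(K_X+\Delta)+E$ demands. Your construction instead insists that $\Delta'$ contain the honest strict transform of $A$ and that $\bar A$ live inside it as an actual subdivisor; that rigidity is precisely what makes the coefficient inequalities unattainable.
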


\begin{proof}
By assumption, we may write $\Delta=A+B$ where $B\ge 0$. First consider the case when $(X,\Delta)$ is log smooth. Then there exists a projective compactification $X\subseteq X_1$ over $T$ such that $A$ extends to an ample divisor $A_1$ on $X_1$. Choose some log resolution $\varphi\colon \oX\to X_1$ that is an isomorphism over $X$ such that the exceptional locus supports a $\varphi$-ample divisor $-F$ and $D=\oX\setminus X$ is simple normal crossing. Then $\varphi^*A_1-\varepsilon F$ is ample when $0<\varepsilon\ll 1$ ($\varepsilon\in \bQ$). Let $\overline{B}$ be the closure of $B$ in $\oX$. Passing to a higher model we may assume that $\overline{B}$ has SNC support. By Bertini's theorem, we may choose some $\bQ$-divisor $0\leq \overline{A}\sim_\bQ \varphi^*A_1-\varepsilon F$ such that $(\oX,\overline{\Delta}:=\overline{A}+\overline{B}+\frac{1}{2} D)$ is log smooth and klt. By construction, $\overline{\Delta}$ supports the ample divisor $\overline{A}$ and $\overline{\Delta}|_X \sim_\bQ A+B = \Delta$. Hence the lemma holds in this case (with $X'=X$).

In general, write $\Delta=A+B$ as before and take a log resolution $\pi\colon X'\to X$ of $(X,\Delta)$ whose exceptional locus supports a $\pi$-ample divisor $-F$. By the negativity lemma we have $F\ge 0$. We may write
\[
K_{X'}+B' = \pi^*(K_X+B)+E
\]
for some $\bQ$-divisors $B',E\ge 0$ such that $B'$ and $E$ have no common components, and $E$ is $\varphi$-exceptional. Since $(X,B)$ is klt, the coefficients of $B'$ are less than $1$, thus the log smooth pair $(X',B')$ is also klt. Choose some $0<\varepsilon\ll 1$ such that $\varphi^*A-\varepsilon F$ is ample and $(X',B'+\varepsilon F)$ remains klt. By Bertini's theorem, we may also choose some $0\le A'\sim_\bQ \pi^*A-\varepsilon F$ such that $(X',\Delta':=A'+B'+\varepsilon F)$ is still log smooth and klt. By construction, we see that $K_{X'}+\Delta'\sim_\bQ \pi^*(K_X+\Delta)+E$ and $\Delta'$ supports the ample $\bQ$-divisor $A'$. Thus we may replace $(X,\Delta)$ by $(X',\Delta')$ and reduce to the log smooth case treated before.
\end{proof}

The following gives the second main ingredient in the proof of Theorem \ref{thm:bir model bdd}.

\begin{lem} \label{lem:pseff cone constant}
Let $\cZ\to \cB$ be a dominant morphism of varieties and let $\cX\to \cZ$ be a Fano type fibration. Then there exists a dense open subset $U\subseteq \cB$ such that for any line bundle $\cL$ on $\cX$ and any closed point $b\in U$, we have $\cL_b$ is pseudo-effective (resp. big) over $\cZ_b$ if and only if $\cL_\eta$ is pseudo-effective (resp. big) over $\cZ_\eta$, where $\eta$ is the generic point of $\cB$.
\end{lem}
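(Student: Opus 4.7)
The plan is to exploit the rational polyhedrality of the pseudo-effective cone for a Fano type fibration, combined with the constancy of the relative Néron--Severi group from Corollary~\ref{cor:rel Pic Fano fibration}.

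By Corollary~\ref{cor:rel Pic Fano fibration}, after passing to a generically finite dominant cover $\cB'\to \cB$ and shrinking, we may assume that the restriction maps $N^1(\cX/\cZ)\to N^1(\cX_b/\cZ_b)$ are all isomorphisms, for every closed $b\in\cB$ and for the generic point $\eta$. This base change is harmless: for a general closed point $b\in \cB$ and any $b'\in \cB'$ above it, the residue fields all agree with $\bC$, so $\cX'_{b'}\cong \cX_b$, and pseudo-effectivity (resp.\ bigness) of a line bundle is preserved by generically finite dominant pullbacks between normal varieties. Fix an $\bR$-divisor $\Delta\ge 0$ on $\cX$ with $(\cX,\Delta)$ klt and $-(K_\cX+\Delta)$ ample over $\cZ$, which exists by the Fano type assumption. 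After further shrinking $\cB$, we may assume that $(\cX_b,\Delta_b)$ is klt with $-(K_{\cX_b}+\Delta_b)$ ample over $\cZ_b$ for every $b\in \cB$. Under the identification of the $N^1$-groups, it then suffices to show that the rational polyhedral cone $\overline{\mathrm{Eff}}(\cX_b/\cZ_b)\subseteq N^1(\cX/\cZ)_\bR$ is independent of $b$ on a dense open, since the big cone is then its interior.

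Since $(\cX,\Delta)$ is of Fano type over $\cZ$, the cone $\overline{\mathrm{Eff}}(\cX/\cZ)$ is rational polyhedral by \cite{BCHM}, generated by finitely many effective $\bQ$-divisor classes $[\tilde E_1],\dots,[\tilde E_k]$. Shrinking $\cB$ so no $\tilde E_i$ contains a fiber of $\cX\to\cB$, each restriction $\tilde E_i|_{\cX_b}$ is effective, yielding the inclusion $\overline{\mathrm{Eff}}(\cX/\cZ)\subseteq \overline{\mathrm{Eff}}(\cX_b/\cZ_b)$ for every $b$. For the reverse inclusion, choose $\bR$-Cartier divisors $H_1,\dots,H_m$ on $\cX$ whose classes, together with the $[\tilde E_j]$, span $N^1(\cX/\cZ)_\bR$, and run the finitely many relative $(K_\cX+\Delta+tH_i)$-MMPs over $\cZ$ for $t\ge 0$ using \cite{BCHM}. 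These MMPs terminate and, together, exhibit every extremal ray of $\overline{\mathrm{Eff}}(\cX/\cZ)$ along with the associated prime exceptional divisors and nef end-models. After shrinking $\cB$ once more, each such MMP restricts to a bona fide $(K_{\cX_b}+\Delta_b+tH_{i,b})$-MMP on $\cX_b/\cZ_b$, so the extremal rays of $\overline{\mathrm{Eff}}(\cX_b/\cZ_b)$ are exhausted by the same finite set of classes, giving the desired equality of pseudo-effective cones.

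The ``big'' case follows immediately, since $\cL_b$ is big over $\cZ_b$ if and only if $[\cL_b]$ lies in the interior of $\overline{\mathrm{Eff}}(\cX_b/\cZ_b)$, which we have just shown is a constant open subset of $N^1(\cX/\cZ)_\bR$ on a dense open of $\cB$. The main obstacle is verifying that the relative MMPs on $\cX/\cZ/\cB$ specialize fiberwise on a dense open: this requires arranging, via generic flatness and Kawamata--Viehweg vanishing (in the spirit of Corollary~\ref{cor:rel Pic Fano fibration} and Lemma~\ref{lem:Q-factorial in family}), that each flip or divisorial contraction on $\cX$ is flat over $\cB$ and commutes with restriction to general fibers.
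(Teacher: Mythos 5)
Your ``easy'' direction (generic pseudo-effective implies special pseudo-effective) is correct and coincides with the paper's argument: restrict the finitely many effective generators of $\overline{\mathrm{Eff}}(\cX/\cZ)$ provided by \cite{BCHM}*{Corollary 1.3.2} after shrinking $\cB$. The problem is the reverse inclusion, which is where the real content of the lemma lies, and there your argument has a genuine gap. You want to show that if $\cL_b$ is pseudo-effective over $\cZ_b$ for a general closed point $b$, then $\cL_\eta$ is pseudo-effective over $\cZ_\eta$; equivalently, that $\overline{\mathrm{Eff}}(\cX_b/\cZ_b)\subseteq \overline{\mathrm{Eff}}(\cX/\cZ)$ under the identification of $N^1$. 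Running the finitely many $(K_\cX+\Delta+tH_i)$-MMPs over $\cZ$ and specializing them to the fibers only probes classes that are already pseudo-effective on the \emph{total space}: the Mori chamber structure of $\overline{\mathrm{Eff}}(\cX/\cZ)$, however completely you describe it and however well it restricts to fibers, says nothing about whether $\overline{\mathrm{Eff}}(\cX_b/\cZ_b)$ is strictly larger. A class that is effective only on the special fiber is invisible to every MMP run on $\cX$, so the sentence ``the extremal rays of $\overline{\mathrm{Eff}}(\cX_b/\cZ_b)$ are exhausted by the same finite set of classes'' is asserted, not proved; the argument is circular at exactly the point where semicontinuity works against you.

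The underlying difficulty is that $h^0$ is upper semicontinuous in families, so sections (hence effectivity, hence pseudo-effectivity on a Fano type variety over an affine base, where the two notions agree) can jump up on special fibers. No cone or MMP formalism removes this; one needs an extension theorem. The paper's proof handles it as follows: since $\cX_b$ is of Fano type over the affine $\cZ_b$, pseudo-effectivity of $\cL_b$ gives $H^0(\cX_b,\ell\cL_b)\neq 0$ for some $\ell$; one then compactifies $(\cX,\cD)$ carefully over $\ocZ\to\cB$ via Lemma \ref{lem:compactify pair}, rewrites $m(K_{\ocX}+\ocD)+\ell\ocL+r\cM$ as an adjoint divisor $m(K_{\ocX}+\overline{B})$ plus an ample divisor for a pair log smooth over $\cB$, and invokes the deformation invariance of log plurigenera \cite{HMX-BirAut}*{Theorem 1.8} to lift the nonvanishing from the fiber to the total space. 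Some input of this strength is unavoidable, and your proposal does not supply it; the ``main obstacle'' you flag at the end (flatness of the MMP steps over $\cB$) is not the obstacle that actually blocks the proof.
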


\begin{proof}
The statement is local on $\cZ$, thus we may assume that $\cZ$ is affine, in which case a divisor on $\cX$ is big (resp. ample) over $\cZ$ if and only if it is big (resp. ample) by itself. Since $\cX\to \cZ$ is a Fano type fibration, by shrinking $\cZ$ again we may assume that there exists some $\bQ$-divisor $\cD\geq 0$ on $\cX$ such that $(\cX,\cD)$ is klt, $K_{\cX}+\cD\sim_\bQ 0$, and $\cD\geq A\geq 0$ for some ample $\mathbb Q$-divisor $A$. Since a divisor $\cL$ on $\cX$ is big if and only if $\cL-\varepsilon A$ is pseudo-effective for some $\varepsilon>0$, the pseudo-effective version of the lemma implies the big version. Thus it suffices to consider the pseudo-effective case of the statement.

Since $\cX\to \cZ$ is a Fano type fibration, by applying \cite{BCHM}*{Corollary 1.3.2} to a small $\mathbb Q$-factorialization of $\mathcal{X}$, we know that there are finitely many divisors $\cD_i\ge 0$ on $\cX$ that generate the (relative) pseudo-effective cone; in other words, for any $\bR$-divisor $\cL$ on $\cX$ that is pseudo-effective over $\cZ$ (equivalently, $\cL_\eta$ is pseudo-effective over $\cZ_\eta$) we have $\cL\sim _{\bR,\cZ} \sum a_i \cD_i$ for some $a_i\ge 0$. By shrinking $\cB$ we may assume that $\cB$ is smooth and affine, $\cX$ is flat over $\cB$, $\cX_b$ is normal, and $\cD_i$ does not contain $\cX_b$ for all $i$ and all $b\in\cB$. In particular, the restriction $\cD_{i,b}$ is well-defined and we have $\cL_b\sim _{\bR,\cZ_b} \sum a_i \cD_{i,b}$, hence $\cL_b$ is pseudo-effective over $\cZ_b$ whenever $\cL_\eta$ is pseudo-effective over $\cZ_\eta$. 

It remains to show that the converse direction holds after possibly further shrinking $\cB$. For this, we first note that since $\cZ$ is affine, we can factorize $\cZ\to \cB$ into an open immersion $\cZ\to \ocZ$ followed by a projective morphism $\ocZ\to \cB$ 
such that there is an ample Cartier divisor $\cM_0$ on $\ocZ$ with $\Supp(\cM_0)=\ocZ\setminus \cZ$. By Lemma \ref{lem:compactify pair}, there exist a log resolution $\pi\colon \cX'\to \cX$, a projective compactification $\cX'\subseteq \ocX$ over $\ocZ$ and an SNC $\bQ$-divisor $\ocD$ on $\ocX$ such that $\ocX\setminus \cX'\subseteq \Supp(\ocD)$, $\ocD=\overline{A}+\overline{B}$ where $\overline{A}\geq 0$ is an ample $\mathbb Q$-divisor and $\overline{B}\geq 0$, $(\ocX,\ocD)$ is klt, and
\[
K_{\cX'}+\cD'\sim_{\bQ} \pi^* (K_{\cX}+\cD)+\cE
\]
for some $\pi$-exceptional $\bQ$-divisor $\cE\ge 0$, where $\cD'=\ocD|_{\cX'}$. Let $U\subseteq \cB$ be an affine open subset such that $(\cX_b,\cD_b)$ is klt (in particular, this implies that $\cX_b\to \cZ_b$ is a Fano type fibration) for all $b\in U$, and $(\ocX,\ocD)$ is log smooth over $U$. Let $\cM$ be the pullback of $\cM_0$ to $\ocX$. By construction, $\ocX$ is projective over $\cB$, the divisor $\cM$ is semiample over $\cB$, and $\Supp(\cM)=\ocX\setminus \cX'$.

Now let $\cL$ be a line bundle on $\cX$ such that $\cL_b$ is pseudo-effective over $\cZ_b$. Then $H^0(\cX_b,\ell\cL_b)\neq 0$ for some positive integer $\ell$ since $\cX_b$ is of Fano type over $\cZ_b$ and $\cZ_b$ is affine. As $\ocX$ is smooth, we may extend $\cL':=\pi^*\cL$ to a line bundle $\ocL$ on $\ocX$. Note that for all sufficiently divisible positive integer $m$ we have
\[
m(K_{\cX'}+\cD')+\ell \cL'\sim m(\pi^* (K_{\cX}+\cD)+\cE)+\ell \pi^*\cL\sim m\cE+\ell\pi^*\cL.
\]
As $H^0(\cX_b,\ell\cL_b)\neq 0$ and $\cE\ge 0$, this implies that 
\[
H^0(\cX'_b,m(K_{\cX'_b}+\cD'_b)+\ell \cL'_b)\neq 0.
\]
Since $\cE$ is $\pi$-exceptional, it also implies that $\cL$ is pseudo-effective as long as $m(K_{\cX'}+\cD')+\ell \cL'$ is pseudo-effective for some positive integers $\ell, m$. Choose some sufficiently large and divisible integer $m$ (which depends on $\ell$) such that $m\overline{A}$ and $m\overline{B}$ are Cartier, and $m\overline{A}+\ell\ocL$ is ample. Since $H^0(\cX'_b,m(K_{\cX'_b}+\cD'_b)+\ell\cL'_b)\neq 0$ and $\Supp(\cM)=\ocX\setminus \cX'$, we may take a sufficiently large integer $r$ (depending on $\ell$ and $m$) such that 
\begin{equation} \label{eq:H^0 not 0 on Xbar}
H^0(\ocX_b,m(K_{\ocX_b}+\ocD_b)+\ell\ocL_b+r\cM_b)\neq 0.
\end{equation}
By construction, we have
\[
m(K_{\ocX}+\ocD)+\ell\ocL+r\cM = m(K_{\ocX}+\overline{B})+(m\overline{A}+\ell\ocL+r\cM).
\]
Note that $(\ocX,\overline{B})$ is log smooth over $\cB$, and $m\overline{A}+\ocL+r\cM$ is ample, thus by \eqref{eq:H^0 not 0 on Xbar} and \cite{HMX-BirAut}*{Theorem 1.8}, we deduce that 
\[
H^0(\ocX,m(K_{\ocX}+\ocD)+\ell \ocL+r\cM)\neq 0.
\]
By restriction to $\cX'$ we also get a section of $m(K_{\cX'}+\cD')+\ell \cL'$. From the previous discussion, we deduce that $\cL$ is pseudo-effective and this concludes the proof.
\end{proof}

We now return to the proof of Theorem \ref{thm:bir model bdd}. We first consider a special case.

\begin{lem} \label{lem:bir contraction bdd}
Let $\cS$ be a bounded set of Fano type fibrations. Then there exists a bounded set $\cT$ of Fano type fibrations such that for any $(X,\Delta)\to Z$ in $\cS$ and any birational contraction $f\colon X\dashrightarrow X'$ over $Z$, we have $(X',\Delta'=f_* \Delta)\to Z$ belongs to $\cT$.
\end{lem}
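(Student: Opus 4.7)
The plan is to combine the finite Mori chamber decomposition of Fano type varieties from \cite{BCHM} with a family version, assembled from the preparatory lemmas already established.

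By Definition \ref{defn:Fano type fibration bdd}, there exist an $\bR$-Gorenstein family $(\cX,\tcD)\to\cZ$ of log Fano fibrations over a finite type base $\cB$ and an $\bR$-divisor $0\le\cD\le\tcD$ on $\cX$ such that every $(X,\Delta)\to Z$ in $\cS$ is isomorphic to $(\cX_b,\cD_b)\to\cZ_b$ for some $b\in\cB$. By Noetherian induction on $\cB$, it suffices to establish the conclusion after replacing $\cB$ by a dense open subset of each of its irreducible components and, after possibly a generically finite dominant base change, the following simplifications: by Corollary \ref{cor:rel Pic Fano fibration}, the restriction $N^1(\cX/\cZ)_\bR\to N^1(\cX_b/\cZ_b)_\bR$ is an isomorphism for every $b\in\cB$; by Lemma \ref{lem:pseff cone constant}, the relative pseudo-effective and big cones are locally constant in the family under this identification; and by Lemma \ref{lem:Q-factorial in family}, every fiber $\cX_b$ is $\bQ$-factorial. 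Since $\cX$ is of Fano type over $\cZ$, a small $\bQ$-factorialization provided by \cite{BCHM}*{Corollary 1.4.3} makes the total space $\cX$ itself $\bQ$-factorial, at the cost of passing through a fiberwise small modification; after further shrinking $\cB$, these assumptions all persist.

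Next, since $\cX$ is $\bQ$-factorial and of Fano type over $\cZ$, by \cite{BCHM} the relative movable cone $\overline{\mathrm{Mov}}(\cX/\cZ)$ admits a finite polyhedral Mori chamber decomposition $\sigma_1,\dots,\sigma_k$, each chamber $\sigma_i$ corresponding to a birational contraction $\varphi_i\colon\cX\dashrightarrow\cY_i$ over $\cZ$ that is the ample model of any class in the interior of $\sigma_i$. For each $i$, set $\tcD_i:=(\varphi_i)_*\tcD$ and $\cD_i:=(\varphi_i)_*\cD$. After further shrinking $\cB$, each $(\cY_i,\tcD_i)\to\cZ$ is an $\bR$-Gorenstein family of Fano type fibrations over $\cB$ with $0\le\cD_i\le\tcD_i$. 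Let $\cT_i$ be the corresponding set of fibers with boundary $\cD_{i,b}$; these are bounded by construction, and I take $\cT$ to be the union of the $\cT_i$ across all strata in the Noetherian induction.

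To verify $\cT$ works, let $f\colon X=\cX_b\dashrightarrow X'$ be an arbitrary birational contraction over $Z=\cZ_b$. By \cite{BCHM}, $X'$ is the ample model of some movable $\bR$-divisor class $\ell_b$ on $X/Z$. By the constancy reductions above, $\ell_b$ lies in the restriction of one of the chambers $\sigma_i$, so the ample model of $\sigma_i$ specializes on the fiber $b$ to the ample model of $\ell_b$, giving $X'\cong\cY_{i,b}$ and $f_*\Delta=\cD_{i,b}$. Hence $(X',f_*\Delta)\to Z$ lies in $\cT_i\subseteq\cT$.

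The main technical obstacle is showing that the family Mori chamber decomposition on $\cX/\cZ$ specializes fiberwise to the Mori chamber decomposition on each $\cX_b/\cZ_b$, so that the family ample models $\cY_i$ restrict correctly to the fiber ample models. This is where the identifications in Corollary \ref{cor:rel Pic Fano fibration} and Lemmas \ref{lem:Q-factorial in family}, \ref{lem:pseff cone constant} are essential: they ensure that pseudo-effectivity and movability are preserved under specialization, so that every fiberwise ample model arises from a family ample model. If more delicate control of the pushforward of $\cD$ is required, one may instead invoke the relative finiteness of marked minimal models from \cite{BCHM}*{Corollary 1.1.5}, applied in family to $(\cX,\cD)$.
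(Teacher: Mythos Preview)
Your overall strategy matches the paper's: reduce by Noetherian induction and generically finite base change, arrange via Corollary~\ref{cor:rel Pic Fano fibration} and Lemma~\ref{lem:Q-factorial in family} that $N^1(\cX/\cZ)\cong N^1(\cX_b/\cZ_b)$ with $\bQ$-factorial fibers, invoke the finiteness of birational contractions $\varphi_i\colon\cX\dashrightarrow\cX_i$ over $\cZ$ from \cite{BCHM}, and argue that every fiberwise birational contraction $\cX_b\dashrightarrow Y$ agrees with some $\cX_{i,b}$.

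However, the step you label ``the main technical obstacle'' is a genuine gap, and your justification does not close it. You write that the preparatory lemmas ``ensure that pseudo-effectivity and movability are preserved under specialization''; but Lemma~\ref{lem:pseff cone constant} only gives constancy of the pseudo-effective and big cones, not of the movable cone or the Mori chamber walls. Even granting that, constancy of cones alone does not show that the family ample model of a chamber restricts to the fiberwise ample model of the corresponding class. Your fallback suggestion to invoke \cite{BCHM}*{Corollary 1.1.5} ``in family'' is not an argument either.

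The paper supplies a direct argument that avoids comparing Mori chambers. Given a birational contraction $\cX_b\dashrightarrow Y$, take $H$ very ample on $Y$ and let $\cL_b$ be its birational pullback, which is big and movable. Use Lemma~\ref{lem:pseff cone constant} to extend $\cL_b$ to a big line bundle $\cL$ on $\cX$; note there is no claim that $\cL$ is movable on the family. Run the $\cL$-MMP over $\cZ$, say $f_j\colon\cX\dashrightarrow\cX_j$ with $\cL_j=f_{j*}\cL$ nef. The key point is that $f_j$ must be small: by the negativity lemma $\cL=f_j^*\cL_j+\cE$ with $\cE\ge 0$ supported on $\Ex(f_j)$, and if $\cE\neq 0$ then (after the base-shrinking step ensuring $\cE_b\neq 0$) the restriction $\cL_b$ would have a divisorial base component, contradicting movability. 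Then $f_{j,b}$ is also small, so $\cL_{j,b}$ is the strict transform of $\cL_b$ and is nef; since the family ample model $\cX_j\to\cX_i$ is a morphism and $\cX_{i,b}$ is normal, $\cX_{i,b}$ is also the ample model of $\cL_{j,b}$, hence $Y\cong\cX_{i,b}$. This negativity-lemma step is precisely what your sketch is missing.
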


\begin{proof}
We may assume that $\cS$ is given by the fibers of a Fano type fibration $(\cX,\cD)\to \cZ$ over a finite type base $\cB$. By induction on $\dim\cB$, it suffices to prove the statement after some generically finite dominant base change $\cB'\to \cB$. Thus we may shrink $\cB$ freely as needed. In particular we assume that $\cB$ is smooth and connected throughout the proof. 

By Lemma \ref{lem:Q-factorial in family}, possibly after a generically finite dominant base change and replacing $\cX$ by a small birational modification, we may assume that $\cX_b$ is $\bQ$-factorial for all $b\in \cB$. Note that a small birational modification does not change the set of birational contractions. By Corollary \ref{cor:rel Pic Fano fibration}, possibly after another generically finite dominant base change, we may also assume that $N^1(\cX_b/\cZ_b)$ form a trivial local system. 
By \cite{BCHM}*{Corollary 1.3.1}, there are only finitely many birational contractions $f_i\colon \cX\dashrightarrow \cX_i$ ($i=1,\dots,m$) over $\cZ$. Let $\cE_i$ be the sum of all the exceptional divisors of $f_i$. By shrinking the base $\cB$ again, we can assume that the following holds simultaneously for all $i$ and all $b\in \cB$:
\begin{itemize}
    \item $\cX_{i,b}$ is normal,
    \item $\cE_{i,b}\neq 0$ whenever $\cE_i\neq 0$,
    \item the map $f_i$ is defined at the generic point of $\cX_b$,
    \item the induced map $f_{i,b}\colon \cX_b\dashrightarrow \cX_{i,b}$ is a birational contraction, 
    \item $\cE_{i,b}$ is contained in the exceptional locus of $f_{i,b}$, and
    \item $f_{i,b}$ is small whenever $f_i$ is small.
\end{itemize} 

Let $\cX_b\dashrightarrow Y$ be a birational contraction over $\cZ_b$. Let $H$ be a very ample Cartier divisor on $Y$ and let $\cL_b$ be its birational pullback to $\cX_b$. Then $\cL_b$ is big and movable (i.e. the base locus of $|\cL_b|$ has codimension at least $2$ in $\cX_b$). It is also $\bQ$-Cartier since $\cX_b$ is $\bQ$-factorial. By Lemma \ref{lem:pseff cone constant}, possibly after replacing $\cL_b$ by a multiple we may extend it to a big line bundle $\cL$ on $\cX$. The ample model of $\cL$ over $\cZ$ necessarily coincides with one of the $f_i\colon \cX\dashrightarrow \cX_i$. We claim that $Y\cong \cX_{i,b}$. 

To see this, first note that the $\cL$-MMP over $\cZ$ coincides with some $f_j\colon \cX\dashrightarrow \cX_j$. We first show that $f_j$ is small. Let $\cL_j=f_{j*}\cL$ which is nef over $\cZ$. By the negativity lemma (see e.g. \cite{KM98}*{Lemma 3.38}), we have $\cL=f_j^*\cL_j + \cE$ for some divisor $\cE\geq 0$ whose support contains every exceptional divisor of $f_j$. If $f_j$ is not small, then $\cE\neq 0$. Restricting to the fiber we see that $\cL_b=f_{j,b}^*\cL_{j,b} + \cE_b$ and hence the base locus of $\cL_b$ contains the divisor $\cE_b\neq 0$, a contraction as $\cL_b$ is movable. Thus $f_j$ is small and so is $f_{j,b}$ (by the last bullet point above). The ample model $\cX_i$ (resp. $Y$) of $\cL$ (resp. $\cL_b$) is therefore the same as the ample model of $\cL_j$ (resp. $\cL_{j,b}$) over $\cZ$ (resp. $\cZ_b$). Since $\cL_j$ is nef over $\cZ$, its ample model is a morphism $\cX_j\to \cX_i$; combined with the fact that $\cX_{i,b}$ is normal, we see that $\cX_{i,b}$ is also the ample model of $\cL_{j,b}$. This proves that $Y\cong \cX_{i,b}$. It follows that every birational contraction of $(\cX_b,\cD_b)$ over $\cZ_b$ is a fiber of the finitely many families $(\cX_i,\cD_i=f_{i*}\cD)\to \cZ$ over $\cB$. 
\end{proof}

\begin{proof}[Proof of Theorem \ref{thm:bir model bdd}]
By Lemma \ref{lem:bdd defn equiv}, we may assume that $\cS$ is given by the fibers of an $\bR$-Gorenstein family $(\cX,\cD)\to \cZ$ of log Fano fibrations over a finite type base $\cB$. By the proof of \cite{HX-CY-bdd}*{Proposition 2.5} (or by Lemma \ref{lem:Q-factorial in family} and {\it loc. cit.}), possibly after a base change via some surjective morphism $\cB'\to \cB$, there exists a crepant birational morphism $(\cY,\cE)\to (\cX,\cD)$ whose restriction to the fiber over $b\in \cB$ extracts every divisor of non-positive discrepancy over $(\cX_b,\cD_b)$. Note that $(\cY,\cE)$ is also of Fano type over $\cZ$ by Lemma \ref{lem:Fano type}, and every birational model $f\colon X'\dashrightarrow \cX_b$ such that $f^*(K_{\cX_b}+\cD_b)=K_{X'}+\Delta'$ for some $\Delta'\ge 0$ can be obtained by a birational contraction $\cY_b\dashrightarrow X'$. The theorem thus follows from Lemma \ref{lem:bir contraction bdd}.
\end{proof}

\subsection{Special boundedness} \label{ss:special bdd}

To prove Theorems \ref{thm:fibration special bdd, no boundary} and \ref{thm:fibration special bdd}, we still need to upgrade Theorem \ref{thm:bir model bdd} to allow specially bounded input. The precise statement is the following.

\begin{thm} \label{thm:bir model log special bdd}
Let $\cS$ be a specially bounded set of log Fano fibration germs. Let $\cT$ be the set of Fano type fibration germs $(X',\Delta')\to Z\ni z$ for which there exist some $(X,\Delta)\to Z\ni z$ in $\cS$ and a birational map $f\colon X'\dashrightarrow X$ over $Z$ such that $f^*(K_X+\Delta)\ge K_{X'}+\Delta'$. Then $\cT$ is log specially bounded.
\end{thm}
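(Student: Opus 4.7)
Given $(X',\Delta')\to Z\ni z$ in $\cT$, by definition there exist $(X,\Delta)\to Z\ni z$ in $\cS$ and a birational map $f\colon X'\dashrightarrow X$ with $f^*(K_X+\Delta)\ge K_{X'}+\Delta'$. Since $\cS$ is specially bounded, there is a special test configuration $(\cX,\cD)\to\cZ$ of $(X,\Delta)\to Z\ni z$ whose central fiber $(\cX_0,\cD_0)\to\cZ_0\ni z_0$ lies in a bounded set $\cS_0$ of log Fano fibrations. Applying Theorem \ref{thm:bir model bdd} to $\cS_0$ yields a bounded set $\cT_0$ of Fano type fibrations containing all crepant birational models of elements of $\cS_0$. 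The plan is to construct a $\bG_m$-equivariant Fano type test configuration $(\cX',\cD')\to\cZ$ of $(X',D')\to Z\ni z$ for a suitable $D'\ge\Delta'\ge 0$, whose central fiber corresponds (up to boundary support) to an element of $\cT_0$.

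For the common model, work $\bG_m$-equivariantly to construct a crepant birational morphism $(\cY,\cE)\to(\cX,\cD)$ over $\cZ$ that extracts, on each fiber $(\cX_t,\cD_t)$, all divisors of non-positive log discrepancy; concretely, take a $\bG_m$-equivariant $\bQ$-factorial dlt modification of the plt pair $(\cX,\cX_0+\cD)$ and discard the strict transform of $\cX_0$. By Lemma \ref{lem:Fano type}, $(\cY,\cE)\to\cZ$ is fiberwise of Fano type. The hypothesis $f^*(K_X+\Delta)\ge K_{X'}+\Delta'$ forces every exceptional divisor of $f^{-1}\colon X\dashrightarrow X'$ to have non-positive log discrepancy with respect to $(X,\Delta)$, so $X'$ is a birational contraction of the general fiber $\cY_t$ for $t\neq 0$.

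Next, fix an ample $\bQ$-Cartier divisor $H'$ on $X'$ over $Z$ and let $L$ be its birational pullback to $\cY_t$, a big and movable $\bQ$-divisor. By Lemma \ref{lem:rel Pic local system}, after a finite base change of $\cZ$ (harmless, as test configuration data is preserved under pullback), the divisor $L$ extends to a $\bQ$-Cartier $\bQ$-divisor $\cL$ on $\cY$; since $\bG_m$ is connected and $\Pic(\bG_m)=0$, we may take $\cL$ to be $\bG_m$-linearized. Running a $\bG_m$-equivariant $(K_{\cY}+\cE+\varepsilon\cL)$-MMP over $\cZ$ for $0<\varepsilon\ll 1$ (terminating by \cite{BCHM} as $(\cY,\cE)$ is fiberwise of Fano type over $\cZ$) and taking the relative ample model of $\cL$ produces a $\bG_m$-equivariant birational contraction $\cY\dashrightarrow\cX'\to\cZ$ whose restriction to $\cY_t$ is the ample model of $L$ over $\cZ_t$, namely $X'$. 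Let $\cD'$ be the pushforward of $\cE+\varepsilon\cL$ to $\cX'$, augmented if needed by a small $\bG_m$-equivariant ample component to make $(\cX',\tcD')\to\cZ$ an $\bR$-Gorenstein family of log Fano fibrations for some $\tcD'\ge\cD'$ (existence of such $\tcD'$ follows from the fiberwise Fano type property together with $\bG_m$-averaging). Then $(\cX',\cD')\to\cZ$ is a Fano type test configuration of $(X',D':=\cD'|_{\cX'_t})\to Z\ni z$ with $\Supp(\Delta')\subseteq\Supp(D')$, and the central fiber $(\cX'_0,\cD'_0)\to\cZ_0\ni z_0$ is a $\bG_m$-equivariant birational contraction of $(\cY_0,\cE_0)$, hence a crepant birational model of $(\cX_0,\cD_0)\in\cS_0$ (up to the $\varepsilon\cL$-perturbation), whose underlying Fano type fibration lies in $\cT_0$.

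The principal obstacle is carrying out the family MMP and ample model construction genuinely $\bG_m$-equivariantly while ensuring the restriction to each fiber produces the expected birational model: this requires the local constancy of the relative Picard group (to extend $L$ to a $\bG_m$-linearized $\cL$), careful handling of the equivariant MMP to preserve fiberwise plt-ness and reproduce $X'$ on the general fiber, and a Kodaira-type decomposition (plus $\bG_m$-averaging) supplying the small equivariant ample component needed to realize the log Fano condition on the total space. The support condition $\Supp(D'_0)\subseteq\Supp(\Delta_0)$ for an element $(\cX'_0,\Delta_0)\in\cT_0$ is achieved by allowing a slightly enlarged boundary in $\cT_0$ that absorbs the image of the $\cL$-component, which is compatible with Theorem \ref{thm:bir model bdd} via an ample perturbation.
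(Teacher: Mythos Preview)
Your overall strategy matches the paper's: degenerate the ambient log Fano fibration, carry the birational model $X'$ along to a test configuration, and then invoke Theorem~\ref{thm:bir model bdd} on the central fiber. However, the construction you propose for the intermediate model $\cY$ does not work as written. A dlt modification of the plt pair $(\cX,\cX_0+\cD)$ is just a small $\bQ$-factorialization (a plt pair is already dlt, so nothing is extracted), hence it does not produce a model on which $X'$ appears as a birational contraction of the general fiber. More seriously, even if you replace this with the extraction of all divisors of non-positive discrepancy, doing so \emph{fiberwise} over $\bA^1$ requires stratification or base change (this is exactly what the proof of Theorem~\ref{thm:bir model bdd} does over a general base $\cB$), and that destroys the test configuration structure: you cannot stratify $\bA^1$ and keep the origin, and a generically finite cover of $\bA^1$ need not be $\bA^1$. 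Your appeal to Lemma~\ref{lem:rel Pic local system} to extend $L$ suffers from the same problem.

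The paper sidesteps all of this via Lemma~\ref{lem:extend bir map to tc}: instead of extracting \emph{all} non-positive discrepancy divisors, one extracts only the finitely many $f$-exceptional divisors $E_1,\dots,E_r$, viewed as the horizontal divisors $\cE_i=E_i\times(\bA^1\setminus\{0\})$ over $\cX$. These have $a(\cE_i,\cX,\cD)\le 0$, so \cite{BCHM}*{Corollary~1.4.3} extracts exactly them on the total space, producing a $\bQ$-factorial $\cY$ with $\cY_0$ irreducible. Because $\cY$ is $\bQ$-factorial, any Weil divisor extension of the pullback of $H'$ is automatically $\bQ$-Cartier --- no relative Picard argument or base change is needed. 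The ample model then gives $\cX'$ with irreducible central fiber. Finally, you are missing the mechanism that transfers the inequality $f^*(K_X+\Delta)\ge K_{X'}+\Delta'$ to the central fiber: this is an adjunction argument (Lemma~\ref{lem:adjunction ineq}), comparing $(\cX,\cX_0+\cD)$ and $(\cX',\cX'_0+\cD')$ and restricting to $\cX_0$, without which you cannot conclude that $(\cX'_0,\cD'_0)$ is a crepant birational model of $(\cX_0,\cD_0)$ in the sense required by Theorem~\ref{thm:bir model bdd}.
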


To prove this, we need a few auxiliary lemmas.

\begin{lem} \label{lem:adjunction ineq}
Let $(X,G+D)$ be a log canonical pair where $G$ is a normal Weil divisor and $D\ge 0$, and let $X\to Z$ a projective morphism such that $-(K_X+G+D)$ is semiample over $Z$. Let $f\colon X'\dashrightarrow X$ be a birational map over $Z$ where $X'$ is projective over $Z$. Let $G'$ be the strict transform of $G$ on $X'$ and let $D'$ be an $\bR$-divisor on $X'$ such that
\[
f^*(K_X+G+D)\ge K_{X'}+G'+D'.
\]
Assume that $G'$ is normal and Cartier on $X'$. Then for the induced birational map $f_G\colon G'\dashrightarrow G$, we have
\[
f_G^*(K_G+\Diff_G(D))\ge K_{G'}+D'|_{G'}.
\]
\end{lem}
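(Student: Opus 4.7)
The plan is to pass to a common resolution of $f$ and compare both sides divisorially on the strict transform of $G$. First, let $p\colon Y\to X'$ be a log resolution resolving the indeterminacy of $f$, so that $q:=f\circ p\colon Y\to X$ is a morphism; by passing to a higher model if necessary, we may assume $Y$ is smooth and the strict transform $G_Y$ of $G$ (equivalently, of $G'$) is smooth. Set $q_G:=q|_{G_Y}\colon G_Y\to G$ and $p_G:=p|_{G_Y}\colon G_Y\to G'$, both of which are birational.

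Next, write the crepant decomposition $q^*(K_X+G+D)=K_Y+G_Y+B_Y$. Since $G$ appears with coefficient one in $K_X+G+D$ and $D$ does not contain $G$, the divisor $B_Y$ does not contain $G_Y$ in its support. Pushing forward to $X'$ we obtain
\[
f^*(K_X+G+D)=p_*(K_Y+G_Y+B_Y)=K_{X'}+G'+p_*B_Y,
\]
so the hypothesis $f^*(K_X+G+D)\ge K_{X'}+G'+D'$ yields $p_*B_Y\ge D'$ as Weil divisors on $X'$. On the other hand, divisorial adjunction on $X$ gives $(K_X+G+D)|_G=K_G+\Diff_G(D)$, and standard adjunction on the smooth pair $(Y,G_Y)$ gives $(K_Y+G_Y+B_Y)|_{G_Y}=K_{G_Y}+B_Y|_{G_Y}$. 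Restricting the equation $q^*(K_X+G+D)=K_Y+G_Y+B_Y$ to $G_Y$ and then pushing forward via $p_G$, one obtains
\[
f_G^*(K_G+\Diff_G(D))=(p_G)_*(K_{G_Y}+B_Y|_{G_Y})=K_{G'}+(p_G)_*\bigl(B_Y|_{G_Y}\bigr).
\]

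It then remains to show $(p_G)_*(B_Y|_{G_Y})\ge D'|_{G'}$. Since $G'$ is Cartier and $p_*B_Y$ has no $G'$ component (because $B_Y$ has no $G_Y$ component), the restriction of $p_*B_Y\ge D'$ to $G'$ is well-defined and yields $(p_*B_Y)|_{G'}\ge D'|_{G'}$. A projection-formula computation using $p^*G'=G_Y+E$ for some $p$-exceptional divisor $E$ shows that $(p_*B_Y)|_{G'}=(p_G)_*(B_Y|_{G_Y})$, since the contribution of $E$ maps to codimension $\ge 2$ in $X'$ and therefore vanishes upon pushforward. Combining these facts gives the desired inequality. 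The hard part will be verifying this last identification cleanly: it requires tracking exceptional and non-exceptional components of $B_Y$ and applying the projection formula, and the Cartier hypothesis on $G'$ is essential precisely in order to make restriction and pushforward commute in the required way.
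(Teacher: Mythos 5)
Your reduction to a common resolution is fine up to the last step, but the final identification is where the argument breaks, and it breaks for exactly the reason you flag as "the hard part." The claimed identity $(p_*B_Y)|_{G'}=(p_G)_*\bigl(B_Y|_{G_Y}\bigr)$ is false in general. Writing $p^*G'=G_Y+E$ with $E\ge 0$ exceptional, the projection formula gives $(p_G)_*\bigl(B_Y|_{G_Y}\bigr)=(p_*B_Y)|_{G'}-p_*(B_Y\cdot E)-(\text{contributions of the $p$-exceptional part of }B_Y)$, and these correction terms are supported on $p(\Ex(p))$. That set has codimension $\ge 2$ in $X'$, but it can perfectly well contain prime \emph{divisors of $G'$}; since we are comparing divisors on $G'$, not on $X'$, the corrections do not vanish. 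Worse, $B_Y$ is only a sub-boundary: $p$-exceptional components with large log discrepancy over $(X,G+D)$ appear in $B_Y$ with negative coefficients, so the error terms have no favorable sign and can make $(p_G)_*(B_Y|_{G_Y})$ strictly smaller than $(p_*B_Y)|_{G'}$ along such divisors --- which is precisely the direction that kills the desired inequality. A telltale sign of the gap is that your argument never uses the hypothesis that $-(K_X+G+D)$ is semiample over $Z$.

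The paper's proof uses that hypothesis essentially: working locally over $Z$, it chooses via Bertini an $\bR$-divisor $0\le H\sim_\bR -(K_X+G+D)$ whose support (and whose restriction to $G$) avoids all exceptional divisors of $f$ (and of $f_G$). Then $K_X+G+D+H\sim_\bR 0$, so after enlarging $D'$ to force equality, the negativity lemma upgrades the birational pullback to a genuinely \emph{crepant} pullback $f^*(K_X+G+D+H)=K_{X'}+G'+D'+H'$; adjunction commutes with crepant pullbacks on the nose, giving $f_G^*(K_G+\Diff_G(D+H))=K_{G'}+\Diff_{G'}(D'+H')$, and one then subtracts $f_G^*(H|_G)\le H'|_{G'}$. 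In other words, the equality of restricted divisors that you are trying to prove by a cycle-theoretic computation is instead obtained by first making the log divisor numerically trivial, where the crepant structure controls all exceptional contributions at once. To repair your approach you would need to reintroduce exactly this input; as written, the final step is a genuine gap.
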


\begin{proof}
By enlarging $D'$ we may assume that $f^*(K_X+G+D)= K_{X'}+G'+D'$. The statement is local on $Z$, hence we may also assume that $-(K_X+G+D)$ is semiample. By Bertini's theorem, we can choose some $\bR$-divisor $0\leq H\sim_\bR -(K_X+G+D)$ such that $H$ (resp. $H|_G$) does not contain any exceptional divisor of $f$ (resp. $f_G$) in its support. In particular, we may assume that $f$ is an isomorphism at every generic point of $\Supp(H)\cap G$, hence $f^*H=H'$ and $f_G^*(H|_G)\le H'|_{G'}$, where $H'$ is the strict transform of $H$ on $X'$. As $K_X+G+D+H\sim_\bR 0$, we also have $f^*(K_X+G+D+H)=K_{X'}+G'+D'+H'\sim_\bR 0$ and this is a crepant pullback. By adjunction, we deduce that 
\[
f_G^*(K_G+\Diff_G(D+H))=K_{G'}+\Diff_{G'}(D'+H').
\]
Note that $\Diff_G(D+H)=\Diff_G(D)+H|_G$ since $H$ is $\bR$-Cartier and $\Diff_{G'}(D'+H')=(D'+H')|_{G'}$ since $G'$ is Cartier. Thus
\[
f_G^*(K_G+\Diff_G(D)) = K_{G'}+(D'+H')|_{G'} - f_G^*(H|_G) \ge K_{G'}+D'|_{S'}
\]
as desired.
\end{proof}

\begin{lem} \label{lem:extend bir map to tc}
Let $(\cX,\cD)\to \cZ$ be a special test configuration of a log Fano fibration $(X,\Delta)\to Z$. Let $(X',\Delta')$ be a projective pair over $Z$ and $f\colon X'\dashrightarrow X$ a birational map over $Z$ such that 
\begin{equation} \label{eq:crepant ineq generic fiber}
    f^*(K_X+\Delta) \ge K_{X'}+\Delta'.
\end{equation}
Then there exist a Fano type test configuration $(\cX',\cD')\to \cZ$ of $(X',\Delta')\to Z$ and a birational map $\cX'\dashrightarrow \cX$ over $\cZ$ extending $f$, such that $\cX'_0$ is the strict transform of $\cX_0$ and the induced birational map $f_0\colon \cX'_0\dashrightarrow \cX_0$ over $\cZ_0$ satisfies
\begin{equation} \label{eq:crepant ineq special fiber}
    f_0^*(K_{\cX_0}+\cD_0) \ge K_{\cX'_0}+\cD'_0.
\end{equation}
\end{lem}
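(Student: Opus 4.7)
The approach is to construct $(\cX', \cD')$ as the output of a $\bG_m$-equivariant relative MMP from a common log resolution of the given birational map, and then deduce the Fano type property via Lemma \ref{lem:Fano type tc}.

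Via the canonical $\bG_m$-equivariant identification $\cX \setminus \cX_0 \cong X \times \bG_m$ furnished by the test configuration, the birational map $f$ extends to a $\bG_m$-equivariant birational map $\Phi\colon X' \times \bA^1 \dashrightarrow \cX$ over $\cZ$. I would take a $\bG_m$-equivariant log resolution of indeterminacy $\cY$, with $\bG_m$-equivariant birational morphisms $p\colon \cY \to X' \times \bA^1$ and $q\colon \cY \to \cX$ over $\cZ$. Since $\cX_0$ is irreducible (by the special assumption, $\cX_0$ is normal), there is a unique irreducible component $E \subset \cY_0$ that dominates $\cX_0$ via $q$; the other components of $\cY_0$ form a divisor $F\geq 0$ which is $q$-exceptional. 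Next I would choose an $\bR$-divisor $\cD_\cY \geq 0$ on $\cY$ with no component along $\cY_0$ as follows: since $(\cX, \cX_0 + \cD)$ is plt, the coefficients of the $q$-exceptional $\bR$-divisor $\tilde G$ defined by
\[
K_\cY + E + q^{-1}_*\cD = q^*(K_\cX + \cX_0 + \cD) + \tilde G
\]
are strictly greater than $-1$; split $\tilde G = G - G_-$ into its non-negative and negative parts (so $G_-$ has coefficients in $(0,1)$), and set $\cD_\cY := q^{-1}_*\cD + G_-$. Then $\cD_\cY \geq 0$ and $K_\cY + E + \cD_\cY = q^*(K_\cX + \cX_0 + \cD) + G$ with $G \geq 0$, $q$-exceptional, and $F \subseteq \Supp(G)$.

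Then I would run a $\bG_m$-equivariant $(K_\cY + E + \cD_\cY)$-MMP with scaling of an ample divisor, tailored so that it contracts all $p$-exceptional divisors (collapsing the generic fiber of $\cY\to \bA^1$ down to $X'$) as well as the $q$-exceptional part of $G$ (collapsing $\cY_0$ to the image of $E$). Termination follows from \cite{BCHM} together with the Fano-type nature of $\cY$ over $\cZ$ (inherited from $(\cX,\cD)\to \cZ$ via Lemma \ref{lem:Fano type}). Let $\cX'$ be the output, with $\cD'$ the pushforward of $\cD_\cY$ and $\cX'_0$ the pushforward of $E$. By construction $(\cX',\cD')$ restricts to $(X',\Delta')\times \bG_m$ away from $\cX'_0$, so $(\cX',\cD') \to \cZ$ is a $\bG_m$-equivariant test configuration of $(X',\Delta')\to Z$, the induced birational map $\cX' \dashrightarrow \cX$ over $\cZ$ extends $f$, and $\cX'_0$ is the irreducible strict transform of $\cX_0$.

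Finally, since $(\cX,\cD)\to \cZ$ is itself a Fano type test configuration (being a special test configuration of a log Fano fibration), the MMP yields $f^*(K_\cX + \cD) \geq K_{\cX'} + \cD'$, and $\cX'_0$ is irreducible, Lemma \ref{lem:Fano type tc} shows that $(\cX',\cD') \to \cZ$ is a Fano type test configuration of $(X',\Delta')\to Z$. Combining this with the stronger inequality $f^*(K_\cX + \cX_0 + \cD) \geq K_{\cX'} + \cX'_0 + \cD'$ (obtained from the MMP analysis together with the full contraction of $G$), and restricting to the Cartier divisor $\cX'_0$ via Lemma \ref{lem:adjunction ineq} applied to the plt pair $(\cX,\cX_0+\cD)$, gives the desired $f_0^*(K_{\cX_0} + \cD_0) \geq K_{\cX'_0} + \cD'_0$. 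The principal obstacle is arranging the MMP to simultaneously (i) contract exactly $\Supp(G)$, so that $\cX'_0$ is the strict transform of $\cX_0$; (ii) collapse the generic fiber precisely to $X'$, so that $(\cX',\cD')$ is genuinely a test configuration of $(X',\Delta')$ rather than of some other birational model; and (iii) preserve the full inequality $f^*(K_\cX + \cX_0 + \cD) \geq K_{\cX'} + \cX'_0 + \cD'$ at the end. It is likely necessary to perform the MMP in stages (first $\bQ$-factorializing $\cY$, then contracting the $p$-exceptional divisors over $X'\times \bA^1$, and finally contracting the $q$-exceptional divisors over $\cX$), rather than running a single monolithic MMP.
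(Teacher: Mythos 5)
Your endgame (deducing the Fano type property from Lemma \ref{lem:Fano type tc} and the central-fiber inequality \eqref{eq:crepant ineq special fiber} from $\widetilde{f}^*(K_{\cX}+\cX_0+\cD)\ge K_{\cX'}+\cX'_0+\cD'$ via Lemma \ref{lem:adjunction ineq}) is exactly what the paper does. But the construction of $\cX'$ itself — the heart of the lemma — is not carried out. You resolve everything and then propose to run a $(K_\cY+E+\cD_\cY)$-MMP ``tailored so that'' it contracts precisely the divisors you want; an MMP does not accept such instructions, and you yourself flag this as the ``principal obstacle'' without resolving it. There is also an internal inconsistency: the component $E$ of $\cY_0$ dominating $\cX_0$ is in general $p$-exceptional (since $\cX_0$ is typically not birational to $X'$), so ``contract all $p$-exceptional divisors'' would destroy the very divisor whose image is supposed to be $\cX'_0$. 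Finally, your $\cD'$ (pushforward of $q^{-1}_*\cD+G_-$) restricts on $\cX'_t$, $t\neq 0$, to $f^{-1}_*\Delta+\sum_i(-a(E_i,X,\Delta))E_i$, which equals $\Delta'$ only when \eqref{eq:crepant ineq generic fiber} is an equality; to get a test configuration of $(X',\Delta')$ one must instead take $\cD'$ to be the closure of $\Delta'\times(\bA^1\setminus\{0\})$.

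The paper avoids the MMP-steering problem with a two-step construction you should compare against. First, it does \emph{not} take a full log resolution: letting $E_1,\dots,E_r$ be the $f$-exceptional divisors and $\cE_i$ the corresponding divisors over $\cX$, inequality \eqref{eq:crepant ineq generic fiber} gives $a(\cE_i,\cX,\cD)\le 0$, so \cite{BCHM}*{Corollary 1.4.3} produces a $\bQ$-factorial $\bG_m$-equivariant $h\colon\cY\to\cX$ extracting \emph{exactly} $\cE_1,\dots,\cE_r$; then $\cY_0$ is automatically irreducible (all $h$-exceptional divisors are horizontal) and $\cY\setminus\cY_0\dashrightarrow X'\times(\bA^1\setminus\{0\})$ is a birational contraction. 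Second, instead of an MMP it takes the \emph{ample model} over $\cZ$ of a $\bQ$-Cartier Weil divisor $\cL$ on $\cY$ extending the birational pullback of an ample $L$ on $X'$: the ample model canonically has generic fiber $X'$ (solving your issue (ii)), and since $\cY_0\sim 0$ is semiample it is not in the base locus of $\cL$, so the map is an isomorphism at the generic point of $\cY_0$ and $\cX'_0$ is the strict transform of $\cX_0$ (solving your issue (i)). The inequality $\widetilde{f}^*(K_\cX+\cD)\ge K_{\cX'}+\cD'$ then follows from \eqref{eq:crepant ineq generic fiber} and irreducibility of the central fibers, with no need to track discrepancies through an MMP. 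To repair your proof you would need to replace the ``tailored MMP'' by this (or an equivalent canonical) contraction procedure.
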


\begin{proof}
Let $E_1,\dots,E_r$ be the prime divisors on $X'$ that are contracted by $f$. Through the isomorphism $\cX\setminus \cX_0\cong X\times (\bA^1\setminus \{0\})$, each $E_i\times (\bA^1\setminus \{0\})$ is identified with a divisor $\cE_i$ over $\cX$. By assumption, $a(E_i,X,\Delta)\le 0$, thus $a(\cE_i,\cX,\cD)\le 0$. By \cite{BCHM}*{Corollary 1.4.3}, we know that there exists a projective birational morphism $h\colon \cY\to \cX$ such that $\cY$ is $\bQ$-factorial and the exceptional divisors are exactly given by $\cE_1,\dots,\cE_r$. Since $\cE_i$ is invariant under the $\bG_m$-action, we also know that $h$ is $\bG_m$-equivariant. By construction, $\cY_0$ is irreducible (as all the $h$-exceptional divisors dominate $\bA^1$), and the induced birational map $\cY\setminus \cY_0 \dashrightarrow X'\times (\bA^1\setminus \{0\})$ is a birational contraction.

Let $L$ be an ample line bundle on $X'$ and choose some Weil divisor $\cL$ on $\cY$ whose restriction to $\cY\setminus \cY_0$ is the birational pullback of $L\times (\bA^1\setminus \{0\})$. Since $\cY$ is $\bQ$-factorial, we know that $\cL$ is $\bQ$-Cartier. Note that $\cY$ is of Fano type over $\cZ$ by Lemma \ref{lem:Fano type}, hence the ample model $g\colon \cY\dashrightarrow \cX'$ of $\cL$ over $\cZ$ exists and is automatically $\bG_m$-equivariant. By construction we have $\cX'\setminus \cX'_0\cong X'\times (\bA^1\setminus \{0\})$ and the induced map $\widetilde{f}\colon \cX'\dashrightarrow \cX$ extends $f$. Note that as $\cY_0$ is irreducible and semiample, it is not in the base locus of $\cL$, thus $g$ is an isomorphism at the generic point of $\cY_0$. It follows that $\cX'_0$ is the strict transform of $\cY_0$ (in particular, it is irreducible), and we have an induced birational map $f_0\colon \cX'_0\dashrightarrow \cX_0$.


Let $\cD'$ be the closure of $\Delta'\times (\bA^1\setminus \{0\})$ in $\cX'$. By \eqref{eq:crepant ineq generic fiber}, we have
\[
\widetilde{f}^*(K_{\cX}+\cD) \ge K_{\cX'}+\cD'.
\]
As $(\cX,\cD)\to \cZ$ is a special test configuration, we see that $(\cX',\cD')\to \cZ$ is a Fano type test configuration of $(X',\Delta')\to Z$ by Lemma \ref{lem:Fano type tc}. Note that we also have $\widetilde{f}^* \cX_0 = \cX'_0$, hence $\widetilde{f}^*(K_{\cX}+\cX_0+\cD) \ge K_{\cX'}+\cX'_0+\cD'$. Thus we obtain \eqref{eq:crepant ineq special fiber} by Lemma \ref{lem:adjunction ineq}.
\end{proof}

We are ready to prove Theorem \ref{thm:bir model log special bdd}.

\begin{proof}[Proof of Theorem \ref{thm:bir model log special bdd}]
By assumption, there exists a bounded set $\cS_0$ of log Fano fibrations such that any fibration germ $(X,\Delta)\to Z\ni z$ in $\cS$ specially degenerates to some $(X_0,\Delta_0)\to Z_0\ni z_0$ in $\cS_0$.  If $f\colon X'\dashrightarrow X$ is a birational map over $Z$ such that $f^*(K_X+\Delta)\ge K_{X'}+\Delta'$ for some $\Delta'\ge 0$, then by Lemma \ref{lem:extend bir map to tc} we get an induced Fano type test configuration of $(X',\Delta')\to Z\ni z$ whose central fiber is of the form $(X'_0,\Delta'_0)\to Z_0\ni z_0$ for some birational model $f_0\colon X'_0\dashrightarrow X_0$ such that $f_0^*(K_{X_0}+\Delta_0)\ge K_{X'_0}+\Delta'_0$. Write $f_0^*(K_{X_0}+\Delta_0)=K_{X'_0}+D'_0$. Then $D'_0\ge \Delta'_0\ge 0$ and $\Supp(\Delta'_0)\subseteq \Supp(D'_0)$. By Theorem \ref{thm:bir model bdd}, the set of such fibrations $(X'_0,D'_0)\to Z_0$ is bounded, hence the set of $(X',\Delta')\to Z\ni z$ is log specially bounded.
\end{proof}

We now have all the ingredients for the proof of Theorems \ref{thm:fibration special bdd, no boundary} and \ref{thm:fibration special bdd}.

\begin{proof}[Proof of Theorem \ref{thm:fibration special bdd, no boundary}]
Let $X\to Z\ni z$ be a Fano type fibration germ such that $\hVol_X(\Delta)\ge \varepsilon$ for some $\bR$-divisor $0\le \Delta\sim_{\bR,Z} -K_X$. 
By \cite{BCHM}*{Corollary 1.4.3}, a small $\bQ$-factorial modification $X''\to X$ exists. By Lemma \ref{lem:Fano type} and \cite{BCHM}*{Corollary 1.3.2}, the ample model $X''\dashrightarrow X'$ of $-K_{X''}$ over $Z$ also exists. Note that $X'\to Z$ is a log Fano fibration by construction. Let $h\colon X\dashrightarrow X'$ be the induced birational contraction and let $\Delta'$ be the strict transform of $\Delta$ on $X'$. Since $K_X+\Delta\sim_{\bR,Z} 0$, we have $K_{X'}+\Delta'\sim_{\bR,Z} 0$ as well. In particular, the birational map $(X,\Delta)\dashrightarrow (X',\Delta')$ is crepant. By Lemma \ref{lem:Vol MMP}, we have $\hVol_{X'}(\Delta')\ge \hVol_X(\Delta)\ge \varepsilon$, hence by Theorem \ref{thm:fibration special bdd, finite rational coef}, we know that $X'\to Z\ni z$ belongs to a specially bounded set of log Fano fibration germs. 

By the negativity lemma, we have $-K_X\ge h^*(-K_{X'})$, or $h^*K_{X'}\ge K_X$. By Theorem \ref{thm:bir model log special bdd}, we see that the Fano type fibration germ $X\to Z\ni z$ belongs to a log specially bounded set. By definition, this implies that the set $\cS$ of such fibrations is bounded up to Fano type degenerations.
\end{proof}

As explained in Section \ref{ss:straregy for fiber bdd}, this also completes the proof of Theorem \ref{thm:fiber bdd}.


\begin{proof}[Proof of Theorem \ref{thm:fibration special bdd}]
By perturbing $\delta$, we may assume that $\delta\in \bQ$. Let $(X,\Delta)\to Z\ni z$ be a log Calabi-Yau fibration germ in $\cS$. Let $\Delta_0 = \frac{\delta}{2}\Supp(\Delta)$. Since $X$ is of Fano type and $(X,\Delta)$ is log Calabi-Yau over $Z$, it is not hard to see that their convex combination $(X,\frac{1}{2}\Delta)$ is of Fano type over $Z$. Therefore, since $\Delta_0\le \frac{1}{2}\Delta$, we see that $(X,\Delta_0)$ is of Fano type over $Z$ as well. Similar to the above proof of Theorem \ref{thm:fibration special bdd, no boundary}, let $h\colon X\dashrightarrow X'$ be the ample model of $-(K_{X}+\Delta_0)$ which exists by \cite{BCHM}*{Corollary 1.3.2}. Let $\Delta'=h_*\Delta$ and $\Delta'_0=h_* \Delta_0$. We have $\hVol_{X'}(\Delta')\ge \varepsilon$ by Lemma \ref{lem:Vol MMP}, and $h^*(K_{X'}+\Delta_0')\ge K_X+\Delta_0$ by the negativity lemma. By Lemma \ref{lem:Vol change boundary}, we also have $\hVol_{X',\Delta'_0}(\Delta'-\Delta'_0)\ge \frac{\varepsilon}{2^n}$ since $\Delta'_0\le \frac{1}{2}\Delta'$ by construction. Note that $K_{X'}+\Delta'\sim_{\bR,Z}0$ and thus $\Delta'-\Delta'_0\sim_{\bR,Z} -(K_{X'}+\Delta'_0)$. By Theorem \ref{thm:fibration special bdd, finite rational coef}, it follows that the log Fano fibration germ $(X',\Delta'_0)\to Z\ni z$ belongs to a specially bounded set. By Theorem \ref{thm:bir model log special bdd}, the Fano type fibrations $(X,\Delta_0)\to Z\ni z$ belong to a log specially bounded set. As $\Supp(\Delta_0)=\Supp(\Delta)$, the set $\cS$ is also log specially bounded.
\end{proof}

\appendix

\section{Boundedness of models implies termination}

In this appendix, we prove that if the pairs that appear in a log canonical MMP belong to a bounded family, then the MMP must terminate after finitely many steps. As a corollary, we show that in any sequence of a log canonical MMP the pairs are mutually non-isomorphic, answering \cite{Kol-KM98exercise}*{Problem 82}.

\begin{thm} \label{thm:bdd imply termination}
Let $(X,\Delta)$ be a log canonical pair and let 
\[
\begin{tikzcd}
(X,\Delta)=:(X_0,\Delta_0) \arrow[r,dashed] & (X_1,\Delta_1) \arrow[r,dashed] & \cdots \arrow[r,dashed] & (X_i,\Delta_i) \arrow[r,dashed] & \cdots,
\end{tikzcd}
\]
be a sequence of steps of a $(K_X+\Delta)$-MMP. Assume that the set of pairs $\{(X_i,\Delta_i)\,|\,i\in\bN\}$ is bounded. Then this MMP terminates after finitely many steps. 
\end{thm}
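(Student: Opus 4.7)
The plan is to proceed by contradiction: assume the MMP admits infinitely many steps $\varphi_i : (X_i,\Delta_i) \dashrightarrow (X_{i+1},\Delta_{i+1})$. First, since the family $\{(X_i,\Delta_i)\}$ is bounded, the Picard numbers $\rho(X_i)$ are uniformly bounded (being the values of a constructible function on the parametrizing base), and because every divisorial contraction strictly decreases $\rho$ while each flip preserves it, only finitely many divisorial contractions can occur. After truncating the MMP, I may therefore assume every $\varphi_i$ is a flip.

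Boundedness also yields a uniform upper bound $r$ on the Cartier index of $\mathbb Q$-Cartier Weil divisors on the $X_i$. By Lemma \ref{lem:index bdd imply mld discrete}, applied to a common $\mathbb Q$-factorial dlt modification produced by Lemma \ref{lem:lift MMP dlt}, the log discrepancies $A_{X_i,\Delta_i}(E)$ of any prime divisor $E$ over $X$ lie in a fixed discrete set $S \subseteq \mathbb R_{\geq 0}$ independent of $i$. By the negativity lemma, the sequence $i \mapsto A_{X_i,\Delta_i}(E)$ is non-decreasing in $i$, and at each flip it strictly increases for at least one prime divisor $E$ centered in the flipping locus.

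The final step is a Shokurov-type difficulty argument analogous to the proof of Lemma \ref{lem:termination dim 4 mld discrete}: lift the MMP to a common $\mathbb Q$-factorial dlt modification and consider the non-negative integer
\[
d_i := \#\{E \text{ prime divisor over } X : A_{X_i,\Delta_i}(E) < 1\},
\]
together with a weighted refinement that records the precise values of $A_{X_i,\Delta_i}(E) \in S \cap [0,1)$ for each such divisor. Boundedness of $\{(X_i,\Delta_i)\}$ combined with the uniform Cartier-index bound forces $d_i$ to be uniformly bounded, while monotonicity makes $d_i$ non-increasing and the discreteness of $S$ combined with strict monotonicity of at least one $A_{X_i,\Delta_i}(E)$ per flip makes the weighted refinement strictly decrease at each step. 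This $\mathbb N$-valued strict decrease contradicts the infinitude of the MMP.

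The main obstacle is the uniform upper bound on $d_i$: one must argue that in a bounded family of log canonical pairs with bounded Cartier index, the number of divisorial valuations with log discrepancy below a fixed threshold is uniformly bounded along the family. I expect this to follow from a Noetherian stratification of the parametrizing base $\cB$ together with a uniform version of \cite[Proposition 2.36]{KM98} applied fiberwise; checking that this uniform bound survives the passage from the $(X_i,\Delta_i)$ to their dlt modifications is the most delicate point, as one needs that the dlt modifications themselves (or at least the counts of relevant exceptional divisors) remain bounded.
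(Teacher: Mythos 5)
Your proposal takes a genuinely different route from the paper's, but as written it has a decisive gap: the claimed strict decrease of the weighted difficulty at each flip. The negativity lemma gives $A_{X_i,\Delta_i}(E)\le A_{X_{i+1},\Delta_{i+1}}(E)$ for all $E$, with strict inequality only for divisors $E$ whose center lies in the flipping/flipped locus, and there is no reason any such $E$ should belong to the set $\{E:\,A_{X_i,\Delta_i}(E)<1\}$ that your invariant records: a flip can be an isomorphism near the centers of all divisors of log discrepancy $<1$, in which case $d_i$ and its weighted refinement do not change at all. This is exactly where the classical difficulty argument breaks down in dimension $\ge 4$, and boundedness of the family does not by itself repair it. Your appeal to Lemma \ref{lem:termination dim 4 mld discrete} is misleading on this point: that lemma is not proved by a difficulty count, but by using discreteness to stabilize the finitely many non-positive discrepancies, lifting to a $\bQ$-factorial terminalization via Lemma \ref{lem:lift MMP terminal}, and invoking termination of terminal fourfold MMP (Lemma \ref{lem: 4fold terminal mmp terminates}) --- an input with no analogue in arbitrary dimension. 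Moreover, you have also misidentified the ``main obstacle'': the uniform bound on $d_i$ is not the delicate point, the strict decrease is. Separately, the log canonical case breaks your setup at an earlier stage. For a strictly lc pair the set $\{E:\,A_{X_i,\Delta_i}(E)<1\}$ is typically infinite (every toroidal valuation over a zero-dimensional lc center of a log smooth lc pair has log discrepancy $0$), so $d_i$ is not finite and \cite[Proposition 2.36]{KM98} does not apply; and the uniform Cartier-index bound you extract from boundedness fails for lc singularities (a single cone over an elliptic curve already carries $\bQ$-Cartier Weil divisors, coming from torsion line bundles of arbitrarily large order, with unbounded Cartier index), so the discreteness of log discrepancies is not available either.

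For comparison, the paper circumvents both issues by replacing the difficulty with Batyrev's stringy $E$-function: Lemma \ref{lem:string Hodge decrease} shows it strictly decreases for $u,v\gg 0$ at \emph{every} MMP step, precisely because all log discrepancies are non-decreasing and at least one strictly increases, while Lemma \ref{lem:stringy constant} shows it takes only finitely many values on a bounded family after stratifying the base and resolving in family; the strictly lc case is reduced to a klt computation by perturbing $\Delta_i$ to $\Delta_i-\cI_{W_i}^{\varepsilon}$, where $W_i$ is the non-klt locus. Any repair of your argument would need an invariant that registers every flip, which is exactly what the stringy $E$-function supplies and the weighted difficulty does not.
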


Here the boundedness assumption means that there exists an $\bR$-Gorenstein family of pairs (see Definition \ref{defn:R-Gor family}) $(\cX,\cD)\to \cB$ that contains every $(X_i,\Delta_i)$ as a fiber.

\begin{cor} \label{cor:no loop in MMP}
Let $(X,\Delta)$ be a log canonical pair and let $(X,\Delta)\dashrightarrow (X',\Delta')$ be a sequence of steps of some $(K_X+\Delta)$-MMP. Then $(X',\Delta')$ is not isomorphic to $(X,\Delta)$.
\end{cor}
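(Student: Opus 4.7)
The plan is to argue by contradiction using Theorem \ref{thm:bdd imply termination} in an essentially formal way. Suppose there exists a sequence of MMP steps
\[
(X,\Delta) =: (X_0,\Delta_0) \dashrightarrow (X_1,\Delta_1) \dashrightarrow \cdots \dashrightarrow (X_n,\Delta_n) =: (X',\Delta')
\]
with $n \ge 1$ together with an isomorphism $\sigma\colon (X',\Delta') \xrightarrow{\sim} (X,\Delta)$. The idea is to produce from this data an infinite sequence of MMP steps whose set of pairs is finite (hence bounded), contradicting Theorem \ref{thm:bdd imply termination}.

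Concretely, I would glue the given sequence to itself via $\sigma$. Using $\sigma$ to identify $(X_n,\Delta_n)$ with $(X_0,\Delta_0)$, the birational transforms of the MMP steps $X_i \dashrightarrow X_{i+1}$ via $\sigma$ produce a new sequence $(X_n,\Delta_n) \dashrightarrow (X_{n+1},\Delta_{n+1}) \dashrightarrow \cdots \dashrightarrow (X_{2n},\Delta_{2n})$ with $(X_{n+i},\Delta_{n+i}) \cong (X_i,\Delta_i)$. Iterating this construction infinitely many times yields an infinite sequence of birational maps
\[
(X_0,\Delta_0) \dashrightarrow (X_1,\Delta_1) \dashrightarrow \cdots \dashrightarrow (X_i,\Delta_i) \dashrightarrow \cdots
\]
in which $(X_{kn+i},\Delta_{kn+i}) \cong (X_i,\Delta_i)$ for all $k \in \bN$ and $0 \le i < n$. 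The key point to verify is that this infinite sequence really is a sequence of MMP steps, i.e.\ each individual map is the divisorial contraction or flip of a $(K_{X_i}+\Delta_i)$-negative extremal ray. This is automatic since being such a contraction or flip is preserved under the isomorphism $\sigma$ and its iterates, so each map is obtained by transporting an MMP step via an isomorphism of log canonical pairs.

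By construction the set of pairs $\{(X_i,\Delta_i) \mid i \in \bN\}$ consists of only finitely many isomorphism classes, namely those of $(X_0,\Delta_0),\ldots,(X_{n-1},\Delta_{n-1})$; in particular, they form a bounded family (one can take $\cB$ to be a disjoint union of $n$ points and $(\cX,\cD) \to \cB$ the trivial family with fibers $(X_i,\Delta_i)$, which is clearly $\bR$-Gorenstein). Therefore Theorem \ref{thm:bdd imply termination} applies and forces the infinite sequence to terminate after finitely many steps, which is absurd. Hence $(X',\Delta')$ cannot be isomorphic to $(X,\Delta)$. I do not anticipate any real obstacle here; the only thing requiring a small check is that transporting MMP steps via an isomorphism preserves the property of being an MMP step, which is immediate from the definitions.
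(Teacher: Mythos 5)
Your proposal is correct and coincides with the paper's own argument: the paper likewise repeats the given sequence of MMP steps via the isomorphism to obtain an infinite MMP involving only finitely many isomorphism classes of pairs, which is bounded and therefore contradicts Theorem \ref{thm:bdd imply termination}. Your write-up just makes the gluing and the trivial bounded family explicit.
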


Before we give the proof, let us recall Batyrev's stringy $E$-function (see e.g. \cite{Batyrev-stringy-E-function}*{Definition 3.7} or \cite{Veys-stringy-survey}*{Definition 7.7}). First let $(X,\Delta)$ be a klt log smooth sub-pair. Let $D=\cup_{i\in I} D_i$ be a simple normal crossing divisor on $X$ that contains every irreducible component of $\Supp(\Delta)$, and write $\Delta=\sum_{i\in I} a_i D_i$. Note that $a_i>-1$ for all $i$ by the klt assumption. Then we define the stringy $E$-function of the sub-pair $(X,\Delta)$ as
\[
E_{\mathrm{st}}(X,\Delta;u,v):=\sum_{J\subseteq I} H(D_J^\circ;u,v) \prod_{j\in J} \frac{uv-1}{(uv)^{1+a_j}-1},
\]
where $D_J^\circ:=\cap_{j\in J} D_j \setminus (\cup_{j\not \in J} D_j)$ (with the understanding that $D_\emptyset^\circ=X\setminus \Supp(D)$ and $\prod_{j\in J}=1$ if $J=\emptyset$), and  $H(D_J^\circ;u,v)$ is the Hodge-Deligne polynomial of $D_J^\circ$, which is defined by
\[
H(U;u,v) := \sum_{i,p,q} (-1)^i h^{p,q}(H^i_c (U,\bC)) u^p v^q
\]
for any algebraic variety $U$. It is not hard to see that the definition of the stringy $E$-function is independent of the choice of $D$.

In general, let $(X,\Delta)$ be a log sub-pair, let $\fa\subseteq \cO_X$ be an ideal sheaf, and let $c\in \bR$ be such that $(X,\Delta+\fa^c)$ is klt, in the sense that
\[
A_{X,\Delta+\fa^c}(F):= A_{X,\Delta}(F) - c\cdot \ord_E(\fa)>0
\]
for all prime divisor $F$ over $X$. Note that we allow $c$ and the coefficient of $\Delta$ to be negative. Let $\pi\colon (Y,D)\to (X,\Delta+\fa)$ be a crepant log resolution. In other words,
\[
\Ex(\pi)+\Supp(\pi^{-1}_*\Delta)+\pi^{-1}\Supp(\cO_X/\fa)
\]
is a simple normal crossing divisor on $Y$, and
\[
-\mult_{F} D = a(F,X,\Delta+\fa^c):=a(F,X,\Delta) - c\cdot \ord_{F}(\fa)
\]
for every prime divisor $F$ on $Y$. Then we define the stringy $E$-function of the klt sub-pair $(X,\Delta+\fa^c)$ as
\[
E_{\mathrm{st}}(X,\Delta+\fa^c;u,v):=E_{\mathrm{st}}(Y,D;u,v).
\]
By \cite{Batyrev-stringy-E-function}*{Theorems 3.6 and 3.8}, the definition is independent of the crepant log resolution $(Y,D)$.

\begin{lem} \label{lem:string Hodge decrease}
Let $(X,\Delta+\fa^c)$ and $(X',\Delta'+\fb^{d})$ be klt sub-pairs and let $\varphi\colon X\dashrightarrow X'$ be a birational map. Assume that: 
\begin{enumerate}
    \item There exist proper birational morphisms $f\colon Y\to X$ and $f'\colon Y\to X'$ such that $f' = \varphi\circ f$, and
    \item 
    $A_{X,\Delta+\fa^c}(F)\le A_{X',\Delta'+\fb^d}(F)$ for all divisors $F$ over $X$ and the inequality is strict for some $F$. 
\end{enumerate}
Then for $u,v\gg 0$ we have
\[
E_{\mathrm{st}}(X,\Delta+\fa^c;u,v) > E_{\mathrm{st}}(X',\Delta'+\fb^d;u,v).
\] 
\end{lem}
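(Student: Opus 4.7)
The plan is to compute both stringy $E$-functions on a common crepant log resolution and compare them termwise. By hypothesis~(2), pick a prime divisor $F_0$ over $X$ with $A_{X,\Delta+\fa^c}(F_0)<A_{X',\Delta'+\fb^d}(F_0)$. Using hypothesis~(1) together with standard resolution of singularities applied to the ideal sheaves $\fa\cO_Y$, $\fb\cO_Y$ and to the pullbacks of $\Supp(\Delta)$ and $\Supp(\Delta')$ on $Y$, and combining this with a further blowup extracting $F_0$, I would construct a proper birational morphism $\rho\colon Z\to Y$ such that both $\pi:=f\circ\rho\colon Z\to X$ and $\pi':=f'\circ\rho\colon Z\to X'$ are crepant log resolutions of the respective pairs, with a common simple normal crossing divisor $\Sigma=\bigcup_{j\in I}\Sigma_j$ on $Z$ supporting all relevant exceptional and pullback divisors; in particular $F_0$ realizes as $\Sigma_{j_0}$ for some $j_0\in I$. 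Here we use that divisorial valuations over $X$ and $X'$ coincide since $\varphi$ is birational.

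By the independence of the stringy $E$-function from the choice of crepant log resolution, both sides expand as
\[
E_{\mathrm{st}}(X,\Delta+\fa^c;u,v)=\sum_{J\subseteq I}H(\Sigma_J^\circ;u,v)\prod_{j\in J}\frac{uv-1}{(uv)^{A^{(1)}_j}-1},
\]
\[
E_{\mathrm{st}}(X',\Delta'+\fb^d;u,v)=\sum_{J\subseteq I}H(\Sigma_J^\circ;u,v)\prod_{j\in J}\frac{uv-1}{(uv)^{A^{(2)}_j}-1},
\]
where $A^{(1)}_j:=A_{X,\Delta+\fa^c}(\Sigma_j)>0$ and $A^{(2)}_j:=A_{X',\Delta'+\fb^d}(\Sigma_j)>0$. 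Crucially, both expansions share the same index set $I$ and the same Hodge--Deligne polynomials $H(\Sigma_J^\circ;u,v)$.

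The proof then rests on two monotonicity observations. First, for $t=uv>1$ the function $A\mapsto(t-1)/(t^{A}-1)$ is strictly positive and strictly decreasing in $A>0$ (by direct calculus), so hypothesis~(2) gives that each factor in the first sum dominates the corresponding factor in the second, strictly at $j=j_0$; taking products, $\prod_{j\in J}\frac{uv-1}{(uv)^{A^{(1)}_j}-1}\ge\prod_{j\in J}\frac{uv-1}{(uv)^{A^{(2)}_j}-1}$, strictly whenever $j_0\in J$. Second, each nonempty open stratum $\Sigma_J^\circ$ is smooth quasi-projective of dimension $d_J:=\dim\Sigma_J^\circ$, and its Hodge--Deligne polynomial has leading term $c\cdot u^{d_J}v^{d_J}$ with integer $c\ge 1$ (the number of top-dimensional components), so $H(\Sigma_J^\circ;u,v)>0$ for $u,v$ sufficiently large. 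Since $\Sigma_{j_0}$ is an irreducible component of the simple normal crossing divisor $\Sigma$, the stratum $\Sigma_{\{j_0\}}^\circ=\Sigma_{j_0}\setminus\bigcup_{k\ne j_0}\Sigma_k$ is automatically a nonempty dense open subset of $\Sigma_{j_0}$.

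Combining these observations, for $u,v\gg 0$ every summand of the difference $E_{\mathrm{st}}(X,\Delta+\fa^c;u,v)-E_{\mathrm{st}}(X',\Delta'+\fb^d;u,v)$ is nonnegative, while the summand indexed by $J=\{j_0\}$ is strictly positive, which yields the required strict inequality. The main technical obstacle is constructing the common crepant log resolution $Z$ that simultaneously resolves both pairs and extracts the prescribed divisor $F_0$; this is a careful but standard application of Hironaka-type resolution to the collection of ideal sheaves and divisors involved. Once that is done, the termwise comparison and the asymptotic positivity of the Hodge--Deligne polynomials (which reduces to the well-known fact that a smooth variety of dimension $d$ contributes $u^dv^d$ as the leading term via its top compactly supported cohomology) complete the proof.
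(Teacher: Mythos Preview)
Your proposal is correct and takes essentially the same approach as the paper: compute both stringy $E$-functions on a common crepant log resolution, note that the termwise factors $\frac{uv-1}{(uv)^{A}-1}$ decrease in $A$, that the Hodge--Deligne polynomials of the open strata have positive leading term (the paper cites \cite[Theorem 5.39]{PS-mix-Hodge}), and that at least one stratum gives a strict contribution. Your treatment is slightly more explicit about arranging the witness divisor $F_0$ to appear on the resolution and about why $\Sigma_{\{j_0\}}^\circ$ is nonempty, but the argument is the same.
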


\begin{proof}
Let $Y$ be a common log resolution of $(X,\Delta+\fa^c)$ and $(X',\Delta'+\fb^d)$ and let $D=\cup_{i\in I} D_i$ be an SNC divisor on $Y$ that contains the strict transforms of $\Delta$ and $\Delta'$, the preimage of $\Supp(\cO_X/\fa)$ and $\Supp(\cO_{X'}/\fb)$, and all the exceptional divisors over $X$ or $X'$. Let $a_i=A_{X,\Delta+\fa^c}(D_i)$ and $a'_i=A_{X',\Delta'+\fb^d}(D_i)$. 
By assumption we have $a'_i\ge a_i>0$ and the inequality is strict for some $i$. Without loss of generality we may assume that $a'_1>a_1$. Note that the Hodge-Deligne polynomials have positive leading coefficients by \cite[Theorem 5.39]{PS-mix-Hodge}. Then for any $u,v\gg 0$ and any $J\subseteq I$, we have
\[
H(D_J^\circ;u,v) \prod_{j\in J} \frac{uv-1}{(uv)^{a_j}-1} \ge H(D_J^\circ;u,v) \prod_{j\in J} \frac{uv-1}{(uv)^{a'_j}-1}.
\]
Moreover, the inequality is strict when $J=\{1\}$. By the definition of the stringy $E$-function, this implies that
\[
E_{\mathrm{st}}(X,\Delta+\fa^c;u,v) > E_{\mathrm{st}}(X',\Delta'+\fb^d;u,v)
\]
when $u,v\gg 0$.
\end{proof}

\begin{lem} \label{lem:stringy constant}
Let $\ocX\to \cB$ be a projective morphism. Let $\ocD$ be an $\bR$-divisor on $\ocX$ such that $(\ocX,\ocD)$ is log smooth and klt, and every stratum of $(\ocX,\ocD)$ is smooth over $\cB$. Let $\cX\subseteq \ocX$ be an open subset that is a union of some open strata of $(\ocX,\ocD)$, and let $\cD=\ocD|_{\cX}$. Then the stringy $E$-function $E_{\mathrm{st}}(\cX_b,\cD_b;u,v)$ is constant in $b\in \cB$.
\end{lem}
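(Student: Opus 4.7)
The plan is to reduce the assertion to the classical fact that Hodge numbers are locally constant in smooth proper families, by computing $E_{\mathrm{st}}(\cX_b,\cD_b;u,v)$ directly via Batyrev's log smooth formula and checking that every ingredient in that formula depends locally constantly on $b$.

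First, I would observe that since $(\ocX,\ocD)$ is log smooth and klt with all strata smooth over $\cB$, the restriction $(\ocX_b,\ocD_b)$ remains log smooth and klt, and its combinatorial data is independent of $b$. Concretely, writing $\ocD=\sum_{i\in I} a_i \ocD_i$, the index set $I$ and the coefficients $a_i$ of the boundary components do not vary with $b$, and the strata $(D_J^\circ)_b$ of $(\cX_b,\cD_b)$ are exactly the fibers over $b$ of the strata $D_J^\circ$ of $(\cX,\cD)$. Using the identity as a crepant log resolution, Batyrev's formula then gives
\[
E_{\mathrm{st}}(\cX_b,\cD_b;u,v)=\sum_{J\,:\, D_J^\circ\subseteq \cX} H((D_J^\circ)_b;u,v)\prod_{j\in J}\frac{uv-1}{(uv)^{1+a_j}-1},
\]
where the rational factors are manifestly independent of $b$. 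Hence it suffices to show that each $H((D_J^\circ)_b;u,v)$ is locally constant in $b\in\cB$.

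Next I would reduce to the proper case using additivity of Hodge-Deligne polynomials on locally closed decompositions (inclusion-exclusion):
\[
H((D_J^\circ)_b;u,v)=\sum_{K\supseteq J}(-1)^{|K|-|J|}H((D_K)_b;u,v).
\]
The closed strata $D_K=\cap_{k\in K} D_k$ are smooth over $\cB$ by hypothesis, and since $\ocX\to \cB$ is projective, each $D_K\to \cB$ is in fact smooth and proper. The main (classical) ingredient then applies: for a smooth proper morphism, the Hodge numbers of the fibers are locally constant, by the $E_1$-degeneration of the Hodge-to-de Rham spectral sequence together with Ehresmann's fibration theorem. Consequently $H((D_K)_b;u,v)$ is locally constant in $b$, and by the inclusion-exclusion formula above so is $H((D_J^\circ)_b;u,v)$.

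Combining these observations with Batyrev's formula yields that $E_{\mathrm{st}}(\cX_b,\cD_b;u,v)$ is locally constant on $\cB$, hence constant on each connected component. There is essentially no obstacle here beyond verifying that the log smooth structure and its strata pass cleanly to fibers (which is given by the hypothesis that every stratum is smooth over $\cB$) and invoking the smooth proper case of Hodge number invariance; all the rest is bookkeeping.
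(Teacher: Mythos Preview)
Your proposal is correct and follows essentially the same approach as the paper: both reduce to showing that the Hodge--Deligne polynomials of the open strata are locally constant in $b$, and both deduce this from the classical invariance of Hodge numbers in smooth proper families via additivity. The only cosmetic difference is that the paper runs a Noetherian induction on the dimension of the strata (peeling off the boundary one layer at a time using $H(\cW_b)=H(\cW_b^\circ)+H(\cW_b\setminus\cW_b^\circ)$), whereas you apply the full M\"obius inversion $H(D_J^\circ)=\sum_{K\supseteq J}(-1)^{|K|-|J|}H(D_K)$ all at once; these are formally equivalent.

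One small imprecision worth tightening: in the paper's conventions a \emph{stratum} is a \emph{connected component} of $D_J=\cap_{j\in J}D_j$, and $\cX$ is only assumed to be a union of open strata in this sense. So it can happen that $\cX$ contains some but not all connected components of a given $D_J^\circ$, and your displayed sum ``over $J$ with $D_J^\circ\subseteq\cX$'' is then not quite the right indexing. The fix is immediate: run the same inclusion--exclusion at the level of connected components (each closed stratum $W$ is still smooth and proper over $\cB$, and $W\setminus W^\circ$ is a union of lower-dimensional closed strata), which is exactly what the paper's induction does.
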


\begin{proof}
It suffices to show that for any open stratum $\cW^\circ$ of $(\ocX,\ocD)$, the Hodge-Deligne polynomial $H(\cW^\circ_b;u,v)$ of $\cW^\circ_{b} := \cW^\circ \cap \ocX_b$ is independent of $b\in \cB$. We prove this by Noetherian induction. To this end, let $\cW:=\overline{\cW^\circ}$ which is a stratum of $(\ocX,\ocD)$. There is nothing to prove when $\cW = \emptyset$. For the inductive step, note that by Deligne's mixed Hodge theory, we have 
\begin{equation} \label{eq:Hdg poly additive}
    H(X;u,v)=H(X\setminus Z;u,v)+H(Z;u,v)
\end{equation}
whenever $Z$ is a closed subvariety of $X$ (see e.g. \cite[Theorem 1.1]{Sri-Hodge}). Since $\cW\setminus \cW^\circ$ is a union of open strata of smaller dimensions, we deduce from the induction hypothesis and \eqref{eq:Hdg poly additive} that $H(\cW_{b}\setminus \cW^{\circ}_{b};u,v)$ is constant in $b$. By assumption, $\cW$ is smooth over $\cB$, hence $H(\cW_b;u,v)$ is also constant in $b$. Thus another application of \eqref{eq:Hdg poly additive} implies that $H(\cW^\circ_b;u,v)$ is constant in $b$. This completes the proof.
\end{proof}

\begin{proof}[Proof of Theorem \ref{thm:bdd imply termination}]
First assume that $(X,\Delta)$ is klt. By the negativity lemma, we know that for any $i\in\bN$ we have $a(E,X_i,\Delta_i)\le a(E,X_{i+1},\Delta_{i+1})$ for all divisors $E$ over $X$, and the inequality is strict for some $E$. By Lemma \ref{lem:string Hodge decrease}, we see that 
\[
E_{\mathrm{st}}(X_i,\Delta_i;u,v) > E_{\mathrm{st}}(X_{i+1},\Delta_{i+1};u,v)
\]
when $u,v\gg 0$. It follows that 
\begin{equation} \label{eq:E(X_i) diff from E(X_j)}
    E_{\mathrm{st}}(X_i,\Delta_i;u,v) \neq E_{\mathrm{st}}(X_j,\Delta_j;u,v)
\end{equation}
whenever $i\neq j$. If  there is an $\bR$-Gorenstein family of pairs $(\cX,\cD)\to \cB$ that contains every $(X_i,\Delta_i)$ as a fiber, then after possibly stratifying $\cB$, we may assume that there is a crepant log resolution $\pi\colon (\cY,\cG)\to (\cX,\cD)$, a projective morphism $\overline{\cY}\to \cB$ with an open immersion $\cY\subseteq \overline{\cY}$ over $\cB$, and an $\bR$-divisor $\overline{\cG}$ on $\overline{\cY}$ such that   $\cG=\overline{\cG}|_{\cY}$, the sub-pair $(\overline{\cY},\overline{\cG})$ is log smooth, every stratum of $(\overline{\cY},\overline{\cG})$ is smooth over $\cB$, and $\cY$ is a union of some open strata of $(\overline{\cY},\overline{\cG})$. By Lemma \ref{lem:stringy constant}, we see that the stringy $E$-function $E_{\mathrm{st}}(\cY_b,\cG_b;u,v)$ ($b\in \cB$) is constant on each connected component of $\cB$, hence there are only finitely many possible stringy $E$-functions $E_{\mathrm{st}}(X_i,\Delta_i;u,v)$. It follows that the MMP must terminate in finitely many steps, otherwise we get a contradiction to \eqref{eq:E(X_i) diff from E(X_j)}.

In general, let $W_i$ be the reduced non-klt locus of $(X_i,\Delta_i)$. We claim that for any $i\in\bN$ and any sufficiently small $0<\varepsilon\ll 1$ (which may depend on $i$), we have
\begin{equation} \label{eq:perturb log discrep}
a(E,X_i,\Delta_i-\cI_{W_i}^\varepsilon) \le  a(E,X_{i+1},\Delta_{i+1}-\cI_{W_{i+1}}^\varepsilon)
\end{equation}
for all divisors $E$ over $X$ and the inequality is strict for some $E$. It is enough to prove this for prime divisors on some common log resolution $Y$ of $(X_i,\Delta_i+\cI_{W_i})$ and $(X_{i+1},\Delta_{i+1}+\cI_{W_{i+1}})$. Let $E$ be one such divisor. If the center of $E$ on $X_i$ is contained in the exceptional locus of the birational map $X_i\dashrightarrow X_{i+1}$ (there is at least one such divisor), then 
\[
a(E,X_i,\Delta_i)<a(E,X_{i+1},\Delta_{i+1})
\]
by the negativity lemma, hence the strict inequality in \eqref{eq:perturb log discrep} holds for sufficiently small $\varepsilon$. As there are only finitely many such divisors $E$ on $Y$, we can make a uniform choice of $\varepsilon$. If the center $\xi=c_{X_i}(E)$ of $E$ on $X_i$ is not contained in the exceptional locus, then the map $X_i\dashrightarrow X_{i+1}$ is an isomorphism at $\xi$ and identifies the non-klt locus $W_i$ with $W_{i+1}$ around $\xi$. It follows that $a(E,X_i,\Delta_i)=a(E,X_{i+1},\Delta_{i+1})$ and $\ord_E(\cI_{W_i}) = \ord_E(\cI_{W_{i+1}})$, hence \eqref{eq:perturb log discrep} also holds. This proves the claim.

Note that the non-klt locus $W_i$ is determined by the pair $(X_i,\Delta_i)$. If there exists an $\bR$-Gorenstein family $(\cX,\cD)\to \cB$ that contains every pair $(X_i,\Delta_i)$ as a fiber, then possibly after stratifying $\cB$, we may assume that the non-klt locus $\cW$ of $(\cX,\cD)$ restricts to $W_i$ on $X_i$. By passing to log resolutions and compactifications over $\cB$ as in the klt case and using Lemma \ref{lem:stringy constant}, we deduce that there are only finitely many possibilities for the functions $E_{\mathrm{st}}(X_i,\Delta_i-\cI_{W_i}^\varepsilon;u,v)$ (in the variables $\varepsilon,u,v$). On the other hand, by \eqref{eq:perturb log discrep} and Lemma \ref{lem:string Hodge decrease} we also have
\[
E_{\mathrm{st}}(X_i,\Delta_i-\cI_{W_i}^\varepsilon;u,v) > E_{\mathrm{st}}(X_{i+1},\Delta_{i+1}-\cI_{W_{i+1}}^\varepsilon;u,v)
\]
when $0<\varepsilon\ll 1$ and $u,v\gg 0$, hence
\[
E_{\mathrm{st}}(X_i,\Delta_i-\cI_{W_i}^\varepsilon;u,v) \neq E_{\mathrm{st}}(X_j,\Delta_j-\cI_j^\varepsilon;u,v)
\]
whenever $i\neq j$. Thus the MMP terminates in finitely many steps by the same argument as in the klt case.
\end{proof}

\begin{proof}[Proof of Corollary \ref{cor:no loop in MMP}]
If $(X',\Delta')\cong (X,\Delta)$, then by repeating the same sequence of $(K_X+\Delta)$-MMP steps, we get an infinite sequence of $(K_X+\Delta)$-MMP that only involves finitely many isomorphism classes of pairs. This contradicts Theorem \ref{thm:bdd imply termination}.
\end{proof}

\bibliography{ref}

\end{document}